\newtheorem{theorem}[subsection]{Theorem}
\newtheorem{proposition}[subsection]{Proposition}
\newtheorem{proposition-definition}[subsection]
{Proposition-Definition}
\newtheorem{corollary}[subsection]{Corollary}
\newtheorem{lemma}[subsection]{Lemma}
\theoremstyle{definition}
\newtheorem{definition}[subsection]{Definition}
\newtheorem{notation}[subsection]{Notation}
\newtheorem{example}[subsection]{Example}
\newtheorem{remark}[subsection]{Remark}
\newtheorem{terminology}[subsection]{Terminology}
\newtheorem{question}[subsection]{Question}
\newtheorem*{ack}{Acknowledgements}
\newcommand{\scr}[1]{\mathbf{\EuScript{#1}}}
\newcommand\cB{\mathcal{B}}
\newcommand\cC{\mathcal{C}}
\newcommand\cD{\mathcal{D}}
\newcommand\cE{\mathcal{E}}
\newcommand\cF{\mathcal{F}}
\newcommand\cG{\mathcal{G}}
\newcommand\cK{\mathcal{K}}
\newcommand\cL{\mathcal{L}}
\newcommand\cM{\mathcal{M}}
\newcommand\cO{\mathcal{O}}
\newcommand\cP{\mathcal{P}}
\newcommand\cQ{\mathcal{Q}}
\newcommand\cR{\mathcal{R}}
\newcommand\cT{\mathcal{T}}
\newcommand\cX{\mathcal{X}}
\newcommand\cY{\mathcal{Y}}
\newcommand\cZ{\mathcal{Z}}
\renewcommand\frm{\mathfrak{m}}
\newcommand{\mc}{\mathcal}
\title{Moduli of elliptic curves via twisted stable maps}
\subjclass[2010]{Primary 11G18. Secondary 14K10, 14H10, 14D23, 14H52.}
\keywords{Generalized elliptic curve, twisted curve, Drinfeld structure, moduli stack}
\author[A. Niles]{Andrew Niles}
\address{Department of Mathematics\\
University of California\\
Berkeley, CA 94720\\
USA}
\email{andrew@math.berkeley.edu}
\date{\today}
\begin{document}

\begin{abstract}
Abramovich, Corti and Vistoli have studied modular compactifications of stacks of curves equipped with abelian level structures arising as substacks of the stack of twisted stable maps into the classifying stack of a finite group, provided the order of the group is invertible on the base scheme. Recently Abramovich, Olsson and Vistoli extended the notion of twisted stable maps to allow arbitrary base schemes, where the target is a tame stack, not necessarily Deligne-Mumford. We use this to extend the results of Abramovich, Corti and Vistoli to the case of elliptic curves with level structures over arbitrary base schemes; we prove that we recover the compactified Katz-Mazur regular models, with a natural moduli interpretation in terms of level structures on Picard schemes of twisted curves. Additionally, we study the interactions of the different such moduli stacks contained in a stack of twisted stable maps in characteristics dividing the level.
\end{abstract}

\maketitle
\tableofcontents

\section{Introduction}
Abramovich and Vistoli introduced in \cite{AV} the stack $\cK_{g,n}(\cX)$ of \textit{$n$-pointed genus $g$ twisted stable maps} into $\cX$, where $\cX$ is a proper tame Deligne-Mumford stack over a base scheme $S$ with a projective coarse moduli space $X/S$. They prove that $\cK_{g,n}(\cX)$ is an algebraic stack, proper over the Kontsevich stack $\overline{\cM}_{g,n}(X)$ of stable maps into $X$, giving an appropriate analogue of the usual Kontsevich stack of stable maps when the target is allowed to be a tame stack instead of merely a scheme or algebraic space. In particular, taking $\cX = \cB G$ for a finite group $G$ naturally yields a modular compactification of the stack of $n$-pointed genus $g$ curves with certain level structure, where the level structure on a curve corresponds to giving a $G$-torsor over the curve; this is studied extensively in \cite{ACV}. This of course differs from the approach in \cite{AR}, where the coverings of curves are assumed themselves to be stable curves; here the coverings are generally not stable curves. In \cite{P} it is shown that in good characteristics we recover the usual compactification of $\scr Y(N)$ in $\cK_{1,1}(\cB(\mathbb{Z}/(N))^2)$.

However, the algebraic stack $\cB G$ is not tame in characteristics dividing $|G|$, so a priori the results of \cite{ACV} only hold over $\mathbb{Z}[1/|G|]$. The notion of a tame stack is generalized in \cite{AOV1} to algebraic stacks that are not necessarily Deligne-Mumford, and in the following paper \cite{AOV2} the stack $\cK_{g,n}(\cX)$ of twisted stable maps into certain proper tame algebraic stacks is introduced and shown to be a proper algebraic stack. This naturally leads us to attempt replacing the finite group $G$ with a group scheme $\cG$, agreeing with $G$ over $\mathbb{Z}[1/|G|]$ but such that $\cB\cG$ is a tame algebraic stack over $\mathrm{Spec}(\mathbb{Z})$; see for example \cite{Abr}. In particular, the group scheme $\mu_N$ is (noncanonically) isomorphic to $\mathbb{Z}/(N)$ over $\mathbb{Z}[1/N,\zeta_N]$, and the classifying stack $\cB\mu_N$ (unlike $\cB(\mathbb{Z}/(N))$) is a tame stack over $\mathrm{Spec}(\mathbb{Z})$. 

The purpose of this paper is to record how the results of \cite{ACV} extend to moduli of elliptic curves with level structure over arbitrary base schemes, using the group scheme $\mu_N$ in place of $\mathbb{Z}/(N)$. For example, the moduli stack $\scr Y_1(N)$ arises as an open substack in the rigidification $\overline{\cK}_{1,1}(\cB\mu_N)$ (defined below) of $\cK_{1,1}(\cB\mu_N)$. Explicitly, given an elliptic curve $E/S$ and a $[\Gamma_1(N)]$-structure $P$ on $E$, via the canonical group scheme isomorphism $E \cong \mathrm{Pic}^0_{E/S}$ over $S$, we may view $P$ as a ``point of exact order $N$'' on $\mathrm{Pic}^0_{E/S}$, determining a group scheme homomorphism $\phi: \mathbb{Z}/(N) \rightarrow \mathrm{Pic}^0_{E/S}$. The stack $\overline{\cK}_{1,1}(\cB\mu_N)$ will be seen to classify certain pairs $(\cC,\phi)$ where $\cC/S$ is a $1$-marked genus $1$ twisted stable curve and $\phi:\mathbb{Z}/(N) \rightarrow \mathrm{Pic}^0_{\cC/S}$ is a group scheme homomorphism, so this construction defines the immersion $\scr Y_1(N) \rightarrow \overline{\cK}_{1,1}(\cB\mu_N)$. We will define the notion of a $[\Gamma_1(N)]$-structure on a $1$-marked genus $1$ twisted stable curve, and write $\scr X^{\mathrm{tw}}_1(N)$ for the stack classifying such structures. Our first main result is:
\begin{theorem}\label{firstgamma1}
Let $S$ be a scheme and $\scr X^{\mathrm{tw}}_1(N)$ the stack over $S$ classifying $[\Gamma_1(N)]$-structures on $1$-marked genus $1$ twisted stable curves with non-stacky marking. Then $\scr X^{\mathrm{tw}}_1(N)$ is a closed substack of $\overline{\cK}_{1,1}(\cB \mu_N)$, which contains $\scr Y_1(N)$ as an open dense substack.
\end{theorem}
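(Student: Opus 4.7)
The plan is to decompose the statement into three pieces: (a) a moduli interpretation of $\overline{\cK}_{1,1}(\cB\mu_N)$ as classifying pairs $(\cC,\phi)$; (b) closedness of the Drinfeld condition cutting out $\scr X^{\mathrm{tw}}_1(N)$; and (c) identification of $\scr Y_1(N)$ with a dense open substack. Step (a) proceeds by the universal property of $\cB\mu_N$: a morphism $\cC \to \cB\mu_N$ is a $\mu_N$-torsor on $\cC$, and by Cartier duality (equivalently, the Kummer sequence) this is the same data as a homomorphism $\ZZ/(N) \to \mathrm{Pic}_{\cC/S}$ landing in $N$-torsion. Rigidifying $\cK_{1,1}(\cB\mu_N)$ kills the residual $\mu_N = \mathrm{Aut}(\cB\mu_N)$ acting trivially on all objects, and, combined with the non-stacky marked section used to select a base point, this yields the description of $\overline{\cK}_{1,1}(\cB\mu_N)$ as classifying pairs $(\cC, \phi)$ with $\phi : \ZZ/(N) \to \mathrm{Pic}^0_{\cC/S}$.

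For (b), a $[\Gamma_1(N)]$-structure on $(\cC,\phi)$ is the Drinfeld ``full set of sections'' condition: $\sum_{a \in \ZZ/(N)} [\phi(a)] = \mathrm{Pic}^0_{\cC/S}[N]$ as effective Cartier divisors on $\mathrm{Pic}^0_{\cC/S}$. Granting that $\mathrm{Pic}^0_{\cC/S}[N]$ is a finite locally free $S$-group scheme for this class of twisted genus $1$ curves---the non-stacky marking lets one mirror the generalized-elliptic-curve analysis---the equality of two effective Cartier divisors of the same degree is cut out by the vanishing of coefficient equations, hence is a closed condition on $S$. This realizes $\scr X^{\mathrm{tw}}_1(N)$ as a closed substack of $\overline{\cK}_{1,1}(\cB\mu_N)$.

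For (c), the locus where $\cC = E$ is smooth (an honest elliptic curve) is open in the stack of $1$-marked genus $1$ twisted stable curves, and pulls back to an open substack of $\scr X^{\mathrm{tw}}_1(N)$. On this locus $\mathrm{Pic}^0_{E/S} \cong E$ canonically via the marking, and the Drinfeld condition on $\phi$ reduces to the classical $[\Gamma_1(N)]$-structure condition, identifying the open substack with $\scr Y_1(N)$. The main obstacle, and the technical heart of the argument, is density: one must rule out components of $\scr X^{\mathrm{tw}}_1(N)$ lying entirely over the boundary. I would address this by comparison with the regular Katz--Mazur compactification of $\scr Y_1(N)$, which is known to be flat over $\ZZ$ with $\scr Y_1(N)$ dense; constructing a morphism to $\scr X^{\mathrm{tw}}_1(N)$ that is an isomorphism on a dense open, together with a local analysis of Drinfeld level structures on $\mathrm{Pic}^0$ of a twisted N\'eron polygon in characteristics dividing $N$, should rule out spurious components supported on the stacky boundary.
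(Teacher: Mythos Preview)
Your decomposition (a)--(c) is reasonable and (a) matches the paper's moduli interpretation of $\overline{\cK}_{1,1}(\cB\mu_N)$. However, (b) and (c) have genuine problems.

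In (b), you have written the wrong Drinfeld condition: you set $\sum_{a}[\phi(a)]$ equal to $\mathrm{Pic}^0_{\cC/S}[N]$, but the left side has degree $N$ while the right side has rank $N^2$ on smooth fibers, so this is never satisfied for $N>1$. The actual $[\Gamma_1(N)]$ condition only asks that $D=\sum_a[\phi(a)]$ be a subgroup scheme and meet every component of each geometric fiber. Moreover, your hypothesis that $\mathrm{Pic}^0_{\cC/S}[N]$ is finite locally free fails over the universal family: on a smooth fiber it has rank $N^2$, on a $\mu_d$-stacky $1$-gon it has rank $Nd$, so the rank jumps when $d<N$. Thus the ``equality of two Cartier divisors of the same degree'' argument cannot be run as stated. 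More importantly, the paper does not argue closedness directly at all: it verifies the valuative criterion of properness for $\scr X^{\mathrm{tw}}_1(N)$ inside the proper stack $\overline{\cK}_{1,1}(\cB\mu_N)$, and this is the technical heart. Given a $[\Gamma_1(N)]$-structure on an elliptic curve over the fraction field of a DVR, one must \emph{produce} a twisted curve over the DVR carrying an extending $[\Gamma_1(N)]$-structure. The construction uses properness of the classical $\scr X_1(N)$, the $e_N$-pairing on generalized elliptic curves to identify the relevant $\mu_N$-torsor with $E/\langle Q\rangle \to E/E[N]$, and then a stack-quotient/relative-coarse-moduli construction; in residue characteristics dividing $N$ one must separate the \'etale and connected pieces of the level structure and handle them differently.

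For (c), your proposed route via comparison with the Katz--Mazur model $\scr X_1(N)$ is what the paper establishes \emph{after} and \emph{using} the present theorem, so invoking it here is circular. The paper instead proves density directly: for any $\mu_d$-stacky $1$-gon with a $[\Gamma_1(N)]$-structure over an algebraically closed field $k$, it exhibits an explicit lift over $k[\![q^{1/N}]\!]$ with smooth generic fiber, built from an $N$-gon Tate curve via a stack quotient by $\cE^{\mathrm{sm}}[N]$ and passage to the relative coarse moduli space of the resulting map to $\cB\mu_N$.
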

This will be proved as Theorem \ref{gamma1} below. The main point is to verify the valuative criterion for properness. We accomplish this by using properness of $\scr X_1(N)$ and the $e_N$-pairing on generalized elliptic curves. To complete a family of twisted curves with level structure when the generic fiber is smooth, we first complete it to a family of generalized elliptic curves with level structure. Then we use a quotient construction involving the $e_N$-pairing to produce a completed family of twisted curves with level structure, modifying our strategy in characteristics dividing $N$ by exploiting the relationship between cyclotomic torsors and line bundles (Lemma \ref{bigpicard}). The same techniques immediately give a corresponding result for $\scr Y(N)$, which is an open substack of $\overline{\cK}_{1,1}(\cB\mu_N^2)$; its closure $\scr X^{\mathrm{tw}}(N)$ classifies $[\Gamma(N)]$-structures on $1$-marked genus $1$ twisted stable curves, as we will show in Theorem \ref{gamma} below.

Unsurprisingly, it turns out that these closures are isomorphic as algebraic stacks to the stacks $\scr X_1(N)$ and $\scr X(N)$ classifying $[\Gamma_1(N)]$-structures and $[\Gamma(N)]$-structures on generalized elliptic curves, as studied in \cite{C}:
\begin{theorem}\label{firstclassical}
Over the base $S = \mathrm{Spec}(\mathbb{Z})$, there is a canonical isomorphism of algebraic stacks $\scr X_1^{\mathrm{tw}}(N) \cong \scr X_1(N)$ extending the identity map on $\scr Y_1(N)$, and a canonical isomorphism of algebraic stacks $\scr X^{\mathrm{tw}}(N) \cong \scr X(N)$ extending the identity map on $\scr Y(N)$.
\end{theorem}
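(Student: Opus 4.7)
The plan is to construct a natural morphism $F:\scr X^{\mathrm{tw}}_1(N) \to \scr X_1(N)$ over $\mathrm{Spec}(\ZZ)$ via coarse-space formation, and then to prove it is an isomorphism by exhibiting an inverse. Given a $1$-marked genus $1$ twisted stable curve $\cC/S$ with non-stacky marking, equipped with a $[\Gamma_1(N)]$-structure $\phi:\ZZ/(N) \to \mathrm{Pic}^0_{\cC/S}$, let $E/S$ denote the coarse moduli space of $\cC$. The geometric fibers of $E$ are smooth elliptic curves or standard N\'eron polygons, and the image of the marking section endows $E$ with the structure of a generalized elliptic curve in the sense of Deligne--Rapoport. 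The morphism $\cC \to E$ is an isomorphism over the smooth locus of $E$, which allows $\phi$ to descend to a homomorphism $\ZZ/(N) \to \mathrm{Pic}^0_{E/S}$ (using Lemma~\ref{bigpicard} or its direct analog comparing the identity components of the Picard schemes of $\cC$ and $E$); this descended map is a Drinfeld $[\Gamma_1(N)]$-structure on $E$ in the sense of \cite{C}. Functoriality is clear, so this defines $F$, which restricts to the identity on $\scr Y_1(N)$.

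By Theorem~\ref{firstgamma1} and the properness of $\scr X_1(N)$ established in \cite{C}, both source and target of $F$ are proper Deligne--Mumford stacks over $\mathrm{Spec}(\ZZ)$ containing $\scr Y_1(N)$ as a dense open substack. To conclude $F$ is an isomorphism, I would produce a functorial inverse $G$: given a generalized elliptic curve $E/S$ with a $[\Gamma_1(N)]$-structure, produce canonically a $1$-marked genus $1$ twisted stable curve $\cC/S$ with coarse space $E$, carrying a $[\Gamma_1(N)]$-structure on $\mathrm{Pic}^0_{\cC/S}$ matching the original. On the elliptic locus take $\cC = E$. At a cuspidal point, form the canonical $\mu_e$-root stack of $E$ along its nodes, where $e$ records the amount of ramification of $\phi$ at that cusp, equivalently the index of the image of $\phi$ in the component group of $E[N]^{\mathrm{sm}}$. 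Lemma~\ref{bigpicard} then converts the given level structure on $E$ into a $[\Gamma_1(N)]$-structure on $\mathrm{Pic}^0_{\cC/S}$ for the root stack $\cC$.

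The crux is verifying that $G$ is well-defined on families over arbitrary $S$ (not merely on geometric points), and that it is a two-sided inverse of $F$. I expect the main difficulty to lie in the cuspidal analysis in mixed and residue characteristic dividing $N$: one must show that the universal generalized elliptic curve with $[\Gamma_1(N)]$-structure near a cusp, described explicitly in \cite{C} in terms of Tate curves over appropriate regular local rings, becomes upon root-stack base change precisely the universal twisted curve with our twisted level structure. This is exactly the kind of comparison our proof of Theorem~\ref{firstgamma1} was designed to handle, and the same use of the $e_N$-pairing together with Lemma~\ref{bigpicard} relating $\mu_N$-torsors to line bundles on twisted curves should provide the required local identification.

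Once $G$ is constructed on formal neighborhoods of cusps and trivially on $\scr Y_1(N)$, it glues via the moduli interpretation, and the compositions $F\circ G$ and $G\circ F$ are endomorphisms of separated stacks extending the identity on the dense open $\scr Y_1(N)$; hence they are the identity. The identical argument, with $\mu_N$ replaced by $\mu_N^2$ and $[\Gamma_1(N)]$-structures replaced by $[\Gamma(N)]$-structures throughout, yields the second isomorphism $\scr X^{\mathrm{tw}}(N) \cong \scr X(N)$.
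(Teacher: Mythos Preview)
Your proposed morphism $F$ does not work. If $\cC/k$ is a standard $\mu_d$-stacky N\'eron $1$-gon with a $[\Gamma_1(N)]$-structure $\phi$, its coarse space $E$ is a N\'eron $1$-gon, with $\mathrm{Pic}^0_{E/k}\cong\mathbb{G}_m$. There is no natural way to descend $\phi:\mathbb{Z}/(N)\to\mathbb{G}_m\times\mathbb{Z}/(d)$ to a $[\Gamma_1(N)]$-structure on $E$: the only available map is the projection $\pi^*:\mathrm{Pic}^0_{E/k}\hookrightarrow\mathrm{Pic}^0_{\cC/k}$, which goes the wrong way, and composing $\phi$ with the projection to $\mathbb{G}_m$ does not in general yield a $[\Gamma_1(N)]$-structure (the resulting divisor need not be a subgroup scheme). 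More fundamentally, the objects of $\scr X_1(N)$ lying over the cusp of $\overline{\cM}_{1,1}$ are N\'eron $d$-gons for various $d\mid N$, not $1$-gons; the coarse space of $\cC$ is simply the wrong generalized elliptic curve. Your proposed $G$ via root stacks has the mirror-image problem: rooting a $d$-gon along its nodes produces a stacky $d$-gon, which is not a $1$-marked genus $1$ stable curve when $d>1$.

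The paper avoids constructing either map directly. Instead it shows that $\scr X^{\mathrm{tw}}_1(N)\to\overline{\cM}_{1,1}$ is representable (hence finite, being proper and quasi-finite), that $\scr X^{\mathrm{tw}}_1(N)$ is Cohen--Macaulay over $\mathrm{Spec}(\mathbb{Z})$, and then deduces normality from Serre's criterion using regularity of $\scr Y_1(N)$ away from the cuspidal Cartier divisor. Both $\scr X^{\mathrm{tw}}_1(N)$ and $\scr X_1(N)$ are then finite flat normal over $\overline{\cM}_{1,1}$, so each is the normalization of $\overline{\cM}_{1,1}$ in $\scr Y_1(N)$, and the canonical isomorphism follows from the universal property. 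Only afterwards does the paper give an explicit modular description of the map $\scr X_1(N)\to\scr X^{\mathrm{tw}}_1(N)$, and it is not via coarse spaces or root stacks: one passes fppf-locally to an auxiliary $N$-gon $E'$, forms the stack quotient $\cC=[E'/E'^{\mathrm{sm}}[N]]$, and uses the $e_N$-pairing to transport the level structure to $\mathrm{Pic}^0_{\cC/S}$.
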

We will prove this as Theorem \ref{classical} below. We first verify that $\scr X_1^{\mathrm{tw}}(N)$ and $\scr X^{\mathrm{tw}}(N)$ are finite over $\overline{\cM}_{1,1}$. Some commutative algebra then tells us they are Cohen-Macaulay over $\mathrm{Spec}(\mathbb{Z})$, at which point we may proceed as in \cite[\S4.1]{C} to identify these stacks with the normalizations of $\overline{\cM}_{1,1}$ in $\scr X_1(N)|_{\mathbb{Z}[1/N]}$ and $\scr X(N)|_{\mathbb{Z}[1/N]}$. These isomorphisms have a natural moduli interpretation, as discussed in Corollaries \ref{modular1} and \ref{modular2}.

The techniques of this paper also yield new moduli interpretations of various moduli stacks of elliptic curves that are not (apparently) contained in a stack of twisted stable maps. These results may be known to experts, but are not recorded in the literature; for completeness we include a careful proof of the modular interpretation of the closure of $\scr Y_1(N)$ in $\overline{\cK}_{1,1}(\cB \mu_N)$. We also study how the different moduli stacks of elliptic curves contained in $\overline{\cK}_{1,1}(\cB\mu_N)$ and $\overline{\cK}_{1,1}(\cB\mu_N^2)$ interact in characteristics dividing $N$; this easily generalizes an example in \cite{AOV2} but otherwise does not appear in the literature. 

In the appendix we recall an example of \cite{CN} which implies that the techniques of this paper do not generalize nicely to curves of higher genus: using the Katz-Mazur notion of a ``full set of sections'' we can define a stack over $\mathrm{Spec}(\mathbb{Z})$ classifying genus $g$ curves with full level $N$ structures on their Jacobians, but this stack is not even flat over $\mathrm{Spec}( \mathbb{Z})$. The corresponding stack over $\mathbb{Z}[1/N]$ is well-behaved, and arises as an open substack of $\overline{\cK}_{g,0}(\cB\mu_N^{2g})$, but no moduli interpretation for its closure in characteristics dividing $N$ appears to be known.

\begin{ack}
The author is grateful to his advisor, Martin Olsson, for suggesting this research topic and for many helpful conversations, and also to the reviewer for several helpful comments and corrections. This research was supported by an NSF Graduate Research Fellowship.
\end{ack}

\section{Review of moduli of generalized elliptic curves}\label{section2}

\subsection*{Drinfeld level structures on generalized elliptic curves}
For the convenience of the reader, we recall the definitions and results from the theory of generalized elliptic curves that we require in this paper.

\begin{definition}
A \textit{Deligne-Rapoport (DR) semistable curve of genus $1$} over a scheme $S$ is a proper, flat, separated, finitely presented morphism of schemes $f:C \rightarrow S$, all of whose geometric fibers are non-empty, connected semistable curves with trivial dualizing sheaves.
\end{definition}

By \cite[\S$\textrm{II.1}$]{DR}, an equivalent definition for $f:C \rightarrow S$ to be a DR semistable curve of genus $1$ is that $f$ is a proper flat morphism of finite presentation and relative dimension $1$, such that every geometric fiber is either a smooth connected genus $1$ curve or a N\'eron polygon. Recall (loc. cit.) that over a base scheme $S$, the \textit{standard N\'eron $N$-gon} $C_N/S$ (for any $N\geq 1$) is obtained from $\widetilde{C}_N := \mathbb{P}^1_S \times \mathbb{Z}/(N)$ by ``gluing'' the section $0$ in the $i^{\mathrm{th}}$ copy of $\mathbb{P}^1_S$ to the section $\infty$ in the $(i+1)^{\mathrm{th}}$ copy of $\mathbb{P}^1_S$:

\begin{center}
\begin{tikzpicture}
\clip (-1.5,-1) rectangle (7.6,3);
\draw (0.2,-0.2) -- (-0.907,0.907);
\draw (-0.707,0.507) -- (-0.707,1.907);
\draw (-0.907,1.507) -- (0.2,2.614);
\draw (-0.2,2.414) -- (1.2,2.414);
\begin{scope}[dashed]
\draw (0.8,2.614) -- (1.707,1.707);
\draw (1,0) -- (-0.2,0);
\draw [decorate,decoration=snake] (1.707,1.707) -- (1,0);
\end{scope}
\node at (0.5,-0.2) [inner sep=0pt,label=270:Standard N\'eron $N$-gon] {};
\draw [name path=curve] (7,2) .. controls (3.5,-4) and (3.5,6) .. (7,0);
\node at (5.7,-0.2) [inner sep=0pt,label=270:Standard N\'eron $1$-gon] {};
\end{tikzpicture}
\end{center}

The natural multiplication action of $\mathbb{G}_m$ on $\mathbb{P}^1_S$, together with the action of $\mathbb{Z}/(N)$ on itself via the group law, determines an action of the group scheme $\mathbb{G}_m \times \mathbb{Z}/(N)$ on $\mathbb{P}^1_S \times \mathbb{Z}/(N)$, descending uniquely to an action of $\mathbb{G}_m \times \mathbb{Z}/(N) = C_N^{\mathrm{sm}}$ on $C_N$ (\cite[II.1.9]{DR}).

\begin{definition}
A \textit{generalized elliptic curve} over a scheme $S$ is a DR semistable curve $E/S$ of genus $1$, equipped with a morphism $E^{\mathrm{sm}} \times E \rightarrow E$ and a section $0_E \in E^{\mathrm{sm}}(S)$ such that the restriction $E^{\mathrm{sm}} \times E^{\mathrm{sm}} \rightarrow E^{\mathrm{sm}}$ makes $E^{\mathrm{sm}}$ a commutative group scheme over $S$ with identity $0_E$, and such that on any singular geometric fiber $E_{\overline{s}}$, the translation action by a rational point on $E^{\mathrm{sm}}_{\overline{s}}$ acts by a rotation on the graph $\Gamma(E_{\overline{s}})$ (\cite[I.3.5]{DR}) of the irreducible components of $E_{\overline{s}}$.
\end{definition}

By \cite[II.1.15]{DR}, over an algebraically closed field a generalized elliptic curve is either a smooth elliptic curve or a N\'eron $N$-gon (for some $N\geq 1$) with the action described above. In fact this result says more: for any generalized elliptic curve $E/S$, there is a locally finite family $(S_N)_{N\geq 1}$ of closed subschemes of $S$ such that $\cup S_N$ is the non-smooth locus in $S$ of $E \rightarrow S$, and $E\times_S S_N$ is fppf-locally on $S_N$ isomorphic to the standard N\'eron $N$-gon over $S_N$.

Recall that for an $S$-scheme $X$, a \textit{relative effective Cartier divisor} in $X$ over $S$ is an effective Cartier divisor in $X$ which is flat over $S$.

\begin{definition}\label{gamma1elliptic}
Let $E/S$ be a generalized elliptic curve. A \textit{$[\Gamma_1(N)]$-structure} on $E$ is a section $P \in E^{\mathrm{sm}}(S)$ such that:
\begin{itemize}
  \item $N\cdot P = 0_E$; 
  \item the relative effective Cartier divisor 
  \begin{displaymath}
  D := \sum_{a \in \mathbb{Z}/(N)} [a\cdot P]
  \end{displaymath}
  in $E^{\mathrm{sm}}$ is a subgroup scheme; and 
  \item for every geometric point $\overline{p} \rightarrow S$, $D_{\overline{p}}$ meets every irreducible component of $E_{\overline{p}}$.
\end{itemize}
We write $\scr X_1(N)$ for the stack over $\mathrm{Spec}(\mathbb{Z})$ associating to a scheme $S$ the groupoid of pairs $(E,P)$, where $E/S$ is a generalized elliptic curve and $P$ is a $[\Gamma_1(N)]$-structure on $E$. We write $\scr Y_1(N)$ for the substack classifying such pairs where $E/S$ is a smooth elliptic curve.
\end{definition}

\begin{definition}\label{gammaelliptic}
Let $E/S$ be a generalized elliptic curve. A \textit{$[\Gamma(N)]$-structure} on $E$ is an ordered pair $(P,Q)$ of sections in $E^{\mathrm{sm}}[N](S)$ such that:
\begin{itemize}
  \item the relative effective Cartier divisor 
  \begin{displaymath}
  D := \sum_{a,b \in \mathbb{Z}/(N)} [a\cdot P + b \cdot Q]
  \end{displaymath}
  in $E^{\mathrm{sm}}$ is an $N$-torsion subgroup scheme, hence $D = E^{\mathrm{sm}}[N]$; and 
  \item for every geometric point $\overline{p} \rightarrow S$, $D_{\overline{p}}$ meets every irreducible component of $E_{\overline{p}}$.
\end{itemize}
We write $\scr X(N)$ for the stack over $\mathrm{Spec}(\mathbb{Z})$  associating to a scheme $S$ the groupoid of tuples $(E,(P,Q))$, where $E/S$ is a generalized elliptic curve and $(P,Q)$ is a $[\Gamma(N)]$-structure on $E$. We write $\scr Y(N)$ for the substack classifying such tuples where $E/S$ is a smooth elliptic curve.
\end{definition}

\begin{definition}\label{gstructure}
Let $E/S$ be a generalized elliptic curve, and let $G$ be a $2$-generated finite abelian group, say $G \cong \mathbb{Z}/(n_1) \times \mathbb{Z}/(n_2)$, $n_2|n_1$. A \textit{$G$-structure} on $E$ is a homomorphism $\phi: G \rightarrow E^{\mathrm{sm}}$ of group schemes over $S$ such that:
\begin{itemize}
  \item the relative effective Cartier divisor 
  \begin{displaymath}
  D := \sum_{a \in G} [\phi(a)]
  \end{displaymath}
  in $E^{\mathrm{sm}}$ is an $n_1$-torsion subgroup scheme; and 
  \item for every geometric point $\overline{p} \rightarrow S$, $D_{\overline{p}}$ meets every irreducible component of $E_{\overline{p}}$.
\end{itemize}
\end{definition}

\begin{theorem}[$\textrm{\cite[3.1.7, 3.2.7, 3.3.1, 4.1.1]{C}}$]
$\scr X_1(N)$ and $\scr X(N)$ are regular Deligne-Mumford stacks, proper and flat over $\mathrm{Spec}(\mathbb{Z})$ of pure relative dimension $1$.
\end{theorem}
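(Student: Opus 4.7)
The strategy is to study the forgetful morphism $\pi: \scr X_1(N) \to \overline{\cM}_{1,1}$ sending $(E,P) \mapsto E$ (and analogously for $\scr X(N)$), exploiting that $\overline{\cM}_{1,1}$ is already known to be a regular, proper DM stack over $\mathrm{Spec}(\ZZ)$ of pure relative dimension $1$. First I would note that $\pi$ is representable: fixing $E$, an object of the fiber is just a section $P\in E^{\mathrm{sm}}(T)$, a set. Next I would show $\pi$ is finite. Quasi-finiteness follows immediately from the Cartier-divisor conditions in Definition \ref{gamma1elliptic}, which force $N\cdot P = 0_E$ and hence place $P$ in the quasi-finite group scheme $E^{\mathrm{sm}}[N]$; the ``meets every component'' condition further restricts the relevant geometric fibers of $E/S$ to those that are smooth or N\'eron $n$-gons with $n\mid N$, on which $E^{\mathrm{sm}}[N]$ is finite. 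Properness comes from the valuative criterion: over a DVR, a generic $[\Gamma_1(N)]$-structure extends because its constituent sections must specialize into the smooth locus in order for the limiting divisor to meet every component. Finiteness of $\pi$, together with properness and the DM property of $\overline{\cM}_{1,1}/\ZZ$, yields that $\scr X_1(N)$ is a proper DM stack over $\mathrm{Spec}(\ZZ)$.

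For flatness and pure relative dimension $1$, I would split at the primes of $N$. Over $\ZZ[1/N]$, the group scheme $E^{\mathrm{sm}}[N]$ is étale and $\pi$ is finite étale, so $\scr X_1(N)|_{\ZZ[1/N]}$ is smooth of relative dimension $1$ over $\mathrm{Spec}(\ZZ[1/N])$. At primes $p\mid N$, flatness is built into the Drinfeld-level formalism of Katz--Mazur: the Cartier divisor $D$ in Definition \ref{gamma1elliptic} is required to be flat over the base, and the additional closed conditions (being a subgroup, meeting every component) are compatible with flatness of $D$. Flatness of $\pi$ combined with flatness of $\overline{\cM}_{1,1}/\ZZ$ then yields flatness of $\scr X_1(N)/\ZZ$, and constancy of fibre dimension along $\pi$ pins down the relative dimension.

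The main obstacle is regularity in characteristics $p\mid N$, where the moduli can fail to be reduced along the cuspidal boundary. Here the plan is the explicit local Tate-curve analysis of Katz--Mazur, as adapted to generalized elliptic curves in \cite{C}: étale-locally at a cusp of $\overline{\cM}_{1,1}$, work over the strict henselization $R = W(\overline{\FF}_p)[[q]]$, where the universal curve becomes a polygon form $\mathrm{Tate}(q^n)$ of the Tate curve; enumerate the $[\Gamma_1(N)]$-structures on $\mathrm{Tate}(q^n)$ explicitly as sections of the form $\zeta_N^a\cdot q^{b/Nn}$ with $(a,b)$ in a specified index set; and verify by direct computation that the completed local ring of the moduli at each such structure is a regular ring of the form $W(\overline{\FF}_p)[\zeta_N][[q^{1/e}]]$ for some $e\mid N$. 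Together with regularity of $\scr X_1(N)$ over the smooth locus of $\overline{\cM}_{1,1}$ (where $\pi$ is finite flat over a regular base and the Drinfeld moduli are regular by Katz--Mazur), this yields regularity of $\scr X_1(N)$ globally. The argument for $\scr X(N)$ proceeds in parallel, replacing the single section $P$ by a pair $(P,Q)$ generating a full $[\Gamma(N)]$-basis on $\mathrm{Tate}(q^n)$.
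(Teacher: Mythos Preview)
The paper does not prove this theorem; it is stated with a citation to \cite[3.1.7, 3.2.7, 3.3.1, 4.1.1]{C} and used as input. So there is no proof in the paper to compare your proposal against. That said, since you have sketched an argument, let me comment on it as a sketch of the cited Conrad/Katz--Mazur proof.

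Your overall architecture is right---representability and finiteness of $\pi$ over $\overline{\cM}_{1,1}$, then flatness, then regularity via Tate-curve computations at the cusps together with \cite[5.1.1]{KM1} on the smooth locus---and this is indeed how the cited references proceed. But two steps in your sketch are not yet arguments. First, your flatness claim is circular: the fact that $D$ is required to be a relative effective Cartier divisor (hence flat over $S$) says nothing about flatness of the moduli stack of such $D$'s over $\overline{\cM}_{1,1}$ or $\mathrm{Spec}(\ZZ)$. In Katz--Mazur and Conrad, flatness of $\pi$ at primes dividing $N$ is a genuine theorem, proved by first handling the smooth locus via deformation theory and the ``homogeneity'' technique, and then at the cusps by the same explicit Tate-curve computations you cite for regularity (one reads off flatness from the form $W(\overline{\FF}_p)[\zeta][\![q^{1/e}]\!]$ of the completed local rings). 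Second, your properness argument for $\pi$ is incomplete: given $(E_\eta,P_\eta)$ over the fraction field of a DVR, one cannot simply specialize $P_\eta$ into a fixed model of $E_\eta$ and hope the limiting divisor meets every component. One must in general replace the limiting generalized elliptic curve by a suitable contraction (so that the number of components of the special fiber matches the ``ampleness'' of the limiting level structure); this is exactly the content of Conrad's proof of properness in \cite[\S3.2]{C}.
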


In particular, it follows (cf. \cite[4.1.5]{C}) that $\scr X_1(N)$ (resp. $\scr X(N)$) is canonically identified with the normalization of $\overline{\cM}_{1,1}$ in the normal Deligne-Mumford stack $\scr X_1(N)|_{\mathbb{Z}[1/N]}$ (resp. $\scr X(N)|_{\mathbb{Z}[1/N]}$), as in \cite[\S8.6]{KM1} and \cite[\S$\textrm{VII.2}$]{DR}.

\subsection*{Reductions mod $p$ of the moduli stacks}

It will be useful for us to have a description of the ``reduction mod $p$'' of the stacks $\scr X_1(N)$ and $\scr X(N)$ for primes $p$ dividing $N$. These reductions are described using the ``crossings theorem'' (\cite[13.1.3]{KM1}) which we now recall.

Work over a fixed field $k$. Let $Y/k$ be a smooth curve, and let $X \rightarrow Y$ be finite and flat. Suppose there is a non-empty finite set of $k$-rational points of $Y$ (which we will call the \textit{supersingular points} of $Y$) such that for each supersingular point $y_0$, $X_{y_0}$ is a single $k$-rational point, and $\widehat{\cO}_{X,x_0} \cong k[\![ x,y]\!]/(f)$ for some $f$ (depending on $y_0$).

Also suppose we have a finite collection of closed immersions $\{Z_i \hookrightarrow X\}_{i = 1}^n$, where each $Z_i$ is finite and flat over $Y$, with $Z_i^{\mathrm{red}}$ a smooth curve over $k$, such that the morphism $\sqcup Z_i \rightarrow X$ is an isomorphism over the non-supersingular locus of $Y$, and such that for each $i$ and each supersingular point $y_0 \in Y$, $(Z_i)_{y_0}$ is a single $k$-rational point.

\begin{theorem}[Crossings Theorem, $\textrm{\cite[13.1.3]{KM1}}$]\label{crossings}
Under the above assumptions, let $y_0 \in Y$ be a supersingular point and $x_0 = X_{y_0}$. Then 
\begin{displaymath}
\widehat{\mathcal{O}}_{X,x_0} \cong k[\![x,y]\!]/\prod_{i = 1}^m f_i^{e_i},
\end{displaymath}
where each $f_i$ is irreducible in $k[\![x,y]\!]$, each $f_i$ and $f_j$ $(i\neq j)$ are distinct in $k[\![x,y]\!]$ modulo multiplication by units, and for $z_{i,0} := (Z_i)_{y_0}$ we have 
\begin{displaymath}
\widehat{\mathcal{O}}_{Z_i,z_{i,0}} \cong k[\![x,y]\!]/(f_i^{e_i}).
\end{displaymath}
If $Y$ is (geometrically) connected then each $Z_i$ is (geometrically) connected, in which case $\{Z_i\}_{i = 1}^n$ is the set of irreducible components of $X$.
\end{theorem}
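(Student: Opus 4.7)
The plan is to do everything locally at $x_0$, inside the complete local ring $\widehat{\cO}_{X,x_0} \cong k[\![x,y]\!]/(f)$, and exploit the UFD structure of $k[\![x,y]\!]$. First, factor $f = u \prod_j g_j^{m_j}$ with $u$ a unit and the $g_j$ pairwise non-associate irreducibles. Each closed immersion $Z_i \hookrightarrow X$ induces, after completion at the unique fiber point $z_{i,0}$ above $y_0$, a surjection $k[\![x,y]\!]/(f) \twoheadrightarrow \widehat{\cO}_{Z_i, z_{i,0}}$, so the target is of the form $k[\![x,y]\!]/(h_i)$ for some divisor $h_i \mid f$.

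The hypothesis that $Z_i^{\mathrm{red}}$ is a smooth curve then forces $\widehat{\cO}_{Z_i^{\mathrm{red}}, z_{i,0}} = k[\![x,y]\!]/\sqrt{(h_i)}$ to be a DVR with residue field $k$ (necessarily $k[\![t]\!]$), so $\sqrt{(h_i)}$ must be prime in $k[\![x,y]\!]$, and consequently $h_i$ is (up to a unit, which we absorb) a prime power $f_i^{e_i}$ with $f_i$ associate to some $g_j$. I next claim that $i \mapsto f_i$ gives a bijection onto $\{g_j\}$: the hypothesis that $\sqcup Z_i \to X$ is an isomorphism over the non-supersingular locus of $Y$ forces distinct $i$ to single out distinct formal branches of $X$ at $x_0$ (else $\sqcup Z_i \to X$ would fail to be injective over the generic point of the offending branch), and every formal branch is hit since the map is surjective over a dense open.

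The central step is the multiplicity matching $e_i = m_{\sigma(i)}$ where $f_i = g_{\sigma(i)}$. The key observation is that $\widehat{\cO}_{X,x_0}$, being a hypersurface in a regular local ring, is Cohen--Macaulay and in particular has no embedded associated primes. Let $I_i \subset \widehat{\cO}_{X,x_0}$ be the ideal cutting out $Z_i$. The scheme-theoretic union of the $Z_i$ inside $\mathrm{Spec}(\widehat{\cO}_{X,x_0})$ is defined by $\bigcap_i I_i$, which lifts in $k[\![x,y]\!]$ to $\bigcap_i (f_i^{e_i}) = (\prod_i f_i^{e_i})$, using coprimality of distinct irreducibles in the UFD. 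The generic isomorphism hypothesis forces $\bigcap_i I_i$ to vanish at every minimal prime of $\widehat{\cO}_{X,x_0}$; the absence of embedded primes upgrades this to $\bigcap_i I_i = 0$, whence $(\prod_i f_i^{e_i}) = (f)$ in $k[\![x,y]\!]$. Reconciling this identity with the factorization $f = u \prod_j g_j^{m_j}$ under the bijection $f_i \leftrightarrow g_{\sigma(i)}$ yields $e_i = m_{\sigma(i)}$, giving the asserted shape of $\widehat{\cO}_{X,x_0}$ together with the description of each $\widehat{\cO}_{Z_i, z_{i,0}}$.

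The hardest part is the multiplicity-matching step: without Cohen--Macaulayness of $X$ one could not rule out ``extra'' primary components hiding in $\bigcap_i I_i$ beyond the ones visible generically. The concluding assertion on irreducible components follows once each $Z_i$ is seen to be (geometrically) connected, which I would deduce from the fact that $Z_i^{\mathrm{red}}$ is a smooth curve in the usual sense (hence (geometrically) integral); combined with the isomorphism $\sqcup Z_i \to X$ over the (connected) non-supersingular locus of $Y$ and the local bijection between $\{Z_i\}$ and formal branches of $X$ at each supersingular point, this identifies $\{Z_i\}_{i=1}^n$ with the set of irreducible components of $X$.
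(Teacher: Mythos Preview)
The paper does not prove this theorem; it is stated with a citation to Katz--Mazur and then used as a black box. So there is no proof in the paper to compare your attempt against.

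Your local analysis at $x_0$ is essentially correct and is close to how Katz--Mazur argue. One small omission: the claim that $\widehat{\cO}_{Z_i,z_{i,0}} = k[\![x,y]\!]/(h_i)$ with $(h_i)$ \emph{principal} is not automatic from the surjection alone. It holds because $Z_i$, being finite flat over the smooth curve $Y$, has $\widehat{\cO}_{Z_i,z_{i,0}}$ finite free over $k[\![t]\!]$ and hence Cohen--Macaulay of dimension $1$; Auslander--Buchsbaum in the regular ring $k[\![x,y]\!]$ then forces the defining ideal to have projective dimension $0$, i.e., to be principal. You invoke the same Cohen--Macaulay mechanism for $X$ later, so this is only an expository gap.

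There is, however, a genuine error in your final paragraph. You write that $Z_i^{\mathrm{red}}$ being ``a smooth curve in the usual sense'' makes it geometrically integral; but smoothness does not imply connectedness (a disjoint union of smooth curves is smooth), and even a smooth \emph{connected} $k$-scheme need not be geometrically connected. The theorem explicitly \emph{deduces} (geometric) connectedness of the $Z_i$ from that of $Y$, so you cannot read it off the word ``curve''. The correct argument uses the single-point-fiber hypothesis: if $Y$ is connected (hence irreducible, being smooth and connected) and $Z_i = Z' \sqcup Z''$ with both pieces nonempty, then each piece, being open and closed in the finite flat $Z_i/Y$, is itself finite flat over $Y$ and hence surjects onto $Y$; thus both $Z'$ and $Z''$ meet the fiber over the supersingular point $y_0$, contradicting the hypothesis that $(Z_i)_{y_0}$ is a single $k$-rational point. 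Geometric connectedness follows by the same argument after base change to $\overline{k}$, using that $y_0$ and $z_{i,0}$ are $k$-rational so remain single points. With this fix, your identification of $\{Z_i\}$ with the irreducible components of $X$ goes through as you indicate.
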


\begin{definition}
Under the above assumptions, we say that $X$ is the \textit{disjoint union with crossings at the supersingular points} of the closed subschemes $\{Z_i\}_{i = 1}^n$. 

If $\cX \rightarrow \cY$ is a finite (hence representable) flat morphism of algebraic stacks over $k$, with $\cY$ Deligne-Mumford (hence $\cX$ is also Deligne-Mumford), and $\{\cZ_i \hookrightarrow \cX\}_{i = 1}^n$ is a finite collection of closed immersions of algebraic stacks, such that for some \'etale surjection $Y' \rightarrow \cY$ with $Y'$ a scheme, the algebraic spaces $Y'$, $\cX \times_{\cY} Y'$, $\{\cZ_i \times_{\cY} Y'\}$ are schemes satisfying the assumptions of the Crossings Theorem, we say that $\cX$ is the \textit{disjoint union with crossings at the supersingular points} of the closed substacks $\{\cZ_i\}$.
\end{definition}

\begin{remark}
We will solely be applying the above theorem in the case $\cY = \overline{\cM}_{1,1}$, with $\cX$ an algebraic stack known to be quasi-finite and proper over $\overline{\cM}_{1,1}$. $\overline{\cM}_{1,1}$ is a Deligne-Mumford stack with separated diagonal, so once we know that $\cX \rightarrow \overline{\cM}_{1,1}$ is representable, it follows from \cite[II.6.16]{K} that the algebraic spaces $\cX \times_{\overline{\cM}_{1,1}} Y'$ and $\{\cZ_i \times_{\overline{\cM}_{1,1}} Y'\}$ are schemes.
\end{remark}

\begin{definition}\label{abcyclic}
Let $S \in \mathrm{Sch}/\mathbb{F}_p$. Let $E/S$ be an elliptic curve, with relative Frobenius $F: E \rightarrow E^{(p)}$. Let $G \subset E$ be a finite, locally free $S$-subgroup scheme of rank $p^n$, $n\geq 1$. For integers $a,b\geq 0$ with $a+b=n$, we say that $G$ is an \textit{$(a,b)$-subgroup} if $\mathrm{ker}(F^a) \subset G$, and if in the resulting factorization of $E \rightarrow E/G$ as 
\begin{displaymath}
E \stackrel{F^a}{\rightarrow} E^{(p^a)} \stackrel{\pi}{\rightarrow} E/G
\end{displaymath}
we have $\mathrm{ker}(\widehat{\pi}) = \mathrm{ker}(F^b_{E/G})$ (where $\widehat{\pi}$ denotes the dual isogeny and $F_{E/G}: E/G \rightarrow (E/G)^{(p)}$ is the relative Frobenius). In particular note that $E^{(p^a)} \cong (E/G)^{(p^b)}$.

We say that $G$ is an \textit{$(a,b)$-cyclic subgroup} if it is an $(a,b)$-subgroup, and either $a=0$, $b=0$, or there exists a closed subscheme $Z \subset S$ defined by a sheaf of ideals $\mathcal{I} \subset \mathcal{O}_S$ with $\mathcal{I}^{p-1} = 0$, such that the isomorphism $E^{(p^a)}|_Z \cong (E/G)^{(p^b)}|_Z$ is induced by an isomorphism $E^{(p^{a-1})}|_Z \cong (E/G)^{(p^{b-1})}|_Z$.

Finally, a \textit{$[\Gamma_1(p^n)]$-$(a,b)$-cyclic structure} on $E$ is a $[\Gamma_1(p^n)]$-structure $P \in E[p^n](S)$ such that the $S$-subgroup scheme 
\begin{displaymath}
D := \sum_{m \in \mathbb{Z}/(p^n)} [m\cdot P]
\end{displaymath}
in $E$ is an $(a,b)$-cyclic subgroup of $E$. We write $\scr Y_1(p^n)^{(a,b)}_S$ for the substack of $\scr Y_1(p^n)_S$ associating to a scheme $T/S$ the groupoid of pairs $(E,P)$, where $E/T$ is an elliptic curve and $P$ is a $[\Gamma_1(p^n)]$-$(a,b)$-cyclic structure on $E$.
\end{definition}

If $E$ is an ordinary elliptic curve over a scheme $S$ of characteristic $p$, it is considerably easier to describe what is meant by a $[\Gamma_1(p^n)]$-$(a,b)$-cyclic structure on $E$. Namely, it is just a $[\Gamma_1(p^n)]$-structure $P$ on $E$ such that the relative effective Cartier divisor 
\begin{displaymath}
D_b := \sum_{m = 1}^{p^b} [m\cdot P]
\end{displaymath}
is a subgroup scheme of $E$ which is \'etale over $S$. Note that for an arbitrary $[\Gamma_1(p^n)]$-structure $P$ on $E$, $D_b$ will not generally be a subgroup scheme of $E$ if $b<n$, and even if it is a subgroup scheme it might not be \'etale over $S$. Being a $[\Gamma_1(p^n)]$-$(a,b)$-cyclic structure on $E$ (for $E$ ordinary) means that for any geometric fiber $E_{\overline{s}}$ (so $E_{\overline{s}}[p^n] \cong \mu_{p^n} \times \mathbb{Z}/(p^n)$ over $k(\overline{s})$) $P_{\overline{s}}$ has exact order $p^b$ as an element of the group $E_{\overline{s}}[p^n](k(\overline{s})) \cong \mathbb{Z}/(p^n)$.

The following elementary result will be required when we study the interactions in characteristic $p$ of different moduli stacks of elliptic curves contained in a moduli stack of twisted stable maps. We include a proof for lack of a suitable reference.

\begin{lemma}\label{gamma1comps}
Let $E/S/\mathbb{F}_p$ be an elliptic curve, and let $P$ be a $[\Gamma_1(p^n)]$-$(a,b)$-cyclic structure on $E$. Then $P$ is also a $[\Gamma_1(p^{n+1})]$-structure on $E$, and is $[\Gamma_1(p^{n+1})]$-$(a+1,b)$-cyclic.
\end{lemma}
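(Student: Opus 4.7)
My strategy centres on the Cartier-divisor identity $D' = pD$, where $D := \sum_{b \in \ZZ/(p^n)}[bP]$ is the subgroup scheme associated to the given $[\Gamma_1(p^n)]$-structure and $D' := \sum_{a \in \ZZ/(p^{n+1})}[aP]$ is the divisor we want to realise as a subgroup scheme. Because $p^n\cdot P = 0_E$, the residue classes in $\ZZ/(p^{n+1})$ partition into fibres of size $p$ over $\ZZ/(p^n)$, all contributing the same section $bP$; hence $D' = p\cdot D$ as relative effective Cartier divisors on $E$, and $p^{n+1}\cdot P = 0_E$, so $P \in E[p^{n+1}](S)$.

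The first substantive step, and the main obstacle, is to identify $pD$ as a closed subgroup scheme of $E$ of rank $p^{n+1}$. Let $\pi_D\colon E\to E/D$ denote the isogeny with kernel $D$, so $D = \pi_D^*([0_{E/D}])$. The crucial ingredient is the Cartier-divisor identity
\[
\ker F_{E/D} = p\cdot[0_{E/D}]
\]
on $E/D$, which holds essentially because we are in characteristic $p$: in a formal-group coordinate $T$ near $0_{E/D}$ the relative Frobenius pulls $T$ back to $T^p$, so $\ker F_{E/D}$ is cut out by $T^p$. Pulling back along the finite flat map $\pi_D$ then yields
\[
pD \;=\; \pi_D^*\bigl(p\cdot[0_{E/D}]\bigr) \;=\; \pi_D^*(\ker F_{E/D}) \;=\; \ker\bigl(F_{E/D}\circ \pi_D\bigr),
\]
which is manifestly a closed subgroup scheme of $E$ of rank $p^{n+1}$. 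Since $E$ has geometrically irreducible fibres, the ``meets every component'' condition is automatic, so $P$ is a $[\Gamma_1(p^{n+1})]$-structure on $E$.

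Finally, I verify that this structure is $(a+1,b)$-cyclic. Factor the $(a,b)$-cyclic structure on $D$ as $\pi_D = \pi'\circ F^a$, with $\pi'\colon E^{(p^a)}\to E/D$ having dual isogeny $F^b_{E/D}$ (via the given identification $E^{(p^a)}\cong (E/D)^{(p^b)}$). Functoriality of relative Frobenius gives $F_{E/D}\circ \pi' = (\pi')^{(p)}\circ F_{E^{(p^a)}}$, hence $F_{E/D}\circ \pi_D = (\pi')^{(p)}\circ F^{a+1}$, displaying the factorization
\[
E \xrightarrow{\;F^{a+1}\;} E^{(p^{a+1})} \xrightarrow{\;(\pi')^{(p)}\;} (E/D)^{(p)} \;=\; E/D'.
\]
This shows $\ker F^{a+1}\subset D'$, and taking duals (compatibly with Frobenius twist) yields $\widehat{(\pi')^{(p)}} = (F^b_{E/D})^{(p)} = F^b_{E/D'}$, as required. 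The remaining cyclicity compatibility is automatic when $b = 0$; when $a,b\ge 1$ I reuse the ideal $\cI$ witnessing cyclicity of $D$ and apply Frobenius base change to the induced isomorphism; and when $a = 0$, $b\ge 1$ I take $\cI = 0$ and read off the required induced isomorphism by applying $(-)^{(p)}$ to the (automatic) identification $E\cong (E/D)^{(p^n)}$ supplied by the $(0,n)$-cyclic structure on $D$.
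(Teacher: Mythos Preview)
Your proof is correct and follows essentially the same route as the paper's: both identify $D'=pD$ with $\ker(F_{E/D}\circ\pi_D)$ via the characteristic-$p$ identity $p\cdot[0_{E/D}]=\ker F_{E/D}$, deduce $E/D'=(E/D)^{(p)}$, and then verify the $(a+1,b)$-factorization together with the dual-isogeny condition. Your direct appeal to functoriality of relative Frobenius and to compatibility of dualization with Frobenius twist is slightly cleaner than the paper's two-square diagram chase, and you are more explicit than the paper about the cyclicity case $a=0$, $b\ge1$ (the paper writes ``suppose $a,b\ge1$,'' but its argument in fact only requires $b\ge1$, and one may simply take $Z=S$ there since the isomorphism $E^{(p^a)}\cong(E/D)^{(p^b)}$ already holds globally).
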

\begin{proof}
Consider the relative effective Cartier divisor 
\begin{displaymath}
G := \sum_{m \in \mathbb{Z}/(p^n)} [m\cdot P]
\end{displaymath}
in $E$. This is an $S$-subgroup scheme containing $\mathrm{ker}(F^a)$, and in the resulting factorization of the quotient map $E \rightarrow E/G$ as 
\begin{displaymath}
E \stackrel{F^a}{\rightarrow} E^{(p^a)} \stackrel{\pi}{\rightarrow} E/G
\end{displaymath}
we have $\mathrm{ker}(\widehat{\pi}) = \mathrm{ker}(F^b_{E/G})$. Consider also the relative effective Cartier divisor 
\begin{displaymath}
G' := \sum_{m \in \mathbb{Z}/(p^{n+1})} [m\cdot P]
\end{displaymath}
in $E$. $G$ is a subgroup of $G'$, and the image of $G'$ in $E/G$ is the relative effective Cartier divisor 
\begin{displaymath}
\sum_{m \in \mathbb{Z}/(p)} [m \cdot 0_{E/G}].
\end{displaymath}
This is simply the kernel of $F_{E/G}: E/G \rightarrow (E/G)^{(p)}$, so we may identify $E/G'$ with $(E/G)^{(p)}$ and the quotient map $E/G \rightarrow E/G'$ with the relative Frobenius $F_{E/G}: E/G \rightarrow (E/G)^{(p)}$. In particular, the quotient map $E \rightarrow E/G'$ is a cyclic $p^{n+1}$-isogeny of elliptic curves with kernel generated by $P$, so we may already conclude that $P$ is a $[\Gamma_1(p^{n+1})]$-structure on $E$.

Now, since $G' = p\cdot G$ as Cartier divisors, if $\mathrm{ker}(F^a) \subset G$ then $\mathrm{ker}(F^{a+1}) \subset G'$. Factor the quotient map $E \rightarrow E/G' = (E/G)^{(p)}$ as 
\begin{displaymath}
E \stackrel{F^{a+1}}{\rightarrow} E^{(p^{a+1})} \stackrel{\pi'}{\rightarrow} (E/G)^{(p)}.
\end{displaymath}
We have a diagram 
\begin{align*}
\xymatrix{ 
  E \ar[r]^{F^a} \ar[dr]_{F^{a+1}} & E^{(p^a)} \ar[r]^\pi \ar[d]^F & E/G \ar[r]^{F_{E/G}} & (E/G)^{(p)} \\
  & E^{(p^{a+1})} \ar[urr]_{\pi'} }
\end{align*}
where the composite of the top arrows $E \rightarrow (E/G)^{(p)}$ is the natural quotient map $E \rightarrow E/G'$. The outer arrows and the left-hand triangle commute, hence the right-hand triangle commutes as well. Now consider the following diagram:
\begin{align*}
\xymatrix{ 
  E^{(p^a)} \ar[d]_F \ar[r]^{\pi} & E/G \ar[d] \ar[r]^{\widehat{\pi}} & E^{(p^a)} \ar[d]^F \\
  E^{(p^{a+1})} \ar[r]_{\pi'} & (E/G)^{(p)} \ar[r]_{\widehat{\pi}'} & E^{(p^{a+1})} }
\end{align*}
The outer rectangle commutes since the horizontal composites are the isogenies $[\mathrm{deg}(\pi)]$ and $[\mathrm{deg}(\pi')]$ (and $\mathrm{deg}(\pi) = \mathrm{deg}(\pi')$), and we have shown that the left-hand square commutes, so the right-hand square commutes as well. Since $\mathrm{ker}(\widehat{\pi}) = \mathrm{ker}(F^b_{E/G})$, we conclude that $\mathrm{ker}(\widehat{\pi}') = \mathrm{ker}(F^b_{(E/G)^{(p)}})$.

Finally, suppose $a,b \geq 1$. Since $E^{(p^a)} \cong (E/G)^{(p^b)}$ we also have 
\begin{displaymath}
E^{(p^{a+1})} \cong (E/G)^{(p^{b+1})} = ((E/G)^{(p)})^{(p^b)}, 
\end{displaymath}
and this is already induced by the isomorphism $E^{(p^a)} \cong (E/G)^{(p^b)} = ((E/G)^{(p)})^{(p^{b-1})}$. Therefore $P$ is a $[\Gamma_1(p^{n+1})]$-$(a+1,b)$-cyclic structure on $E$.
\end{proof}

\begin{remark}
The reason the above argument fails when $p$ is invertible on the base scheme $S$ is that, preserving the notation of the above proof, the image of the relative Cartier divisor $G'$ in $E/G$ is not a subgroup of $E/G$ unless $S$ is an $\mathbb{F}_p$-scheme. Indeed, the image of $G'$ in $E/G$ is the relative effective Cartier divisor $p \cdot [0_{E/G}]$, which is finite flat of rank $p$ over $S$. If $p$ is invertible on $S$, any finite flat commutative group scheme of rank $p$ over $S$ is \'etale (cf. \cite[Cor. 4.3]{Sha}), but $p \cdot [0_{E/G}]$ is obviously not \'etale.
\end{remark}

It is straightforward to extend the definition of a $[\Gamma_1(p^n)]$-$(a,b)$-cyclic structure, and the result of the above lemma, to the case of generalized elliptic curves. Let $E/S/\mathbb{F}_p$ be a generalized elliptic curve; let $S_1 \subset S$ be the open locus where the morphism $E \rightarrow S$ is smooth, and let $S_2 \subset S$ be the complement of the closed locus in $S$ where the geometric fibers are supersingular elliptic curves. We say that a $[\Gamma_1(p^n)]$-structure $P$ on $E/S$ is a \textit{$[\Gamma_1(p^n)]$-$(a,b)$-cyclic structure} if:
\begin{itemize}
  \item $P|_{S_1}$ is a $[\Gamma_1(p^n)]$-$(a,b)$-cyclic structure on the elliptic curve $E_{S_1}/S_1$, and 
  \item the relative effective Cartier divisor 
  \begin{displaymath}
  D_b := \sum_{m = 1}^{p^b} [m\cdot P|_{S_2}]
  \end{displaymath}
  in $E^{\mathrm{sm}}|_{S_2}$ is a subgroup scheme of $E^{\mathrm{sm}}|_{S_2}$ which is \'etale over $S_2$.
\end{itemize}
We write $\scr X_1(p^n)^{(a,b)}_S$ for the substack of $\scr X_1(p^n)_S$ associating to a scheme $T/S$ the groupoid of pairs $(E,P)$, where $E/T$ is a generalized elliptic curve and $P$ is a $[\Gamma_1(p^n)]$-$(a,b)$-cyclic structure on $E$.

\begin{theorem}[$\textrm{\cite[13.5.4]{KM1}}$]\label{y1-char-p}
Let $k$ be a perfect field of characteristic $p$. $\scr Y_1(p^n)_k$ (resp. $\scr X_1(p^n)_k$) is the disjoint union, with crossings at the supersingular points, of the $n+1$ substacks $\scr Y_1(p^n)^{(a,n-a)}_k$ (resp. $\scr X_1(p^n)^{(a,n-a)}_k$) for $0 \leq a \leq n$.
\end{theorem}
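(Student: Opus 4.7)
The statement is the stack-theoretic translation of the Katz--Mazur decomposition \cite[13.5.4]{KM1}, extended to the nodal boundary of $\overline{\cM}_{1,1} \otimes k$ along the lines of \cite[\S4]{C}. The plan is to reduce to the scheme-theoretic setting by adjoining auxiliary prime-to-$p$ level structure and then cite \cite{KM1} directly.

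First, I would fix an integer $N \geq 3$ with $\gcd(N,p)=1$ and introduce the moduli problem $\scr X_1(p^n;\Gamma(N))_k$ classifying a $[\Gamma_1(p^n)]$-structure together with a $[\Gamma(N)]$-structure on a generalized elliptic curve over a $k$-scheme. For $N \geq 3$ this is representable by a scheme, and the forgetful morphism $\scr X_1(p^n;\Gamma(N))_k \to \scr X_1(p^n)_k$ is finite \'etale and surjective (the $\Gamma(N)$-level kills the automorphisms obstructing representability). Because the $[\Gamma_1(p^n)]$-$(a,b)$-cyclic condition is intrinsic to the pair $(E,P)$ and does not mention the $\Gamma(N)$-structure, the scheme-level substacks $\scr X_1(p^n;\Gamma(N))^{(a,n-a)}_k$ are exactly the preimages of the $\scr X_1(p^n)^{(a,n-a)}_k$, and the $\mathrm{GL}_2(\ZZ/N)$-action on the cover preserves the stratification.

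Next I would apply \cite[13.5.4]{KM1} to the scheme $\scr X_1(p^n;\Gamma(N))_k$ (handling the nodal boundary via the Tate-curve analysis of \cite[\S4]{C}). This yields the scheme-theoretic conclusion: above each supersingular point of the moduli scheme of elliptic curves with $[\Gamma(N)]$-structure in characteristic $p$, one has a single $k$-rational point of $\scr X_1(p^n;\Gamma(N))_k$ with completed local ring $k[\![ x,y]\!]/\prod f_i^{e_i}$, and the closed subschemes $\scr X_1(p^n;\Gamma(N))^{(a,n-a)}_k$ pick off the factors $k[\![ x,y]\!]/(f_a^{e_a})$. Taking a smooth scheme cover of $\overline{\cM}_{1,1}\otimes k$ that dominates the $[\Gamma(N)]$-moduli scheme gives an \'etale surjection $Y' \to \overline{\cM}_{1,1}\otimes k$ over which all of $\scr X_1(p^n)_k$, $\{\scr X_1(p^n)^{(a,n-a)}_k\}$ become the corresponding $[\Gamma(N)]$-enhanced schemes satisfying the hypotheses of the Crossings Theorem in the excerpt. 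This verifies the stack-level ``disjoint union with crossings at the supersingular points'' condition by definition. The statement for $\scr Y_1(p^n)_k$ follows by restriction to the smooth locus, where the strata are disjoint and no crossings occur.

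The only substantive obstacle is the local computation at the supersingular points, which is precisely the content of \cite[Ch.~13]{KM1}. The key input is that for a supersingular elliptic curve $E$ over $\overline{k}$ the $p^n$-torsion $E[p^n]$ is an infinitesimal formal group of height $2$, so the $[\Gamma_1(p^n)]$-structure necessarily specializes to the zero section on each supersingular geometric fiber (whence each stratum contributes exactly one geometric point above each supersingular point in $\overline{\cM}_{1,1}\otimes k$); the identification of the factors $f_a^{e_a}$ then comes from a power-series analysis of which sections of the height-$2$ formal group give rise to $(a,b)$-cyclic subgroup schemes. Granting this, the rest is descent along the finite \'etale cover by $\scr X_1(p^n;\Gamma(N))_k$ and a direct verification of the definitions.
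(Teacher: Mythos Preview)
The paper does not give its own proof of this statement: it is simply quoted from \cite[13.5.4]{KM1} as input, with the stack-level formulation explained via the definition of ``disjoint union with crossings at the supersingular points'' for morphisms of Deligne-Mumford stacks given just above the statement. Your proposal is correct and is exactly the standard bridge between the two formulations: Katz--Mazur already work throughout with representable moduli problems obtained by adjoining auxiliary prime-to-$p$ level (e.g.\ $[\Gamma(N)]$ with $N\geq 3$, $(N,p)=1$), so \cite[13.5.4]{KM1} is literally a scheme-theoretic statement about $\scr Y_1(p^n;\Gamma(N))_k$, and the extension to the cusps is supplied by \cite[\S4]{C}. Passing to the stack $\scr X_1(p^n)_k$ is then precisely the descent along the finite \'etale $\mathrm{GL}_2(\mathbb{Z}/N)$-cover that you describe, which matches the paper's definition of the crossings condition on stacks (pull back along an \'etale surjection $Y' \to \overline{\cM}_{1,1}\otimes k$ and verify the scheme-theoretic hypotheses there). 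So there is nothing to compare: your argument is the intended unpacking of the citation.
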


It can be helpful to visualize $\scr X_1(p^n)_k$ as follows:

\begin{center}
\begin{tikzpicture}
\clip (0,-2) rectangle (12.5,3.3);
\draw (0,0) .. controls (2,2) and (4,-2) .. (6,0);
\draw (6,0) .. controls (8,2) and (9,-1) .. (10,-1);
\draw (0,1) .. controls (2,-3) and (4,4) .. (6,1);
\draw (6,1) .. controls (8,-3) and (9,3) .. (10,3);
\draw (0,0.5) .. controls (2,-0.5) and (3,0.85) .. (6,0.4);
\draw (6,0.4) .. controls (8,0) and (9,0.6) .. (10,0.5);
\node at (10,-1) [inner sep=0pt,label=0:$\scr X_1(p^n)^{(0,n)}_k$] {};
\node at (10,0.5) [inner sep=0pt,label=0:$\scr X_1(p^n)^{(1,n-1)}_k$] {};
\node at (10,1.75) [inner sep=0pt,label=0:...] {};
\node at (10,3) [inner sep=0pt,label=0:$\scr X_1(p^n)^{(n,0)}_k$] {};
\node at (2,-1.5) [inner sep=0pt,label=0:$\scr X_1(p^n)$ over a perfect field $k$ of characteristic $p$] {};
\end{tikzpicture}
\end{center}

\begin{definition}
Let $S \in \mathrm{Sch}/\mathbb{F}_p$, and let $H \leq (\mathbb{Z}/(p^n))^2$ such that $H \cong \mathbb{Z}/(p^n)$ (hence also $(\mathbb{Z}/(p^n))^2/H \cong \mathbb{Z}/(p^n)$). Let $E/S$ be a generalized elliptic curve. Let $(P,Q)$ be a $[\Gamma(p^n)]$-structure on $E$, corresponding to a group homomorphism $\phi:(\mathbb{Z}/(p^n))^2 \rightarrow E[p^n]$, $(1,0) \mapsto P$, $(0,1) \mapsto Q$. We say that $(P,Q)$ has \textit{component label $H$} if:
\begin{itemize}
  \item $\phi(H) \subseteq \mathrm{ker}(F^n)$, where $F:E \rightarrow E^{(p)}$ is the relative Frobenius, and 
  \item the resulting group scheme homomorphism $\mathbb{Z}/(p^n) \cong (\mathbb{Z}/(p^n))^2/H \rightarrow E^{(p^n)} = E/\mathrm{ker}(F^n)$ is a $[\Gamma_1(p^n)]$-structure on $E^{(p^n)}$. (Note that this is independent of the choice of isomorphism $(\mathbb{Z}/(p^n))^2/H \cong \mathbb{Z}/(p^n)$, although the $[\Gamma_1(p^n)]$-structure obtained depends on this choice.)
\end{itemize}

We define $\scr Y(p^n)^H_S$ (resp. $\scr X(p^n)^H_S$) to be the substack of $\scr Y(p^n)_S$ (resp. $\scr X(p^n)_S$) associating to a scheme $T/S$ the groupoid of tuples $(E,(P,Q))$, where $E/T$ is an elliptic curve (resp. generalized elliptic curve) and $(P,Q)$ is a $[\Gamma(p^n)]$-structure on $E$ of component label $H$.
\end{definition}

\begin{theorem}[$\textrm{\cite[13.7.6]{KM1}}$]
Let $k$ be a perfect field of characteristic $p$. $\scr Y(p^n)_k$ (resp. $\scr X(p^n)_k$) is the disjoint union, with crossings at the supersingular points, of the substacks $\scr Y(p^n)^H_k$ (resp. $\scr X(p^n)^H_k$) for $H \leq (\mathbb{Z}/(p^n))^2$ with $H \cong \mathbb{Z}/(p^n)$.
\end{theorem}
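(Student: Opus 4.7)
The plan is to mimic the structure of Theorem \ref{y1-char-p}, applying the Crossings Theorem (Theorem \ref{crossings}) to the family $\{\scr X(p^n)^H_k\}_H$ of closed substacks inside $\scr X(p^n)_k$ over $\cY = \overline{\cM}_{1,1,k}$; the supersingular points of $\overline{\cM}_{1,1,k}$ play the role of the ``supersingular points'' in the hypotheses of Theorem \ref{crossings}. The analogous statement for $\scr Y(p^n)_k$ will follow by restriction to the open locus where $E$ is smooth.

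First I would check that each $\scr X(p^n)^H_k$ is a closed substack of $\scr X(p^n)_k$. The condition $\phi(H) \subseteq \mathrm{ker}(F^n)$ is closed, as it is the vanishing locus of $F^n \circ \phi|_H$, and the condition that the induced map $(\mathbb{Z}/(p^n))^2/H \to E^{(p^n)}$ be a $[\Gamma_1(p^n)]$-structure on $E^{(p^n)}$ is also closed, since the Drinfeld-divisor and ``meets every component'' conditions are closed. Next I would show that over the complement of the supersingular locus in $\overline{\cM}_{1,1,k}$ the natural map $\bigsqcup_H \scr X(p^n)^H_k \to \scr X(p^n)_k$ is an isomorphism. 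Away from supersingular points, on each geometric fiber $E[p^n]$ is an extension of an étale group scheme by the connected infinitesimal subgroup $\mathrm{ker}(F^n) \cong \mu_{p^n}$, which étale-locally splits; hence $\phi^{-1}(\mathrm{ker}(F^n))$ is an étale-locally-constant cyclic subgroup of order $p^n$ in $(\mathbb{Z}/(p^n))^2$, producing a canonical and unique component label, and the induced quotient $(\mathbb{Z}/(p^n))^2/H \to E^{(p^n)}$ is automatically a $[\Gamma_1(p^n)]$-structure. The same argument applies at the cusps after replacing $E$ by $E^{\mathrm{sm}}$.

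The main content is to verify the hypotheses of the Crossings Theorem at the supersingular points. By the regularity of $\scr X(p^n)$ together with its flatness and relative dimension $1$ over $\mathrm{Spec}(\mathbb{Z})$ (the theorem quoted from \cite{C} above), the morphism $\scr X(p^n)_k \to \overline{\cM}_{1,1,k}$ is finite and flat, and at each supersingular geometric point $y_0 \in \overline{\cM}_{1,1,k}$ there is a unique point $x_0$ of $\scr X(p^n)_k$ above it whose completed local ring has the form $k[\![x,y]\!]/(f)$. I would then verify that each $\scr X(p^n)^H_k$ is itself finite and flat over $\overline{\cM}_{1,1,k}$ — which follows from miracle flatness once one knows it is reduced and equidimensional of the expected dimension — and that its completion at $x_0$ has the form $k[\![x,y]\!]/(f_H^{e_H})$ for irreducible, pairwise non-associate $f_H$. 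The explicit identification of the factors $f_H$ is carried out using Serre--Tate coordinates on the deformation space of a supersingular elliptic curve together with the deformation theory of Drinfeld $[\Gamma(p^n)]$-structures developed in Chapter 13 of \cite{KM1}.

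The hardest step is this last local analysis at the supersingular points: one must show that the defining equations of the substacks $\scr X(p^n)^H_k$ cut out precisely the formal branches of $\scr X(p^n)_k$ through $x_0$, each with its correct multiplicity. Once this is in place, Theorem \ref{crossings} yields the desired crossings decomposition for $\scr X(p^n)_k$; restricting to the open substack where $E$ is smooth gives the corresponding decomposition of $\scr Y(p^n)_k$.
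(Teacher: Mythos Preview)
The paper does not give its own proof of this statement: it is stated as a theorem with the citation \cite[13.7.6]{KM1} and no argument is supplied. The paper simply imports the result from Katz--Mazur and then goes on to discuss its immediate generalization (Theorem \ref{gkstructurescharp}). So there is nothing in the paper to compare your proposal against beyond the bare reference.

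That said, your outline is broadly the Katz--Mazur strategy the citation points to, so it is the right shape. One point to tighten: your appeal to ``miracle flatness once one knows it is reduced and equidimensional'' is not quite right. The closed substacks $\scr X(p^n)^H_k$ are typically \emph{not} reduced (they carry multiplicities, as the paper's later pictures of $\scr H(p)_k$ make explicit), and miracle flatness requires the source to be Cohen--Macaulay, not reduced. In \cite{KM1} the flatness of the component substacks over the base is obtained by other means (essentially by exhibiting them via relatively representable closed conditions whose fiber ranks are constant, cf. \cite[\S 13.4--13.5]{KM1}), and the Crossings Theorem only asks that each $Z_i$ be finite flat with $Z_i^{\mathrm{red}}$ smooth. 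You should replace the reducedness claim with the correct Cohen--Macaulay/flatness input, or follow Katz--Mazur's direct fiber-rank argument.
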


The proof of the above theorem immediately generalizes to a slightly more general setting which we will find useful when studying compactified stacks of $[\Gamma(N)]$-structures below. Let $K \leq (\mathbb{Z}/(p^n))^2$, and write $G_K = (\mathbb{Z}/(p^n))^2/K$. Then there exist integers $m\geq l \geq 0$ with $G_K \cong \mathbb{Z}/(p^m) \times \mathbb{Z}/(p^l)$. Suppose that $l\geq 1$, so $G_K \cong \mathbb{Z}/(p^m) \times \mathbb{Z}/(p^l)$ with $m\geq l \geq 1$. Given a $G_K$-structure $\phi$ on an ordinary elliptic curve $E/T/\mathbb{F}_p$ (in the sense of Definition \ref{gstructure}), \'etale locally on $T$ we can consider the composite 
\begin{displaymath}
G_K = \mathbb{Z}/(p^m) \times \mathbb{Z}/(p^l) \stackrel{\phi}{\rightarrow} E[p^n] \cong \mathbb{Z}/(p^n) \times \mu_{p^n} \stackrel{\pi_1}{\rightarrow} \mathbb{Z}/(p^n).
\end{displaymath}
Since $\phi$ is a $G_K$-structure, the kernel and image of this composite are necessarily cyclic. The same argument used in \cite[13.7.6]{KM1} to prove the above theorem shows that in characteristic $p$, $\scr Y_K$ breaks up into a union of substacks indexed in this way by the possible kernels of group homomorphisms $G_K \rightarrow \mathbb{Z}/(p^n)$, subject to the condition on the image just described. So we can describe $\scr Y_K$ as a union of closed substacks indexed by the set 
\begin{displaymath}
L_K := \{H\leq G_K|\textrm{$H$ and $G/H$ are both cyclic}\}.
\end{displaymath}
Since $G_K \cong \mathbb{Z}/(p^m) \times \mathbb{Z}/(p^l)$ with $m\geq l \geq 1$, we have that $H \in L_K$ if and only if $H \cong \mathbb{Z}/(p^m)$ or $H \cong \mathbb{Z}/(p^l)$. 

The rigorous definition, accounting for elliptic curves that might not be ordinary and for the case where $G_K$ is cyclic, is as follows:

\begin{definition}\label{gkstructures}
Let $K \leq (\mathbb{Z}/(N))^2$ with corresponding quotient $G_K = (\mathbb{Z}/(N))^2/K$. We write $\scr Y_K$ (resp. $\scr X_K$) for the algebraic stack over $\mathrm{Spec}(\mathbb{Z})$ associating to a scheme $S$ the groupoid of pairs $(E,\phi)$, where $E/S$ is an elliptic curve (resp. generalized elliptic curve) and $\phi$ is a $G_K$-structure on $E$. 

If $S$ is an $\mathbb{F}_p$-scheme, $N = p^n$ and $G_K \cong \mathbb{Z}/(p^m) \times \mathbb{Z}/(p^l)$ with $m \geq l \geq 1$, then for any $H \in L_K$, we define $\scr Y^H_{K,S} \subset \scr Y_{K,S}$ (resp. $\scr X^H_{K,S} \subset \scr X_{K,S}$) to be the substack associating to a scheme $T/S$ the groupoid of pairs $(E,\phi)$, where $E/T$ is an elliptic curve (resp. generalized elliptic curve) and $\phi: G_K \rightarrow E[p^m]$ is a $G_K$-structure such that:
\begin{itemize}
  \item $\phi(H) \subset \mathrm{ker}(F^m)$, where $F^m: E \rightarrow E^{(p^m)}$ is the $m$-fold relative Frobenius on $E$, and 
  \item the resulting group scheme homomorphism $G_K/H \rightarrow E/\mathrm{ker}(F^m) \cong E^{(p^m)}$ is a $G_K/H$-structure on $E^{(p^m)}$ in the sense of \cite[\S1.5]{KM1}.
\end{itemize}
In this case we say that the $G_K$-structure $\phi$ has \textit{component label $H$}.

If $G_K \cong \mathbb{Z}/(p^m)$ (i.e. $l = 0$), the stack $\scr Y_K$ (resp. $\scr X_K$) is isomorphic to $\scr Y_1(p^m)$ (resp. $\scr X_1(p^m)$), and for $H \cong \mathbb{Z}/(p^a) \in L_K$ and $S \in \mathrm{Sch}/\mathbb{F}_p$ we define $\scr Y^H_{K,S}$ (resp. $\scr X^H_{K,S}$) to be the substack $\scr Y_1(p^m)^{(a,m-a)}_S \subset \scr Y_1(p^m)_S$ (resp. $\scr X_1(p^m)^{(a,m-a)}_S \subset \scr X_1(p^m)_S$), as in Definition \ref{abcyclic}. We still say that $\scr Y^H_{K,S}$ and $\scr X^H_{K,S}$ classify $G_K$-structures of \textit{component label $H$}.
\end{definition}
The result is:

\begin{theorem}\label{gkstructurescharp}
Let $k$ be a perfect field of characteristic $p$. $\scr Y_{K,k}$ (resp. $\scr X_{K,k}$) is the disjoint union, with crossings at the supersingular points, of the closed substacks $\scr Y_{K,k}^H$ (resp. $\scr X_{K,k}^H$) for $H \in L_K$.
\end{theorem}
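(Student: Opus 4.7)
The plan is to adapt verbatim the proof of \cite[13.7.6]{KM1}, which handles the case $K = 0$ (so $G_K = (\mathbb{Z}/(p^n))^2$), to this slightly more general setting. It suffices to prove the statement for $\scr X_{K,k}$, since $\scr Y_{K,k} \subset \scr X_{K,k}$ is the open substack where the underlying curve is smooth and the claimed decomposition restricts to the stated one. I would verify the hypotheses of the Crossings Theorem for the morphism $\scr X_{K,k} \to \ocM_{1,1,k}$ with respect to the finite set of supersingular $j$-invariants.

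First I would check that $\scr X_{K,k} \to \ocM_{1,1,k}$ is representable, finite, and flat. This follows from the general theory of moduli of generalized elliptic curves with $G$-structures for a $2$-generated finite abelian group $G$, exactly as in the cases $G = \mathbb{Z}/(N)$ and $G = (\mathbb{Z}/(N))^2$ recalled from \cite{C}. Over each supersingular point $y_0$ the $p$-divisible group of $E_{y_0}$ is connected of height $2$, so any $G_K$-structure factors through the (unique) subgroup scheme $\ker(F^m) \subset E_{y_0}$ of the appropriate rank, and the geometric fiber of $\scr X_{K,k}$ over $y_0$ is a single $k$-rational point $x_0$.

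Next I would show each $\scr X^H_{K,k}$ is a closed substack: the condition $\phi(H) \subseteq \ker(F^m)$ defines a closed subscheme because $\ker(F^m)$ is a closed subgroup scheme, and the further condition that the induced map $G_K/H \to E^{(p^m)}$ is a $G_K/H$-structure is closed since ``being a subgroup scheme'' and ``meeting every irreducible component of each geometric fiber'' are closed conditions on a relative effective Cartier divisor. For the union covering $\scr X_{K,k}$ over the ordinary locus, work \'etale-locally on the ordinary part of $\ocM_{1,1,k}$ so that $E[p^n] \cong \mathbb{Z}/(p^n) \times \mu_{p^n}$. Composing any $G_K$-structure $\phi$ with the projection to the \'etale part yields a homomorphism $G_K \to \mathbb{Z}/(p^n)$ whose kernel $H$ must lie in $L_K$ (since the remaining composite $G_K/H \to \mu_{p^n}$ must be a $G_K/H$-structure, which forces both $H$ and $G_K/H$ to be cyclic); this assignment is locally constant on the ordinary locus and exhibits the disjoint decomposition there.

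The main obstacle is the analysis at each supersingular point $x_0$, where one must show that the completed local ring $\widehat{\cO}_{\scr X_{K,k}, x_0}$ has the form required by Theorem \ref{crossings} with one irreducible formal branch for each $H \in L_K$. The key input is Serre--Tate theory: deformations of $(E_{y_0}, \phi)$ correspond to deformations of the $p$-divisible group $E_{y_0}[p^\infty]$ together with the Drinfeld $G_K$-structure, and the argument of \cite[Ch.~13]{KM1} is phrased directly in terms of the formal group and Drinfeld-level data on $E_{y_0}[p^\infty]$. That argument produces exactly one smooth formal branch for each ``component label'' $H$ (and shows these branches are pairwise distinct as elements of $\widehat{\cO}_{\scr X_{K,k}, x_0}$), with each $(\scr X^H_{K,k})^{\mathrm{red}}$ smooth over $k$; the argument is insensitive to the choice of $K \leq (\mathbb{Z}/(p^n))^2$, since the decomposition only sees the quotient $G_K$ and the possible cyclic subgroups $H \in L_K$. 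Combining this local analysis with the global closedness and ordinary-locus decomposition above verifies all the hypotheses of Theorem \ref{crossings}, yielding the claimed disjoint union with crossings at the supersingular points.
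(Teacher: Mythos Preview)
Your proposal is correct and follows essentially the same approach as the paper: both argue that the proof of \cite[13.7.6]{KM1} (the case $K=0$) adapts verbatim to general $K \leq (\mathbb{Z}/(p^n))^2$, with the component labels now indexed by $L_K$ rather than by the cyclic order-$p^n$ subgroups of $(\mathbb{Z}/(p^n))^2$. The paper in fact gives no further detail beyond this remark, so your sketch of how the Crossings Theorem hypotheses are verified (finiteness/flatness over $\overline{\cM}_{1,1,k}$, single fiber over each supersingular point, closedness of each $\scr X^H_{K,k}$, the ordinary-locus decomposition via the \'etale quotient, and the Serre--Tate local analysis) is more explicit than what the paper provides.
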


We also note that an analogue of Lemma \ref{gamma1comps} holds in this case, which we record for future use. It is clear that if $K' \leq K \leq (\mathbb{Z}/(p^n))^2$, giving a canonical quotient map $\pi: G_{K'} \rightarrow G_K$, and if $\phi: G_K \rightarrow E^{\mathrm{sm}}$ is a $G_K$-structure on a generalized elliptic curve $E/S/\mathbb{F}_p$, then a necessary condition for the composite $\phi \circ \pi: G_{K'} \rightarrow E^{\mathrm{sm}}$ to be a $G_{K'}$-structure is that $\mathrm{ker}(\pi)$ is cyclic. In fact, unwinding the definitions we immediately deduce:

\begin{lemma}\label{gammacomps}
Let $E/S/\mathbb{F}_p$ be a generalized elliptic curve with no supersingular fibers. Let $K' \leq K \leq (\mathbb{Z}/(p^n))^2$ such that the canonical quotient $\pi: G_{K'} \rightarrow G_K$ has cyclic kernel. Let $H \in L_K$ and suppose that $\phi: G_K \rightarrow E$ is a $G_K$-structure with component label $H$. Then $\phi \circ \pi$ is a $G_{K'}$ structure on $E$ if and only if $\pi^{-1}(H) \subseteq G_{K'}$ is cyclic, in which case $\phi \circ \pi$ has component label $\pi^{-1}(H) \in L_{K'}$.
\end{lemma}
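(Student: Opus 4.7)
The plan is to reduce everything to a combinatorial check using the split decomposition of the $p$-torsion, which is available throughout $S$ by the no-supersingular-fibers hypothesis. Work étale-locally on $S$; there the $p^n$-torsion in $E^{\mathrm{sm}}$ decomposes canonically as $\mu_{p^n} \oplus \mathbb{Z}/(p^n)$, and $\ker(F^j)$ is the $\mu_{p^j}$ factor. Write $G_K \cong \mathbb{Z}/(p^m) \times \mathbb{Z}/(p^l)$ with $m\geq l$ and $G_{K'} \cong \mathbb{Z}/(p^{m'}) \times \mathbb{Z}/(p^{l'})$ with $m'\geq l'$. Surjectivity of $\pi$ forces $m'\geq m$, hence $\ker(F^m) \subseteq \ker(F^{m'})$. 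Moreover the canonical isomorphism $G_{K'}/\pi^{-1}(H) \cong G_K/H$ is cyclic (since $H \in L_K$), so ``$\pi^{-1}(H) \in L_{K'}$'' is equivalent to ``$\pi^{-1}(H)$ is cyclic''.

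I would start by recording the intrinsic characterization of a component label: for a $G_?$-structure $\psi$, the label is just $\psi^{-1}(\ker(F^?))$. This is immediate from Definition \ref{gkstructures} once one notes that the second bullet of that definition, combined with the rank of the étale factor, forces equality rather than mere inclusion. With this in hand, the forward implication is a one-line check. If $\phi \circ \pi$ is a $G_{K'}$-structure, Theorem \ref{gkstructurescharp} assigns it a label $H' \in L_{K'}$, and
\begin{displaymath}
H' = (\phi \circ \pi)^{-1}(\ker(F^{m'})) = \pi^{-1}(\phi^{-1}(\ker(F^{m'}))) = \pi^{-1}(H),
\end{displaymath}
where the last equality uses $\ker(F^{m'}) \cap E[p^m] = \ker(F^m)$. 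Hence $\pi^{-1}(H) = H'$ is cyclic and the label is as claimed.

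For the reverse direction I would check both bullets of Definition \ref{gkstructures} (together with the underlying Definition \ref{gstructure}) for $\phi \circ \pi$ with candidate label $\pi^{-1}(H)$. The containment $\phi(\pi(\pi^{-1}(H))) = \phi(H) \subseteq \ker(F^m) \subseteq \ker(F^{m'})$ is immediate, and the induced map
\begin{displaymath}
G_{K'}/\pi^{-1}(H) \cong G_K/H \xrightarrow{\bar\phi} E^{(p^m)} \xrightarrow{F^{m'-m}} E^{(p^{m'})}
\end{displaymath}
is a $G_K/H$-structure because $\bar\phi$ already is one and $G_K/H$ is cyclic (so the claim reduces to the case of $[\Gamma_1(p^r)]$-structures being preserved under post-composition with relative Frobenius on an ordinary base). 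The main obstacle is the subgroup-scheme condition from Definition \ref{gstructure}: one must show that the Cartier divisor $\sum_{a \in G_{K'}}[\phi(\pi(a))]$, which equals $|\ker(\pi)|$ times the divisor for $\phi$, is a $p^{m'}$-torsion subgroup scheme meeting every irreducible component of each geometric fiber. In the étale-local split decomposition this reduces to checking that $\phi \circ \pi$ has cyclic image in the étale factor $\mathbb{Z}/(p^n)$ with kernel $\pi^{-1}(H)$, which is exactly the cyclicity assumption on $\pi^{-1}(H)$.
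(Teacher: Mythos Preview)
Your strategy---pass to the \'etale-local connected--\'etale splitting $E^{\mathrm{sm}}[p^n]\cong \mu_{p^n}\times\mathbb{Z}/(p^n)$ and read off all component data from the projection to the \'etale factor---is exactly what the paper intends; its entire proof is the phrase ``unwinding the definitions we immediately deduce.'' So at the level of approach you and the paper agree.

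There are, however, two soft spots in your execution. First, your ``intrinsic characterization'' of the label as $\psi^{-1}(\ker F^{m})$ is asserted but not established: you claim the second bullet of Definition~\ref{gkstructures} ``forces equality rather than mere inclusion,'' yet that bullet only says $\psi(H)\subset\ker(F^{m})$ and that the induced map on $G_K/H$ is a structure. When $m>l$ one may take $H\in L_K$ with $|H|=p^{l}$ strictly smaller than $|\psi^{-1}(\ker F^{m})|$, and nothing you have written rules out $H\subsetneq\psi^{-1}(\ker F^{m})$. The paper's own justification for uniqueness of the label is the sentence preceding Definition~\ref{gkstructures} (``the kernel and image of this composite are necessarily cyclic''); your argument needs to invoke and justify that cyclicity of $\psi^{-1}(\ker F^{m})$ from the divisor condition, since it does not fall out of the two bullets alone. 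Second, your reverse-direction reduction is circular as written: you say the subgroup-scheme condition on $\sum_{a\in G_{K'}}[\phi(\pi(a))]$ ``reduces to checking that $\phi\circ\pi$ has cyclic image in the \'etale factor $\mathbb{Z}/(p^n)$ with kernel $\pi^{-1}(H)$,'' but the image in a cyclic group is automatically cyclic and the kernel is automatically $\pi^{-1}(H)$, so no condition is being imposed. What must actually be verified is that the divisor $|\ker\pi|\cdot D$ is a $p^{m'}$-torsion subgroup scheme, and you have not explained how cyclicity of $\pi^{-1}(H)$ enters that verification.
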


\section{Generalities/review of twisted stable maps}

\subsection*{Twisted stable maps}

We will be studying moduli stacks of elliptic curves embedded in moduli stacks of twisted stable maps to tame stacks. We now recall the relevant definitions and results relating to twisted stable maps.

\begin{definition}[$\textrm{\cite[2.2]{AOV1}}$]
Let $G$ be a group scheme over a scheme $S$. Write $\mathrm{QCoh}^G(S)$ for the category of $G$-equivariant quasi-coherent sheaves on $S$; writing $\cB G$ for the classifying stack of $G$ over $S$, this is equivalent to $\mathrm{QCoh}(\cB G)$ (\cite[\S2.1]{AOV1}). We say that $G$ is \textit{linearly reductive} if the functor $\mathrm{QCoh}^G(S) \rightarrow \mathrm{QCoh}(S)$, $\mathcal{F} \mapsto \mathcal{F}^G$ is exact, or equivalently if the pushforward $\mathrm{QCoh}(\cB G) \rightarrow \mathrm{QCoh}(S)$ is exact.
\end{definition}

Linearly reductive group schemes are classified in \cite[\S2.3]{AOV1}. The examples in which we are most interested for this paper are the finite flat commutative linearly reductive group schemes $\mu_N$ and $\mu_N^2$ over $S$.

\begin{definition}[$\textrm{\cite[3.1]{AOV1}}$]
Let $\mathcal{X}$ be a locally finitely presented algebraic stack over a scheme $S$, with finite inertia. By \cite{KM2}, $\mathcal{X}$ has a coarse moduli space $\rho: \mathcal{X} \rightarrow X$, with $\rho$ proper. We say that $\mathcal{X}/S$ is \textit{tame} if $\rho_*: \mathrm{QCoh}(\mathcal{X}) \rightarrow \mathrm{QCoh}(X)$ is exact.
\end{definition}

As observed in \cite[\S3]{AOV1}, for any finite flat group scheme $G$ over a scheme $S$, the classifying stack $\cB G$ over $S$ is tame if and only if $G$ is linearly reductive. So in particular the classifying stacks $\cB\mu_N$ and $\cB\mu_N^2$ are always tame.

\begin{definition}[$\textrm{\cite[\S2]{AOV2}}$]\label{twistedcurvedef}
An \textit{$n$-marked twisted curve} over a scheme $S$ is a proper tame stack $\cC$ over $S$, with connected dimension $1$ geometric fibers, and coarse space $f:C \rightarrow S$ a nodal curve over $S$; together with $n$ closed substacks 
\begin{displaymath}
\{\Sigma_i \subset \cC\}_{i = 1}^n 
\end{displaymath}
which are fppf gerbes over $S$ mapping to $n$ markings $\{p_i \in C^{\mathrm{sm}}(S)\}$, such that:
\begin{itemize}
  \item the preimage in $\cC$ of the complement $C' \subset C$ of the markings and singular locus of $C/S$ maps isomorphically onto $C'$; 
  \item if $\overline{p} \rightarrow C$ is a geometric point mapping to the image in $C$ of a marking $\Sigma_i \subset \cC$, then 
  \begin{displaymath}
  \mathrm{Spec}(\cO_{C,\overline{p}}) \times_C \cC \simeq [D^{\mathrm{sh}}/\mu_r]
  \end{displaymath}
  for some $r\geq1$, where $D^{\mathrm{sh}}$ is the strict Henselization at $(\frm_{S,f(\overline{p})},z)$ of 
  \begin{displaymath}
  D = \mathrm{Spec}(\cO_{S,f(\overline{p})}[z]) 
  \end{displaymath}
  and $\zeta\in \mu_r$ acts by $z\mapsto \zeta\cdot z$; and 
  \item if $\overline{p} \rightarrow C$ is a geometric point mapping to a node of $C$, then
  \begin{displaymath}
  \mathrm{Spec}(\cO_{C,\overline{p}})\times_C \cC \simeq [D^{\mathrm{sh}}/\mu_r]
  \end{displaymath}
  for some $r\geq 1$, where $D^{\mathrm{sh}}$ is the strict Henselization at $(\frm_{S,f(\overline{p})},x,y)$ of 
  \begin{displaymath}
  D = \mathrm{Spec}(\cO_{S,f(\overline{p})}[x,y]/(xy-t))
  \end{displaymath}
  for some $t \in \frm_{S,f(\overline{p})}$, and $\zeta\in \mu_r$ acts by $x \mapsto \zeta\cdot x$ and $y\mapsto \zeta^{-1}\cdot y$.
\end{itemize}
We say a twisted curve $\cC/S$ has \textit{genus $g$} if the geometric fibers of its coarse space $C/S$ have geometric genus $g$, and we say an $n$-marked genus $g$ twisted curve $\cC/S$ is \textit{stable} if the genus $g$ curve $C/S$ with the markings $\{p_i\}$ is an $n$-marked genus $g$ stable curve over $S$.
\end{definition}

\begin{example}\label{stacky1gon}
Over any base scheme $S$, consider a N\'eron $1$-gon $C/S$ as in \S\ref{section2}. We have $C^{\mathrm{sm}} \cong \mathbb{G}_m$, and $C$ admits the structure of a generalized elliptic curve with an action $C^{\mathrm{sm}} \times C \rightarrow C$ extending the group scheme structure of $\mathbb{G}_m$. For any positive integer $N$, the inclusion $\mu_N \subset \mathbb{G}_m$ determines an action of $\mu_N$ on $C$, and the stack quotient $\cC := [C/\mu_N]$ is a twisted stable curve over $S$, with coarse space $f:C'\rightarrow S$ again a N\'eron $1$-gon. If $\overline{p} \rightarrow C'$ is a geometric point mapping to a node of $C'$, then 
\begin{displaymath}
\mathrm{Spec}(\cO_{C',\overline{p}}) \times_{C'} \cC \simeq [D^{\mathrm{sh}}/\mu_N],
\end{displaymath}
where $D^{\mathrm{sh}}$ denotes the strict Henselization of 
\begin{displaymath}
D := \mathrm{Spec}(\cO_{S,f(\overline{p})}[x,y]/(xy))
\end{displaymath}
at the point $(\mathfrak{m}_{S,f(\overline{p})},x,y)$ and $\zeta \in \mu_N$ acts by $x \mapsto \zeta \cdot x$ and $y \mapsto \zeta^{-1} \cdot y$. We will refer to this twisted curve as the \textit{standard $\mu_N$-stacky N\'eron $1$-gon over $S$}.
\end{example}

\begin{center}
\begin{tikzpicture}
\clip (-1,-1.6) rectangle (5,1.2);
\draw [name path=curve] (3,1) .. controls (-0.5,-5) and (-0.5,5) .. (3,-1);
\node at (2.35,0) [circle,draw,fill,inner sep=2pt,label=0:$\mu_{N}$] {};
\node at (1.8,-1) [inner sep=0pt,label=270: Standard $\mu_N$-stacky N\'eron $1$-gon] {};
\end{tikzpicture}
\end{center}

\begin{definition}[$\textrm{\cite[\S4]{AOV2}}$]
Let $\cX$ be a finitely presented algebraic stack, proper over a scheme $S$ and with finite inertia. A \textit{twisted stable map} to $\cX$ from an $n$-marked twisted curve $(\cC/S,\{\Sigma_i\})$ over $S$ with coarse space $(C/S,\{p_i\})$ is a morphism $\cC \rightarrow \cX$ of stacks over $S$ such that:
\begin{itemize}
  \item $\cC \rightarrow \cX$ is a representable morphism, and 
  \item the induced map $C \rightarrow X$ is a stable map from $(C,\{p_i\})$ to $X$.
\end{itemize}
Over any base scheme $S$, we write $\cK_{g,n}(\cX)$ for the stack over $S$ associating to a scheme $T/S$ the groupoid of pairs $(\cC,\alpha)$, where $\cC/T$ is an $n$-marked twisted curve whose coarse space $C/T$ is a genus $g$ nodal curve, and $\alpha: \cC \rightarrow \cX \times_S T$ is a twisted stable map. 
\end{definition}

\begin{proposition}[$\textrm{\cite[4.2]{AOV2}}$]
For $\cX$ as above, $\cK_{g,n}(\cX)$ is a locally finitely presented algebraic stack over $S$. 
\end{proposition}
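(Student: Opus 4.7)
The plan is to build $\cK_{g,n}(\cX)$ out of two more primitive algebraic stacks: the stack $\mathfrak{M}^{\mathrm{tw}}_{g,n}$ of $n$-marked genus $g$ twisted curves, and, over it, a relative Hom stack parametrizing morphisms into $\cX$. Both will be locally finitely presented algebraic stacks, and then the representability and stability conditions in the definition will each be recognized as open conditions cut out inside the relative Hom stack.

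First, I would show that $\mathfrak{M}^{\mathrm{tw}}_{g,n}$ is a locally finitely presented algebraic stack over $S$. The local models at markings and nodes exhibited in Definition \ref{twistedcurvedef} identify a twisted curve with a root-stack style thickening of its nodal coarse space in a finite tame quotient, so one can construct $\mathfrak{M}^{\mathrm{tw}}_{g,n}$ as an iterated stack of cyclotomic gerbes and root constructions over $\overline{\cM}_{g,n}$; more concretely, one forgets the stacky structure to get a morphism $\mathfrak{M}^{\mathrm{tw}}_{g,n}\to \overline{\cM}_{g,n}$ whose fibers are described by a choice of local twisting index $r_i$ at each marking and each node, and algebraicity then reduces to algebraicity of the relevant quotient and gerbe stacks. (This is the twisted-curve portion of the Olsson/Abramovich--Vistoli machinery.)

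Next, given a twisted curve $\cC\to T$ in $\mathfrak{M}^{\mathrm{tw}}_{g,n}(T)$, I want to form the relative Hom stack $\underline{\mathrm{Hom}}_T(\cC,\cX\times_S T)$. The key input here is an analogue of Olsson's theorem on algebraicity of Hom stacks valid for \emph{tame} targets, which is precisely the setting of \cite{AOV1}: since $\cX$ has finite inertia and is, locally around the image of any representable morphism, tame (the obstruction theory for deformations of morphisms into tame stacks is controlled by coherent cohomology, which behaves well under the exact pushforward to the coarse moduli space), one obtains that $\underline{\mathrm{Hom}}_T(\cC,\cX\times_S T)$ is algebraic and locally of finite presentation over $T$. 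Performing this construction in families over $\mathfrak{M}^{\mathrm{tw}}_{g,n}$ produces a locally finitely presented algebraic stack $\mathcal{H}\to \mathfrak{M}^{\mathrm{tw}}_{g,n}$ classifying pairs $(\cC,\alpha\colon\cC\to \cX\times_S T)$.

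Finally, I would cut out $\cK_{g,n}(\cX)$ from $\mathcal{H}$ by imposing the two conditions in the definition of a twisted stable map. Representability of $\alpha$ is an open condition on $\mathcal{H}$: it is equivalent to the induced map on automorphism group schemes being a monomorphism at every geometric point, which is constructible, and a standard semicontinuity argument (using properness of $\cC\to T$ and finiteness of the inertia of $\cX$) promotes it to openness. Stability of the induced map $C\to X$ of coarse spaces is likewise open in families of ordinary stable-map theory. Intersecting these two open substacks gives $\cK_{g,n}(\cX)$ as an open substack of $\mathcal{H}$, hence it inherits the structure of a locally finitely presented algebraic stack over $S$.

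The hard step, and the one on which everything else rests, is the algebraicity of the relative Hom stack in the presence of a tame (but not necessarily Deligne--Mumford) target: classical Artin representability reductions for $\underline{\mathrm{Hom}}$ require a deformation and obstruction theory with coherent values, and the whole point of passing from Deligne--Mumford stacks to tame Artin stacks is to retain exactness of pushforward to the coarse space, which is exactly what is needed to control these obstructions. Once that tame Hom-stack result is in hand, the rest of the argument is essentially formal.
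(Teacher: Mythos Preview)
The paper does not prove this proposition; it is quoted directly from \cite[4.2]{AOV2} without argument. Your outline is essentially the strategy carried out there: algebraicity of the stack $\mathfrak{M}^{\mathrm{tw}}_{g,n}$ of twisted curves (done in \cite{Ols} and \cite[\S2--3]{AOV2}), algebraicity of the relative Hom stack into $\cX$ over it, and then openness of the representability and stability conditions. So there is nothing in the present paper to compare against, and your sketch matches the source.

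One correction is worth making. You lean on tameness of $\cX$ in the Hom-stack step, but the proposition as stated only assumes $\cX$ is finitely presented, proper, and has finite inertia; tameness is an additional hypothesis that appears only in the next result, Theorem~\ref{propqfinite}. The algebraicity of $\underline{\mathrm{Hom}}_T(\cC,\cX\times_S T)$ invoked in \cite{AOV2} does not require tameness of the target---it rests on general Hom-stack results valid for arbitrary algebraic target stacks with mild finiteness hypotheses. So your strategy is right, but the justification you give for the hard step is misplaced: exactness of pushforward to the coarse space plays no role in algebraicity and only becomes essential when proving properness and quasi-finiteness over $\overline{\cM}_{g,n}(X)$.
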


There is a natural morphism of stacks $\cK_{g,n}(\cX) \rightarrow \overline{\cM}_{g,n}(X)$ (where $\overline{\cM}_{g,n}(X)$ is the usual Kontsevich stack of stable maps into $X$) defined by passing to coarse spaces. In particular, note that since a twisted stable map is required to be respresentable, if $\cX$ is representable then this map is an equality.

\begin{theorem}[$\textrm{\cite[4.3]{AOV2}}$]\label{propqfinite}
Let $\cX$ be as above, and also assume that $\cX$ as tame. Then $\cK_{g,n}(\cX)$ is proper and quasi-finite over $\overline{\cM}_{g,n}(X)$.
\end{theorem}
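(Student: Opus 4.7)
The plan is to verify that the natural morphism $\Psi \colon \cK_{g,n}(\cX) \to \overline{\cM}_{g,n}(X)$, obtained by passing to coarse spaces, is quasi-finite and proper. Since both stacks are locally of finite presentation, it suffices to check quasi-finiteness fiberwise over algebraically closed fields and properness via the valuative criterion on complete discrete valuation rings with algebraically closed residue field.

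\emph{Quasi-finiteness.} Fix an algebraically closed field $k$ and a stable map $(C,\{p_i\},f \colon C \to X)$ over $k$; the fiber of $\Psi$ over this point parametrizes pairs $(\cC,\alpha)$ where $\cC$ is a twisted curve with coarse space $C$ and $\alpha \colon \cC \to \cX_k$ is representable and lifts $f$. By Definition \ref{twistedcurvedef}, the twisted structure of $\cC$ at each node and marking is determined by a single positive integer $r$, and representability of $\alpha$ forces $\mu_r$ to inject into the local automorphism group of $\cX_k$ at the image point. Since $\cX$ has finite inertia, only finitely many values of $r$ are possible at each node and marking. For each choice of twisted curve $\cC$, lifts of $f$ to a representable $\alpha \colon \cC \to \cX_k$ form a set controlled by local \'etale quotient presentations of $\cX_k$ by linearly reductive groups (cf.\ \cite[Thm.~3.2]{AOV1}); finiteness of this set follows from the finiteness of the local stabilizers.

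\emph{Properness.} Let $R$ be a complete DVR with fraction field $K$ and algebraically closed residue field, and suppose given a twisted stable map $(\cC_K,\alpha_K \colon \cC_K \to \cX_K)$ over $K$ together with a stable map $f_R \colon C_R \to X_R$ extending its coarse space data. First construct $\cC_R$ by performing root stack constructions \'etale locally on $C_R$ at nodes and markings, with local indices matching those of $\cC_K$; this is possible because the closed nodes and markings of $C_R$ form loci flat over $R$. The result is a twisted curve $\cC_R/R$ whose generic fiber is $\cC_K$. Second, extend $\alpha_K$ to $\alpha_R \colon \cC_R \to \cX$: away from the stacky closed points the identification $\cC_R \simeq C_R$ lets the map factor through $f_R$, and at each stacky closed point one uses the explicit local presentation of $\cC_R$ as $[D^{\mathrm{sh}}/\mu_r]$ together with a local \'etale presentation of $\cX$ as a quotient by a linearly reductive group to build the equivariant extension. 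The uniqueness of this extension simultaneously establishes separatedness.

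The main obstacle is the second step of properness: extending $\alpha_K$ across the stacky points of the closed fiber, particularly at nodes where $\cC_R$ has local model $[\mathrm{Spec}(R[x,y]/(xy-t))^{\mathrm{sh}}/\mu_r]$. One must reconcile this structure with a local \'etale presentation of $\cX$ as $[U/G]$ with $G$ finite linearly reductive, and extend the generic $G$-equivariant morphism across the closed fiber. Tameness of $\cX$ is essential here: the linear reductivity of $G$ makes its equivariant deformation theory and torsor theory well-behaved integrally, so that the obstructions to extending vanish and the resulting extension is unique up to unique isomorphism. Without tameness, wild inertia would produce non-split gerbes and obstruct both the extension and uniqueness arguments.
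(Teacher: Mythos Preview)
The paper does not prove this theorem; it is quoted from \cite[4.3]{AOV2} and used as a black box. So there is no ``paper's own proof'' to compare against, and your proposal must be judged on its own merits as a sketch of the argument in \cite{AOV2}.

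Your quasi-finiteness outline is essentially correct: bounding the possible local indices by the order of the inertia of $\cX$, and then bounding the lifts by finiteness of stabilizers, is the right idea.

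Your properness argument, however, has a genuine gap. You propose to build $\cC_R$ by ``performing root stack constructions \'etale locally on $C_R$ at nodes and markings, with local indices matching those of $\cC_K$.'' This does not make sense at nodes of the \emph{special} fiber that are not limits of nodes of $\cC_K$. In the typical situation (and in the cases that actually matter for the valuative criterion) $\cC_K$ is smooth, so there are no generic-fiber indices to match, yet the closed fiber of $C_R$ may well be nodal. The correct local index at such a new node is not determined by the source curve alone: it depends on the map $\alpha_K$ and how the target stack $\cX$ degenerates along the image. You cannot first build $\cC_R$ and then extend $\alpha$; the two must be constructed together.

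The approach in \cite{AOV2} (following \cite{AV}) handles this differently. One first extends the map to $\cX$ over a twisted curve that is allowed to be too stacky (or uses the Purity Lemma on the smooth part and a local analysis at nodes), obtaining a morphism that may fail to be representable; then one passes to the \emph{relative coarse moduli space} over $\cX$, which produces a representable morphism from a genuine twisted curve. Tameness enters exactly here, guaranteeing that the relative coarse moduli space exists and has the right properties. Your final paragraph gestures at the role of tameness but locates it in the wrong place: it is not primarily an obstruction-vanishing statement for equivariant deformations, but rather what makes the relative coarse space construction (and the local structure theory of \cite{AOV1}) available.
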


A quick word of caution: the twisted stable maps of \cite{AOV2} are a generalization of what are referred to as \textit{balanced} twisted stable maps in \cite{AV} and \cite{ACV}; we are only interested in balanced maps for this paper, so hopefully our notation $\cK_{g,n}(\cX)$ will not cause any confusion. 

Let $(\cC/S,\{\Sigma_i\})$ be an $n$-marked twisted curve with coarse space $C/S$. If $\overline{p} \rightarrow C$ is a geometric point mapping to the image in $C$ of the gerbe $\Sigma_i$, such that $\mathrm{Spec}(\cO_{C,\overline{p}}) \times_C \cC \simeq [D^{\mathrm{sh}}/\mu_r]$ as in Definition \ref{twistedcurvedef}, then the integer $r$ is uniquely determined by $\overline{p}$, and we call $r$ the \textit{local index of $\cC$ at $\overline{p}$}. In fact one verifies immediately (cf. \cite[8.5.1]{AV}) that the local index only depends on $i$ and on the image of $\overline{p}$ in $S$, and that the function $\epsilon_i: S \rightarrow \mathbb{Z}_{>0}$ sending $s = f(\overline{p}) \in S$ to the local index $r$ is locally constant.

\begin{notation}\label{globalindex}
Let $\mathbf{e} = (e_1,...,e_n) \in \mathbb{Z}^n_{>0}$. We say an $n$-marked twisted curve $(\cC/S,\{\Sigma_i\})$ has \textit{global index $\mathbf{e}$} if $\epsilon_i$ is constant of value $e_i$ for all $i = 1,...,n$.

If $\cX/S$ is a finitely presented algebraic stack, proper over $S$ and with finite inertia, we write $\cK^{\mathbf{e}}_{g,n}(\cX)$ for the substack of $\cK_{g,n}(\cX)$ associating to a scheme $T/S$ the groupoid of pairs $(\cC,\alpha)$, where $\cC/T$ is an $n$-marked genus $g$ twisted curve of global index $\mathbf{e}$, and $\alpha: \cC \rightarrow \cX \times_S T$ is a twisted stable map. Since the functions $\epsilon_i: S \rightarrow \mathbb{Z}_{>0}$ are locally constant, we see that $\cK^{\mathbf{e}}_{g,n}(\cX)$ is an open and closed substack of $\cK_{g,n}(\cX)$, and 
\begin{displaymath}
\cK_{g,n}(\cX) = \coprod_{\mathbf{e} \in \mathbb{Z}^n_{>0}} \cK^{\mathbf{e}}_{g,n}(\cX).
\end{displaymath}
In this paper we will generally only be interested in the case $\mathbf{e} = (1,...,1)$, in which case $\cK^{\mathbf{e}}_{g,n}(\cX)$ classifies twisted stable maps to $\cX$ from twisted curves whose markings are honest sections. We will write $\cK'_{g,n}(\cX)$ for $\cK^{(1,...,1)}_{g,n}(\cX)$.
\end{notation}

\begin{notation}
We write $\cK_{g,n}^\circ(\cX)$ for the open substack of $\cK'_{g,n}(\cX)$ associating to $T/S$ the groupoid of pairs $(\cC,\alpha)$, where $\cC/T$ is a smooth $n$-marked genus $g$ curve (with no stacky structure) and $\alpha: \cC \rightarrow \cX \times_S T$ is a twisted stable map. 
\end{notation}

\subsection*{Twisted covers and Picard schemes of twisted curves}

If $G/S$ is a linearly reductive finite flat group scheme, then we have already observed that $\cB G$ is tame. So in this case we can consider the algebraic stack $\cK_{g,n}(\cB G)$, which is proper and quasi-finite over the Deligne-Mumford stack $\overline{\cM}_{g,n}$ by \ref{propqfinite}, since the coarse space of $\cB G$ is $S$. 

\begin{theorem}[$\textrm{\cite[5.1]{AOV2}}$]
Let $G/S$ be a linearly reductive finite flat group scheme. Then $\cK_{g,n}(\cB G)$ is flat over $S$, with local complete intersection fibers. 
\end{theorem}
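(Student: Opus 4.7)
The plan is to factor the structure morphism $\cK_{g,n}(\cB G) \to S$ through the algebraic stack $\mathfrak{M}^{\mathrm{tw}}_{g,n}$ of $n$-marked genus $g$ twisted stable curves, exploiting both smoothness of the latter and the good deformation theory of $G$-torsors for linearly reductive $G$.

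First, I would introduce the forgetful morphism $\pi: \cK_{g,n}(\cB G) \to \mathfrak{M}^{\mathrm{tw}}_{g,n}$ sending a twisted stable map $(\cC,\alpha)$ to its source twisted curve $\cC$. By Olsson's work on moduli of twisted curves, $\mathfrak{M}^{\mathrm{tw}}_{g,n}$ is smooth over $\mathrm{Spec}(\ZZ)$ of relative dimension $3g-3+n$, hence its structure morphism to $S$ is flat with smooth (in particular LCI) fibers. Since flatness and the LCI property of fibers are both preserved under composition of flat morphisms, it suffices to prove that $\pi$ is flat with LCI fibers.

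Next I would analyze the fibers of $\pi$ via deformation theory. Over a twisted curve $\cC/T$, the fiber is an open substack of the mapping stack $\underline{\mathrm{Hom}}_T(\cC,\cB G)$ (the representability condition cuts out an open locus determined by the injectivity of local monodromy into $G$). The obstruction to deforming a morphism $e: \cC \to \cB G$ over a fixed deformation of $\cC$ lies in $\mathrm{Ext}^2_{\cC}(e^*\cL_{\cB G/S}, \cI)$ for the relevant square-zero ideal $\cI$. Since $G$ is linearly reductive, $\cB G$ is a tame stack, $\cL_{\cB G/S}$ is a perfect complex of bounded amplitude, and the tameness of $\cC \to T$ together with its relative dimension $1$ forces the coherent cohomology $H^i(\cC_t,\cF)$ of any quasi-coherent sheaf on a geometric fiber $\cC_t$ to vanish for $i\geq 2$. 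Thus all obstructions vanish, $\pi$ is smooth, and in particular flat with smooth (hence LCI) fibers of the expected dimension. Combining with the preceding paragraph yields the theorem.

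The main obstacle is making the deformation-theoretic argument precise in characteristics where $G$ is not smooth (e.g.\ $G = \mu_p$ over $\FF_p$), since then $\cL_{\cB G/S}$ is not simply the Lie algebra. The decisive input is that linear reductivity of $G$ guarantees $\cB G$ is tame in the sense of \cite{AOV1}, so pushforward along $\cC\to T$ remains exact on quasi-coherent sheaves; combined with the one-dimensionality of $\cC\to T$, this preserves the cohomological amplitude bounds needed to kill the obstructions, even with the non-smooth cotangent complex $\cL_{\cB G/S}$ replacing the classical Lie algebra $\mathfrak{g}$.
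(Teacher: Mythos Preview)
The paper does not itself prove this statement but cites it from \cite[5.1]{AOV2}. Evaluating your argument on its own merits, there is a genuine gap: the claim that $\pi:\cK_{g,n}(\cB G)\to\mathfrak{M}^{\mathrm{tw}}_{g,n}$ is \emph{smooth} is false in characteristics dividing $|G|$, and this is precisely why the theorem asserts only local complete intersection fibers rather than smoothness. Take $G=\mu_p$ over a field $k$ of characteristic $p$ and let $E/k$ be an elliptic curve. The fiber of $\pi$ over $[E]$ is (an open substack of a $\mu_p$-gerbe over) $\mathrm{Pic}_{E/k}[p]$, whose identity component is the non-reduced group scheme $E[p]$; this fiber is not smooth over $k$. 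Indeed the paper's own description of $\overline{\cK}'_{1,1}(\cB\mu_p)_k$ as components crossing at the supersingular points (Corollary~\ref{twistedgamma1comps}) exhibits this non-smoothness globally.

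The error in the obstruction-theoretic step is the amplitude count. When $G$ is not smooth, $\cL_{\cB G/S}$ is a genuine two-term perfect complex: its pullback along the atlas $S\to\cB G$ is the shifted co-Lie complex $\ell_G[-1]$, which has amplitude two whenever $G$ is finite flat LCI but not \'etale. Tameness of $\cC\to T$ indeed gives $H^i(\cC_t,\cF)=0$ for $i\ge 2$, but the obstruction group (your $\mathrm{Ext}^2$) still receives a contribution from $H^1$ of the sheaf arising from the extra term of $\cL_{\cB G/S}$, and $H^1$ on a curve does not vanish. Linear reductivity controls exactness of pushforward to the coarse space, not the amplitude of $\cL_{\cB G/S}$. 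The argument in \cite{AOV2} does not attempt smoothness of $\pi$; it reduces, via the structure theory of linearly reductive finite flat group schemes, to the diagonalizable case, where $G$-torsors correspond to torsion line bundles and the fiber of $\pi$ over $\cC$ is an open in a product of the finite flat LCI group schemes $\mathrm{Pic}_{\cC}[n_i]$, each the kernel of a flat endomorphism of the smooth group scheme $\mathrm{Pic}_{\cC}$.
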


\begin{definition}
Since a map $\cC \rightarrow \cB G$ is equivalent to a $G$-torsor $P \rightarrow \cC$, it is often convenient to view $\cK_{g,n}( \cB G)$ as classifying such torsors. A $G$-torsor $P \rightarrow \cC$ is a \textit{twisted $G$-cover} of $\cC$ if it arises in this way. 
\end{definition}

Note that since the coarse space of $\cB G$ is $S$, the condition that the map $\cC \rightarrow \cB G$ is stable just says that $C/S$ is stable in the usual sense. So for $G$ as above, $\cK_{g,n}(\cB G)$ can and will be viewed as the stack classifying twisted $G$-covers of $n$-marked genus $g$ twisted stable curves.

Still writing $G$ for a finite flat linearly reductive group scheme over $S$, suppose in addition that $G$ is commutative. Then every object of $\cK_{g,n}(\cB G)$ canonically contains $G$ in the center of its automorphism group scheme (determining a unique subgroup stack $H$ in the center of the inertia stack of $\cK_{g,n}(\cB G)$ such that for any object $\xi \in \cK_{g,n}(\cB G)(T)$ the pullback of $H$ to $T$ is $G$). We can therefore apply the rigidification procedure described in \cite[\S5.1]{ACV} and generalized in \cite[Appendix A]{AOV1}, thereby ``removing'' $G$ from the automorphism group of each object:

\begin{definition}
$\overline{\cK}_{g,n}(\cB G)$ is the rigidification of $\cK_{g,n}(\cB G)$ with respect to the copy of $G$ naturally contained in the center of its inertia stack. We refer to the objects of $\overline{\cK}_{g,n}(\cB G)$ as \textit{rigidified twisted stable maps} into $\cB G$, or \textit{rigidified twisted $G$-covers} of $n$-marked genus $g$ twisted stable curves. We write $\overline{\cK}_{g,n}^\circ(\cB G)$ for the open substack corresponding to smooth curves and $\overline{\cK}'_{g,n}(\cB G)$ for the open and closed substack corresponding to twisted stable curves whose markings are representable.
\end{definition}

In \cite[$\textrm{\S}$C.2]{AGV} it is shown that the rigidification of an algebraic stack with respect to a group scheme admits a natural moduli interpretation. In this paper we are only interested in the case where $G$ is diagonalizable, in which case $\overline{\cK}_{g,n}(\cB G)$ can be given a more concrete moduli interpretation which we will now describe.

Given a twisted curve $\cC$ over a scheme $S$, let $\scr Pic_{\cC/S}$ denote the stack associating to $T/S$ the groupoid of line bundles on $\cC\times_S T$.

\begin{proposition}[$\textrm{\cite[2.7]{AOV2}}$]
$\scr Pic_{\cC/S}$ is a smooth locally finitely presented algebraic stack over $S$, and for any $\cL \in \scr Pic_{\cC/S}(T)$ the group scheme $\mathrm{Aut}_T(\cL)$ is canonically isomorphic to $\mathbb{G}_{m,T}$. 
\end{proposition}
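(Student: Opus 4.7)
The plan is to exploit tameness of $\cC$ to reduce all cohomological inputs to the corresponding statements for the coarse space $C/S$, which is an ordinary proper flat nodal curve, and then to apply a general algebraicity result for Picard/Hom stacks. First I would invoke the coarse moduli morphism $\pi:\cC\to C$, which is proper and (by the tameness theorem of \cite{AOV1}) has the properties that $\pi_*$ is exact on quasi-coherent sheaves, $\pi_*\cO_{\cC}=\cO_C$, and the formation of $\pi_*$ commutes with arbitrary base change on $C$. Combined with the fact that $C/S$ is a proper flat nodal curve with geometrically connected reduced fibers, this gives $R^if_*\cO_{\cC}=R^if_{C,*}\cO_C$ for all $i\geq 0$ compatibly with arbitrary base change on $S$, where $f:\cC\to S$ is the structural morphism. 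In particular, $f_*\cO_{\cC}=\cO_S$ universally and $R^if_*\cO_{\cC}=0$ for $i\geq 2$ since $C_{\bar{s}}$ has dimension~$1$.

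For algebraicity and local finite presentation, I would identify $\scr Pic_{\cC/S}$ with the Hom stack $\underline{\mathrm{Hom}}_S(\cC,\cB\mathbb{G}_{m,S})$: since $\cC/S$ is flat, proper, and of finite presentation and $\cB\mathbb{G}_m$ is an algebraic stack with affine diagonal, a general Hom-stack algebraicity theorem (e.g.\ Aoki, or Hall--Rydh) yields that $\scr Pic_{\cC/S}$ is algebraic and locally of finite presentation over $S$. Alternatively, one can verify Artin's representability criterion by hand: the infinitesimal deformation theory of a line bundle $\cL$ on a square-zero extension $T_0\hookrightarrow T$ with ideal $I$ is controlled, in the standard way, by tangent space $H^1(\cC_{T_0},\cO_{\cC_{T_0}})\otimes I$ and obstruction class in $H^2(\cC_{T_0},\cO_{\cC_{T_0}})\otimes I$, while effectivity of formal deformations is provided by Grothendieck's existence theorem for proper tame stacks. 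Smoothness of $\scr Pic_{\cC/S}/S$ is then immediate from the vanishing established in the first paragraph: the obstruction group $H^2(\cC_{\bar{s}},\cO_{\cC_{\bar{s}}})=H^2(C_{\bar{s}},\cO_{C_{\bar{s}}})$ is zero since $C_{\bar{s}}$ is a nodal curve.

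For the automorphism-group statement, I would note that for any $T\to S$ and any line bundle $\cL$ on $\cC_T$, automorphisms of $\cL$ are global units of $\mathcal{H}om(\cL,\cL)=\cO_{\cC_T}$, functorially in $T$. The universal base-change identity $f_*\cO_{\cC}=\cO_S$ from the first paragraph then gives, for every $T'\to T$, the chain of canonical identifications $H^0(\cC_{T'},\cO_{\cC_{T'}})^{\times}=\Gamma(T',\cO_{T'})^{\times}=\mathbb{G}_{m,T}(T')$, so $\mathrm{Aut}_T(\cL)$ represents $\mathbb{G}_{m,T}$ canonically. The main obstacle in this outline is the algebraicity assertion: the deformation-theoretic and automorphism inputs are essentially formal consequences of tameness plus the geometry of nodal curves, but algebraicity genuinely requires a result beyond the classical Deligne--Mumford or scheme-theoretic Picard-functor framework, and one must either quote a Hom-stack algebraicity theorem that allows tame (non-Deligne--Mumford) source stacks or go through Artin's criterion carefully.
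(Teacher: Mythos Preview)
The paper does not give its own proof of this proposition; it is quoted verbatim as \cite[2.7]{AOV2} and used as a black box. Your outline is correct and is in fact essentially the argument given in \cite{AOV2}: algebraicity there is obtained from a Hom-stack algebraicity theorem (Aoki's), and smoothness from the vanishing of $H^2(\cC_{\bar s},\cO_{\cC_{\bar s}})$, the latter reduced via tameness to the corresponding vanishing on the nodal coarse space. So there is nothing to compare against in the present paper, and your proposal matches the cited source.
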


Write $\mathrm{Pic}_{\cC/S}$ for the rigidification of this stack with respect to $\mathbb{G}_m$; $\mathrm{Pic}_{\cC/S}$ is none other than the relative Picard functor of $\cC/S$. From the analysis of $\mathrm{Pic}_{\cC/S}$ in \cite[\S2]{AOV2} we have:

\begin{proposition}\label{picardexact}
$\mathrm{Pic}_{\cC/S}$ is a smooth group scheme over $S$, and if $\pi:\cC \rightarrow C$ is the coarse space of $\cC/S$, then there is a short exact sequence of group schemes over $S$
\begin{displaymath}
0 \rightarrow \mathrm{Pic}_{C/S} \stackrel{\pi^*}{\rightarrow} \mathrm{Pic}_{\cC/S} \rightarrow W \rightarrow 0,
\end{displaymath}
with $W$ quasi-finite and \'etale over $S$. 
\end{proposition}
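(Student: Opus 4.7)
The plan is threefold: first, deduce smoothness of $\mathrm{Pic}_{\cC/S}$ from the preceding proposition and formalities of rigidification; second, produce $\pi^*$ by pullback and verify injectivity using tameness; third, identify the cokernel $W$ via the explicit étale-local structure at stacky points of $\cC$. Smoothness is essentially automatic: rigidifying a smooth algebraic stack with respect to a flat, finitely presented closed subgroup of its inertia yields a smooth algebraic stack (cf.\ \cite[Appendix A]{AOV1}). Applied to $\scr{Pic}_{\cC/S}$ rigidified by the central $\mathbb{G}_m$, this kills all stabilizers, producing a smooth algebraic space that becomes a commutative group space over $S$ via tensor product; representability as a scheme follows a posteriori from the exact sequence once the outer two terms are known to be representable.

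For the map $\pi^*$, pullback of line bundles along $\pi \colon \cC \rightarrow C$ defines a homomorphism $\pi^* \colon \mathrm{Pic}_{C/S} \rightarrow \mathrm{Pic}_{\cC/S}$ of fppf sheaves. Since $\cC$ is tame with coarse space $C$, the natural map $\cO_C \rightarrow \pi_* \cO_{\cC}$ is an isomorphism universally, stable under arbitrary base change, and the projection formula gives $\pi_* \pi^* \cL \cong \cL$ for any line bundle $\cL$ on $C \times_S T$. Hence triviality of $\pi^* \cL$ forces triviality of $\cL$, so $\pi^*$ is a monomorphism. I then define $W := \mathrm{Pic}_{\cC/S}/\pi^* \mathrm{Pic}_{C/S}$ as an fppf quotient sheaf of abelian groups.

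To prove $W$ is quasi-finite and étale over $S$, I would work étale-locally on $C$ using the explicit charts of Definition \ref{twistedcurvedef}. Outside the stacky locus $\pi$ is an isomorphism, so $W$ is supported at the stacky points. Near a stacky point with chart $[D^{\mathrm{sh}}/\mu_r]$, a line bundle on the twisted curve corresponds to a $\mu_r$-equivariant line bundle on $D^{\mathrm{sh}}$, and those descending from the coarse space $D^{\mathrm{sh}}/\mu_r$ are precisely those with trivial $\mu_r$-action on the fiber at the fixed point. The local quotient is thus identified with the character group $\widehat{\mu_r} \cong \mathbb{Z}/(r)$. Collecting these contributions over all stacky points exhibits $W$ étale-locally as a direct sum of constant group schemes $\mathbb{Z}/(r)$ indexed by the stacky locus; since each geometric fiber has only finitely many such points and the local indices are locally constant on $S$, $W$ is quasi-finite and étale.

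The main obstacle is the local calculation at stacky nodes, where the chart is $[\mathrm{Spec}(\cO_{S,s}[x,y]/(xy-t))/\mu_r]$ with the balanced action $(x,y) \mapsto (\zeta x, \zeta^{-1} y)$, and one must carefully determine which $\mu_r$-equivariant line bundles on this strictly Henselian local model descend to the coarse space. The expected answer --- that the obstruction is again captured by the $\mu_r$-character on the fiber at the node, giving $\mathbb{Z}/(r)$ --- requires a direct computation with the invariant subring $(\cO_{S,s}[x,y]/(xy-t))^{\mu_r}$ and its equivariant module category, together with a verification that this description varies étale-locally constantly on $S$ as the node parameter $t$ deforms. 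Once these local computations are in hand, the global assembly of $W$ and the exactness of the sequence reduce to a routine fppf descent argument.
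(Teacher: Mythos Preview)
The paper does not supply its own proof of this proposition: it is quoted directly from \cite[\S2]{AOV2}, with only the supplementary remark that the Leray spectral sequence $E_2^{p,q} = H^p(C,\mathbf{R}^q\pi_*\mathbb{G}_m) \Rightarrow H^{p+q}(\cC,\mathbb{G}_m)$ identifies $W$ with the sheafification of $T \mapsto H^0(C_T,\mathbf{R}^1\pi_*\mathbb{G}_m)$. Your proposal is thus not really comparable to a proof in the paper, but it is a reasonable sketch of how one would unpack the argument in \cite{AOV2}. Your local computation at stacky markings and nodes is precisely the calculation of the stalks of $\mathbf{R}^1\pi_*\mathbb{G}_m$, and your answer $\mathbb{Z}/(r)$ at each such point agrees with the paper's later use of $\mathrm{Pic}(\cB\mu_r)\cong\mathbb{Z}/(r)$ (e.g.\ in Lemma~\ref{singulartwistedcurves}). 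The Leray spectral sequence packaging is cleaner in that it gives exactness and the description of $W$ in one stroke, whereas your route requires assembling the local pieces by hand; conversely, your approach makes the \'etale-local constancy of $W$ transparent. One small point to tighten: your claim that representability of $\mathrm{Pic}_{\cC/S}$ as a \emph{scheme} follows ``a posteriori from the exact sequence'' is not automatic---an extension of schemes in the category of algebraic spaces need not be a scheme---so you should either invoke the separatedness and quasi-projectivity arguments used in \cite{AOV2}, or argue that after an \'etale base change trivializing $W$ the extension splits as a product.
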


In fact, since $\pi_* \mathbb{G}_m = \mathbb{G}_m$ we can deduce from the exact sequence of low-degree terms of the fppf Leray spectral sequence 
\begin{displaymath}
E_2^{p,q} = H^p(C,\mathbf{R}^q\pi_* \mathbb{G}_m) \Rightarrow H^{p+q}(\cC, \mathbb{G}_m) 
\end{displaymath}
that $W$ is the sheaf associated to the presheaf $T \mapsto H^0(C_T,\mathbf{R}^1\pi_* \mathbb{G}_m)$ (where we still write $\pi: \cC_T \rightarrow C_T$ for the morphism induced by base change from $\pi:\cC \rightarrow C$).

For any integer $N$ annihilating $W$, we get a natural morphism 
\begin{displaymath}
\times N: \mathrm{Pic}_{\cC/S} \rightarrow \mathrm{Pic}_{C/S}.
\end{displaymath}

\begin{definition}[$\textrm{\cite[2.11]{AOV2}}$]
The \textit{generalized Jacobian} of $\cC$ is 
\begin{displaymath}
\mathrm{Pic}^0_{\cC/S} := \mathrm{Pic}_{\cC/S} \times_{ \times N, \mathrm{Pic}_{C/S}} \mathrm{Pic}^0_{C/S}, 
\end{displaymath}
where $\mathrm{Pic}^0_{C/S}$ is the fiberwise connected component of the identity in the group scheme $\mathrm{Pic}_{C/S}$.
\end{definition}

$\mathrm{Pic}^0_{\cC/S}$ is independent of $N$ and is viewed as classifying line bundles of fiberwise degree $0$ on $\cC/S$.

\begin{remark}
Unlike the case of $\mathrm{Pic}^0_{C/S}$ for $C/S$ a (non-stacky) genus $g$ curve, the geometric fibers of $\mathrm{Pic}^0_{\cC/S}$ need not be connected for $\cC/S$ a twisted curve. In fact, when $\cC/S$ is a $1$-marked genus $1$ twisted stable curve, $\mathrm{Pic}^0_{\cC/S}$ behaves like the smooth part of a generalized elliptic curve over $S$. For instance, if $\cC = [C/\mu_N]$ for $C/S$ a N\'eron $1$-gon as in Example \ref{stacky1gon}, it is easily verified that $\mathrm{Pic}^0_{\cC/S} \cong \mathbb{G}_m \times \mathbb{Z}/(N)$. Standard $\mu_N$-stacky N\'eron $1$-gons will play an analogous role in this paper to that of N\'eron polygons in \cite{DR} and \cite{C}. In particular, we have:
\end{remark}

\begin{lemma}\label{singulartwistedcurves}
Let $k$ be an algebraically closed field, and $\cC/k$ a $1$-marked genus $1$ twisted stable curve, with no stacky structure at its marking, such that the coarse space $C/k$ is not smooth. Then $\cC$ is a standard $\mu_N$-stacky N\'eron $1$-gon (Example \ref{stacky1gon}) for some (unique) positive integer $N$.
\end{lemma}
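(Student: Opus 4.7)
The plan is to proceed in three steps. First, I would analyze the coarse space. Since $\cC/k$ is a $1$-marked genus $1$ twisted stable curve with a non-stacky marking, its coarse space $(C,p)/k$ is a $1$-marked genus $1$ stable curve in the usual sense. Over the algebraically closed field $k$, any non-smooth such curve is a N\'eron $n$-gon; stability forces $n=1$, since any component of a N\'eron $n$-gon with $n \geq 2$ meets only two nodes, so an unmarked component (which must exist when $n \geq 2$) would have only $2$ special points on its normalization, violating the stability bound of $3$ for a $\mathbb{P}^1$-component. Hence $C$ is a N\'eron $1$-gon with unique node $\nu$, and because $\cC \to C$ is an isomorphism away from the markings and the node, the only potentially stacky point of $\cC$ lies over $\nu$. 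Writing $N \geq 1$ for the local index at $\nu$, Definition \ref{twistedcurvedef} gives an isomorphism
\[
\mathrm{Spec}(\widehat{\cO}_{C,\nu}) \times_C \cC \simeq [\mathrm{Spec}(k[\![ x,y ]\!]/(xy))/\mu_N],
\]
with $\mu_N$ acting by $(x,y) \mapsto (\zeta x, \zeta^{-1} y)$.

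Next, to identify $\cC$ with the standard $\mu_N$-stacky N\'eron $1$-gon $\cC_0 := [C_0/\mu_N]$ of Example \ref{stacky1gon}, I would exhibit a $\mu_N$-torsor $\pi: \tilde{C} \to \cC$ whose total space $\tilde{C}$ is a (non-stacky) N\'eron $1$-gon, with $\mu_N$ acting via the inclusion $\mu_N \subset \mathbb{G}_m = \tilde{C}^{\mathrm{sm}}$. To construct $\pi$, parametrize $C^{\mathrm{sm}} \cong \mathbb{G}_m$ by a coordinate $t$ (with both $t \to 0$ and $t \to \infty$ approaching $\nu$), and take the Kummer $\mu_N$-torsor $u^N = t$ over $\mathbb{G}_m$, whose total space is again $\mathbb{G}_m$ with coordinate $u$. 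Over the formal neighborhood of $\nu$, this torsor extends to the tautological torsor
\[
\mathrm{Spec}(k[\![ x,y ]\!]/(xy)) \longrightarrow [\mathrm{Spec}(k[\![ x,y ]\!]/(xy))/\mu_N]
\]
by matching $x$ with $u$ on the $t \to 0$ branch and $y$ with $u^{-1}$ on the $t \to \infty$ branch, which is consistent with the action $(x,y) \mapsto (\zeta x, \zeta^{-1} y)$. Gluing realizes $\tilde{C}$ as $\mathbb{P}^1_u$ with $u = 0$ identified to $u = \infty$, namely a N\'eron $1$-gon, and identifies $\cC$ with $[\tilde{C}/\mu_N]$ acting via $u \mapsto \zeta u$; this is precisely the standard $\mu_N$-stacky N\'eron $1$-gon.

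The main obstacle is the gluing argument in the previous paragraph: one must check that the Kummer cover on $C^{\mathrm{sm}}$ really does match the tautological local torsor at the node, i.e., that the formal-local data and the global data can be assembled into a single $\mu_N$-torsor. Equivalently, one must show that a twisted curve over an algebraically closed field is determined up to isomorphism by its coarse space and local indices at the nodes and markings; this can be extracted from the local classification of twisted curves in \cite[\S 2]{AOV2}, or verified directly by a change-of-coordinates computation at $\nu$, using that $k$ is algebraically closed to extract $N$-th roots of any unit in $k[\![ t ]\!]$ so that any formal gluing datum can be normalized to the one above. Uniqueness of $N$ is immediate, as $N$ is intrinsic to $\cC$ as the order of the stabilizer group scheme at the stacky point.
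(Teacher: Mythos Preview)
Your argument is correct but proceeds quite differently from the paper's. You reduce to the general principle that a twisted curve over an algebraically closed field is determined up to isomorphism by its coarse space together with the local indices at the stacky points, and then match $\cC$ with the standard model by an explicit gluing of the Kummer cover over $\mathbb{G}_m$ with the tautological $\mu_N$-torsor at the node. This is valid, and your identification of the gluing step as the crux is accurate; the normalization of the formal gluing datum via $N$-th roots of units in $k[\![t]\!]$ is the right mechanism.

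The paper instead avoids invoking any such classification principle and works intrinsically with $\cC$: it first computes $\mathrm{Pic}^0_{\cC/k} \cong \mathbb{G}_m \times \mathbb{Z}/(N)$ and $H^1(\cC,\mu_N) \cong \mu_N \times \mathbb{Z}/(N)$ via exact sequences coming from the normalization $\widetilde{\cC} \to \cC$ and the comparison $\widetilde{\cC} \to \mathbb{P}^1$, then singles out the $\mu_N$-torsor $C' \to \cC$ corresponding to the class $(1,1)$. Representability of $C'$ is checked via the criterion of \cite[2.3.10]{AH} (uniformizing line bundle), the torsor is analyzed separately over $C^{\mathrm{sm}}$ and over an \'etale neighborhood of the node to see that $C'$ is nodal with a single node, and Riemann--Hurwitz gives arithmetic genus $1$, so $C'$ is a N\'eron $1$-gon with $\cC = [C'/\mu_N]$. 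Your route is shorter and more structural; the paper's route is more hands-on and yields the Picard computation as a byproduct, which is exactly what is needed later to make sense of $[\Gamma_1(N)]$-structures on singular twisted curves (Definition \ref{gammaNtwistedcurve} and the surrounding discussion).
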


\begin{proof}
It follows from \cite[II.1.15]{DR} that the $1$-marked genus $1$ stable curve $C/k$ is a N\'eron $1$-gon. Write $\pi: \cC \rightarrow C$ for the map exhibiting $C$ as the coarse space of $\cC$. Write $\widetilde{C} = \mathbb{P}^1 \rightarrow C$ for the normalization of $C$, and write $\widetilde{\cC}$ for the following fiber product:
\begin{align*}
\xymatrix{ 
  \widetilde{\cC} \ar[r]^{\tau} \ar[d]_{\nu} & \mathbb{P}^1 \ar[d] \\
  \cC \ar[r]_{\pi} & C }
\end{align*}
We have a short exact sequence of fppf sheaves on $\mathbb{P}^1$ 
\begin{displaymath}
0 \rightarrow \cO^\times_{\mathbb{P}^1} \rightarrow \tau_* \cO^\times_{\widetilde{\cC}} \rightarrow \cF \rightarrow 0,
\end{displaymath}
where $\cF$ is the pushforward to $\mathbb{P}^1$ of $\cO^\times_Z$, for $Z \cong \cB\mu_N \sqcup \cB\mu_N$ the preimage in $\widetilde{\cC}$ of the (stacky) node of $\cC$. This induces an exact sequence 
\begin{displaymath}
0 \rightarrow \mathrm{Pic}(\mathbb{P}^1) \rightarrow \mathrm{Pic}(\widetilde{\cC}) \rightarrow \mathrm{Pic}(Z) \rightarrow H^2(\mathbb{P}^1,\cO^\times_{\mathbb{P}^1}) = 0,
\end{displaymath}
inducing an isomorphism $\mathrm{Pic}^0_{\widetilde{\cC}/k} \cong \mathrm{Pic}(Z) \cong \mathbb{Z}/(N) \times \mathbb{Z}/(N)$ since $\mathrm{Pic}^0_{\mathbb{P}^1/k} = 0$.

Now consider the short exact sequence of fppf sheaves on $\cC$ 
\begin{displaymath}
0 \rightarrow \cO^\times_{\cC} \rightarrow \nu_* \cO^\times_{\widetilde{\cC}} \rightarrow \cG \rightarrow 0,
\end{displaymath}
where $\cG$ is the pushforward to $\cC$ of $\cO^\times_p$ for $p \cong \cB\mu_N$ the (stacky) node of $\cC$. This induces an exact sequence 
\begin{displaymath}
0 \rightarrow k^\times \stackrel{\mathrm{id}}{\rightarrow} k^\times \stackrel{0}{\rightarrow} k^\times \rightarrow \mathrm{Pic}(\cC) \rightarrow \mathrm{Pic}(\widetilde{\cC}) \rightarrow \mathrm{Pic}(\cB\mu_N).
\end{displaymath}
We have $\mathrm{Pic}(\cB\mu_N) \cong \mathbb{Z}/(N)$ and $\mathrm{Pic}^0_{\cC/k} \cong \mathbb{Z}/(N) \times \mathbb{Z}/(N)$, and the map 
\begin{displaymath}
\mathrm{Pic}^0_{\cC/k} \cong \mathbb{Z}/(N) \times \mathbb{Z}/(N) \rightarrow \mathbb{Z}/(N) \cong \mathrm{Pic}(\cB\mu_N)
\end{displaymath}
is given by $(a,b) \mapsto a-b$. In particular the kernel is isomorphic to $\mathbb{Z}/(N)$, so we have a short exact sequence 
\begin{displaymath}
0 \rightarrow k^\times \rightarrow \mathrm{Pic}^0_{\cC/k} \stackrel{f}{\rightarrow} \mathbb{Z}/(N) \rightarrow 0.
\end{displaymath}
This sequence splits (noncanonically) since $k^\times$ is divisible (as $k$ is algebraically closed), so $\mathrm{Pic}^0_{\cC/k} \cong \mathbb{G}_m \times \mathbb{Z}/(N)$.

The fppf exact sequence of sheaves on $\cC$ 
\begin{displaymath}
0 \rightarrow \mu_N \rightarrow \mathbb{G}_m \rightarrow \mathbb{G}_m \rightarrow 0
\end{displaymath}
then gives us an isomorphism 
\begin{align*}
H^1 (\cC, \mu_N) & \cong \mathrm{ker}(\mathrm{Pic}_{\cC/k} \stackrel{\times N}{\longrightarrow} \mathrm{Pic}_{\cC/k}) \\
& \cong \mu_N \times \mathbb{Z}/(N).
\end{align*}

Let $C' \rightarrow \cC$ be the $\mu_N$-torsor over $\cC$ corresponding to the class $(1,1) \in \mu_N \times \mathbb{Z}/(N) \cong H^1 (\cC, \mu_N)$. Then $C'$ is representable by \cite[2.3.10]{AH}, since $(1,1)$ is the class in $H^1 (\cC, \mu_N)$ of a uniformizing line bundle on $\cC$. Let 
\begin{displaymath}
V = C^{\mathrm{sm}} \cong \cC \times_C C^{\mathrm{sm}} \stackrel{\iota}{\hookrightarrow} \cC, 
\end{displaymath}
and consider the resulting $\mu_N$-torsor $C'_V \rightarrow V \cong \mathbb{G}_m$. The pullback map 
\begin{displaymath}
\iota^*: H^1(\cC,\mu_N) \cong \mu_N \times \mathbb{Z}/(N) \rightarrow \mathbb{Z}/(N) \cong H^1(\mathbb{G}_m,\mu_N)
\end{displaymath}
is given by $(\zeta,a) \mapsto a$. It therefore follows that $C'_V \cong \mathbb{G}_m$, with the $\mu_N$-action given by the standard multiplication action on $\mathbb{G}_m$. The quotient map $\mathbb{G}_m \rightarrow [\mathbb{G}_m/\mu_N] \cong \mathbb{G}_m$ is the map ``$x\mapsto x^N$.''

Fix an \'etale neighborhood $W$ of the node of $C$ of the form $W = \mathrm{Spec}(k[z,w]/(zw))$, such that 
\begin{displaymath}
\cC \times_C W \cong [D/\mu_N]
\end{displaymath}
for $D = \mathrm{Spec}(k[x,y]/(xy))$, with $\zeta \in \mu_N$ acting by $x \mapsto \zeta x$ and $y \mapsto \zeta^{-1} y$. The composite 
\begin{displaymath}
D \rightarrow [D/\mu_N] \rightarrow W
\end{displaymath}
is given by the ring homomorphism $z \mapsto x^N, w \mapsto y^N$. Since $C'$ is representable and $C' \times_C W \rightarrow \cC \times_C W$ is a $\mu_N$-torsor, it follows that $C' \times_C W \cong D$ with the above $\mu_N$-action. In particular we see that $C'$ is a nodal curve with one node.

Composing our original $C' \rightarrow \cC$ with the coarse space map $\cC \rightarrow C$ gives us a finite morphism of nodal curves $C' \rightarrow C$, which restricts to the $\mu_N$-torsor $\mathbb{G}_m \rightarrow \mathbb{G}_m \cong C^{\mathrm{sm}}$ and which is totally ramified over the node of $C$. Riemann-Hurwitz for nodal curves implies that $C'$ has arithmetic genus $1$, so $C'$ is a N\'eron $1$-gon with smooth locus $C'^{\mathrm{sm}} = \mathbb{G}_m$. The multiplication action of $\mu_N$ on $\mathbb{G}_m$ extends uniquely to an action on $C'$, and by assumption $\cC = [C'/\mu_N]$. Thus $\cC$ is a standard $\mu_N$-stacky $1$-gon.
\end{proof}

\subsection*{Relationship to moduli of elliptic curves with level structure}

\begin{notation}
For any finite flat commutative group scheme $G$ over a base scheme $S$, $\scr H(G)$ is the stack over $S$ associating to an $S$-scheme $T$ the groupoid of pairs $(E,\phi)$, where $E/T$ is an elliptic curve and $\phi: G^* \rightarrow E[N]$ is a homomorphism of group schemes over $T$ (for $G^*$ the Cartier dual of $G$). For the $S$-scheme $G=\mu_N$ we write $\scr H_1(N)$ for $\scr H(\mu_N)$, and for the $S$-scheme $G = \mu_N^2$ we write $\scr H(N)$ for $\scr H(\mu_N^2)$. 
\end{notation}

As in \cite[\S2.3]{AOV1}, we say a finite commutative group scheme is \textit{diagonalizable} if its Cartier dual is constant, and \textit{locally diagonalizable} if its Cartier dual is \'etale. 

\begin{lemma}[$\textrm{\cite[5.7]{AOV2}}$]\label{bigpicard}
Let $G/S$ be a finite diagonalizable commutative group scheme, so $\cB G$ is tame (since $G$ is linearly reductive) and $X := G^*$ is constant. For any twisted curve $\cC/S$ there is an equivalence of categories between the stack $\mathrm{TORS}_{\cC/S}(G)$ classifying $G$-torsors on $\cC/S$ and the Picard stack $\scr Pic_{\cC/S}[X]$ of morphisms of Picard stacks $X \rightarrow \scr Pic_{\cC/S}$. 
\end{lemma}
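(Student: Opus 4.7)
The plan is to make the equivalence explicit by passing to the $X$-graded decomposition of the pushforward of the structure sheaf of a torsor, and then to check that this assignment is quasi-inverse to the $\underline{\mathrm{Spec}}$ construction. Since $G$ is diagonalizable with constant Cartier dual $X$, we have $G = \underline{\mathrm{Spec}}_S(\cO_S[X])$, so for any $S$-stack $Y$ a $G$-action on a quasi-coherent $\cO_Y$-module is equivalent to an $X$-grading. Note that the twisted-curve hypothesis plays no essential role; the statement is a formal one about diagonalizable group schemes.

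First, starting from a $G$-torsor $\pi: P \to \cC$, I would push forward to get an $\cO_\cC$-algebra $\pi_*\cO_P$ with a $G$-action, hence an $X$-grading
\[
\pi_*\cO_P \;=\; \bigoplus_{x \in X} \cL_x,
\]
with $\cL_0 = \cO_\cC$ and multiplication maps $\mu_{x,y}: \cL_x \otimes \cL_y \to \cL_{x+y}$. Since $\pi$ is fppf-locally trivial, fppf-locally on $\cC$ we recover $\pi_*\cO_P \cong \cO_\cC[X]$ as a graded algebra, so each $\cL_x$ is fppf-locally free of rank one, hence a line bundle on $\cC$, and each $\mu_{x,y}$ is an isomorphism. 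The data $(\cL_x, \mu_{x,y})$ satisfying associativity and commutativity of the algebra structure is precisely a symmetric monoidal functor $X \to \scr Pic_{\cC/S}$, i.e.\ a morphism of Picard stacks. Morphisms of $G$-torsors correspond to morphisms of graded $\cO_\cC$-algebras, which correspond to natural isomorphisms of the associated symmetric monoidal functors, so this construction extends to a morphism of Picard stacks $\mathrm{TORS}_{\cC/S}(G) \to \scr Pic_{\cC/S}[X]$.

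Second, I would construct the inverse. Given $\phi: X \to \scr Pic_{\cC/S}$ with line bundles $\cL_x := \phi(x)$ and coherence isomorphisms $\mu_{x,y}$, form the quasi-coherent graded $\cO_\cC$-algebra $\cA := \bigoplus_{x \in X}\cL_x$ (with multiplication from the $\mu_{x,y}$) and set $P := \underline{\mathrm{Spec}}_\cC(\cA)$. The $X$-grading induces a $G$-action on $P$, and trivializing each $\cL_x$ fppf-locally on $\cC$ exhibits $\cA$ as $\cO_\cC[X]$, so $P \to \cC$ is a $G$-torsor. This construction is manifestly functorial in $\phi$. One checks that these two functors are mutually quasi-inverse: starting from a torsor, $\underline{\mathrm{Spec}}_\cC(\pi_*\cO_P) \cong P$ canonically, and starting from $\phi$, the $X$-isotypic decomposition of the structure sheaf of $\underline{\mathrm{Spec}}_\cC(\cA)$ recovers the $\cL_x$ and $\mu_{x,y}$.

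Finally, I would verify that the equivalence is compatible with the Picard structures on both sides: the Baer/tensor product of $G$-torsors corresponds, on the level of graded algebras, to the $X$-graded tensor product, which matches pointwise tensor product of the associated morphisms $X \to \scr Pic_{\cC/S}$. The main obstacle, such as it is, lies not in any single step but in the bookkeeping: verifying that the $\mu_{x,y}$ inherited from the algebra multiplication satisfy exactly the coherence data required of a symmetric monoidal functor, and that the assignment respects morphisms and tensor structures strictly enough to give an equivalence of Picard stacks (not merely of groupoids). Once this is unwound, the proof is formal and independent of the specific geometry of twisted curves; the only global input is that $\scr Pic_{\cC/S}$ is well-behaved as a Picard stack, which was recorded in the preceding proposition.
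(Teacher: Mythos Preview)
Your proposal is correct and follows essentially the same approach as the paper's treatment: the paper records exactly this construction (decompose $\pi_*\cO_P$ into its $X$-graded pieces $\cL_\chi$, each invertible because $P$ is a torsor, and conversely take $\underline{\mathrm{Spec}}_\cC$ of $\bigoplus_\chi \phi(\chi)$ with algebra structure coming from $\phi$), citing \cite[5.7]{AOV2} for the details. You have supplied more of the verification than the paper does, but the underlying argument is the same.
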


The construction for the above equivalence is as follows:
\begin{quotation}
  Let $\pi:P \rightarrow \cC$ be a $G$-torsor over $\cC$. $G$ acts on the sheaf $\pi_* \cO_P$, yielding a decomposition 
\begin{displaymath}
\pi_* \cO_P = \bigoplus_{\chi \in X} \cL_\chi.
\end{displaymath}
Each $\cL_\chi$ is invertible since $P$ is a torsor over $E$, so this determines a morphism of Picard stacks
\begin{align*}
\phi_P: X & \rightarrow \scr Pic_{\cC/S} \\
\chi & \mapsto [\cL_\chi].
\end{align*}
And conversely, such a morphism $\phi: X \rightarrow \scr Pic_{\cC/S}$ naturally determines an algebra structure on 
\begin{displaymath}
\bigoplus_{\chi \in X} \phi(\chi),
\end{displaymath}
giving a $G$-torsor 
\begin{displaymath}
\underline{\mathrm{Spec}}_{\cC} \big( \bigoplus_{\chi \in X} \phi(\chi) \big) \rightarrow \cC
\end{displaymath}
with the $G$-action determined by the $X$-grading.
\end{quotation}
This defines an open immersion from $\cK_{g,n}(\cB G)$ into the algebraic stack over $S$ associating to an $S$-scheme $T$ the groupoid of pairs $(\cC,\phi)$, where $\cC/T$ is an $n$-marked genus $g$ twisted stable curve and $\phi \in \scr Pic_{\cC/T}[X]$. Rigidifying $\cK_{g,n}(\cB G)$ and $\scr Pic_{\cC/T}$ with respect to the group schemes $G$ and $\mathbb{G}_m$ respectively, we have an open immersion from $\overline{\cK}_{g,n}(\cB G)$ into the stack classifying pairs $(\cC,\phi)$, where $\cC/T$ is an $n$-marked genus $g$ twisted stable curve and $\phi: X \rightarrow \mathrm{Pic}_{\cC/T}$ is a homomorphism of group schemes over $T$.

Writing $f:\cC\rightarrow S$ for the structural morphism, we have $\mathrm{Pic}_{\cC/S} = \mathbf{R}^1 f_* \mathbb{G}_m$. Therefore fppf-locally on $T$, $\phi$ corresponds to the choice of an $X$-torsor $P \rightarrow \cC$, with $P$ representable if and only if $(\cC,\phi)$ comes from an object of $\overline{\cK}_{g,n}(\cB G)$ (refer for example to \cite[2.3.10]{AH} to see that a morphism from a twisted curve $\cC$ to $\cB G$ is representable if and only if the corresponding $G$-torsor over $\cC$ is representable). This gives us:

\begin{corollary}\label{picard}
For a finite flat diagonalizable group scheme $G/S$, the above construction gives an equivalence between $\overline{\mc K}_{g,n}(\cB G)$ and the stack over $S$ associating to $T/S$ the groupoid of pairs $(\cC,\phi)$, where $\cC/T$ is an $n$-marked genus $g$ twisted stable curve and $\phi:X \rightarrow \mathrm{Pic}_{\cC/T}$ is a group scheme homomorphism such that, fppf-locally on $T$, the $G$-torsor over $\cC$ corresponding to $\phi$ as above is representable. 
\end{corollary}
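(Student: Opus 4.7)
The plan is to deduce this essentially as a formal consequence of Lemma \ref{bigpicard} together with the construction of the rigidifications. First, I would recall that $\cK_{g,n}(\cB G)$ is, by definition, an open substack (cut out by the representability condition and by stability of the coarse map) of the stack $\cM$ of pairs $(\cC, P)$, where $\cC/T$ is an $n$-marked genus $g$ twisted curve whose coarse space is a stable $n$-marked genus $g$ nodal curve and $P \to \cC$ is a $G$-torsor. Lemma \ref{bigpicard}, applied functorially in $T$, gives an equivalence between $\cM$ and the stack $\cN$ of pairs $(\cC, \Phi)$ with $\cC$ as above and $\Phi \in \scr Pic_{\cC/T}[X]$ a morphism of Picard stacks $X \to \scr Pic_{\cC/T}$.

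Next I would identify the two natural $G$-actions that one needs to rigidify. On $\cM$, each object has $G$ sitting centrally in its automorphism group, coming from the $G$-action on the trivial torsor (equivalently, the automorphism $G$ of the classifying map $\cC \to \cB G$). On the Picard-stack side, the category of morphisms $X \to \scr Pic_{\cC/T}$ inherits an $X$-graded $\mathbb{G}_m$ action from the $\mathbb{G}_m$-automorphisms of each line bundle $\Phi(\chi)$; under Cartier duality $X = G^*$, this is exactly the $G$-action induced by $\mathbb{G}_m$-rigidification of $\scr Pic_{\cC/T}$. I would verify that the equivalence of Lemma \ref{bigpicard} identifies these two central subgroups: an automorphism $g \in G$ of the torsor $P = \underline{\mathrm{Spec}}_{\cC}(\bigoplus \cL_\chi)$ corresponds under the decomposition to multiplication by the scalar $\chi(g)$ on each $\cL_\chi$, which is precisely the $\mathbb{G}_m$-automorphism of each summand determined by $g$ via $X = G^*$. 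Once this compatibility is established, the general rigidification formalism (\cite[App.~A]{AOV1}, \cite[\S5.1]{ACV}) implies that the induced map on rigidifications is again an equivalence, so $\overline{\cK}_{g,n}(\cB G)$ is open in the stack of pairs $(\cC, \phi)$ with $\phi: X \to \mathrm{Pic}_{\cC/T}$ a group scheme homomorphism.

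Finally, I would translate the two remaining open conditions defining $\overline{\cK}_{g,n}(\cB G)$ inside this stack. Stability of the classifying map $\cC \to \cB G$ is automatic since the coarse space of $\cB G$ is $S$; it is simply stability of $(C, \{p_i\})$, which is already built into the definition of an $n$-marked stable twisted curve. Representability of the map $\cC \to \cB G$ is equivalent to representability of the associated $G$-torsor $P \to \cC$ (as noted via \cite[2.3.10]{AH}); since $\mathrm{Pic}_{\cC/T}$ is the fppf sheafification, the morphism $\phi$ only determines the torsor $P$ fppf-locally on $T$, so the precise condition is that this locally-defined $G$-torsor is representable, which is exactly the condition stated in the corollary.

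The only non-formal step is the compatibility of the two central $G$-actions under the equivalence of Lemma \ref{bigpicard}; everything else is bookkeeping with rigidification and with the representability criterion, so I would expect the scalar-action computation on the eigen-decomposition $\pi_*\cO_P = \bigoplus_\chi \cL_\chi$ to be the sole substantive verification.
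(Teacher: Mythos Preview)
Your proposal is correct and follows essentially the same route as the paper: apply Lemma \ref{bigpicard} to identify $\cK_{g,n}(\cB G)$ with an open substack of the stack of pairs $(\cC,\Phi)$ with $\Phi\in\scr Pic_{\cC/T}[X]$, then rigidify $\cK_{g,n}(\cB G)$ by $G$ and $\scr Pic_{\cC/T}$ by $\mathbb{G}_m$, and finally translate the open condition via the representability criterion \cite[2.3.10]{AH}. Your explicit verification that the central $G\subset\mathrm{Aut}(P)$ matches the $\mathbb{G}_m$-automorphisms on the eigenspaces $\cL_\chi$ under Cartier duality is a point the paper leaves implicit, so your write-up is in fact slightly more careful here.
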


In particular, we get an equivalence 
\begin{displaymath}
\overline{\cK}^\circ_{1,1}(\cB G) \simeq \mc H(G),
\end{displaymath}
since for an elliptic curve $E/S$ every $G$-torsor over $E$ is representable and we canonically have $E \cong \mathrm{Pic}^0_{E/S}$. This isomorphism sends $Q \in E(S)$ to the class of the line bundle $\cL((Q)-(0_E))$, so as a special case we see that if $\phi: \mathbb{Z}/(N) \rightarrow E$ is a group scheme homomorphism with $\phi(1) = Q$, the corresponding $\mu_N$-torsor over $E$ is of the form
\begin{displaymath}
P = \underline{\mathrm{Spec}}_E \big( \bigoplus_{a = 0}^{N-1} \cL((a\cdot Q)-(0_E)) \big).
\end{displaymath}

Since $\scr H_1(N)$ naturally contains a closed substack isomorphic to the stack $\scr Y_1(N)$ over $S$ classifying $[\Gamma_1(N)]$-structures on elliptic curves, we see that the algebraic stack $\overline{\cK}_{1,1}(\cB \mu_N)$ is a modular compactification of $\scr Y_1(N)$. Similarly, the algebraic stack $\overline{\cK}_{1,1}(\cB \mu_N^2)$ is a modular compactification of the stack $\scr Y(N)$ classifying (not necessarily symplectic) full level $N$ structures on elliptic curves. The task in both cases is to give a moduli interpretation of the closure of these classical moduli stacks in $\overline{\cK}_{1,1}(\cB G)$, and we address this in the following sections.

\begin{lemma}\label{dense}
$\overline{\cK}^\circ_{1,1}(\cB\mu_N)$ is dense in $\overline{\cK}'_{1,1}(\cB\mu_N)$, and $\overline{\cK}^\circ_{1,1}(\cB\mu_N^2)$ is dense in $\overline{\cK}'_{1,1}(\cB\mu_N^2)$.
\end{lemma}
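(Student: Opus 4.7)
The plan is to check density via a specialization argument: for every algebraically closed field $k$ and every geometric point $(\cC, \phi) \in \overline{\cK}'_{1,1}(\cB\mu_N)(k)$ with $\cC$ not smooth, I would produce a DVR $R$ with residue field $k$ and an $R$-point of $\overline{\cK}'_{1,1}(\cB\mu_N)$ specializing to $(\cC, \phi)$ whose generic fiber factors through $\overline{\cK}^\circ_{1,1}(\cB\mu_N)$. The first step is to classify the singular objects: by Lemma \ref{singulartwistedcurves}, $\cC \cong [C/\mu_M]$ for some N\'eron $1$-gon $C/k$ and some $M \geq 1$, with $\mu_M$ acting by its embedding into $C^{\mathrm{sm}} = \mathbb{G}_m$; the non-stacky marking may be placed at the identity $1 \in \mathbb{G}_m$.

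Next I would smooth the underlying curve by quotienting a Tate curve. Let $E$ be the Tate curve over $R := k[[q]]$, a generalized elliptic curve with special fiber $C$ and smooth generic fiber $E_\eta$, and extend the translation action of $\mu_M$ on $C$ to all of $E$ via $\mu_M \subset \mathbb{G}_m \subset E^{\mathrm{sm}}$. Setting $\widetilde{\cC} := [E/\mu_M]$ yields a family of $1$-marked genus $1$ twisted stable curves over $R$ (with marking pulled from the identity section of $E$) whose special fiber is $\cC$, while the generic fiber $[E_\eta/\mu_M]$ is the smooth non-stacky elliptic curve $E_\eta/\mu_M$, because the translation action of $\mu_M$ on $E_\eta$ is free.

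The substantive step is lifting the level structure. Using the interpretation of Corollary \ref{picard}, $\phi$ corresponds to a group scheme homomorphism $\mathbb{Z}/(N) \to \mathrm{Pic}^0_{\cC/k}$, and we seek a lift $\widetilde{\phi}: \mathbb{Z}/(N) \to \mathrm{Pic}^0_{\widetilde{\cC}/R}$. Representability of the associated $\mu_N$-torsor on the generic fiber is automatic once that fiber is a scheme, so it suffices to produce an $R$-point (possibly after a finite ramified extension of $R$) of the $N$-torsion subscheme of $\mathrm{Pic}^0_{\widetilde{\cC}/R}$ reducing to $\phi(1)$. Using that $\mathrm{Pic}^0_{\widetilde{\cC}/R}$ is smooth by Proposition \ref{picardexact} and that its special fiber decomposes (cf.\ the proof of Lemma \ref{singulartwistedcurves}) as $\mathbb{G}_m \times \mathbb{Z}/(M)$, the $\mu_N \subset \mathbb{G}_m$ component of $\phi(1)$ lifts tautologically to $\mu_N \subset \mathbb{G}_m \subset E^{\mathrm{sm}}$, and the \'etale $\mathbb{Z}/(M)$ component lifts after a suitable ramified extension of $R$ that realizes enough $N$-torsion of $E_\eta/\mu_M$ as rational sections.

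I expect this lifting step to be the main obstacle, especially in characteristics dividing $N$, where the $N$-torsion subscheme of $\mathrm{Pic}^0_{\widetilde{\cC}/R}$ is not \'etale so a naive application of Hensel's lemma fails. The hard point is to verify that the specialization map $\mathrm{Pic}^0_{\widetilde{\cC}_\eta/\eta}[N] \to \mathrm{Pic}^0_{\cC/k}[N]$ is surjective after a sufficiently ramified base change; this reduces to a statement about the N\'eron-model-like structure of $\mathrm{Pic}^0_{\widetilde{\cC}/R}$ viewed as an extension of the coarse Picard by the \'etale piece contributed by the stacky node. The argument for $\overline{\cK}'_{1,1}(\cB\mu_N^2)$ is identical, applied to the pair of homomorphisms corresponding to a $\mu_N^2$-torsor in the same way.
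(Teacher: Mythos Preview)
Your overall strategy is sound and parallel to the paper's: reduce to a geometric point over an algebraically closed field, classify the singular objects as $\mu_M$-stacky $1$-gons via Lemma~\ref{singulartwistedcurves}, and deform using a Tate curve. The gap is exactly the one you flag: you do not resolve the lifting of $\phi(1)$ to an $N$-torsion section of $\mathrm{Pic}^0_{\widetilde{\cC}/R}$ in characteristics dividing $N$, and the sentence about lifting the $\mu_N$-part ``tautologically to $\mu_N\subset\mathbb{G}_m\subset E^{\mathrm{sm}}$'' is not quite right, since $E^{\mathrm{sm}}$ is not $\mathrm{Pic}^0_{\widetilde{\cC}/R}$ and you have given no map between them.

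The paper avoids this difficulty by a different choice of deformation. Rather than forming $[E/\mu_M]$ from the $1$-gon Tate curve, it takes the $N$-gon Tate curve $\cE/k[\![q^{1/N}]\!]$ and sets $\cC=[\cE/\cE^{\mathrm{sm}}[N]]$, whose special fiber is the $\mu_N$-stacky $1$-gon $\cC_0'$. The point is that here one has an explicit isomorphism of finite flat group schemes $\mathrm{Pic}^0_{\cC/R}[N]\cong\cE^{\mathrm{sm}}[N]\cong\mu_N\times\mathbb{Z}/(N)$ over $R$ (this is the content imported from \S\ref{compare}), so lifting a $k$-point $(\zeta,(N/d)a)$ is immediate: $\mu_N(R)=\mu_N(k)$ and $\mathbb{Z}/(N)$ is constant. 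One then passes to the relative coarse moduli space of the resulting map $\cC\to\cB\mu_N$ to land on the desired $\mu_d$-stacky curve $\cC_0$. In effect, the paper trades your unresolved lifting problem for the relative coarse moduli construction plus the identification of $\mathrm{Pic}^0_{\cC/R}[N]$ with $\cE^{\mathrm{sm}}[N]$.

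Your route can also be completed, but it needs two observations you did not make. First, the coarse space $C'$ of $\widetilde{\cC}=[E/\mu_M]$ is again a Tate curve (with parameter $q^M$), so $\mu_N\hookrightarrow C'^{\mathrm{sm}}\cong\mathrm{Pic}^0_{C'/R}\hookrightarrow\mathrm{Pic}^0_{\widetilde{\cC}/R}$ as group schemes over $R$; this is where the $\mu_N$-part of $\phi(1)$ actually lifts, with no ramified extension required. Second, the $\mathbb{Z}/(M)$-part lifts for free: the tautological $\mu_M$-torsor $E\to\widetilde{\cC}$ itself furnishes, via Lemma~\ref{bigpicard}, a section $L\in\mathrm{Pic}^0_{\widetilde{\cC}/R}[M](R)$ whose reduction generates the $\mathbb{Z}/(M)$-factor, so $a\cdot L$ plus a suitable $\mu_N$-correction gives the lift. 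Neither step needs a ramified base change, and neither is in your proposal as written.
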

\begin{proof}
Let $\cC_0$ be a standard $\mu_d$-stacky $1$-gon over an algebraically closed field $k$, and let $\phi_0: \mathbb{Z}/(N) \rightarrow \mathrm{Pic}^0_{\cC_k} \cong \mathbb{G}_m \times \mathbb{Z}/(d)$ be a group scheme homomorphism such that $(\cC_0, \phi_0) \in \overline{\cK}'_{1,1}(\cB \mu_N)(k)$ (so $d|N$ and $\phi_0$ meets every component of $\mathrm{Pic}^0_{\cC_0/k}$). We need to lift $(\cC_0/k, \phi_0)$ to a pair $(\cC, \phi) \in \overline{\cK}'_{1,1}(\cB \mu_N)(A)$ for a local ring $A$ with residue field $k$, such that the generic fiber of $\cC$ is a smooth elliptic curve.

Let $\cC_0'/k$ be a standard $\mu_N$-stacky $1$-gon, and consider the morphism $\cC_0' \rightarrow \cB \mu_N$ corresponding to the group scheme homomorphism $\mathbb{Z}/(N) \rightarrow \mathrm{Pic}^0_{\cC_0'/k} \cong \mathbb{G}_m \times \mathbb{Z}/(N)$ sending $1$ to $(1,N/d)$. This morphism is not representable; the corresponding $\mu_N$-torsor over $\cC_0'$ is as follows:
\begin{center}
\begin{tikzpicture}
\clip (-2,2.9) rectangle (9,7);
\draw (0.2,3.8) -- (-0.907,4.907);
\draw (-0.707,4.507) -- (-0.707,5.907);
\draw (-0.907,5.507) -- (0.2,6.614);
\draw (-0.2,6.414) -- (1.2,6.414);
\begin{scope}[dashed]
\draw (0.8,6.614) -- (1.707,5.707);
\draw (1,4) -- (-0.2,4);
\draw [decorate,decoration=snake] (1.707,5.707) -- (1,4);
\end{scope}
\node at (0,4) [circle,draw,fill,inner sep=2pt,label=185:$\mu_{N/d}$] {};
\node at (-0.707,4.707) [circle,draw,fill,inner sep=2pt,label=190:$\mu_{N/d}$] {};
\node at (-0.707,5.707) [circle,draw,fill,inner sep=2pt,label=170:$\mu_{N/d}$] {};
\node at (0,6.414) [circle,draw,fill,inner sep=2pt,label=135:$\mu_{N/d}$] {};
\draw [name path=curve] (8,6) .. controls (4.5,0) and (4.5,10) .. (8,4);
\node at (7.35,5) [circle,draw,fill,inner sep=2pt,label=0:$\mu_{N}$] {};
\node at (6.5,3.5) [inner sep=0pt,label=270:\textrm{$\cC_0'$}] {};
\node at (0.5,3.5) [inner sep=0pt,label=270:(Stacky $N/d$-gon)] {};
\draw [->] (2.5,5) -- (4,5);
\end{tikzpicture}
\end{center}
We may factor the morphism $\cC_0' \rightarrow \cB\mu_N$ as $\cC_0' \rightarrow \cC_0 \rightarrow \cB \mu_N$, where $\cC_0 \rightarrow \cB \mu_N$ is the relative coarse moduli space (\cite[Thm. 3.1]{AOV2}) of $\cC_0' \rightarrow \cB \mu_N$:
\begin{center}
\begin{tikzpicture}
\clip (-2,2.9) rectangle (9,7);
\draw (0.2,3.8) -- (-0.907,4.907);
\draw (-0.707,4.507) -- (-0.707,5.907);
\draw (-0.907,5.507) -- (0.2,6.614);
\draw (-0.2,6.414) -- (1.2,6.414);
\begin{scope}[dashed]
\draw (0.8,6.614) -- (1.707,5.707);
\draw (1,4) -- (-0.2,4);
\draw [decorate,decoration=snake] (1.707,5.707) -- (1,4);
\end{scope}
\draw [name path=curve] (8,6) .. controls (4.5,0) and (4.5,10) .. (8,4);
\node at (7.35,5) [circle,draw,fill,inner sep=2pt,label=0:$\mu_{d}$] {};
\node at (6.5,3.5) [inner sep=0pt,label=270:\textrm{$\cC_0$}] {};
\node at (0.5,3.5) [inner sep=0pt,label=270:($N/d$-gon)] {};
\draw [->] (2.5,5) -- (4,5);
\end{tikzpicture}
\end{center}
Since this $\cC_0$ is a standard $\mu_d$-stacky $1$-gon, we may identify it with our original twisted curve $\cC_0$. This gives us a morphism $\cC_0' \rightarrow \cC_0$, and the resulting pullback map $\mathrm{Pic}^0_{\cC_0/k} \rightarrow \mathrm{Pic}^0_{\cC'_0/k}$ is the monomorphism 
\begin{align*}
\mathbb{G}_m \times \mathbb{Z}/(d) & \rightarrow \mathbb{G}_m \times \mathbb{Z}/(N) \\
(\zeta, a) & \mapsto (\zeta, \frac{N}{d} \cdot a).
\end{align*}

Let $\cE/k[\![q^{1/N}]\!]$ be an $N$-gon Tate curve, so the special fiber of $\cE$ is a N\'eron $N$-gon, $\cE \otimes k((q^{1/N}))$ is a smooth elliptic curve, and we have an isomorphism $\cE^{\mathrm{sm}}[N] \cong \mu_N \times \mathbb{Z}/(N)$ of finite flat commutative group schemes over $k[\![q^{1/N}]\!]$. Let $Q = (1,1) \in \cE^{\mathrm{sm}}[N]$. The relative effective Cartier divisor 
\begin{displaymath}
D := \sum_{a \in \mathbb{Z}/(N)} [a\cdot Q]
\end{displaymath}
in $\cE^{\mathrm{sm}}$ is a subgroup scheme, \'etale over $k[\![q^{1/N}]\!]$, and the quotient $\overline{\cE} := \cE/D$ is naturally a generalized elliptic curve whose special fiber is a $1$-gon. The stack quotient $\cC := [\cE/\cE^{\mathrm{sm}}[N]]$ is naturally a twisted curve, whose generic fiber is an elliptic curve and whose special fiber is $\cC_0'$. Writing $\pi: \overline{\cE} \rightarrow \cC$ for the natural quotient map, we will see in \S\ref{compare} that for any line bundle $\cL$ on $\overline{\cE}$ there is a canonical decomposition 
\begin{displaymath}
\pi_* \cL \cong \bigoplus_{a \in \mathbb{Z}/(N)} \cL_a,
\end{displaymath}
where each $\cL_a$ is a line bundle on $\cC$.

For a section $R \in \cE^{\mathrm{sm}} (k[\![q^{1/N}]\!])$, we write $\overline{R}$ for its image in $\overline{\cE}^{\mathrm{sm}} (k[\![q^{1/N}]\!])$. Then we have the degree $0$ line bundle $\cL_R := \cL((\overline{R}) - (0_{\overline{\cE}}))$ on $\overline{\cE}$, hence a canonical decomposition 
\begin{displaymath}
\pi_* \cL_R \cong \bigoplus_{a \in \mathbb{Z}/(N)} \cL_{R,a}.
\end{displaymath}
We will see in \S\ref{compare} that the map $\cE^{\mathrm{sm}}[N] \rightarrow \mathrm{Pic}^0_{\cC/k[\![q^{1/N}]\!]} [N]$ sending $R = (\zeta, a) \in \mu_N \times \mathbb{Z}/(N) \cong \cE^{\mathrm{sm}}[N]$ to $\cL_{R,a}$ is an isomorphism of group schemes over $k[\![q^{1/N}]\!]$.

Returning to our original pair $(\cC_0, \phi_0) \in \overline{\cK}'_{1,1}(\cB\mu_N)(k)$, write $\phi(1) = (\zeta, a) \in \mu_N \times \mathbb{Z}/(d) \cong \mathrm{Pic}^0_{\cC_0/k}[N]$. Via the map $\cC_0' \rightarrow \cC_0$ constructed above, this corresponds to $(\zeta, (N/d) \cdot a) \in \mu_N \times \mathbb{Z}/(N) \cong \mathrm{Pic}^0_{\cC'_0/k}[N]$. $\cC'_0$ is the special fiber of the twisted curve $\cC$, and we have an isomorphism over $k[\![q^{1/N}]\!]$ 
\begin{displaymath}
\mathrm{Pic}^0_{\cC/k[\![q^{1/N}]\!]}[N] \cong \cE^{\mathrm{sm}}[N] \cong \mu_N \times \mathbb{Z}/(N).
\end{displaymath}
Now $(\zeta, (N/d)\cdot a)$ lifts to a section of $\mathrm{Pic}^0_{\cC/k[\![q^{1/N}]\!]}[N]$, corresponding to a group scheme homomorphism $\mathbb{Z}/(N) \rightarrow \mathrm{Pic}^0_{\cC/k[\![q^{1/N}]\!]}$, hence to a morphism $\cC \rightarrow \cB \mu_N$. Writing $\overline{\cC} \rightarrow \cB\mu_N$ for the relative coarse moduli space and $\phi: \mathbb{Z}/(N) \rightarrow \mathrm{Pic}^0_{\overline{\cC}/k[\![q^{1/N}]\!]}$ for the corresponding group scheme homomorphism, we see that $\overline{\cC}/k[\![q^{1/N}]\!]$ is a twisted curve with special fiber $\cC_0$ and generic fiber an elliptic curve. $\phi$ extends $\phi_0$ and $(\overline{\cC}, \phi) \in \overline{\cK}'_{1,1}(\cB\mu_N)(k[\![q^{1/N}]\!])$ as desired.

A similar argument of course applies to $\overline{\cK}^\circ_{1,1}(\cB\mu_N^2) \subset \overline{\cK}'_{1,1}(\cB\mu_N^2)$.
\end{proof}

We will require a concrete description of the $\mu_N$-torsor corresponding to a particular sort of $[\Gamma_1(N)]$-structure on an elliptic curve:

\begin{lemma}\label{quotient}
Let $K$ be a field and $E/K$ an elliptic curve. Let $Q \in E(K)$ be a $[\Gamma_1(N)]$-structure on $E$ such that the relative effective Cartier divisor 
\begin{displaymath}
D := \sum_{a = 0}^{N-1} [a\cdot Q]
\end{displaymath}
in $E$ is \'etale over $\mathrm{Spec}(K)$. Let $P \rightarrow E$ be the $\mu_N$-torsor corresponding to $Q$ as in \ref{picard}:
\begin{displaymath}
P = \underline{\mathrm{Spec}}_E \big( \bigoplus_{a = 0}^{N-1} \cL( (a \cdot Q)-(0_E)) \big).
\end{displaymath}
Then $P$ can be naturally identified with the quotient $E/D$, where $D$ is viewed as a subgroup scheme of $E$, \'etale of rank $N$ over $\mathrm{Spec}(K)$, with the quotient map $P \rightarrow E$ corresponding to the natural map $E/D \rightarrow E/E[N] \cong E$.
\end{lemma}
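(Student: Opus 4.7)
The plan is to identify the $\mu_N$-torsor $\hat{f}: E/D \to E$ (dual to the \'etale isogeny $f: E \to E/D$) with $P \to E$ by showing that both correspond to the same group scheme homomorphism under the equivalence of Corollary \ref{picard}.

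First I would establish the basic setup. Since $Q \in E(K)$ has exact order $N$ and $D = \sum_a [aQ]$ is \'etale of rank $N$ over $\mathrm{Spec}(K)$, $D$ is the constant \'etale group scheme $\underline{\mathbb{Z}/(N)}_K$ generated by $Q$. Hence $E/D$ is an elliptic curve, $f:E \to E/D$ is an \'etale isogeny with kernel $D$, and its dual isogeny $\hat{f}: E/D \to E$ has kernel equal to the Cartier dual $D^\vee \cong \mu_N$, exhibiting $E/D$ as a $\mu_N$-torsor. Moreover, $\hat{f} \circ f = [N]_E$, so under the identification $E \cong E/E[N]$ via $[N]_E$ the isogeny $\hat{f}$ coincides with the natural quotient $E/D \to E/E[N]$; this gives the second assertion of the lemma once we have the $E$-scheme identification $P \cong E/D$.

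Next I would show that the homomorphism $\phi': \mathbb{Z}/(N) \to \mathrm{Pic}^0_{E/K} \cong E$ associated to $\hat{f}$ under Corollary \ref{picard} satisfies $\phi'(1) = Q$. By duality of isogenies, under the canonical autoduality $E \cong \mathrm{Pic}^0_{E/K}$ the pullback $\hat{f}^*:\mathrm{Pic}^0_{E/K} \to \mathrm{Pic}^0_{(E/D)/K}$ is identified with $f$ itself. Hence for each $a \in \mathbb{Z}/(N)$,
\begin{displaymath}
\hat{f}^* \cL((aQ) - (0_E)) \cong \cL((f(aQ)) - (0_{E/D})) \cong \cO_{E/D}
\end{displaymath}
since $aQ \in D = \ker f$. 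Via adjunction a trivialization produces a nonzero $\cO_E$-module map $\cL((aQ) - (0_E)) \to \hat{f}_* \cO_{E/D}$, and the Cartier duality pairing $D \times \mu_N \to \mathbb{G}_m$ matches $aQ$ with the character $\chi_a$ of $\mu_N$, so the trivialization may be chosen as a $\mu_N$-eigensection of weight $a$; consequently the image lies in the $\chi_a$-eigenspace of the $\mu_N$-action on $\hat{f}_* \cO_{E/D}$. Assembling over all $a$ yields a map
\begin{displaymath}
\bigoplus_{a \in \mathbb{Z}/(N)} \cL((aQ) - (0_E)) \to \hat{f}_* \cO_{E/D}
\end{displaymath}
of locally free $\cO_E$-modules of rank $N$ that is nonzero on every summand, hence an isomorphism. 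In particular $\phi'(1) = Q$; by Corollary \ref{picard} this means $P$ and $E/D$ are isomorphic as $\mu_N$-torsors over $E$, and applying $\underline{\mathrm{Spec}}_E$ to the above isomorphism of sheaves of algebras realizes the identification explicitly.

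The main obstacle is the eigenspace identification in the second step: bare adjunction produces a map into $\hat{f}_* \cO_{E/D}$ but does not a priori specify where its image sits in the $\mu_N$-eigenspace decomposition. Pinning down that the natural trivializations are $\mu_N$-eigensections of the correct weight---which is ultimately a Cartier-duality statement for the inclusion $D \hookrightarrow E$---is the key technical input. Once this is established, rank comparison yields the isomorphism and compatibility with the algebra structure on $P$ from Corollary \ref{picard} is automatic, since both sides give $\mathbb{Z}/(N)$-graded $\cO_E$-algebras with the same underlying graded module.
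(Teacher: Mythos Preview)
Your outline is sound and takes a genuinely different route from the paper. The paper does not attempt the direct eigenspace identification you propose; instead it first fixes the $\mu_N$-action on $E/D$ via the $e_N$-pairing (the isomorphism $E[N]/D\cong\mu_N$ sending $x\mapsto e_N(Q,x)$), then invokes Lemma~\ref{bigpicard} abstractly to write $E/D=\underline{\mathrm{Spec}}_E(\bigoplus_a\cL_a)$ with $\cL_a\cong\cL((aQ_0)-(0_E))$ for some \emph{a priori unknown} $Q_0\in E[N]$. It then shows $Q\in\langle Q_0\rangle$ by computing $\pi^*\cL((Q)-(0_E))$ via the dual isogeny and observing this pullback is trivial, and finally pins down $Q_0=Q$ (rather than a multiple) by appealing to the definition of $e_N$. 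So the paper trades your direct descent computation for an indirect ``unknown $Q_0$, then identify it'' argument that stays closer to the explicit $e_N$-pairing formalism already set up for use in \S\ref{section2} and in the later properness arguments.

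Your approach is more conceptual, but the step you flag as the main obstacle really is the whole content of the lemma, and the sentence you offer for it does not yet close the gap. Saying ``the Cartier duality pairing matches $aQ$ with $\chi_a$'' records how $D$ and $D^\vee$ pair, but what you need is the compatibility between that pairing and the translation action of $\ker\hat f$ on sections of $\hat f^*\cL((aQ)-(0_E))$: concretely, that translation by $T\in\ker\hat f$ scales your chosen trivialization by $e_f(aQ,T)$, where $e_f$ is the Weil pairing for the isogeny $f$. This is exactly one of the standard definitions of $e_f$, so the step is fillable, but you should either cite it or prove it rather than assert it. Note also that your rank argument (``nonzero on every summand, hence an isomorphism'') only works once you know the summands land in \emph{distinct} eigenspaces, which again presupposes the eigenspace identification; without it a nonzero map on each summand says nothing about injectivity. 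Once that point is secured, your conclusion about the algebra structure is fine: by Corollary~\ref{picard} the rigidified torsor is determined by the homomorphism $1\mapsto Q$, so matching graded pieces suffices.
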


\begin{proof}
Consider the $e_N$-pairing on $E[N]$, a nondegenerate bilinear pairing of finite flat group schemes over $\mathrm{Spec}(K)$: 
\begin{displaymath}
e_N: E[N] \times E[N] \rightarrow \mu_N.
\end{displaymath}
Under our assumptions, the composite map 
\begin{displaymath}
E[N]/D = \{Q\}\times E[N]/D \hookrightarrow D \times E[N]/D \stackrel{e_N}{\rightarrow} \mu_N
\end{displaymath}
is an isomorphism of group schemes over $\mathrm{Spec}(K)$. Then via this isomorphism, $\mu_N$ acts on the quotient $E/D$ through the natural action of the subgroup scheme $E[N]/D \subset (E/D)[N]$, making $E/D$ a $\mu_N$-torsor over $E/E[N] \cong E$ with quotient map the obvious one induced from the factorization of $[N]$ as $E \rightarrow E/D \rightarrow E/E[N] \cong E$.

By Lemma \ref{bigpicard}, we may express the $\mu_N$-torsor $E/D \rightarrow E$ as 
\begin{displaymath}
E/D = \underline{\mathrm{Spec}}_E \big( \bigoplus_{a = 0}^{N-1} \cL_a \big)
\end{displaymath}
for some line bundles $\cL_a \in \mathrm{Pic}^0_{E/K}[N]$, with the algebra structure determined by isomorphisms $\cL_a \otimes \cL_b \cong \cL_{\textrm{$a+b$ mod $N$}}$ and the $\mu_N$-action corresponding to the grading. We have a natural isomorphism of group schemes over $K$ 
\begin{align*}
E & \rightarrow \mathrm{Pic}^0_{E/K} \\
R \in E(K) & \mapsto \cL((R)-(0_E)),
\end{align*}
so we conclude that $\cL_1 \cong \cL((Q_0)-(0_E))$ for some $Q_0 \in E[N]$, and $\cL_a \cong \cL((a\cdot Q_0)-(0_E))$.

Let $\hat{\pi}: E \rightarrow E/D$ be the isogeny dual to $\pi: E/D \rightarrow E/E[N] \cong E$. Identifying $E \cong \mathrm{Pic}^0_{E/K}$ and $E/D \cong \mathrm{Pic}^0_{(E/D)/K}$, $\hat{\pi}$ is simply given by the pullback map $\pi^*: \mathrm{Pic}^0_{E/K} \rightarrow \mathrm{Pic}^0_{(E/D)/K}$. For any line bundle $\cL$ on $E/K$ we have 
\begin{displaymath}
\pi^*(\cL) = \bigoplus_{a = 0}^{N-1} \cL \otimes \cL((a\cdot Q_0)-(0_E)),
\end{displaymath}
viewing the direct sum of line bundles on $E$ as a line bundle on $E/D = \underline{\mathrm{Spec}}_E( \oplus \cL((a\cdot Q_0)-(0_E)))$. In particular, for our original $[\Gamma_1(N)]$-structure $Q$,
\begin{displaymath}
\pi^* (\cL((Q)-(0_E))) = \bigoplus_{a = 0}^{N-1} \cL((Q+a\cdot Q_0)-(0_E)).
\end{displaymath}
But the dual isogeny to $\pi: E/D \rightarrow E/E[N]\cong E$ is the natural quotient map $E \rightarrow E/D$, and this maps $Q$ to $0_E$. Therefore the line bundle $\oplus \cL((Q+a\cdot Q_0)-(0_E))$ on $E/D$ is the trivial line bundle on $E/D$:
\begin{displaymath}
\bigoplus_{a = 0}^{N-1} \cL((Q+a\cdot Q_0)-(0_E)) \cong \bigoplus_{a = 0}^{N-1} \cL((a\cdot Q_0)-(0_E)).
\end{displaymath}
Therefore $Q$ is contained in the subgroup scheme of $E$ generated by $Q_0$: $Q = b\cdot Q_0$ for some $b$. Since $Q_0 \in E[N]$ and $N$ is the minimal positive integer with $N\cdot Q = 0_E$, this implies that $b \in (\mathbb{Z}/(N))^\times$, and in fact by the definition of the $e_N$-pairing we have $b = 1$, i.e. $Q_0 = Q$. Thus
\begin{displaymath}
\underline{\mathrm{Spec}}_E \big( \bigoplus_{a = 0}^{N-1} \cL((a\cdot Q_0)-(0_E)) \big) = \underline{\mathrm{Spec}}_E \big( \bigoplus_{a = 0}^{N-1} \cL((a\cdot Q)-(0_E)) \big),
\end{displaymath}
that is, $E/D \cong P$ with the quotient map $P \rightarrow E$ of the given $\mu_N$-action becoming identified with the natural quotient map $E/D \rightarrow E/E[N] \cong E$.
\end{proof}

\section{Moduli of elliptic curves in $\overline{\cK}_{1,1}(\cB \mu_N)$}

\subsection*{Reduction mod $p$ of $\scr H_1(N)$}

We first describe how the different components of $\overline{\cK}_{1,1}^\circ (\cB \mu_N) \cong \scr H_1(N)$ interact. These results are direct corollaries of \cite[\S13.5]{KM1}. Continue working over an arbitrary base scheme $S$. First consider the case where $N=p^n$ is a prime power. For each $0 \leq m \leq n$ we get a closed immersion 
\begin{displaymath}
\iota^{(p^m)}: \scr Y_1(p^m) \hookrightarrow \scr H_1(p^n)
\end{displaymath}
sending a pair $(E,\phi) \in \scr Y_1(p^m)(T)$, where $\phi:\mathbb{Z}/(p^m) \rightarrow E[p^m]$ is a $[\Gamma_1(p^m)]$-structure on $E/T$, to the pair $(E,\widetilde{\phi}) \in \scr H_1(p^n)(T)$, where $\widetilde{\phi}: \mathbb{Z}/(p^n) \rightarrow E[p^n]$ is obtained from $\phi$ by precomposing with the canonical projection $\mathbb{Z}/(p^n) \rightarrow \mathbb{Z}/(p^m)$. These yield a proper surjection of algebraic stacks
\begin{displaymath}
\scr Y_1(p^n) \sqcup \scr Y_1 (p^{n-1}) \sqcup ... \sqcup \scr Y_1(p) \sqcup \scr Y_1(1) \rightarrow \scr H_1(p^n)
\end{displaymath}
which is an isomorphism over $S[1/p]$.

But this is not an isomorphism over $S \otimes \mathbb{F}_p$. Recall that by Theorem \ref{y1-char-p}, for any perfect field $k$ of characteristic $p$, $\scr Y_1(p^m)_{k}$ is the disjoint union, with crossings at the supersingular points, of components 
\begin{align*}
\scr Y_1(p^m)_{k}^{(m-b,b)}\:\: (0\leq b \leq m),
\end{align*}
where an object of $\scr Y_1(p^m)_{k}^{(m-b,b)}(T)$ is a pair $(E,\phi)$ where $E/T$ is an elliptic curve and $\phi:\mathbb{Z}/(p^m) \rightarrow E[p^m]$ is a $[\Gamma_1(p^m)]$-$(m-b,b)$-cyclic structure on $E/T$ (Definition \ref{abcyclic}). Such an object corresponds via $\iota^{(m)}$ to the pair $(E,\widetilde{\phi}) \in \scr H_1(p^n)(T)$ as described above.

The key observation is Lemma \ref{gamma1comps}: if $\phi: \mathbb{Z}/(p^m) \rightarrow E$ is a $[\Gamma_1(p^m)]$-$(m-b,b)$-cyclic structure on an ordinary elliptic curve $E/T/\mathbb{F}_p$, then $\widetilde{\phi}:=\phi\circ\pi: \mathbb{Z}/(p^n) \rightarrow E$ is a $[\Gamma_1(p^n)]$-$(n-b,b)$-cyclic structure on $E$, where $\pi: \mathbb{Z}/(p^n) \rightarrow \mathbb{Z}/(p^m)$ is the natural projection. \'Etale locally on $T$ such that $E[p^n] \cong \mu_{p^n} \times \mathbb{Z}/(p^n)$ and $E[p^m] \cong \mu_{p^m} \times \mathbb{Z}/(p^m)$, the $[\Gamma_1(p^m)]$-$(m-b,b)$-cyclic structure $\phi$ corresponds to a section of 
\begin{align*}
\mu_{p^m}^\times \times (\mathbb{Z}/(p^b))^\times &\:\:\: \textrm{if $b<m$} \\
\mu_{p^m} \times (\mathbb{Z}/(p^b))^\times &\:\:\: \textrm{if $b=m$},
\end{align*}
and the reason that such a section also gives a $[\Gamma_1(p^n)]$-structure is that in characteristic $p$, unlike in characteristic $\neq p$, if $c <n$ and $\mathbb{Z}/(p^c) \rightarrow \mu_{p^c}$ is a $\mathbb{Z}/(p^c)$-generator then the composite 
\begin{displaymath}
\mathbb{Z}/(p^n) \twoheadrightarrow \mathbb{Z}/(p^c) \rightarrow \mu_{p^c} \hookrightarrow \mu_{p^n}
\end{displaymath}
is a $\mathbb{Z}/(p^n)$-generator. This gives us:
\begin{proposition}\label{y1comps}
Let $k$ be a perfect field of characteristic $p$. $\scr H_1(p^n)_k$ is the disjoint union, with crossings at the supersingular points, of components $Z_b$ for $0 \leq b \leq n$, where 
\begin{displaymath}
Z_b = \bigcup_{b\leq m \leq n} \scr Y_1(p^m)^{(m-b,b)}_k,
\end{displaymath}
identifying each $\scr Y_1(p^m)^{(m-b,b)}_k$ with a closed substack of $\scr H_1(p^n)_k$ via $\iota^{(p^m)}$. Each $\scr Y_1(p^m)^{(m-b,b)}_k$ is ``set-theoretically identified with $Z_b$'' in the sense that $(Z_b)_{\mathrm{red}} = \scr Y_1(p^m)^{(m-b,b)}_{k,\mathrm{red}}$ as substacks of $\scr H_1(p^n)_{k,\mathrm{red}}$ for all $b\leq m \leq n$.
\end{proposition}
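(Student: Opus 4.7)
The plan is to push the decomposition of Theorem \ref{y1-char-p} forward along the proper surjection
\begin{displaymath}
\scr Y_1(p^n) \sqcup \scr Y_1(p^{n-1}) \sqcup \cdots \sqcup \scr Y_1(p) \sqcup \scr Y_1(1) \longrightarrow \scr H_1(p^n)
\end{displaymath}
already constructed, after base change to $k$. By Theorem \ref{y1-char-p}, each $\scr Y_1(p^m)_k$ is the disjoint union with crossings of the strata $\scr Y_1(p^m)^{(m-b,b)}_k$ for $0 \le b \le m$. I will group the images in $\scr H_1(p^n)_k$ by the invariant $b$, setting $Z_b := \bigcup_{b \le m \le n} \iota^{(p^m)}(\scr Y_1(p^m)^{(m-b,b)}_k)$, and then establish the two claims of the proposition: the reduction identification and the crossings decomposition.

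For the reduction identification, the main input is Lemma \ref{gamma1comps}, which places $\iota^{(p^m)}(\scr Y_1(p^m)^{(m-b,b)}_k)$ as a closed substack of $\scr Y_1(p^n)^{(n-b,b)}_k \subset \scr H_1(p^n)_k$. To see that the reductions of these substacks coincide as $m$ ranges in $\{b, b+1, \ldots, n\}$, I restrict to the open dense ordinary locus of $\cM_{1,1,k}$. Over a reduced $\mathbb{F}_p$-scheme $T$ with $E/T$ ordinary, \'etale-locally $E[p^n] \cong \mu_{p^n} \times \mathbb{Z}/(p^n)$; since $\mu_{p^n}$ is infinitesimal, it admits only the trivial section over $T$, so a section of $E[p^n]$ over $T$ is determined by its \'etale component $c \in \mathbb{Z}/(p^n)(T)$. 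Thus a reduced $T$-point of $\iota^{(p^m)}(\scr Y_1(p^m)^{(m-b,b)}_k)$ is simply a pair $(E,c)$ with $c$ of exact order $p^b$, independent of $m$. Taking closures yields the equality $(Z_b)_{\mathrm{red}} = \scr Y_1(p^m)^{(m-b,b)}_{k,\mathrm{red}}$ throughout $\scr H_1(p^n)_{k,\mathrm{red}}$ for every $b \le m \le n$.

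It remains to verify the disjoint-union-with-crossings structure. I apply the Crossings Theorem \ref{crossings} to the finite morphism $\scr H_1(p^n)_k \to \cM_{1,1,k}$; finiteness comes from the propriety and quasi-finiteness of $\overline{\cK}_{1,1}(\cB\mu_{p^n}) \to \overline{\cM}_{1,1}$ via Theorem \ref{propqfinite}, after restricting to the smooth locus of $\overline{\cM}_{1,1}$. The supersingular points are the finitely many geometric $j$-invariants at which the fiber consists of a single $k$-rational point. The $Z_b$ are closed, their union is all of $\scr H_1(p^n)_k$ by the surjection above, and by the reduction identification combined with the crossings decomposition of $\scr Y_1(p^n)_k$ from Theorem \ref{y1-char-p}, each $Z_b$ is set-theoretically the generically smooth irreducible closed substack $\scr Y_1(p^n)^{(n-b,b)}_{k,\mathrm{red}}$ meeting every supersingular fiber in a single $k$-point. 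The conclusion of the Crossings Theorem then yields the desired decomposition.

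The main obstacle is the middle step: verifying set-theoretically that the various $\iota^{(p^m)}(\scr Y_1(p^m)^{(m-b,b)}_k)$ for $b \le m \le n$ have the same reduction inside $\scr H_1(p^n)_{k,\mathrm{red}}$. The essential ingredient is the vanishing of $\mu_{p^n}(T)$ on reduced $\mathbb{F}_p$-schemes, which collapses the dependence on $m$ to the \'etale order data $p^b$; once this is in hand, the rest of the proposition is a formal consequence of the Crossings Theorem together with Theorem \ref{y1-char-p}.
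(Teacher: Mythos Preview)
Your argument is correct and follows essentially the same route as the paper: both use Lemma \ref{gamma1comps} to land $\iota^{(p^m)}(\scr Y_1(p^m)^{(m-b,b)}_k)$ inside $\scr Y_1(p^n)^{(n-b,b)}_k$, then identify reductions over the ordinary locus via the \'etale-local splitting $E[p^n]\cong\mu_{p^n}\times\mathbb{Z}/(p^n)$ and the triviality of $\mu_{p^n}$-sections on reduced $\mathbb{F}_p$-schemes, and finally invoke the Crossings Theorem together with Theorem \ref{y1-char-p}. One minor remark: your appeal to Theorem \ref{propqfinite} for finiteness of $\scr H_1(p^n)_k\to\cM_{1,1,k}$ is a detour, since this morphism is just the universal $E[p^n]$ and hence visibly finite flat of rank $p^{2n}$.
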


To illustrate, visualize $\scr H_1(p^n)_{k,\mathrm{red}}$ as follows:

\begin{center}
\begin{tikzpicture}
\clip (-1,-2.7) rectangle (12,3.2);
\draw (0,0) .. controls (2,2) and (4,-2) .. (6,0);
\draw (6,0) .. controls (8,2) and (9,-1) .. (10,-1);
\draw (0,1) .. controls (2,-3) and (4,4) .. (6,1);
\draw (6,1) .. controls (8,-3) and (9,3) .. (10,3);
\draw (0,0.5) .. controls (2,-0.5) and (3,0.85) .. (6,0.4);
\draw (6,0.4) .. controls (8,0) and (9,0.6) .. (10,0.5);
\draw (0,-0.5) .. controls (2,4.4) and (4,-4.8) .. (6,-0.5);
\draw (6,-0.5) .. controls (8,4.6) and (8.6,-2.5) .. (10,-2.5);
\node at (10,-2.5) [inner sep=0pt,label=0:\textrm{$Z_{n,\mathrm{red}}$}] {};
\node at (10,-1.75) [inner sep=0pt,label=0:...] {};
\node at (10,-1) [inner sep=0pt,label=0:\textrm{$Z_{m,\mathrm{red}}$}] {};
\node at (10,0.5) [inner sep=0pt,label=0:\textrm{$Z_{m-1,\mathrm{red}}$}] {};
\node at (10,1.75) [inner sep=0pt,label=0:...] {};
\node at (10,3) [inner sep=0pt,label=0:\textrm{$Z_{0,\mathrm{red}}$}] {};
\node at (0,2.5) [inner sep=0pt,label=0:\textrm{$\scr H_1(p^n)_{k,\mathrm{red}}$}] {};
\end{tikzpicture}
\end{center}
The closed immersion $\iota^{(p^m)}$ sends the following copy of $\scr Y_1(p^m)_k$ to the obvious closed substack of $\scr H_1(p^n)_k$, contributing nilpotent structure to the components $Z_0,...,Z_m$:

\begin{center}
\begin{tikzpicture}
\clip (-1,-8.3) rectangle (12.5,-3.7);
\draw [gray!70,line width=0.5pt] (0,-7) .. controls (2,-5) and (4,-9) .. (6,-7);
\draw [gray!70,line width=0.5pt] (6,-7) .. controls (8,-5) and (9,-8) .. (10,-8);
\draw [gray!70,line width=3pt] (0,-6) .. controls (2,-10) and (4,-3) .. (6,-6);
\draw [gray!70,line width=3pt] (6,-6) .. controls (8,-10) and (9,-4) .. (10,-4);
\draw [gray!70,line width=1.5pt] (0,-6.5) .. controls (2,-7.5) and (3,-6.15) .. (6,-6.6);
\draw [gray!70,line width=1.5pt] (6,-6.6) .. controls (8,-7) and (9,-6.4) .. (10,-6.5);
\draw (0,-7) .. controls (2,-5) and (4,-9) .. (6,-7);
\draw (6,-7) .. controls (8,-5) and (9,-8) .. (10,-8);
\draw (0,-6) .. controls (2,-10) and (4,-3) .. (6,-6);
\draw (6,-6) .. controls (8,-10) and (9,-4) .. (10,-4);
\draw (0,-6.5) .. controls (2,-7.5) and (3,-6.15) .. (6,-6.6);
\draw (6,-6.6) .. controls (8,-7) and (9,-6.4) .. (10,-6.5);
\node at (10,-8) [inner sep=0pt,label=0:$\scr Y_1(p^m)^{(0,m)}_k$] {};
\node at (10,-6.5) [inner sep=0pt,label=0:$\scr Y_1(p^m)^{(1,m-1)}_k$] {};
\node at (10,-5.25) [inner sep=0pt,label=0:...] {};
\node at (10,-4) [inner sep=0pt,label=0:$\scr Y_1(p^m)^{(m,0)}_k$] {};
\node at (0,-5) [inner sep=0pt,label=0:$\scr Y_1(p^m)_k$] {};
\end{tikzpicture}
\end{center}
The result is that $\scr H_1(p^n)_{k,\mathrm{red}} \cup \scr Y_1(p^m)_k \subseteq \scr H_1(p^n)_k$ looks something like this:

\begin{center}
\begin{tikzpicture}
\clip (-1,-3) rectangle (12.5,3.4);
\draw [gray!70,line width=0.5pt] (0,0) .. controls (2,2) and (4,-2) .. (6,0);
\draw [gray!70,line width=0.5pt] (6,0) .. controls (8,2) and (9,-1) .. (10,-1);
\draw [gray!70,line width=3pt] (0,1) .. controls (2,-3) and (4,4) .. (6,1);
\draw [gray!70,line width=3pt] (6,1) .. controls (8,-3) and (9,3) .. (10,3);
\draw [gray!70,line width=1.5pt] (0,0.5) .. controls (2,-0.5) and (3,0.85) .. (6,0.4);
\draw [gray!70,line width=1.5pt] (6,0.4) .. controls (8,0) and (9,0.6) .. (10,0.5);
\draw (0,0) .. controls (2,2) and (4,-2) .. (6,0);
\draw (6,0) .. controls (8,2) and (9,-1) .. (10,-1);
\draw (0,1) .. controls (2,-3) and (4,4) .. (6,1);
\draw (6,1) .. controls (8,-3) and (9,3) .. (10,3);
\draw (0,0.5) .. controls (2,-0.5) and (3,0.85) .. (6,0.4);
\draw (6,0.4) .. controls (8,0) and (9,0.6) .. (10,0.5);
\draw (0,-0.5) .. controls (2,4.4) and (4,-4.8) .. (6,-0.5);
\draw (6,-0.5) .. controls (8,4.6) and (8.6,-2.5) .. (10,-2.5);
\node at (10,-2.5) [inner sep=0pt,label=0:\textrm{$Z_{n,\mathrm{red}}$}] {};
\node at (10,-1.75) [inner sep=0pt,label=0:...] {};
\node at (10,-1) [inner sep=0pt,label=0:\textrm{$\scr Y_1(p^m)^{(0,m)}_k$}] {};
\node at (10,0.5) [inner sep=0pt,label=0:\textrm{$\scr Y_1(p^m)^{(1,m-1)}_k$}] {};
\node at (10,1.75) [inner sep=0pt,label=0:...] {};
\node at (10,3) [inner sep=0pt,label=0:\textrm{$\scr Y_1(p^m)^{(m,0)}_k$}] {};
\node at (0,2.5) [inner sep=0pt,label=0:\textrm{$\scr H_1(p^n)_{k,\mathrm{red}} \cup \scr Y_1(p^m)_k$}] {};
\end{tikzpicture}
\end{center}
Each $\scr Y_1(p^m)_k$ (for $0\leq m \leq n$) contributes additional nilpotent structure, giving us:

\begin{center}
\begin{tikzpicture}
\clip (-1,-3) rectangle (12,3.4);
\draw [gray!70,line width=2.5pt] (0,0) .. controls (2,2) and (4,-2) .. (6,0);
\draw [gray!70,line width=2.5pt] (6,0) .. controls (8,2) and (9,-1) .. (10,-1);
\draw [gray!70,line width=5pt] (0,1) .. controls (2,-3) and (4,4) .. (6,1);
\draw [gray!70,line width=5pt] (6,1) .. controls (8,-3) and (9,3) .. (10,3);
\draw [gray!70,line width=3.5pt] (0,0.5) .. controls (2,-0.5) and (3,0.85) .. (6,0.4);
\draw [gray!70,line width=3.5pt] (6,0.4) .. controls (8,0) and (9,0.6) .. (10,0.5);
\draw (0,0) .. controls (2,2) and (4,-2) .. (6,0);
\draw (6,0) .. controls (8,2) and (9,-1) .. (10,-1);
\draw (0,1) .. controls (2,-3) and (4,4) .. (6,1);
\draw (6,1) .. controls (8,-3) and (9,3) .. (10,3);
\draw (0,0.5) .. controls (2,-0.5) and (3,0.85) .. (6,0.4);
\draw (6,0.4) .. controls (8,0) and (9,0.6) .. (10,0.5);
\draw (0,-0.5) .. controls (2,4.4) and (4,-4.8) .. (6,-0.5);
\draw (6,-0.5) .. controls (8,4.6) and (8.6,-2.5) .. (10,-2.5);
\node at (10,-2.5) [inner sep=0pt,label=0:\textrm{$Z_n$}] {};
\node at (10,-1.75) [inner sep=0pt,label=0:...] {};
\node at (10,-1) [inner sep=0pt,label=0:\textrm{$Z_m$}] {};
\node at (10,0.5) [inner sep=0pt,label=0:\textrm{$Z_{m-1}$}] {};
\node at (10,1.75) [inner sep=0pt,label=0:...] {};
\node at (10,3) [inner sep=0pt,label=0:\textrm{$Z_0$}] {};
\node at (0,2.5) [inner sep=0pt,label=0:\textrm{$\scr H_1(p^n)_k$}] {};
\end{tikzpicture}
\end{center}

More generally, for arbitrary $N$ we get a closed immersion
\begin{displaymath}
\iota^{(d)}:\scr Y_1(d)\hookrightarrow \scr H_1(N)
\end{displaymath}
for each $d$ dividing $N$, and the resulting map 
\begin{displaymath}
\bigsqcup_{d|N} \scr Y_1(d) \rightarrow \scr H_1(N)
\end{displaymath}
is an isomorphism over $S[1/N]$. It follows immediately from \cite[\S13.5]{KM1} that if $(r,p) = 1$, then for a perfect field $k$ of characteristic $p$, $\scr Y_1(p^mr)_k$ is the disjoint union, with crossings at the supersingular points, of $m+1$ components $\scr Y_1(p^mr)_k^{(m-b,b)}$ ($0\leq b \leq m$), where 
\begin{displaymath}
\scr Y_1(p^mr)_k^{(m-b,b)} := \scr Y_1(r)_k \times_{\overline{\cM}_{1,1,k}} \scr Y_1(p^m)^{(m-b,b)}_k. 
\end{displaymath}
Now let $N = p^nN'$ where $(N',p) = 1$, and for any $r|N'$ let $\scr H_1(N)^r_k \subset \scr H_1(N)_k$ denote the union of the components $\scr Y_1(p^mr)_k$ for $0\leq m \leq n$. In summary:
\begin{corollary}
Let $k$ be a perfect field of characteristic $p$. For any $r$ dividing $N'$, $\scr H_1(N)^r_k$ is the disjoint union, with crossings at the supersingular points, of components $Z^r_b$ for $0 \leq b \leq n$, where 
\begin{displaymath}
Z_b^r = \bigcup_{b\leq m \leq n} \scr Y_1(p^mr)^{(m-b,b)}_k,
\end{displaymath}
identifying each $\scr Y_1(p^mr)^{(m-b,b)}_k$ with a closed substack of $\scr H_1(N)_k$ via $\iota^{(p^mr)}$. Each $\scr Y_1(p^mr)^{(m-b,b)}_k$ is ``set-theoretically identified with $Z_b^r$'' in the sense that $(Z_b)_{\mathrm{red}} = \scr Y_1(p^mr)^{(m-b,b)}_{k,\mathrm{red}}$ as substacks of $\scr H_1(N)_{k,\mathrm{red}}$. $\scr H_1(N)_k$ is the disjoint union of the closed substacks 
\begin{displaymath}
\{\scr H_1(N)^r_k\}_{r|N'}.
\end{displaymath}
\end{corollary}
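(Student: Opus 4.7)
The plan is to reduce to the prime-power case of Proposition \ref{y1comps} by separating the $p$-part from the prime-to-$p$ part of the level. Write $N = p^n N'$ with $\gcd(N',p) = 1$. The Chinese Remainder decomposition $\mathbb{Z}/(N) \cong \mathbb{Z}/(p^n) \times \mathbb{Z}/(N')$ induces a canonical isomorphism
\[
\scr H_1(N) \cong \scr H_1(p^n) \times_{\overline{\cM}_{1,1}} \scr H_1(N')
\]
of stacks over $\overline{\cM}_{1,1}$: a homomorphism $\mathbb{Z}/(N) \to E[N]$ over a base $T$ decomposes uniquely as a pair $(\phi_1,\phi_2)$ with $\phi_1 : \mathbb{Z}/(p^n) \to E[p^n]$ and $\phi_2 : \mathbb{Z}/(N') \to E[N']$.

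Since $N'$ is invertible on $k$, the stack $\scr H_1(N')_k$ is étale over $\overline{\cM}_{1,1,k}$, and the general decomposition valid over $S[1/p]$ gives an honest disjoint union $\scr H_1(N')_k = \bigsqcup_{r\mid N'} \scr Y_1(r)_k$ via the closed immersions $\iota^{(r)}$. Substituting and distributing the fiber product across the disjoint union, the definition of $\scr H_1(N)^r_k$ identifies it with $\scr H_1(p^n)_k \times_{\overline{\cM}_{1,1,k}} \scr Y_1(r)_k$, and in particular $\scr H_1(N)_k = \bigsqcup_{r\mid N'} \scr H_1(N)^r_k$ as required.

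Now I would apply Proposition \ref{y1comps} to $\scr H_1(p^n)_k$, which writes it as the disjoint union with crossings at the supersingular points of components $Z_b = \bigcup_{b \leq m \leq n} \scr Y_1(p^m)^{(m-b,b)}_k$. Base changing along the étale morphism $\scr Y_1(r)_k \to \overline{\cM}_{1,1,k}$ (étale because $\gcd(r,p) = 1$) produces
\[
Z^r_b := Z_b \times_{\overline{\cM}_{1,1,k}} \scr Y_1(r)_k = \bigcup_{b \leq m \leq n} \bigl( \scr Y_1(r)_k \times_{\overline{\cM}_{1,1,k}} \scr Y_1(p^m)^{(m-b,b)}_k \bigr),
\]
and by the Katz–Mazur fiber product identity $\scr Y_1(p^m r)^{(m-b,b)}_k = \scr Y_1(r)_k \times_{\overline{\cM}_{1,1,k}} \scr Y_1(p^m)^{(m-b,b)}_k$ recalled just before the statement, this agrees with the claimed $Z^r_b = \bigcup_{b \leq m \leq n} \scr Y_1(p^m r)^{(m-b,b)}_k$. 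The set-theoretic identification $(Z^r_b)_{\mathrm{red}} = \scr Y_1(p^m r)^{(m-b,b)}_{k,\mathrm{red}}$ then follows from the analogous statement for $Z_b$ by flat base change.

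The only real point to verify is that the "disjoint union with crossings at the supersingular points" structure is preserved under the base change $\scr Y_1(r)_k \to \overline{\cM}_{1,1,k}$. This is where I would be most careful: one checks that the supersingular locus of $\scr Y_1(r)_k$ is exactly the preimage of the supersingular locus of $\overline{\cM}_{1,1,k}$ (clear since the notion only depends on the underlying elliptic curve and the map is étale), and that an étale cover witnessing the hypotheses of the Crossings Theorem \ref{crossings} for $\scr H_1(p^n)_k \to \overline{\cM}_{1,1,k}$ pulls back to a witnessing étale cover for $\scr H_1(N)^r_k \to \scr Y_1(r)_k$; the shape of the complete local rings at a supersingular point is unchanged under étale base change, so the factorization $\prod f_i^{e_i}$ pulls back to the corresponding factorization for the components $Z^r_b$. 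This is essentially formal, which is why I expect the whole argument to be a short corollary rather than a substantive new computation.
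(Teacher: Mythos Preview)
Your proposal is correct and follows essentially the same approach as the paper: the corollary is stated without an explicit proof, immediately after the paper records the fiber-product identity $\scr Y_1(p^mr)_k^{(m-b,b)} = \scr Y_1(r)_k \times_{\overline{\cM}_{1,1,k}} \scr Y_1(p^m)^{(m-b,b)}_k$ and defines $\scr H_1(N)^r_k$, so the intended argument is exactly the Chinese Remainder decomposition plus \'etale base change of Proposition~\ref{y1comps} along $\scr Y_1(r)_k \to \overline{\cM}_{1,1,k}$ that you wrote out. Your extra care in checking that the Crossings Theorem hypotheses survive \'etale base change is a reasonable elaboration of what the paper leaves implicit.
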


\subsection*{Closure of $\scr Y_1(N)$ in $\overline{\cK}_{1,1}(\cB \mu_N)$}

We now want to describe the closure of $\scr Y_1(N)$ in $\overline{\cK}_{1,1} (\cB \mu_N)$, as a moduli stack classifying twisted curves with extra structure. This is accomplished in \cite[\S5.2]{ACV} over $\mathbb{Z}[1/N]$; let us briefly recall how this is done. 

\begin{definition}
Let $G$ be a finite group, viewed as a finite \'etale group scheme over $\mathbb{Z}[1/|G|]$. Fix an isomorphism $G^* \cong G$, after adjoining the necessary roots of unity to $\mathbb{Z}[1/|G|]$. $\scr B_{g,n}^{\textrm{tei}}(G)$ is defined as the substack of $\overline{\cK}_{1,1}(\cB G)$ over $\mathbb{Z}[1/|G|]$ whose objects are \textit{twisted Teichm\"uller $G$-structures} on twisted curves. An object of $\overline{\cK}_{g,n}(\cB G)(T)$ is a pair $(\cC,\phi)$, where $\cC/T$ is a $1$-marked genus $1$-twisted stable curve with non-stacky marking, and $\phi$ is a group scheme homomorphism $G \rightarrow \mathrm{Pic}^0_{\cC/T}$. By definition, $(\cC,\phi) \in \scr B_{g,n}^{\textrm{tei}}(G)(T)$ if and only if, whenever $P \rightarrow \cC$ is a $G$-torsor corresponding to $\phi$ (fppf-locally on $T$), the geometric fibers of $P/T$ are connected.
\end{definition}

$\scr B_{g,n}^{\textrm{tei}}(G)$ is naturally a closed substack of $\overline{\cK}_{g,n}(\cB G)$. Working over $\mathbb{Z}[1/N]$, the choice of a primitive $N^{\mathrm{th}}$ root of unity $\zeta_N$ determines an isomorphism of group schemes $G := \mathbb{Z}/(N) \cong \mu_N$ over $\mathbb{Z}[\zeta_N,1/N]$. Applying the resulting isomorphism $\overline{\cK}_{1,1}(\cB G) \cong \overline{\cK}_{1,1}(\cB\mu_N)$, we may view $\scr Y_1(N)$ as a substack of $\overline{\cK}_{1,1}(\cB G)$, and the closure of $\scr Y_1(N)$ in $\overline{\cK}_{1,1}(\cB G)$ over $\mathbb{Z}[\zeta_N,1/N]$ can be shown to be $\scr B_{1,1}^{\textrm{tei}}(G)$ (indeed, this follows from \ref{gamma1} below). Thus:

\begin{corollary}
The closure of $\scr Y_1(N)$ in $\overline{\cK}_{1,1}(\cB \mu_N)$ over $\mathbb{Z}[1/N]$ is the stack whose objects over a scheme $T/\mathbb{Z}[1/N]$ are pairs $(\cC,\phi)$, where $\cC/T$ is a $1$-marked genus $1$ twisted stable curve with non-stacky marking, $\phi:\mathbb{Z}/(N) \rightarrow \mathrm{Pic}^0_{\cC/S}$ is a group scheme homomorphism, and whenever $P \rightarrow \cC$ is a $\mu_N$-torsor corresponding to $\phi$ (fppf-locally on $T$) the geometric fibers of $P/T$ are connected.
\end{corollary}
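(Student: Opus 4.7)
The plan is to combine the author's observation just preceding the corollary — that over $\mathbb{Z}[\zeta_N,1/N]$ the closure of $\scr Y_1(N)$ in $\overline{\cK}_{1,1}(\cB G)$ is $\scr B^{\textrm{tei}}_{1,1}(G)$ — with faithfully flat descent along the cover $\mathrm{Spec}(\mathbb{Z}[\zeta_N,1/N]) \to \mathrm{Spec}(\mathbb{Z}[1/N])$. Modulo that input, which will follow from Theorem \ref{gamma1}, the proof is essentially descent bookkeeping.

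First, over $\mathbb{Z}[\zeta_N,1/N]$ the chosen isomorphism $\iota:G=\mathbb{Z}/(N)\xrightarrow{\sim}\mu_N$ induces an equivalence between $G$-torsors and $\mu_N$-torsors on any scheme, preserving the underlying scheme of the torsor. Under the corresponding isomorphism $\iota_\ast:\overline{\cK}_{1,1}(\cB G)\xrightarrow{\sim}\overline{\cK}_{1,1}(\cB\mu_N)$, the twisted Teichm\"uller condition defining $\scr B^{\textrm{tei}}_{1,1}(G)$ (that the universal $G$-torsor over the twisted curve has geometrically connected fibers) is carried to the geometrically-connected-fibers condition on the corresponding $\mu_N$-torsor. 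Via the Picard-theoretic description of $\mu_N$-torsors supplied by Lemma \ref{bigpicard} and Corollary \ref{picard}, this is exactly the condition cutting out the substack $\cZ'\subset \overline{\cK}_{1,1}(\cB\mu_N)|_{\mathbb{Z}[\zeta_N,1/N]}$ appearing in the corollary. Combined with the author's stated identification, this shows $\cZ'$ is the closure of $\scr Y_1(N)$ in $\overline{\cK}_{1,1}(\cB\mu_N)|_{\mathbb{Z}[\zeta_N,1/N]}$.

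Next I would descend to $\mathbb{Z}[1/N]$. Let $\cZ\subset \overline{\cK}_{1,1}(\cB\mu_N)|_{\mathbb{Z}[1/N]}$ denote the substack defined by the statement. Since $\cC/T$ is proper and any representable $\mu_N$-torsor $P\to\cC$ inherits properness over $T$, geometric connectedness of the fibers of $P/T$ is a closed condition, so $\cZ$ is a closed substack. By the previous paragraph $\cZ|_{\mathbb{Z}[\zeta_N,1/N]}=\cZ'$, which is the closure of $\scr Y_1(N)|_{\mathbb{Z}[\zeta_N,1/N]}$. Because $\mathbb{Z}[1/N]\to \mathbb{Z}[\zeta_N,1/N]$ is faithfully flat (even \'etale) and formation of closure commutes with flat base change, the closure of $\scr Y_1(N)$ in $\overline{\cK}_{1,1}(\cB\mu_N)|_{\mathbb{Z}[1/N]}$ also base-changes to $\cZ'$. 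Faithfully flat descent for closed substacks then forces $\cZ=\overline{\scr Y_1(N)}$ as closed substacks of $\overline{\cK}_{1,1}(\cB\mu_N)|_{\mathbb{Z}[1/N]}$.

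The main technical point is the Step 1 dictionary: verifying that the ACV twisted Teichm\"uller condition, phrased in terms of connected $G$-torsors, translates under $\iota_\ast$ and the Cartier-dual equivalence of Lemma \ref{bigpicard} into the connected $\mu_N$-torsor condition appearing in the corollary. This is formal once one chases the construction $P=\underline{\mathrm{Spec}}_\cC\bigl(\bigoplus_{\chi\in X}\cL_\chi\bigr)$ and notes that $\iota_\ast$ preserves the underlying scheme of a torsor; after that, the descent step is routine.
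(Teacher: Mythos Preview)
Your proposal is correct and follows essentially the same approach as the paper. The paper does not give a separate proof of the Corollary; it simply writes ``Thus:'' after the paragraph identifying, over $\mathbb{Z}[\zeta_N,1/N]$, the closure of $\scr Y_1(N)$ with $\scr B_{1,1}^{\textrm{tei}}(G)$ via the isomorphism $G\cong\mu_N$ (deferring the key input to Theorem~\ref{gamma1}). You have reproduced this argument and additionally made explicit the faithfully-flat descent step from $\mathbb{Z}[\zeta_N,1/N]$ down to $\mathbb{Z}[1/N]$, which the paper leaves implicit.
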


However, in characteristics dividing $N$, simply requiring the $\mu_N$-torsors to have connected geometric fibers will not give us a moduli stack isomorphic to the closure of $\scr Y_1(N)$ in $\overline{\cK}_{1,1}(\cB\mu_N)$. For example, the group scheme $\mu_{p^n}$ is itself connected over any field of characteristic $p$; so any $\mu_{p^n}$-torsor over an $n$-marked genus $g$ twisted stable curve over a field of characteristic $p$ will automatically be connected. So if the above result held over a base scheme $S \in \mathrm{Sch}/\mathbb{F}_p$, it would say the closure of $\scr Y_1(p^n)$ in $\overline{\cK}_{1,1}(\cB\mu_{p^n})$ is all of $\overline{\cK}'_{1,1}(\cB\mu_{p^n})$ (the substack of $\overline{\cK}_{1,1}(\cB\mu_{p^n})$ where the marking is representable). We will see that this is not true; the closure of $\scr Y_1(p^n)$ turns out to be finite and flat of constant rank $p^{2n}-p^{2n-2}$ over $\overline{\cM}_{1,1}$, while $\overline{\cK}'_{1,1}(\cB\mu_{p^n})$ turns out to be finite and flat of constant rank $p^{2n}$ over $\overline{\cM}_{1,1}$.

A $\mu_N$-torsor over $\cC/S$ determines a group scheme homomorphism $\mathbb{Z}/(N) \rightarrow \mathrm{Pic}^0_{\cC/S}$ (cf. \ref{bigpicard}), and over $\mathbb{Z}[1/N]$ a $\mu_N$-torsor with connected geometric fibers corresponds to a group scheme homomorphism which is injective. We are therefore led to consider Drinfeld structures on our twisted curves:

\begin{definition}
Let $\cC/S$ be a $1$-marked genus $1$ twisted stable curve with no stacky structure at its marking. A \textit{$[\Gamma_1(N)]$-structure} on $\cC$ is a group scheme homomorphism $\phi: \mathbb{Z}/(N) \rightarrow \mathrm{Pic}^0_{\cC/S}$ such that:
\begin{itemize}
  \item the relative effective Cartier divisor 
  \begin{displaymath}
  D := \sum_{a \in \mathbb{Z}/(N)} [\phi(a)]
  \end{displaymath}
  in $\mathrm{Pic}^0_{\cC/S}$ is an $S$-subgroup scheme, and 
  \item for every geometric point $\overline{p} \rightarrow S$, $D_{\overline{p}}$ meets every irreducible component of $(\mathrm{Pic}^0_{\cC/S})_{\overline{p}} = \mathrm{Pic}^0_{\cC_{\overline{p}}/k(\overline{p})}$.
\end{itemize}
Over any scheme $S$, we define $\scr X^{\mathrm{tw}}_1(N)$ to be the substack of $\overline{\cK}_{1,1}(\cB \mu_N)$ whose objects over an $S$-scheme $T$ are pairs $(\cC,\phi) \in \overline{\cK}_{1,1}(\cB\mu_N)(T)$ such that $\phi$ is a $[\Gamma_1(N)]$-structure on $\cC$.
\end{definition}

Note that if $\cC/S$ is a $1$-marked genus $1$ stable curve (non-twisted), so that every geometric fiber of $\cC/S$ is either a smooth elliptic curve or a N\'eron $1$-gon, then this agrees with the definition of a $[\Gamma_1(N)]$-structure on $\cC/S$ as given in \cite[2.4.1]{C} and in Definition \ref{gamma1elliptic} above, identifying a $[\Gamma_1(N)]$-structure $P \in \cC^{\mathrm{sm}}(S)$ with its corresponding group scheme homomorphism $\mathbb{Z}/(N) \rightarrow \cC^{\mathrm{sm}},1\mapsto P$, since $\mathrm{Pic}^0_{\cC/S} \cong \cC^\mathrm{sm}$ in this case.

\begin{theorem}[Restatement of \ref{firstgamma1}]\label{gamma1}
Let $S$ be a scheme and $\scr X^{\mathrm{tw}}_1(N)$ the stack over $S$ classifying $[\Gamma_1(N)]$-structures on $1$-marked genus $1$ twisted stable curves with non-stacky marking. Then $\scr X^{\mathrm{tw}}_1(N)$ is a closed substack of $\overline{\cK}_{1,1}(\cB \mu_N)$, which contains $\scr Y_1(N)$ as an open dense substack.
\end{theorem}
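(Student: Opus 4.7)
The argument has three parts. First I verify that the conditions defining $\scr X^{\mathrm{tw}}_1(N)$ inside $\overline{\cK}_{1,1}(\cB\mu_N)$ are closed: using Corollary \ref{picard} to identify $\overline{\cK}_{1,1}(\cB\mu_N)$ with pairs $(\cC,\phi)$ where $\phi : \mathbb{Z}/(N) \to \mathrm{Pic}_{\cC/T}$ (with representability of the associated $\mu_N$-torsor), the conditions that $\phi$ factor through $\mathrm{Pic}^0_{\cC/T}$, that $D := \sum_a [\phi(a)]$ be a relative subgroup scheme, and that $D$ meet every irreducible component of every geometric fiber of $\mathrm{Pic}^0_{\cC/T}$, are all closed. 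Second, smoothness of $\cC/T$ is an open condition in $\overline{\cK}'_{1,1}(\cB\mu_N)$, and on the smooth locus the canonical isomorphism $E \simeq \mathrm{Pic}^0_{E/T}$ identifies the twisted-curve notion of a $[\Gamma_1(N)]$-structure with that of Definition \ref{gamma1elliptic}, so $\scr Y_1(N)$ is precisely the pullback of the smooth locus and hence is open in $\scr X^{\mathrm{tw}}_1(N)$.

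The substantive content is density. Since $\scr X^{\mathrm{tw}}_1(N)$ is closed in the proper stack $\overline{\cK}_{1,1}(\cB\mu_N)$ it is itself proper, so to show $\overline{\scr Y_1(N)} = \scr X^{\mathrm{tw}}_1(N)$ it suffices to verify the valuative criterion for the closure: for every DVR $R$ with fraction field $K$ and every $(E_K, Q_K) \in \scr Y_1(N)(K)$, construct an extension $(\cC, \phi) \in \scr X^{\mathrm{tw}}_1(N)(R)$ (uniqueness is automatic from properness of the ambient stack). Using properness of $\scr X_1(N)$ over $\mathrm{Spec}(\mathbb{Z})$, after possibly a finite extension of $R$ one first extends $(E_K, Q_K)$ to a generalized-elliptic family $(\overline{E}, \overline{Q}) \in \scr X_1(N)(R)$. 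If $\overline{E}_k$ is smooth the extension already lies in $\scr Y_1(N)(R)$, so assume $\overline{E}_k$ is a N\'eron polygon.

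To convert this generalized elliptic model into a twisted-curve model, let $\overline{D} = \sum_a [a \cdot \overline{Q}] \subseteq \overline{E}^{\mathrm{sm}}$ be the finite flat rank-$N$ subgroup scheme generated by $\overline{Q}$ and form $\overline{E}/\overline{D}$, itself a generalized elliptic curve. When $N$ is invertible on $R$, Lemma \ref{quotient} applied relatively identifies $\overline{E}/\overline{D}$ as a $\mu_N$-torsor over the candidate twisted curve $\cC := [(\overline{E}/\overline{D})/\mu_N]$, with $\mu_N$ acting through the $e_N$-pairing identification $\overline{E}^{\mathrm{sm}}[N]/\overline{D} \simeq \mu_N$; the special fiber of $\cC$ then works out to a standard $\mu_d$-stacky $1$-gon (Example \ref{stacky1gon}, Lemma \ref{singulartwistedcurves}) for the appropriate $d \mid N$, and the $[\Gamma_1(N)]$-structure $\phi$ is read off from the induced homomorphism $\mathbb{Z}/(N) \to \mathrm{Pic}^0_{\cC/R}$. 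In characteristic dividing $N$ the $e_N$-pairing degenerates and $\overline{E}^{\mathrm{sm}}[N]$ need not be \'etale, so I instead invoke Lemma \ref{bigpicard}: the line bundles $\cL_a := \cL\bigl((a \cdot \overline{Q}) - (0_{\overline{E}})\bigr)$ assemble into a morphism of Picard stacks $\mathbb{Z}/(N) \to \scr Pic_{\overline{E}/R}$ and hence, via the cyclotomic-torsor/line-bundle dictionary, directly produce a $\mu_N$-torsor; the twisted curve $\cC$ is recovered as the relative coarse moduli space of this torsor, and $\phi$ from the corresponding Picard-stack morphism.

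The main obstacle is precisely this last construction in positive or mixed characteristic: one must verify that the twisted curve produced from the Picard-stack data has the correct local structure at its nodes (matching Lemma \ref{singulartwistedcurves}), that the resulting $\mu_N$-torsor is representable so that $(\cC,\phi)$ genuinely lies in $\overline{\cK}_{1,1}(\cB\mu_N)$ rather than just in the enlarged Picard stack, and that the divisor conditions for $\phi$ on $\mathrm{Pic}^0_{\cC/R}$ propagate from those for $\overline{Q}$ on $\overline{E}^{\mathrm{sm}}$ via the short exact sequence of Proposition \ref{picardexact}. Representability is the key point, to be handled through the uniformizing-line-bundle characterization used in the proof of Lemma \ref{singulartwistedcurves}.
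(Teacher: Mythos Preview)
Your overall toolkit---reducing to the generalized elliptic model via properness of $\scr X_1(N)$, using the $e_N$-pairing to identify $\mu_N$-actions when $N$ is invertible, and appealing to the line-bundle/cyclotomic-torsor dictionary of Lemma \ref{bigpicard} in bad characteristic---is exactly what the paper uses. But the logical architecture is inverted, and this creates a genuine gap.

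The valuative-criterion computation you describe (start with $(E_K,Q_K)\in\scr Y_1(N)(K)$, produce $(\cC,\phi)\in\scr X^{\mathrm{tw}}_1(N)(R)$) does \emph{not} show density. It shows only that points of $\overline{\scr Y_1(N)}$ with smooth generic fiber extend inside $\scr X^{\mathrm{tw}}_1(N)$, and by uniqueness of limits in a proper stack this is vacuous once you already know both substacks are closed. Density requires the opposite direction: given a boundary point $(\cC_0,\phi_0)\in\scr X^{\mathrm{tw}}_1(N)(k)$ with $\cC_0$ singular, deform it to a family whose generic fiber is smooth. The paper handles this separately via Lemma \ref{dense}. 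In the paper's organization, the valuative-criterion work you sketch is used to prove \emph{closedness} (properness) of $\scr X^{\mathrm{tw}}_1(N)$, not density---and for that purpose it must also treat the case where $\cC_\eta$ is already a stacky $1$-gon (the paper's Lemma \ref{genericnode}), which your outline omits.

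There is also a concrete problem with your bad-characteristic construction. You propose to form the line bundles $\cL_a=\cL((a\cdot\overline{Q})-(0_{\overline{E}}))$ on the generalized elliptic curve $\overline{E}$, obtain a $\mu_N$-torsor from them, and then ``recover $\cC$ as the relative coarse moduli space of this torsor.'' But $\overline{E}$ is already a scheme, so the map $\overline{E}\to\cB\mu_N$ is representable and its relative coarse moduli space is $\overline{E}$ itself; no twisted curve appears. The paper's construction is more delicate: in pure characteristic $p$ with $N=p^n$ (Lemma \ref{purechar}(b)) one first isolates the maximal $m$ for which $Q_\eta$ generates an \'etale $[\Gamma_1(p^m)]$-structure, passes to a generalized elliptic curve $\scr E$ with $p^m$-gon special fiber, forms the twisted curve as the \emph{stack quotient} $\cC=[\scr E/\scr E^{\mathrm{sm}}[p^m]]$, and then uses the $e_{p^m}$-pairing on the \'etale part (which is still nondegenerate) to exhibit $\scr E/D$ as a representable $\mu_{p^m}$-torsor over $\cC$; the remaining $\mu_{p^{n-m}}$ is handled trivially. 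In mixed characteristic (Lemma \ref{mixedchar}) a further splitting $Q=Q^1+Q^2$ into an \'etale-order part and a part specializing into the identity component is required. Your sketch does not produce the stacky quotient, and the ``relative coarse moduli space'' step as written is a no-op.
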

In particular $\scr X^{\mathrm{tw}}_1(N)$ is flat over $S$ with local complete intersection fibers, and is proper and quasi-finite over $\overline{\cM}_{1,1}$.

\begin{remark}
Although this gives a new modular compactification of $\scr Y_1(N)$, it should be noted that the proof of the theorem relies in several places on the proof in \cite{C} that the moduli stack classifying $[\Gamma_1(N)]$-structures on generalized elliptic curves is a proper algebraic stack over $\overline{\cM}_{1,1}$.
\end{remark}

\begin{proof}[Proof of \ref{gamma1}]
The main point is to verify the valuative criterion of properness for $\scr X^{\mathrm{tw}}_1(N)$, which implies $\scr X^{\mathrm{tw}}_1(N)$ is a closed substack of $\overline{\cK}_{1,1}(\cB \mu_N)$. It follows from Lemma \ref{dense} that $\scr Y_1(N)$ is dense.

Let $R$ be a discrete valuation ring with $T := \mathrm{Spec}(R) \in \mathrm{Sch}/S$; write $\eta = \mathrm{Spec}(K)$ for the generic point of $T$ and $s = \mathrm{Spec}(k)$ for the closed point. Let $(\cC_\eta,\phi_\eta) \in \scr X^{\mathrm{tw}}_1(N)(\eta)$, so $\cC_\eta/K$ is a $1$-marked genus $1$ twisted stable curve with non-stacky marking and $\phi_\eta:\mathbb{Z}/(N) \rightarrow \mathrm{Pic}^0_{\cC_\eta/K}$ is a $[\Gamma_1(N)]$-structure on $\cC_\eta$. Since $\overline{\cK}_{1,1}(\cB\mu_N)$ is proper over $S$, there is a discrete valuation ring $R_1$ containing $R$ as a local subring, with corresponding morphism of spectra $T_1 \rightarrow T$ over $S$, such that the pair $(\cC_\eta \times_T T_1, (\phi_\eta)_{T_1})$ extends to a pair $(\cC_{T_1}, \phi_{T_1}) \in \overline{\cK}_{1,1}(\cB \mu_N)(T)$; and whenever such an extension exists, it is unique. Therefore it suffices to show that for some such $R_1$ and $T_1$, there exists a $[\Gamma_1(N)]$-structure $\phi_{T_1}$ on a $1$-marked genus $1$ twisted stable curve $\cC_{T_1}$ extending the $[\Gamma_1(N)]$-structure $(\phi_\eta)_{T_1}$ on $\cC_\eta \times_T T_1$. This is accomplished in Lemmas \ref{completelysmooth}-\ref{mixedchar} below.

\begin{terminology}
In the following lemmas and their proofs, ``base change on $R$'' will refer to replacing $R$ with a discrete valuation ring $R_1$ as above.
\end{terminology}

By \cite[IV.1.6]{DR}, after a base change on $R$ we may assume that the minimal proper regular model of the coarse space $C_\eta$ of $\cC_\eta$ is a generalized elliptic curve. We maintain this assumption for the rest of the proof.

\begin{lemma}\label{completelysmooth}
Suppose $\cC_\eta/K$ is a smooth elliptic curve whose minimal proper regular model over $R$ is smooth, and $\phi_\eta$ is a $[\Gamma_1(N)]$-structure on $\cC_\eta$. Then after base change on $R$, $(\cC_\eta,\phi_\eta)$ extends to a $[\Gamma_1(N)]$-structure $\phi$ on a smooth elliptic curve $\cC/R$.
\end{lemma}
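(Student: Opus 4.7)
The plan is to reduce to Conrad's properness theorem for $\scr X_1(N)$ over $\overline{\cM}_{1,1}$ (quoted in the excerpt), using the fact that in the smooth case the twisted-curve definition of a $[\Gamma_1(N)]$-structure reduces to the classical (Drinfeld) one via the canonical isomorphism $\cC \cong \mathrm{Pic}^0_{\cC/R}$.

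First, since the minimal proper regular model $\cC/R$ is a smooth elliptic curve, it is tautologically a generalized elliptic curve and defines a morphism $\mathrm{Spec}(R) \to \overline{\cM}_{1,1}$. The canonical identification $\mathrm{Pic}^0_{\cC_\eta/K} \cong \cC_\eta$ converts the twisted structure $\phi_\eta$ into a group scheme homomorphism $\mathbb{Z}/(N) \to \cC_\eta$, and under this identification the defining conditions for a twisted $[\Gamma_1(N)]$-structure coincide with the classical ones: the divisor $\sum_a [\phi_\eta(a)]$ is the same in both formulations, and the ``meets every component'' condition is automatic because $\cC_\eta$ is (geometrically) irreducible. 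So $(\cC_\eta,\phi_\eta)$ determines a $K$-point of $\scr X_1(N) \times_{\overline{\cM}_{1,1}} \mathrm{Spec}(R)$.

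Next, by Conrad's theorem, $\scr X_1(N) \to \overline{\cM}_{1,1}$ is proper; hence the fiber product above is proper over $\mathrm{Spec}(R)$. Applying the valuative criterion of properness to this stack, after replacing $R$ by a suitable extension $R_1$, the $K$-point $\phi_\eta$ extends to an $R_1$-point. Concretely this gives a classical $[\Gamma_1(N)]$-structure $\widetilde{\phi}:\mathbb{Z}/(N)\to \cC_{R_1}$ on the smooth elliptic curve $\cC_{R_1}/R_1$ restricting to $\phi_\eta$ on the generic fiber.

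Finally, the canonical isomorphism $\cC_{R_1} \cong \mathrm{Pic}^0_{\cC_{R_1}/R_1}$ converts $\widetilde{\phi}$ back into a group scheme homomorphism $\phi: \mathbb{Z}/(N) \to \mathrm{Pic}^0_{\cC_{R_1}/R_1}$; the subgroup-scheme condition on $\sum_a [\phi(a)]$ transports directly, and the geometric-component condition is again automatic from smoothness. Thus $\phi$ is the desired twisted $[\Gamma_1(N)]$-structure on $\cC_{R_1}$ extending $\phi_\eta$.

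The only real content here is Conrad's properness theorem, invoked as a black box; the main technical point to justify is the equivalence between the twisted and classical notions of $[\Gamma_1(N)]$-structure on a smooth elliptic curve, which is however essentially tautological since $\mathrm{Pic}^0_{\cC/R}$ literally equals $\cC$ in this case. An alternative, more elementary version of the same argument avoids base change entirely: $\phi_\eta(1)\in \cC_\eta(K)$ extends uniquely to $\cC(R)$ by properness of $\cC/R$, and the subgroup-scheme condition on the resulting relative divisor is closed on $\mathrm{Spec}(R)$, hence holds everywhere since it holds at the generic point of a DVR.
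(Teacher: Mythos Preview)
Your proposal is correct and follows the same approach as the paper: the paper's proof is a single sentence invoking properness of $\scr X_1(N)$ from \cite[1.2.1]{C}, and you have simply unpacked the implicit identifications (twisted versus classical $[\Gamma_1(N)]$-structures on a smooth elliptic curve via $\cC \cong \mathrm{Pic}^0_{\cC/R}$) that the paper takes for granted. Your alternative elementary argument at the end is a pleasant bonus not present in the paper.
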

\begin{proof}
This is immediate from the fact that the stack $\scr X_1(N)$ classifying $[\Gamma_1(N)]$-structures on generalized elliptic curves is proper (\cite[1.2.1]{C}).
\end{proof}

\begin{lemma}\label{genericnode}
Suppose $(\cC_\eta,\phi_\eta) \in \scr X^{\mathrm{tw}}_1(N)(\eta)$, such that the coarse space $C_\eta$ of $\cC_\eta$ is singular. Then after base change on $R$, $(\cC_\eta,\phi_\eta)$ extends to $(\cC,\phi) \in \scr X^{\mathrm{tw}}_1(N)(R)$.
\end{lemma}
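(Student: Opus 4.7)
The strategy is to exploit the fact that $\cC_\eta$, having singular coarse space, must be (after a suitable finite base change on $R$) a standard $\mu_d$-stacky N\'eron $1$-gon; such a curve admits a canonical ``constant'' extension over $R$, the level structure extends by finiteness of $\mu_N$, and the $[\Gamma_1(N)]$-structure condition is preserved precisely because everything is pulled back from a constant family.

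First I would analyze the generic fiber. Lemma \ref{singulartwistedcurves} applied to a geometric generic fiber shows that $\cC_\eta \otimes_K \overline{K}$ is a standard $\mu_d$-stacky N\'eron $1$-gon for some uniquely determined positive integer $d$ (the local index at the node). The explicit local model at the node (Definition \ref{twistedcurvedef}) together with a Galois descent argument gives, after passing to a finite extension $K'/K$, an isomorphism $\cC_\eta \otimes_K K' \cong [C^{(1)}_{K'}/\mu_d]$, where $C^{(1)}_{K'}$ is the standard N\'eron $1$-gon over $K'$ and $\mu_d$ acts by multiplication on the smooth locus $\mathbb{G}_m$. Replacing $R$ by a DVR $R'$ dominating it whose fraction field contains $K'$, and setting $\cC := [C^{(1)}_{R'}/\mu_d]$, we obtain a standard $\mu_d$-stacky N\'eron $1$-gon over $R'$ (in particular a $1$-marked genus $1$ twisted stable curve with non-stacky marking) whose generic fiber is $\cC_\eta \otimes_K K'$.

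Next I would extend the level structure. By the Remark preceding Lemma \ref{singulartwistedcurves}, $\mathrm{Pic}^0_{\cC/R'}$ is canonically isomorphic to $\mathbb{G}_{m,R'} \times \mathbb{Z}/(d)_{R'}$, so the base change of $\phi_\eta$ to $K'$ is determined by a pair $(\zeta_\eta, a) \in \mu_N(K') \times \mathbb{Z}/(d)$. After one more finite base change if necessary, $\zeta_\eta$ extends to a section $\zeta \in \mu_N(R')$, since $\mu_{N,R'}$ is finite (hence proper) over the DVR $R'$. Setting $\phi(1) := (\zeta, a)$ then defines the required homomorphism $\phi: \mathbb{Z}/(N) \to \mathrm{Pic}^0_{\cC/R'}$ extending $\phi_\eta \otimes_K K'$.

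Finally I would verify that $\phi$ is a $[\Gamma_1(N)]$-structure. The pair $(\cC, \phi)$ is defined by pullback from a subring $A \subset R'$ containing $\zeta$ and $a$, so the relative effective Cartier divisor $D = \sum_{j} [\phi(j)]$ in $\mathrm{Pic}^0_{\cC/R'}$ is the pullback of a divisor over $A$. The closed condition that the multiplication of $\mathrm{Pic}^0_{\cC/R'}$ restricts to a map $D \times_{R'} D \to D$ holds on the generic fiber by hypothesis on $\phi_\eta$; since $D \times_{R'} D$ is flat over $R'$ and $D$ is closed (hence separated) in $\mathrm{Pic}^0_{\cC/R'}$, this closed condition propagates from the generic fiber to all of $R'$, so $D$ is a subgroup scheme. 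The condition that $D$ meets every irreducible component of every geometric fiber of $\mathrm{Pic}^0_{\cC/R'}$ is constant along the family and holds because it holds on the generic fiber. This gives $(\cC, \phi) \in \scr X^{\mathrm{tw}}_1(N)(R')$, as required. The main obstacle I expect is the descent in the first step, making rigorous the claim that $\cC_\eta$ becomes literally $[C^{(1)}_{K'}/\mu_d]$ after a finite extension and not merely a twist of it; once that is in hand, the remaining steps are explicit constructions and the $[\Gamma_1(N)]$-structure condition is forced by the constancy of the family.
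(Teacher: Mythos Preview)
Your proposal is correct and follows essentially the same approach as the paper: reduce (after base change) to $\cC_\eta$ being a standard $\mu_d$-stacky N\'eron $1$-gon, extend it constantly over $R$, extend $\phi_\eta(1)=(\zeta,a)$ using finiteness of $\mu_N$, and check the $[\Gamma_1(N)]$-condition. The only difference is in the final verification: the paper observes directly that $a\in(\mathbb{Z}/(d))^\times$ (since $\phi_\eta$ meets every component of $\mathbb{G}_m\times\mathbb{Z}/(d)$), which immediately gives that $\phi$ is a $[\Gamma_1(N)]$-structure on the constant family, whereas you argue via propagation of the closed subgroup condition from the generic fiber; both are valid, and the paper's direct check is shorter here.
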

\begin{proof}
After a base change on $R$, we may assume that $\cC_\eta/K$ is the standard $\mu_d$-stacky N\'eron $1$-gon as in Example \ref{stacky1gon}, for some $d\geq 1$; since $\cC_\eta$ admits a $[\Gamma_1(N)]$-structure and $\mathrm{Pic}^0_{\cC_\eta/K} \cong \mathbb{G}_{m,K} \times \mathbb{Z}/(d)$, we have $d|N$. 

\begin{center}
\begin{tikzpicture}
\clip (-1,-1.6) rectangle (5,1.2);
\draw [name path=curve] (3,1) .. controls (-0.5,-5) and (-0.5,5) .. (3,-1);
\node at (2.35,0) [circle,draw,fill,inner sep=2pt,label=0:$\mu_{d}$] {};
\node at (1.8,-1) [inner sep=0pt,label=270: Standard $\mu_d$-stacky N\'eron $1$-gon] {};
\end{tikzpicture}
\end{center}
Let $\cC/R$ be the standard $\mu_d$-stacky N\'eron $1$-gon over $R$, so $\cC$ is a genus $1$ twisted curve over $R$ extending $\cC_\eta$ (and of course the marking of $\cC_\eta$ extends to $\cC$). After further base change on $R$ we may assume $\mu_N(K) = \mu_N(R)$. Let $\phi_\eta(1) = (\zeta,a) \in \mathbb{G}_m(K) \times \mathbb{Z}/(d)$; since $\phi_\eta$ is a $[\Gamma_1(N)]$-structure on $\cC_\eta$, we have $\zeta \in \mu_N(K)$ and $a \in (\mathbb{Z}/(d))^\times$. Since $\mu_N(K) = \mu_N(R)$, this extends to a section $(\zeta,a) \in \mathbb{G}_m(R) \times \mathbb{Z}/(d)$, determining a group scheme homomorphism $\phi: \mathbb{Z}/(N) \rightarrow \mathbb{G}_{m,R} \times \mathbb{Z}/(d)$ with $\phi(1) = (\zeta,a)$. Since $a \in (\mathbb{Z}/(d))^\times$, $\phi$ is a $[\Gamma_1(N)]$-structure on $\cC$.
\end{proof}

\begin{lemma}\label{purechar}
Suppose $R$ has pure characteristic $p \geq 0$, and $\cC_\eta = E_\eta$ is an elliptic curve over $K$ whose minimal proper regular model over $R$ is not smooth; that is, $E_\eta/K$ is an ordinary elliptic curve with bad reduction. If $\phi_\eta: \mathbb{Z}/(N) \rightarrow \mathrm{Pic}^0_{\cC_\eta/K}$ is a $[\Gamma_1(N)]$-structure on $\cC_\eta$, then after base change on $R$, there exists a pair $(\cC,\phi) \in \scr X^{\mathrm{tw}}_1(N)(R)$ extending $(\cC_\eta,\phi_\eta)$.
\end{lemma}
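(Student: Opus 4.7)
The plan is to first extend to a family of generalized elliptic curves with $[\Gamma_1(N)]$-structure using properness of $\scr X_1(N)$, then build the twisted curve $\cC$ as a stack quotient (paralleling the construction in the proof of Lemma \ref{dense}), and finally transfer the level structure via pushforward of line bundles, invoking Lemma \ref{bigpicard} to move between $\mu_N$-torsors and morphisms into $\mathrm{Pic}_{\cC/R}$.

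By properness of $\scr X_1(N)$ (\cite[1.2.1]{C}), after a finite base change on $R$ the pair $(E_\eta, Q_\eta)$---where $Q_\eta \in E_\eta^{\mathrm{sm}}(K)$ corresponds to $\phi_\eta$ under $E_\eta \cong \mathrm{Pic}^0_{E_\eta/K}$---extends to a pair $(E,Q) \in \scr X_1(N)(R)$ with $E/R$ a generalized elliptic curve having N\'eron $m$-gon special fiber for some $m\mid N$. After a further finite base change on $R$, I would refine $E$ to an $N$-gon Tate model $\widetilde{E}/R$ (so that $\widetilde{E}^{\mathrm{sm}}[N]$ becomes flat of constant rank $N^2$); $Q$ lifts uniquely to $\widetilde{Q} \in \widetilde{E}^{\mathrm{sm}}(R)$, and I form the stack quotient $\widetilde{\cC} := [\widetilde{E}/\widetilde{E}^{\mathrm{sm}}[N]]$. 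As in the proof of Lemma \ref{dense}, $\widetilde{\cC}_\eta \cong E_\eta$ via the $[N]$-isogeny, and $\widetilde{\cC}_s$ is a standard $\mu_N$-stacky N\'eron $1$-gon with non-stacky marking. For the level structure, let $H \subset \widetilde{E}^{\mathrm{sm}}$ be the \'etale $\mathbb{Z}/(N)$-subgroup lifting the component group, let $\overline{E} := \widetilde{E}/H$ be the contracted generalized elliptic curve with $1$-gon special fiber, and let $\pi: \overline{E} \to \widetilde{\cC}$ be the natural quotient by the residual $\mu_N$-action. For each $R_0 \in \widetilde{E}^{\mathrm{sm}}[N](R)$, the decomposition $\pi_*\cL((\overline{R_0})-(0_{\overline{E}})) \cong \bigoplus_{a\in\mathbb{Z}/(N)} \cL_{R_0,a}$ defines (as verified at the end of the proof of Lemma \ref{dense}) a group-scheme isomorphism $\psi: \widetilde{E}^{\mathrm{sm}}[N] \to \mathrm{Pic}^0_{\widetilde{\cC}/R}[N]$, $R_0 \mapsto [\cL_{R_0,1}]$; then $\widetilde{\phi} := \psi \circ \widetilde{Q}$ is a $[\Gamma_1(N)]$-structure on $\widetilde{\cC}$, and passing to the relative coarse moduli space (\cite[Thm.\ 3.1]{AOV2}) of the morphism $\widetilde{\cC} \to \cB\mu_N$ associated to $\widetilde{\phi}$ via Lemma \ref{bigpicard} produces the desired representable morphism $\cC \to \cB\mu_N$, with $\cC$ a standard $\mu_d$-stacky N\'eron $1$-gon for suitable $d\mid N$ and $(\cC,\phi) \in \scr X_1^{\mathrm{tw}}(N)(R)$ extending $(\cC_\eta,\phi_\eta)$.

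The hard part is the line-bundle construction in characteristics $p\mid N$: when $\widetilde{Q}$ has nonzero $\mu_{p^n}$-component, the subgroup $D \subset \widetilde{E}^{\mathrm{sm}}$ generated by $\widetilde{Q}$ has non-trivial connected part, and the explicit model of the $\mu_N$-torsor as the quotient $\widetilde{E}/D$ from Lemma \ref{quotient} is no longer available. The workaround, flagged in the introduction, is to bypass torsors entirely and construct the $\mu_N$-torsor indirectly through the line bundles $\cL_{R_0,1}$ via Lemma \ref{bigpicard}; verifying that $\psi$ is a group-scheme isomorphism in all characteristics---and that it reproduces the correct torsor on the generic fiber (where Lemma \ref{quotient} is available)---requires the explicit description of $\mathrm{Pic}^0$ of a standard $\mu_N$-stacky N\'eron $1$-gon as $\mathbb{G}_m \times \mathbb{Z}/(N)$ noted in the remark following the definition of the generalized Jacobian.
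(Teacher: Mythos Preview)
Your approach is valid in outline but takes a genuinely different route from the paper's proof. The paper argues by splitting into two cases---$(N,p)=1$ and $N=p^n$---and in each case builds the $\mu_N$-torsor explicitly as a scheme quotient $\scr E/\langle Q\rangle$ (or $\scr E/D$) using Lemma~\ref{quotient}, with the $\mu_N$-action coming from the $e_N$-pairing (resp.\ $e_{p^m}$-pairing) on the generalized elliptic curve. The explicit quotient makes it immediate that the special-fiber torsor hits a generator of $H^1(\cB\mu_d,\mu_N)\cong\mathbb Z/(d)$, hence that the resulting homomorphism into $\mathrm{Pic}^0$ is a $[\Gamma_1(N)]$-structure. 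Your route instead imports the line-bundle transfer map $\psi$ from \S\ref{compare} wholesale and avoids the case split; this is cleaner and is exactly how the paper later \emph{realizes} the isomorphism $\scr X_1(N)\simeq\scr X_1^{\mathrm{tw}}(N)$, so you are effectively front-loading that construction to prove properness.

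Two points to tighten. First, watch the circularity: you invoke ``$\psi$ is a group-scheme isomorphism'' by pointing to Lemma~\ref{dense}, but that lemma only forward-references \S\ref{compare}, and \S\ref{compare} establishes that $\psi$ is well-defined and equals the identity over the smooth locus---it does not directly prove $\psi$ is an isomorphism on the singular fiber without passing through Theorem~\ref{classical}, which in turn uses the very properness you are proving. You don't actually need $\psi$ to be an isomorphism: what you need is that (i) $\psi$ restricts to the canonical identification on the generic fiber (proved in \S\ref{compare} by a direct computation using Lemma~\ref{quotient}), and (ii) the image $\psi(\widetilde{Q})$, after taking the relative coarse moduli space, meets every component of $\mathrm{Pic}^0_{\cC_s/k}$. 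Part (ii) is what the paper's explicit $e_N$-pairing argument supplies and is what you must still check directly on the special fiber; it does not follow formally from the generic-fiber statement. Second, two small slips: your indexing $\cL_{R_0,1}$ should be $\cL_{R_0,a_0}$ where $a_0$ is the component of $R_0$ under $\widetilde{E}^{\mathrm{sm}}[N]\cong\mu_N\times\mathbb Z/(N)$ (this is how the map is defined in Lemma~\ref{dense} and \S\ref{compare}); and $\widetilde{\phi}$ is generally \emph{not} a $[\Gamma_1(N)]$-structure on $\widetilde{\cC}$ (it need not meet every component of the $\mu_N$-stacky $1$-gon's $\mathrm{Pic}^0$)---it becomes one only on the relative coarse space $\cC$, which is a $\mu_d$-stacky $1$-gon for the correct $d$.
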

\begin{proof}
Since for any coprime $N$ and $N'$ we obviously have 
\begin{displaymath}
\overline{\cK}_{1,1}(\cB\mu_{NN'}) \cong \overline{\cK}_{1,1}(\cB\mu_N) \times_{\overline{\cM}_{1,1}} \overline{\cK}_{1,1}(\cB\mu_{N'})
\end{displaymath}
and 
\begin{displaymath}
\scr X^{\mathrm{tw}}_1(NN') \cong \scr X^{\mathrm{tw}}_1(N) \times_{\overline{\cM}_{1,1}} \scr X^{\mathrm{tw}}_1(N'),
\end{displaymath}
it suffices to consider the cases where \textbf{(a)} $N$ is prime to $p$ (including the case $p=0$) and \textbf{(b)} $N = p^n$.

\textbf{(a)} First suppose $(N,p) = 1$ or $p=0$. After base extension on $R$ we may assume that the finite abelian group $E_\eta(K) [N]$ is isomorphic to $(\mathbb{Z}/(N))^2$. After further base extension on $R$, the map $\phi_\eta$ gives us a $\mu_N$-torsor $P_\eta \rightarrow E_\eta$, corresponding to the point $Q_\eta = \phi_\eta(1) \in \mathrm{Pic}^0_{E_\eta/K}(K) \cong E_\eta(K)$ of ``exact order $N$'' in the sense of \cite[\S1.4]{KM1} (and since $(N,p) = 1$, this just says $Q_\eta$ has exact order $N$ as an element of the group $E_\eta(K)$). Let $\scr E/R$ be the minimal proper regular model; after further base extension on $R$ and replacing $\scr E$ with the new minimal proper regular model of $E_\eta$, we may assume $\scr E$ has the structure of a generalized elliptic curve extending that of $E_\eta$ (\cite[IV.1.6]{DR}; also cf. \cite[proof of 3.2.6]{C}). Since $E_\eta(K) = \scr E^{\mathrm{sm}}(R)$, the finite flat $R$-group scheme $\scr E^{\mathrm{sm}}[N]$ has rank $N^2$, hence is \'etale over $\mathrm{Spec}(R)$ since $(N,p) = 1$. Therefore the special fiber $\scr E_s$ is geometrically a N\'eron $mN$-gon for some $m$. Replacing $\scr E$ with its contraction away from the subgroup scheme $\scr E^{\mathrm{sm}}[N] \subset \scr E^{\mathrm{sm}}$ (cf. \cite[IV.1]{DR}), we get a generalized elliptic curve $\scr E/R$ extending $E_\eta$, whose special fiber is geometrically a N\'eron $N$-gon, with $E_\eta(K)[N] \cong \scr E^{\mathrm{sm}}(R) [N]$. 

\begin{center}
\begin{tikzpicture}
\clip (-2.2,-0.8) rectangle (6,3);
\draw (1.2,2.7) .. controls (1,2.4) and (0.5,1) .. (-0.5,2);
\draw (-0.5,2) .. controls (-1,2.5) and (-1.5,2.1) .. (-1.5,1);
\draw (1.2,-0.7) .. controls (1,-0.4) and (0.5,1) .. (-0.5,0);
\draw (-0.5,0) .. controls (-1,-0.5) and (-1.5,-0.1) .. (-1.5,1);
\node at (-1.5,1) [inner sep=0pt,label=180: $\scr E_\eta$] {};
\draw (3.2,-0.2) -- (2.093,0.907);
\draw (2.293,0.507) -- (2.293,1.907);
\draw (2.093,1.507) -- (3.2,2.614);
\draw (2.8,2.414) -- (4.2,2.414);
\begin{scope}[dashed]
\draw (3.8,2.614) -- (4.707,1.707);
\draw (4,0) -- (2.8,0);
\draw [decorate,decoration=snake] (4.707,1.707) -- (4,0);
\end{scope}
\node at (2,1) [inner sep=0pt,label=180:$\scr E_{\overline{s}}$] {};
\node at (3.5,-0.2) [inner sep=0pt,label=270:($N$-gon)] {};
\end{tikzpicture}
\end{center}
Note that $Q_\eta$ extends uniquely to a section $Q \in \scr E^{\mathrm{sm}}(R)[N]$ of exact order $N$, which Conrad calls a ``possibly non-ample $[\Gamma_1(N)]$-structure on $\scr E$,'' meaning $Q$ satisfies all the conditions of a $[\Gamma_1(N)]$-structure except that the relative effective Cartier divisor $\sum_{a \in \mathbb{Z}/(N)} [a \cdot Q]$ might not meet every irreducible component of a geometric closed fiber $\scr E_{\overline{s}}$.

Recall that by Corollary \ref{picard}, 
\begin{displaymath}
P_\eta = \underline{\mathrm{Spec}}_{E_\eta} \big( \bigoplus_{a=0}^{N-1} \cL((a\cdot Q_\eta) - (0_{E_\eta}))\big)
\end{displaymath}
with the $\mu_N$-action on $\oplus \cL((a\cdot Q_\eta) - (0_{E_\eta}))$ induced by the $\mathbb{Z}/(N)$-grading. Since $N$ is invertible on $\mathrm{Spec}(R)$, the assumptions of \ref{quotient} are satisfied, so we may identify $P_\eta$ with $E_\eta/\langle Q_\eta \rangle$, with the quotient map $P_\eta \rightarrow E_\eta$ identified with the quotient map $E_\eta/\langle Q_\eta \rangle \rightarrow E_\eta/E_\eta[N] \cong E_\eta$. Then the $\mu_N$-action on $P_\eta = E_\eta/\langle Q_\eta \rangle$ is determined by the group scheme isomorphism $E_\eta[N]/\langle Q_\eta \rangle \cong \mu_N$ induced by the $e_N$-pairing and the choice of $Q_\eta$:
\begin{displaymath}
E_\eta[N]/\langle Q_\eta \rangle = \{Q_\eta\} \times E_\eta[N]/\langle Q_\eta \rangle \hookrightarrow \langle Q_\eta \rangle \times E_\eta[N]/\langle Q_\eta \rangle \stackrel{e_N}{\rightarrow} \mu_N.
\end{displaymath}
By \cite[Theorem 4.1.1]{C}, the $e_N$-pairing on $E_\eta/K$ extends (possibly after further base change on $R$) to a nondegenerate bilinear pairing of finite flat commutative group schemes over $R$ 
\begin{displaymath}
e_N: \scr E^{\mathrm{sm}}[N] \times \scr E^{\mathrm{sm}}[N] \rightarrow \mu_N.
\end{displaymath}
Therefore the isomorphism of group schemes $E_\eta[N]/\langle Q_\eta \rangle \cong \mu_N$ described above may be extended via the same formula to an isomorphism of group schemes $\scr E^{\mathrm{sm}}[N]/\langle Q \rangle \cong \mu_N$.

This isomorphism of group schemes makes $P' := [\scr E/\langle Q \rangle]$ a $\mu_N$-torsor over the twisted curve $\cC':= [\scr E/\scr E^{\mathrm{sm}}[N]]$, extending our original $\mu_N$-torsor over $E_\eta$ ($\cC'$ is indeed a twisted curve: by \cite[Prop. 2.3]{AOV2} we may detect this on the geometric fibers, where it is clear, since the geometric closed fiber of $\cC'$ is a standard $\mu_N$-stacky N\'eron $1$-gon). But $P'$ is not necessarily representable. Indeed, let $d$ be the minimal positive integer such that $d\cdot Q_{\overline{s}}$ lies in the identity component of the geometric closed fiber $\scr E_{\overline{s}}$. Then the coarse space $\overline{P}'_{\overline{s}}$ of $P'_{\overline{s}}$ is a N\'eron $N/d$-gon, and for any geometric point $\overline{q} \rightarrow \overline{P}'_{\overline{s}}$ mapping to a node of $\overline{P}'_{\overline{s}}$, we have $P'_{\overline{s}}\times_{\overline{P}'_{\overline{s}}} \overline{q} \cong (\cB\mu_{N/d})_{k(\overline{q})}$. 

\begin{center}
\begin{tikzpicture}
\clip (-2,2.2) rectangle (9,7);
\draw (0.2,3.8) -- (-0.907,4.907);
\draw (-0.707,4.507) -- (-0.707,5.907);
\draw (-0.907,5.507) -- (0.2,6.614);
\draw (-0.2,6.414) -- (1.2,6.414);
\begin{scope}[dashed]
\draw (0.8,6.614) -- (1.707,5.707);
\draw (1,4) -- (-0.2,4);
\draw [decorate,decoration=snake] (1.707,5.707) -- (1,4);
\end{scope}
\node at (0,4) [circle,draw,fill,inner sep=2pt,label=185:$\mu_{N/d}$] {};
\node at (-0.707,4.707) [circle,draw,fill,inner sep=2pt,label=190:$\mu_{N/d}$] {};
\node at (-0.707,5.707) [circle,draw,fill,inner sep=2pt,label=170:$\mu_{N/d}$] {};
\node at (0,6.414) [circle,draw,fill,inner sep=2pt,label=135:$\mu_{N/d}$] {};
\draw [name path=curve] (8,6) .. controls (4.5,0) and (4.5,10) .. (8,4);
\node at (7.35,5) [circle,draw,fill,inner sep=2pt,label=0:$\mu_{N}$] {};
\node at (6.5,3.5) [inner sep=0pt,label=270:\textrm{$\cC'_{\overline{s}} = [\scr E_{\overline{s}}/\scr E^{\mathrm{sm}}_{\overline{s}}[N]]$}] {};
\node at (0.5,3.5) [inner sep=0pt,label=270:\textrm{$P'_{\overline{s}} = [\scr E_{\overline{s}}/\langle Q_{\overline{s}} \rangle]$}] {};
\node at (0.5,2.8) [inner sep=0pt,label=270:(Stacky $N/d$-gon)] {};
\draw [->] (2.5,5) -- (4,5);
\end{tikzpicture}
\end{center}
Let $P \rightarrow \cC$ be the $\mu_N$-torsor corresponding to the relative coarse moduli space (\cite[Thm. 3.1]{AOV2}) of the map $\cC' \rightarrow \cB\mu_N$ coming from the $\mu_N$-torsor $P' \rightarrow \cC'$. The generic fibers are the same as those of $P' \rightarrow \cC'$, $P_{\overline{s}}$ is a non-stacky N\'eron $N/d$-gon, and $\cC_{\overline{s}}$ is a standard $\mu_d$-stacky N\'eron $1$-gon, say with coarse space $\pi: \cC_{\overline{s}} \rightarrow C_{\overline{s}}$. 

\begin{center}
\begin{tikzpicture}
\clip (-2,2.5) rectangle (9,6.5);
\draw (0.2,3.8) -- (-0.907,4.907);
\draw (-0.707,4.507) -- (-0.707,5.907);
\draw (-0.907,5.507) -- (0.2,6.614);
\draw (-0.2,6.414) -- (1.2,6.414);
\begin{scope}[dashed]
\draw (0.8,6.614) -- (1.707,5.707);
\draw (1,4) -- (-0.2,4);
\draw [decorate,decoration=snake] (1.707,5.707) -- (1,4);
\end{scope}
\draw [name path=curve] (8,6) .. controls (4.5,0) and (4.5,10) .. (8,4);
\node at (7.35,5) [circle,draw,fill,inner sep=2pt,label=0:$\mu_{d}$] {};
\node at (6.5,3.7) [inner sep=0pt,label=270:\textrm{$\cC_{\overline{s}}$}] {};
\node at (0.5,3.7) [inner sep=0pt,label=270:\textrm{$P_{\overline{s}}$}] {};
\node at (0.5,3.2) [inner sep=0pt,label=270:($N/d$-gon)] {};
\draw [->] (2.5,5) -- (4,5);
\end{tikzpicture}
\end{center}
Let $\overline{q} \rightarrow C_{\overline{s}}$ be a geometric point mapping to the node of $C_{\overline{s}}$. Then $\cC_{\overline{s}} \times_{C_{\overline{s}}} \overline{q} \cong (\cB\mu_d)_{k(\overline{q})}$ and $P_{\overline{s}} \times_{C_{\overline{s}}} \overline{q} = \mu_{N/d} \times \overline{q}$ (which as a $k(\overline{q})$-scheme is just $N/d$ disjoint copies of $\overline{q}$ since $(N,p) = 1$), so the resulting $\mu_N$-torsor over $(\cB\mu_d)_{k(\overline{q})}$ corresponds to a generator of $H^1(\cB\mu_d,\mu_N) \cong \mathbb{Z}/(d)$. Therefore, with respect to the decomposition 
\begin{displaymath}
\mathrm{Pic}^0_{\cC_{\overline{s}}/k(\overline{s})} \cong \mathrm{Pic}^0_{C_{\overline{s}}/k(\overline{s})} \times H^0(C_{\overline{s}}, \mathbf{R}^1\pi_*\mathbb{G}_m) \cong \mathbb{G}_m \times \mathbb{Z}/(d),
\end{displaymath}
the class of the $\mu_N$-torsor $P_{\overline{s}} \rightarrow \cC_{\overline{s}}$ projects in the second factor to a generator of $\mathbb{Z}/(d)$, so the map $\phi: \mathbb{Z}/(N) \rightarrow \mathrm{Pic}^0_{\cC/R}$ induced by $P$ is a $[\Gamma_1(N)]$-structure on the twisted curve $\cC/R$ extending the $[\Gamma_1(N)]$-structure $\phi_\eta$ on $\cC_\eta$.

\textbf{(b)} Now suppose $N = p^n$. After base change on $R$, the $[\Gamma_1(N)]$-structure $\phi_\eta$ gives us a $\mu_N$-torsor $P_\eta \rightarrow E_\eta$ corresponding to the point $Q_\eta = \phi_\eta(1) \in \mathrm{Pic}^0_{E_\eta/K}(K) \cong E_\eta(K)$ of ``exact order $N$'' (in the sense of \cite[\S1.4]{KM1}, but not necessarily as an element of the group $E_\eta(K)$). Since $E_\eta/K$ is ordinary, after base change on $R$ we have an isomorphism of group schemes over $K$ 
\begin{displaymath}
E_\eta[N] \cong \mu_N \times \mathbb{Z}/(N),
\end{displaymath}
so $E_\eta(K)[N] \cong \mu_N(K) \times \mathbb{Z}/(N) = \{1\} \times \mathbb{Z}/(N) = \mathbb{Z}/(N)$. The $\mu_N$-torsor $P_\eta \rightarrow E_\eta$ is 
\begin{displaymath}
P_\eta = \underline{\mathrm{Spec}}_{E_\eta} \big( \bigoplus_{a=0}^{N-1} \cL((a\cdot Q_\eta) - (0_{E_\eta}))\big)
\end{displaymath}
with the $\mu_N$-action on $\oplus \cL((Q_\eta) - (0_{E_\eta}))$ induced by the $\mathbb{Z}/(N)$-grading.

Choose $m\geq 0$ minimal such that the image of $p^m \cdot Q_\eta$ in $\mathbb{Z}/(N)$ is zero. So $p^m \cdot Q_\eta = 0_{E_\eta}$ is viewed as a point of ``exact order $p^{n-m}$'' on $E_\eta$. Since $\cL((p^m \cdot Q_\eta) - (0_{E_\eta})) \cong \cO_{E_\eta}$, the corresponding $\mu_{p^{n-m}}$-torsor over $E_\eta$ is trivial. So if $\cC/R$ is a twisted curve extending $E_\eta/K$, the $\mu_{p^{n-m}}$-torsor corresponding to $p^m \cdot Q_\eta$ will automatically extend to the trivial $\mu_{p^{n-m}}$-torsor over $\cC$.

Now view $Q_\eta$ as a point of ``exact order $p^m$'' on $E_\eta/K$. The relative effective Cartier divisor 
\begin{displaymath}
D_\eta := \sum_{a \in \mathbb{Z}/(p^m)} [a\cdot Q_\eta] 
\end{displaymath}
in $E_\eta$ is a subgroup scheme which is \'etale over $K$. So by \ref{quotient}, $E_\eta/D_\eta$ is the underlying scheme of the $\mu_{p^m}$-torsor corresponding to $Q_\eta$. The $\mu_{p^m}$-action on $E_\eta/D_\eta$ is given by the group scheme isomorphism 
\begin{displaymath}
E_\eta[p^m]/D_\eta \cong \mu_{p^m}
\end{displaymath}
induced by the $e_{p^m}$-pairing and the choice of $Q_\eta$:
\begin{displaymath}
E_\eta[p^m]/D_\eta = \{Q_\eta\} \times E_\eta[p^m]/D_\eta \hookrightarrow D_\eta \times E_\eta[p^m]/D_\eta \stackrel{e_{p^m}}{\rightarrow} \mu_{p^m}.
\end{displaymath}

After a base change on $R$ if necessary, let $\scr E/R$ be a generalized elliptic curve extending $E_\eta$, with a $[\Gamma_1(p^m)]$-structure $Q$ on $\scr E$ extending $Q_\eta$. The special fiber $\scr E_s/k$ is geometrically a N\'eron $p^m$-gon, and $\scr E^{\mathrm{sm}}[p^m] \cong \mu_{p^m} \times \mathbb{Z}/(p^m)$. 

\begin{center}
\begin{tikzpicture}
\clip (-2.2,-0.8) rectangle (6,3);
\draw (1.2,2.7) .. controls (1,2.4) and (0.5,1) .. (-0.5,2);
\draw (-0.5,2) .. controls (-1,2.5) and (-1.5,2.1) .. (-1.5,1);
\draw (1.2,-0.7) .. controls (1,-0.4) and (0.5,1) .. (-0.5,0);
\draw (-0.5,0) .. controls (-1,-0.5) and (-1.5,-0.1) .. (-1.5,1);
\node at (-1.5,1) [inner sep=0pt,label=180: $\scr E_\eta$] {};
\draw (3.2,-0.2) -- (2.093,0.907);
\draw (2.293,0.507) -- (2.293,1.907);
\draw (2.093,1.507) -- (3.2,2.614);
\draw (2.8,2.414) -- (4.2,2.414);
\begin{scope}[dashed]
\draw (3.8,2.614) -- (4.707,1.707);
\draw (4,0) -- (2.8,0);
\draw [decorate,decoration=snake] (4.707,1.707) -- (4,0);
\end{scope}
\node at (2,1) [inner sep=0pt,label=180:$\scr E_{\overline{s}}$] {};
\node at (3.5,-0.2) [inner sep=0pt,label=270:($p^m$-gon)] {};
\end{tikzpicture}
\end{center}
By \cite[Theorem 4.1.1]{C}, the $e_{p^m}$-pairing on $E_\eta/K$ extends (possibly after further base change on $R$) to a nondegenerate bilinear pairing of finite flat commutative group schemes over $R$
\begin{displaymath}
e_{p^m}: \scr E^{\mathrm{sm}}[p^m] \times \scr E^{\mathrm{sm}}[p^m] \rightarrow \mu_{p^m}.
\end{displaymath}
Therefore the isomorphism of group schemes $E_\eta[p^m]/D_\eta \cong \mu_{p^m}$ described above may be extended via the same formula to an isomorphism of group schemes $\scr E^{\mathrm{sm}}[p^m]/D \cong \mu_{p^m}$, where $D$ is the relative effective Cartier divisor 
\begin{displaymath}
D = \sum_{a \in \mathbb{Z}/(p^m)} [a \cdot Q]
\end{displaymath}
in $\scr E^{\mathrm{sm}}$. This makes the quotient $[\scr E/D] = \scr E/D$ a representable $\mu_{p^m}$-torsor over the twisted curve $\cC := [\scr E/\scr E^{\mathrm{sm}}[p^m]]$, which extends the given $\mu_{p^m}$-torsor over $E_\eta$.

\begin{center}
\begin{tikzpicture}
\clip (-2,3.1) rectangle (9,6.2);
\draw [name path=curve] (2,6) .. controls (-1.5,0) and (-1.5,10) .. (2,4);
\draw [name path=curve] (7.5,6) .. controls (4,0) and (4,10) .. (7.5,4);
\draw [->] (2.5,5) -- (4,5);
\node at (6.85,5) [circle,draw,fill,inner sep=2pt,label=0:$\mu_{p^m}$] {};
\node at (0.7,3.7) [inner sep=0pt,label=270:\textrm{$[\scr E_{\overline{s}}/D_{\overline{s}}]$}] {};
\node at (6.2,3.7) [inner sep=0pt,label=270:\textrm{$\cC_{\overline{s}} = [\scr E_{\overline{s}}/\scr E^{\mathrm{sm}}_{\overline{s}}[p^m]]$}] {};
\node at (3.4,5) [inner sep=0pt,label=90:``$x\mapsto x^{p^m}$''] {};
\end{tikzpicture}
\end{center}
The geometric special fiber $\cC_{\overline{s}}$ is a standard $\mu_{p^m}$-stacky N\'eron $1$-gon, say with coarse space $\pi: \cC_{\overline{s}} \rightarrow C_{\overline{s}}$; $[\scr E_{\overline{s}}/D_{\overline{s}}] = \scr E_{\overline{s}} /D_{\overline{s}}$ is a (non-stacky) N\'eron $1$-gon, and the quotient map $P_{\overline{s}} := \scr E_{\overline{s}} /D_{\overline{s}} \rightarrow \cC_{\overline{s}}$ extends the map $P_{\overline{s}}^{\mathrm{sm}} = \mathbb{G}_m \rightarrow \cC_{\overline{s}}^{\mathrm{sm}} = \mathbb{G}_m,x\mapsto x^{p^m}$. In particular, if $\overline{q} \rightarrow C_{\overline{s}}$ is a geometric point mapping to the node of $C_{\overline{s}}$, then $\cC\times_C \overline{q} \cong (\cB\mu_{p^m})_{k(\overline{q})}$ and $P_{\overline{s}}\times_C \overline{q} = \overline{q}$, so the resulting $\mu_{p^m}$-torsor over $(\cB\mu_{p^m})_{k(\overline{q})}$ corresponds to a generator of $H^1(\cB\mu_{p^m}, \mu_{p^m}) \cong \mathbb{Z}/(p^m)$. Therefore, with respect to the decomposition 
\begin{displaymath}
\mathrm{Pic}^0_{\cC_{\overline{s}}/k(\overline{s})} \cong \mathrm{Pic}^0_{C_{\overline{s}}/k(\overline{s})} \times H^0 (C_{\overline{s}}, \mathbf{R}^1\pi_* \mathbb{G}_m) \cong \mathbb{G}_m \times \mathbb{Z}/(p^m),
\end{displaymath}
the class of the $\mu_{p^m}$-torsor $P_{\overline{s}} \rightarrow \cC_{\overline{s}}$ projects in the second factor to a generator of $\mathbb{Z}/(p^m)$, so the group scheme homomorphism $\phi: \mathbb{Z}/(p^m) \rightarrow \mathrm{Pic}^0_{\cC/R}$ corresponding to $P := [\scr E/D] = \scr E/D$ is a $[\Gamma_1(p^m)]$-structure on the twisted curve $\cC/R$.

Finally, write 
\begin{displaymath}
P = \underline{\mathrm{Spec}}_{\cC}\big( \bigoplus_{a = 0}^{p^m-1} \cL_a \big),
\end{displaymath}
with the grading determined by the $\mu_{p^m}$-action on $P$. Since $P$ extends the $\mu_{p^m}$-torsor on $E_\eta$ determined by $Q_\eta$, we have $\cL_a|_\eta \cong \cL((Q_\eta)-(0_{E_\eta}))$. Then 
\begin{displaymath}
P \times \mu_{p^{n-m}} = \underline{\mathrm{Spec}}_{\cC} \big( \bigoplus_{a = 0}^{p^n-1} \cL_{(\textrm{$a$ mod $p^m$})} \big)
\end{displaymath}
is a $\mu_{p^n}$-torsor over $\cC$ with the $\mu_{p^n}$-action determined by the grading, extending the original $\mu_{p^n}$-torsor over $E_\eta$ and representable since $P$ is representable. Since the group scheme homomorphism $\phi: \mathbb{Z}/(p^m) \rightarrow \mathrm{Pic}^0_{\cC/R}$ corresponding to $P$ is a $[\Gamma_1(p^m)]$-structure on $\cC$, and the group scheme homomorphism $\phi': \mathbb{Z}/(p^n) \rightarrow \mathrm{Pic}^0_{\cC/R}$ corresponding to $P \times \mu_{p^{n-m}}$ is $\phi \circ \pi$ for the canonical projection $\pi: \mathbb{Z}/(p^n) \rightarrow \mathbb{Z}/(p^m)$, it follows immediately that $\phi'$ is a $[\Gamma_1(p^n)]$-structure on $\cC$.
\end{proof}

\begin{lemma}\label{mixedchar}
Suppose $R$ has mixed characteristic $(0,p)$, and $\cC_\eta = E_\eta$ is an elliptic curve over $K$ whose minimal proper regular model over $R$ is not smooth; that is, $E_\eta/K$ is an ordinary elliptic curve with bad reduction. If $\phi_\eta: \mathbb{Z}/(N) \rightarrow \mathrm{Pic}^0_{\cC_\eta/K}$ is a $[\Gamma_1(N)]$-structure on $\cC_\eta$, then after base change on $R$, there exists a pair $(\cC,\phi) \in \scr X^{\mathrm{tw}}_1(N)(R)$ extending $(\cC_\eta,\phi_\eta)$.
\end{lemma}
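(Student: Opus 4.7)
The plan is to adapt the proof of Lemma \ref{purechar}(b) to the mixed-characteristic situation, using that in characteristic $0$ the $[\Gamma_1(p^n)]$-structure condition reduces to requiring $Q_\eta \in E_\eta(K)[p^n]$ to have exact group-theoretic order $p^n$, so the auxiliary integer ``$m<n$'' that appeared in the pure-characteristic argument disappears. As in that case, the isomorphisms
\[
\scr X_1^{\mathrm{tw}}(NN') \cong \scr X_1^{\mathrm{tw}}(N)\times_{\overline{\cM}_{1,1}} \scr X_1^{\mathrm{tw}}(N'), \qquad \overline{\cK}_{1,1}(\cB\mu_{NN'}) \cong \overline{\cK}_{1,1}(\cB\mu_N)\times_{\overline{\cM}_{1,1}}\overline{\cK}_{1,1}(\cB\mu_{N'})
\]
for coprime $N, N'$ reduce us to two cases: $N$ prime to $p$ (which is handled word-for-word as in Lemma \ref{purechar}(a), since the only use of pure characteristic there was the invertibility of $N$ on $R$), and $N=p^n$.

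For $N=p^n$: since $K$ has characteristic $0$ the $[\Gamma_1(p^n)]$-structure $Q_\eta$ is simply a point of exact order $p^n$ in the group $E_\eta(K)$, and by properness of $\scr X_1(p^n)$ over $\overline{\cM}_{1,1}$ (\cite[1.2.1]{C}) we may, after base change on $R$, extend $(E_\eta,Q_\eta)$ to a pair $(\scr E,Q)\in \scr X_1(p^n)(R)$. The special fiber $\scr E_s$ is then a N\'eron $d$-gon for some $d\mid p^n$. For Conrad's extended $e_{p^n}$-pairing argument to apply we need $\scr E^{\mathrm{sm}}[p^n]$ to be finite flat of rank $p^{2n}$ over $R$, and since the generic-fiber rank is already $p^{2n}$ (the elliptic curve $E_\eta$ is smooth in characteristic zero), this requires $d=p^n$. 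To arrange this, after a further totally ramified base change on $R$ of degree $p^n/d$, replace $\scr E$ by its minimal proper regular model equipped with its canonical generalized-elliptic structure (cf. \cite[IV.1]{DR}, \cite[\S3.2]{C}). The extension of $Q$ is then a $[\Gamma_1(p^n)]$-structure on a generalized elliptic curve whose special fiber is a N\'eron $p^n$-gon, and the image of $Q_s$ in the component group $\mathbb{Z}/(p^n)$ of $\scr E_s^{\mathrm{sm}}\cong \mathbb{G}_{m,k}\times \mathbb{Z}/(p^n)$ is a generator.

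With this gonality arrangement in place the argument proceeds exactly as in Lemma \ref{purechar}(b) with $m$ replaced by $n$. The relative effective Cartier divisor $D:=\sum_{a\in \mathbb{Z}/(p^n)}[a\cdot Q]$ in $\scr E^{\mathrm{sm}}$ is an $R$-subgroup scheme, finite flat of rank $p^n$; by \cite[4.1.1]{C} the $e_{p^n}$-pairing on $E_\eta[p^n]$ extends, after possibly a further base change, to a nondegenerate bilinear pairing $\scr E^{\mathrm{sm}}[p^n]\times \scr E^{\mathrm{sm}}[p^n]\to \mu_{p^n}$ of finite flat commutative group schemes over $R$, and the map $x\mapsto e_{p^n}(Q,x)$ then induces an isomorphism $\scr E^{\mathrm{sm}}[p^n]/D \cong \mu_{p^n}$. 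This endows $P:=\scr E/D$ with the structure of a representable $\mu_{p^n}$-torsor over the twisted curve $\cC:=[\scr E/\scr E^{\mathrm{sm}}[p^n]]$, whose special fiber is a standard $\mu_{p^n}$-stacky N\'eron $1$-gon in the sense of Example \ref{stacky1gon}. The node-local computation at the end of the proof of Lemma \ref{purechar}(b) then shows that the corresponding group scheme homomorphism $\phi:\mathbb{Z}/(p^n)\to \mathrm{Pic}^0_{\cC/R}$ extends $\phi_\eta$ and meets every component of the identity component $\mathbb{G}_m\times \mathbb{Z}/(p^n)$ of $\mathrm{Pic}^0_{\cC_s/k}$ on the closed fiber, so it is a $[\Gamma_1(p^n)]$-structure on $\cC$. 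The main obstacle is the gonality adjustment: only after replacing $\scr E$ by a model whose special fiber is a full N\'eron $p^n$-gon does $\scr E^{\mathrm{sm}}[p^n]$ become finite flat over $R$, which is precisely what makes the extended $e_{p^n}$-pairing available and unlocks the rest of the construction.
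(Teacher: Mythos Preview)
Your reduction to the two cases is correct, and case (a) is fine. The gap is in case (b), specifically in the sentence ``the image of $Q_s$ in the component group $\mathbb{Z}/(p^n)$ of $\scr E_s^{\mathrm{sm}}\cong \mathbb{G}_{m,k}\times \mathbb{Z}/(p^n)$ is a generator.'' This is false in general, and no amount of base change repairs it.

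Consider the Tate curve $E_\eta=\mathbb{G}_m/q^{\mathbb{Z}}$ and take $Q_\eta=\zeta_{p^n}$, a primitive $p^n$-th root of unity. Since $\mathrm{char}\,K=0$ this is a point of exact order $p^n$, hence a valid $[\Gamma_1(p^n)]$-structure on $E_\eta$. When you extend via properness of $\scr X_1(p^n)$, the special fiber of $\scr E$ is a $1$-gon (since $Q_s$ lies in the identity component and $D$ must meet every component). Passing to a larger model with $p^n$-gon special fiber does not move $Q_s$ out of the identity component: the inclusion of the old smooth locus into the new one identifies the old component group with the index-$p^n$ subgroup of the new one, so $Q_s$ still maps to $0\in\mathbb{Z}/(p^n)$. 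Consequently $D_s=p^n\cdot[0_{\scr E_s}]$ is not \'etale over $k$, Lemma \ref{quotient} does not apply, and the quotient $\scr E^{\mathrm{sm}}[p^n]/D$ is not $\mu_{p^n}$ (it is not even a group scheme of the right rank, since $D$ has rank $<p^n$ as a subgroup scheme in this degenerate case).

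This is exactly the obstruction the paper's proof is designed to overcome. The paper writes $Q=Q^1+Q^2$ where $Q^1$ has exact order $d$ equal to the order of the image of $Q_s$ in the component group (so $\langle Q^1\rangle$ is \'etale over $R$ and the $e_d$-pairing argument on a $d$-gon model $\scr E_1$ goes through), while $Q^2$ specializes into the identity component and is handled separately by extending its associated $\mu_e$-torsor over the coarse $1$-gon $C$ and pulling back. The two pieces are then recombined by tensoring the line bundles $\cL_a\otimes\cL'_a$. Your argument is essentially the special case $d=p^n$, $Q^2=0$; the missing idea is precisely this decomposition to handle the case $d<p^n$.
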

\begin{proof}
As before, we can restrict to the two separate cases where $(N,p) = 1$ and where $N = p^n$.

\textbf{(a)} If $(p,N) = 1$, the same argument as in part (a) of the proof of Lemma \ref{purechar} carries through.

\textbf{(b)} Suppose $N = p^n$. As in the proof of Lemma \ref{purechar}, after base extension on $R$ we can extend $E_\eta$ to a generalized elliptic curve $\scr E/R$ whose special fiber is geometrically a N\'eron $N$-gon, such that $\scr E^{\mathrm{sm}}(R)[N] \cong E_\eta(K)[N] \cong (\mathbb{Z}/(N))^2$ (the latter isomorphism being a noncanonical isomorphism of abelian groups). After further base change on $R$, the $[\Gamma_1(p^n)]$-structure $\phi_\eta$ gives us a $\mu_N$-torsor $P_\eta \rightarrow E_\eta$, corresponding to $Q_\eta = \phi_\eta(1) \in \mathrm{Pic}^0_{E_\eta/K}(K) \cong E_\eta(K)$ of ``exact order $N$'' in the sense of \cite[\S1.4]{KM1}. $Q_\eta$ extends to a ``possibly non-ample $[\Gamma_1(N)]$-structure'' on $\scr E/R$. Since the special fiber $\scr E_s$ is geometrically a N\'eron $N$-gon, after further base change on $R$ we may assume $\scr E^{\mathrm{sm}}_s[N] \cong \mu_N \times \mathbb{Z}/(N)$, so 
\begin{displaymath}
\scr E^{\mathrm{sm}}_s(k(s))[N] \cong \mu_N(k(s)) \times \mathbb{Z}/(N) = \{1\} \times \mathbb{Z}/(N) \cong \mathbb{Z}/(N).
\end{displaymath}

Suppose $d\geq 1$ is minimal such that $d\cdot Q_s$ maps to $0$ in $\mathbb{Z}/(N)$. Then we can choose $Q^1,Q^2 \in \scr E^{\mathrm{sm}}(R)[N]$ such that:
\begin{itemize}
  \item $Q = Q^1+Q^2$ in $\scr E^{\mathrm{sm}}(R)$; 
  \item $Q^1$ has exact order $d$ in the abelian group $\scr E^{\mathrm{sm}}(R)[N]$, and the relative effective Cartier divisor 
  \begin{displaymath}
    \sum_{a = 0}^{d-1} [a\cdot Q] 
  \end{displaymath}
  in $\scr E^{\mathrm{sm}}$ is \'etale over $\mathrm{Spec}(R)$; and 
  \item $Q^2_s$ maps to $0$ in $\mathbb{Z}/(N)$ via the above isomorphism.
\end{itemize}
In the abelian group $\scr E^{\mathrm{sm}}(R)[N] \cong (\mathbb{Z}/(N))^2$, $Q^2$ has exact order $e$ for some $e|N$. Therefore $Q^1_\eta$ is a $[\Gamma_1(d)]$-structure on $E_\eta$, and $Q^2_\eta$ is a $[\Gamma_1(e)]$-structure on $E_\eta$. They correspond via Lemma \ref{bigpicard} to the $\mu_d$-torsor 
\begin{displaymath}
P^1_\eta := \underline{\mathrm{Spec}}_{E_\eta} \big( \bigoplus_{a = 0}^{d-1} \cL( (a\cdot Q^1_\eta)-(0_{E_\eta}))\big)
\end{displaymath}
and the $\mu_e$-torsor 
\begin{displaymath}
P^2_\eta := \underline{\mathrm{Spec}}_{E_\eta} \big( \bigoplus_{a = 0}^{e-1} \cL( (a\cdot Q^2_\eta)-(0_{E_\eta}))\big)
\end{displaymath}
respectively, with the gradings determining the actions of $\mu_d$ and $\mu_e$. The $\mu_N$-torsor corresponding to $Q_\eta$ via Lemma \ref{bigpicard} is 
\begin{displaymath}
P_\eta := \underline{\mathrm{Spec}}_{E_\eta} \big( \bigoplus_{a = 0}^{N-1} \cL( (a\cdot Q_\eta)-(0_{E_\eta}))\big).
\end{displaymath}
The group law on $E_\eta$ tells us that 
\begin{displaymath}
\big( (Q^1_\eta)-(0_{E_\eta})\big) + \big( (Q^2_\eta)-(0_{E_\eta}) \big) \sim (Q^1_\eta+Q^2_{\eta})-(0_{E_\eta}) = (Q_\eta)-(0_{E_\eta}),
\end{displaymath}
so we conclude that 
\begin{displaymath}
P_\eta = \underline{\mathrm{Spec}}_{E_\eta} \big( \bigoplus_{a = 0}^{N-1} \cL( (a\cdot Q^1_\eta)-(0_{E_\eta})) \otimes \cL( (a\cdot Q^2_\eta)-(0_{E_\eta})) \big),
\end{displaymath}
with the $\mu_N$-action induced by the grading.

Consider the $\mu_d$-torsor $P^1_\eta \rightarrow E_\eta$. As in Lemma \ref{purechar}, factoring the isogeny $[d]$ on $E_\eta$ as $E_\eta \rightarrow E_\eta/\langle Q^1_\eta \rangle \rightarrow E_\eta$, we have $P^1_\eta = E_\eta/\langle Q^1_\eta \rangle$ with $\mu_d$ acting on $P^1_\eta$ through the isomorphism with the group scheme $E_\eta[d]/\langle Q^1_\eta \rangle$ induced via 
\begin{displaymath}
E_\eta[d]/\langle Q^1_\eta \rangle \cong \{Q^1_\eta\} \times E_\eta[d]/\langle Q^1_\eta \rangle \hookrightarrow \langle Q^1_\eta \rangle \times E_\eta[d]/\langle Q^1_\eta \rangle \stackrel{e_d}{\rightarrow} \mu_d.
\end{displaymath}
Let $\scr E_1/R$ (possibly after base change on $R$) be a generalized elliptic curve extending $E_\eta$, whose closed fiber is geometrically a N\'eron $d$-gon, with $\scr E_1^{\mathrm{sm}}(R)[d] \cong E_\eta(K)[d]$. 

\begin{center}
\begin{tikzpicture}
\clip (-2.4,-0.8) rectangle (6,3);
\draw (1.2,2.7) .. controls (1,2.4) and (0.5,1) .. (-0.5,2);
\draw (-0.5,2) .. controls (-1,2.5) and (-1.5,2.1) .. (-1.5,1);
\draw (1.2,-0.7) .. controls (1,-0.4) and (0.5,1) .. (-0.5,0);
\draw (-0.5,0) .. controls (-1,-0.5) and (-1.5,-0.1) .. (-1.5,1);
\node at (-1.5,1) [inner sep=0pt,label=180: $\scr E_{1,\eta}$] {};
\draw (3.2,-0.2) -- (2.093,0.907);
\draw (2.293,0.507) -- (2.293,1.907);
\draw (2.093,1.507) -- (3.2,2.614);
\draw (2.8,2.414) -- (4.2,2.414);
\begin{scope}[dashed]
\draw (3.8,2.614) -- (4.707,1.707);
\draw (4,0) -- (2.8,0);
\draw [decorate,decoration=snake] (4.707,1.707) -- (4,0);
\end{scope}
\node at (2,1) [inner sep=0pt,label=180:$\scr E_{1,\overline{s}}$] {};
\node at (3.5,-0.2) [inner sep=0pt,label=270:($d$-gon)] {};
\end{tikzpicture}
\end{center}
As in Lemma \ref{purechar}, after further base change on $R$ we may assume that the $e_d$-pairing on $E_\eta$ extends to a nondegenerate bilinear pairing of finite flat commutative group schemes over $R$ 
\begin{displaymath}
e_d: \scr E_1^{\mathrm{sm}}[d] \times \scr E_1^{\mathrm{sm}}[d] \rightarrow \mu_d.
\end{displaymath}
$Q^1_\eta$ extends to a $[\Gamma_1(d)]$-structure $Q^1$ on $\scr E_1/R$, and the relative effective Cartier divisor 
\begin{displaymath}
D:=\sum_{a = 0}^{d-1} [a\cdot Q^1]
\end{displaymath}
in $\scr E_1^{\mathrm{sm}}$ is \'etale over $R$, so via the same formula as above we see that the isomorphism $E_\eta[d]/\langle Q^1_\eta \rangle \cong \mu_d$ extends to a group scheme isomorphism $\scr E_1^{\mathrm{sm}}[d]/\langle Q^1 \rangle \cong \mu_d$. This makes $P^1 := [\scr E_1/\langle Q^1 \rangle] = \scr E_1/\langle Q_1 \rangle$ a representable $\mu_d$-torsor over the twisted curve $\cC := [\scr E/\scr E^{\mathrm{sm}}[d]]$, extending the given $\mu_d$-torsor $P^1_\eta \rightarrow E_\eta$.

The special fiber $\cC_s$ is geometrically a standard $\mu_d$-stacky N\'eron $1$-gon, say with coarse space $\pi: \cC_s \rightarrow C_s$; $[\scr E_{1,s}/D_s] = \scr E_{1,s}/D_s$ is geometrically a (non-stacky) N\'eron $1$-gon, and the quotient map $P^1_s := \scr E_{1,s}/D_s \rightarrow \cC_s$ extends the map $P_s^{1,\mathrm{sm}} = \mathbb{G}_m \rightarrow \cC_s^{\mathrm{sm}} = \mathbb{G}_m,x\mapsto x^d$. 

\begin{center}
\begin{tikzpicture}
\clip (-2,3.1) rectangle (9,6.2);
\draw [name path=curve] (2,6) .. controls (-1.5,0) and (-1.5,10) .. (2,4);
\draw [name path=curve] (7.5,6) .. controls (4,0) and (4,10) .. (7.5,4);
\draw [->] (2.5,5) -- (4,5);
\node at (6.85,5) [circle,draw,fill,inner sep=2pt,label=0:$\mu_{d}$] {};
\node at (0.7,3.7) [inner sep=0pt,label=270:\textrm{$P^1_{\overline{s}} = [\scr E_{1,\overline{s}}/D_{\overline{s}}]$}] {};
\node at (6.2,3.7) [inner sep=0pt,label=270:\textrm{$\cC_{\overline{s}} = [\scr E_{1,\overline{s}}/\scr E^{\mathrm{sm}}_{1,\overline{s}}[d]]$}] {};
\node at (3.4,5) [inner sep=0pt,label=90:``$x\mapsto x^{d}$''] {};
\end{tikzpicture}
\end{center}
In particular, if $\overline{q} \rightarrow C_s$ is a geometric point mapping to the node of $C_s$, then $\cC\times_C \overline{q} \cong (\cB\mu_d)_{k(\overline{q})}$ and $P^1_s\times_C \overline{q} = \overline{q}$, so the resulting $\mu_d$-torsor over $(\cB\mu_d)_{k(\overline{q})}$ corresponds to a generator of $H^1(\cB\mu_d, \mu_d) \cong \mathbb{Z}/(d)$. Therefore, with respect to the decomposition 
\begin{displaymath}
\mathrm{Pic}^0_{\cC_s/k(s)} \cong \mathrm{Pic}^0_{C_s/k(s)} \times H^0 (C_s, \mathbf{R}^1\pi_* \mathbb{G}_m) \cong \mathbb{G}_m \times \mathbb{Z}/(d),
\end{displaymath}
the class of the $\mu_d$-torsor $P^1_s \rightarrow \cC_s$ projects in the second factor to a generator of $\mathbb{Z}/(d)$. Therefore the group scheme homomorphism $\mathbb{Z}/(d) \rightarrow \mathrm{Pic}^0_{\cC/R}$ defined by the $\mu_d$-torsor $P^1 := [\scr E_1/D] = \scr E_1/D$ over $\cC$ is a $[\Gamma_1(d)]$-structure on the twisted curve $\cC/R$.

Next consider the $\mu_e$-torsor $P^2_\eta \rightarrow E_\eta$. Let $\pi: \cC \rightarrow C$ be the coarse space of the twisted curve $\cC/R$ described above. By \cite[IV.1.6]{DR}, after further base change on $R$ we may assume that $C/R$ is a generalized elliptic curve, with structure extending that of $E_\eta$; note that $C_\eta = E_\eta$ and that $C_s$ is geometrically a N\'eron $1$-gon. 

\begin{center}
\begin{tikzpicture}
\clip (-2.2,-0.8) rectangle (6,2.8);
\draw (1.2,2.7) .. controls (1,2.4) and (0.5,1) .. (-0.5,2);
\draw (-0.5,2) .. controls (-1,2.5) and (-1.5,2.1) .. (-1.5,1);
\draw (1.2,-0.7) .. controls (1,-0.4) and (0.5,1) .. (-0.5,0);
\draw (-0.5,0) .. controls (-1,-0.5) and (-1.5,-0.1) .. (-1.5,1);
\node at (-1.5,1) [inner sep=0pt,label=180: $C_\eta$] {};
\draw [name path=curve] (5.5,2) .. controls (2,-4) and (2,6) .. (5.5,0);
\node at (2.8,1) [inner sep=0pt,label=180:$C_{\overline{s}}$] {};
\end{tikzpicture}
\end{center}
We may take the scheme-theoretic closure of the section $Q^2_\eta \in E_\eta(K)$ to get a unique section $Q^2 \in C^{\mathrm{sm}}(R)$; necessarily $e\cdot Q^2 = 0_C$ since $e\cdot Q^2_\eta = 0_{E_\eta}$. The isomorphisms 
\begin{displaymath}
\cL((a\cdot Q^2_\eta)-(0_{E_\eta})) \otimes \cL((b\cdot Q^2_\eta)-(0_{E_\eta})) \cong \cL(((a+b)\cdot Q^2_\eta)-(0_{E_\eta}))
\end{displaymath}
of line bundles on $E_\eta$ extend uniquely to isomorphisms 
\begin{displaymath}
\cL((a\cdot Q^2)-(0_C))\otimes \cL((b\cdot Q^2)-(0_C)) \cong \cL(((a+b)\cdot Q^2) - (0_C))
\end{displaymath}
of line bundles on $C$; therefore the $\mu_e$-torsor 
\begin{displaymath}
P^2_\eta = \underline{\mathrm{Spec}}_{E_\eta} \big( \bigoplus_{a = 0}^{e-1} \cL( (a\cdot Q^2_\eta)-(0_{E_\eta}))\big) 
\end{displaymath}
extends uniquely to a $\mu_e$-torsor 
\begin{displaymath}
P^2 := \underline{\mathrm{Spec}}_C \big( \bigoplus_{a = 0}^{e-1} \cL( (a\cdot Q^2) - (0_C)) \big)
\end{displaymath}
over $C$, with the $\mu_e$-action induced by the grading. Since $\mathrm{Pic}^0_{C/R}$ has irreducible geometric fibers, this is a $[\Gamma_1(e)]$-structure on the generalized elliptic curve $C/R$. Pulling this back to $\cC$ via the coarse moduli space map $\pi: \cC \rightarrow C$, we get a $\mu_e$-torsor $\cP^2 \rightarrow \cC$ extending $P^2_\eta \rightarrow E_\eta$, such that the corresponding map $\phi: \mathbb{Z}/(N) \rightarrow \mathrm{Pic}^0_{\cC/R}$ lands in the identity component of every geometric fiber.

Finally, we return to the $\mu_N$-torsor $P_\eta \rightarrow E_\eta$. Write the $\mu_d$-torsor $P^1 \rightarrow \cC$ as 
\begin{displaymath}
P^1 = \underline{\mathrm{Spec}}_{\cC} \big( \bigoplus_{a = 0}^{d-1} \cL_a \big),
\end{displaymath}
so $\cL_a|_\eta = \cL((a\cdot Q^1_\eta)-(0_{E_\eta}))$; and write the $\mu_e$-torsor $P^2 \rightarrow \cC$ as 
\begin{displaymath}
P^2 = \underline{\mathrm{Spec}}_{\cC} \big( \bigoplus_{a = 0}^{e-1} \cL'_a \big),
\end{displaymath}
so $\cL'_a|_\eta = \cL((a\cdot Q^2_\eta)-(0_{E_\eta}))$. Consider the $\mu_N$-torsor 
\begin{displaymath}
P := \underline{\mathrm{Spec}}_{\cC} \big( \bigoplus_{a = 0}^{N-1} \cL_{\textrm{$a$ mod $d$}} \otimes \cL'_{\textrm{$a$ mod $e$}} \big)
\end{displaymath}
over $\cC$, with the $\mu_N$-action induced by the grading. Since 
\begin{displaymath}
\cL((a\cdot Q^1_\eta)-(0_{E_\eta}))\otimes \cL((a\cdot Q^2_\eta)-(0_{E_\eta})) \cong \cL((a\cdot Q_\eta)-(0_{E_\eta})),
\end{displaymath}
we conclude that $P \rightarrow \cC$ extends the original $\mu_N$-torsor $P_\eta \rightarrow E_\eta$. Furthermore, with respect to the decomposition 
\begin{displaymath}
\mathrm{Pic}^0_{\cC_s/k(s)} \cong \mathrm{Pic}^0_{C_s/k(s)} \times H^0 (C_s, \mathbf{R}^1\pi_* \mathbb{G}_m) \cong \mathbb{G}_m \times \mathbb{Z}/(d),
\end{displaymath}
the line bundle $\cL_1|_s$ projects to a generator of $\mathbb{Z}/(d)$, and the line bundle $\cL'_1|_s$ projects to $0 \in \mathbb{Z}/(d)$; therefore the line bundle $(\cL_1 \otimes \cL'_1)|_s$ projects to a generator of $\mathbb{Z}/(d)$, so the group scheme homomorphism $\mathbb{Z}/(N) \rightarrow \mathrm{Pic}^0_{\cC/R}$ corresponding to the $\mu_N$-torsor $P \rightarrow \cC$ is a $[\Gamma_1(N)]$-structure on $\cC/R$, extending our original $[\Gamma_1(N)]$-structure $\phi_\eta: \mathbb{Z}/(N) \rightarrow \mathrm{Pic}^0_{\cC_\eta/K}$.
\end{proof}

This concludes the proof of our final lemma and thus of Theorem \ref{gamma1}.
\end{proof}

\subsection*{Reduction mod $p$ of $\overline{\cK}'_{1,1}(\cB \mu_N)$}

The analysis of $\scr H_1(N)$ above immediately generalizes to the compactified case. Recall (\ref{globalindex}) that $\overline{\cK}'_{1,1}(\cB \mu_N) \subset \overline{\cK}_{1,1}(\cB \mu_N)$ denotes the closed substack classifying rigidified twisted stable $\mu_N$-covers of twisted curves with non-stacky marking; $\overline{\cK}'_{1,1}(\cB \mu_N)$ is the closure of $\overline{\cK}^\circ_{1,1} (\cB \mu_N) \simeq \scr H_1(N)$ in $\overline{\cK}_{1,1}(\cB \mu_N)$ by Lemma \ref{dense}.

We have a natural closed immersion
\begin{displaymath}
\iota^{(d)}:\scr X^{\mathrm{tw}}_1(d)\hookrightarrow \overline{\cK}'_{1,1}(\cB \mu_N)
\end{displaymath}
for each $d$ dividing $N$, precomposing a map $\mathbb{Z}/(d) \rightarrow \mathrm{Pic}^0_{\cC/S}$ with the canonical projection $\mathbb{Z}/(N) \rightarrow \mathbb{Z}/(d)$. The resulting map 
\begin{displaymath}
\bigsqcup_{d|N} \scr X^{\mathrm{tw}}_1(d) \rightarrow \overline{\cK}'_{1,1}(\cB \mu_N)
\end{displaymath}
is an isomorphism over $S[1/N]$.

\begin{definition}\label{abcyclictwisted}
Let $p$ be prime and $S \in \mathrm{Sch}/\mathbb{F}_p$. Let $\cC/S$ be a $1$-marked genus $1$ twisted stable curve with no stacky structure at its marking. Let $n\geq 1$ and $a,b\geq 0$ with $a+b=n$. A \textit{$[\Gamma_1(p^n)]$-$(a,b)$-cyclic structure} on $\cC$ is a $[\Gamma_1(p^n)]$-structure $\phi: \mathbb{Z}/(p^n) \rightarrow \mathrm{Pic}^0_{\cC/S}$, such that: 
\begin{itemize}
  \item if $S_1 \subset S$ is the maximal Zariski open subset such that $\cC_{S_1} \rightarrow S_1$ is smooth, $\phi_{S_1}: \mathbb{Z}/(p^n) \rightarrow \cC_{S_1}$ is a $[\Gamma_1(p^n)]$-$(a,b)$-cyclic structure in the sense of \cite{KM1}; and 
  \item if $S_2 \subset S$ is the complement of the supersingular locus of $\cC \rightarrow S$, then the relative effective Cartier divisor 
  \begin{displaymath}
  D := \sum_{m=1}^{p^b} [\phi(m)]
  \end{displaymath}
  in $\mathrm{Pic}^0_{\cC_{S_2}/S_2}$ is a subgroup scheme of $\mathrm{Pic}^0_{\cC_{S_2}/S_2}$ which is \'etale over $S_2$. 
\end{itemize}
Over the base scheme $S \in \mathrm{Sch}/\mathbb{F}_p$, we define $\scr X^{\mathrm{tw}}_1(p^n)^{(a,b)} \subset \scr X^{\mathrm{tw}}_1(p^n)$ to be the closed substack associating to $T/S$ the groupoid of pairs $(\cC,\phi)$, where $\cC/S$ is a $1$-marked genus $1$ twisted stable curve with non-stacky marking, and $\phi$ is a $[\Gamma_1(p^n)]$-$(a,b)$-cyclic structure on $\cC$.

If $N = p^nN'$ with $(N',p) = 1$, we define $\scr X^{\mathrm{tw}}_1(N)^{(a,b)} := \scr X^{\mathrm{tw}}_1(N') \times_{\overline{\cM}_{1,1}} \scr X^{\mathrm{tw}}_1(p^n)^{(a,b)}$.
\end{definition}

The same argument as that used to prove Lemma \ref{gamma1comps} immediately gives us:
\begin{lemma}
Let $\cC/S/\mathbb{F}_p$ be a $1$-marked genus $1$ twisted stable curve with non-stacky marking, and let $\phi:\mathbb{Z}/(p^n) \rightarrow \mathrm{Pic}^0_{\cC/S}$ be a $[\Gamma_1(p^n)]$-$(a,b)$-cyclic structure on $\cC$. Then for the canonical projection $\pi: \mathbb{Z}/(p^{n+1}) \twoheadrightarrow \mathbb{Z}/(p^n)$, the composite $\phi \circ \pi: \mathbb{Z}/(p^{n+1}) \rightarrow \mathrm{Pic}^0_{\cC/S}$ is a $[\Gamma_1(p^{n+1})]$-structure on $\cC$, and is $[\Gamma_1(p^{n+1})]$-$(a+1,b)$-cyclic.
\end{lemma}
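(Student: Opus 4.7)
The plan is to mirror the proof of Lemma~\ref{gamma1comps}, transported from the elliptic curve $E$ to the smooth commutative group scheme $\mathrm{Pic}^0_{\cC/S}$, and to handle the refined ``$(a+1,b)$-cyclic'' condition separately on the two loci appearing in Definition~\ref{abcyclictwisted}.

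First I would check that $\phi \circ \pi$ is a $[\Gamma_1(p^{n+1})]$-structure on $\cC$. Setting
$D := \sum_{a \in \ZZ/(p^n)}[\phi(a)]$ and $D' := \sum_{a \in \ZZ/(p^{n+1})}[\phi \circ \pi(a)]$
in $\mathrm{Pic}^0_{\cC/S}$, the surjectivity of $\pi$ with kernel of order $p$ yields the identity $D' = p \cdot D$ of relative effective Cartier divisors; in particular $D'$ has the same support as $D$ and therefore meets every irreducible component of every geometric fiber of $\mathrm{Pic}^0_{\cC/S}$ by hypothesis on $\phi$. For the subgroup-scheme condition on $D'$, I would repeat the argument from the proof of Lemma~\ref{gamma1comps}: since $\mathrm{Pic}^0_{\cC/S}$ is a smooth commutative group scheme over $S \in \mathrm{Sch}/\FF_p$ (Proposition~\ref{picardexact}), the image of $D'$ in the quotient $\mathrm{Pic}^0_{\cC/S}/D$ is the relative Cartier divisor $p \cdot [0]$, which is exactly the kernel of the relative Frobenius on that quotient, hence a subgroup scheme; pulling back along the quotient map shows that $D'$ itself is a subgroup scheme of $\mathrm{Pic}^0_{\cC/S}$.

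Next I would verify the $(a+1,b)$-cyclic condition, which by Definition~\ref{abcyclictwisted} decomposes into a classical Katz-Mazur $(a+1,b)$-cyclicity condition on the smooth open $S_1 \subset S$ and an \'etaleness condition on the non-supersingular open $S_2 \subset S$; since $S_1 \cup S_2 = S$, these two local verifications suffice. Over $S_1$ the twisted curve $\cC_{S_1}$ is a smooth elliptic curve (the marking being non-stacky), $\mathrm{Pic}^0_{\cC_{S_1}/S_1}$ is canonically identified with it, and Lemma~\ref{gamma1comps} applied to $\cC_{S_1}$ directly yields the classical $[\Gamma_1(p^{n+1})]$-$(a+1,b)$-cyclicity of $\phi \circ \pi|_{S_1}$. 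Over $S_2$ the key observation is that for each $1 \le m \le p^b$, since $p^b \le p^n$, the canonical projection satisfies $\pi(m) = m$, so
\[
\sum_{m=1}^{p^b}[\phi \circ \pi(m)] = \sum_{m=1}^{p^b}[\phi(m)]
\]
as relative effective Cartier divisors in $\mathrm{Pic}^0_{\cC_{S_2}/S_2}$, and the right-hand side is \'etale over $S_2$ by hypothesis.

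The main obstacle I anticipate is the subgroup-scheme verification for $D'$ on the non-smooth locus of $\cC/S$, where the geometric fibers of $\mathrm{Pic}^0_{\cC/S}$ have a nontrivial $\GG_m$-part (cf.\ Lemma~\ref{singulartwistedcurves}) and one cannot directly invoke the elliptic-curve structure to produce the Frobenius-kernel identification. However, the diagram chase in Lemma~\ref{gamma1comps} uses only that the ambient group scheme is smooth and commutative over a base of characteristic $p$, together with the factorization of the multiplication-by-$p$ map through the relative Frobenius, so it transports to $\mathrm{Pic}^0_{\cC/S}$ without modification.
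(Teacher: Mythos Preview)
Your proposal is correct and follows essentially the same approach as the paper, which simply states that the argument of Lemma~\ref{gamma1comps} carries over without change. You have in fact supplied more detail than the paper does, explicitly spelling out how the Frobenius-kernel identification for $p\cdot[0]$ transfers to the smooth group scheme $\mathrm{Pic}^0_{\cC/S}$ and how the $(a+1,b)$-cyclic condition decomposes over $S_1$ and $S_2$.
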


If $N = p^nN'$ with $(N',p) = 1$, for any $r|N'$ write $\overline{\cK}'_{1,1}(\cB\mu_N)^r \subset \overline{\cK}'_{1,1}(\cB\mu_N)$ for the union of the components $\scr X^{\mathrm{tw}}_1(p^mr)$ over $0 \leq m \leq n$. 

\begin{corollary}\label{twistedgamma1comps}
Let $k$ be a perfect field of characteristic $p$, and let $N = p^nN'$ where $(N',p) = 1$. For any $r|N'$, $\overline{\cK}'_{1,1}(\cB\mu_N)^r_k$ is the disjoint union, with crossings at the supersingular points, of components $\scr Z^r_b$ for $0 \leq b \leq n$, where 
\begin{displaymath}
\scr Z_b^r = \bigcup_{b\leq m \leq n} \scr X^{\mathrm{tw}}_1(p^mr)^{(m-b,b)}_k,
\end{displaymath}
identifying each $\scr X^{\mathrm{tw}}_1(p^mr)^{(m-b,b)}_k$ with a closed substack of $\overline{\cK}'_{1,1}(\cB\mu_N)_k$ via $\iota^{(p^mr)}$. Each substack $\scr X^{\mathrm{tw}}_1(p^mr)^{(m-b,b)}_k$ is ``set-theoretically identified with $\scr Z_b^r$'' in the sense that $(\scr Z^r_b)_{\mathrm{red}} = \scr X_1(p^mr)^{(m-b,b)}_{k,\mathrm{red}}$ as substacks of $\overline{\cK}'_{1,1}(\cB\mu_N)_{k,\mathrm{red}}$. $\overline{\cK}'_{1,1}(\cB\mu_N)_k$ is the disjoint union of the open and closed substacks 
\begin{displaymath}
\{\overline{\cK}'_{1,1}(\cB\mu_N)^r_k\}_{r|N'}.
\end{displaymath}
\end{corollary}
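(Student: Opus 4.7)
The plan is to reduce this corollary directly to its uncompactified analogue (the corollary immediately following Proposition \ref{y1comps}), which has already been established, by taking closures in $\overline{\cK}'_{1,1}(\cB\mu_N)_k$ and invoking the Crossings Theorem. The key point is that the supersingular locus of $\overline{\cM}_{1,1,k}$ is disjoint from the cuspidal locus, so the compactification introduces no new phenomena near the supersingular points, and the local structure there is identical to that of $\scr H_1(N)_k$.

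First I would handle the decomposition over the prime-to-$p$ part. The factorization $\overline{\cK}'_{1,1}(\cB\mu_N) \cong \overline{\cK}'_{1,1}(\cB\mu_{p^n}) \times_{\overline{\cM}_{1,1}} \overline{\cK}'_{1,1}(\cB\mu_{N'})$ respects the honest disjoint-union decomposition $\overline{\cK}'_{1,1}(\cB\mu_{N'})_k = \bigsqcup_{r|N'}\scr X^{\mathrm{tw}}_1(r)_k$ (which is a disjoint union of open-and-closed substacks over $\mathbb{F}_p$ because $\mu_{N'}$ is \'etale in characteristic $p$, so the classical Hecke-like splitting by divisors of $N'$ survives). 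It therefore suffices to prove the disjoint-union-with-crossings description after base change along the finite flat map $\scr X^{\mathrm{tw}}_1(r)_k \to \overline{\cM}_{1,1,k}$, which is \'etale over the supersingular locus and hence preserves the crossings structure. This reduces us to the case $r = 1$ and $N = p^n$.

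Next I would identify $\scr Z_b := \bigcup_{b \leq m \leq n} \iota^{(p^m)}(\scr X^{\mathrm{tw}}_1(p^m)^{(m-b,b)}_k)$ with a closed substack of $\scr X^{\mathrm{tw}}_1(p^n)^{(n-b,b)}_k$ by iterating the lemma immediately preceding the corollary, which upgrades a $[\Gamma_1(p^m)]$-$(m-b,b)$-cyclic structure to a $[\Gamma_1(p^{m+1})]$-$(m-b+1,b)$-cyclic structure under pullback along the projection $\mathbb{Z}/(p^{m+1}) \twoheadrightarrow \mathbb{Z}/(p^m)$. For the set-theoretic identification $(\scr Z_b)_{\mathrm{red}} = \scr X^{\mathrm{tw}}_1(p^m)^{(m-b,b)}_{k,\mathrm{red}}$, I would combine the analogous identification in $\scr H_1(p^n)_k$ (given by the uncompactified corollary) with density of $\overline{\cK}^\circ_{1,1}(\cB\mu_{p^n})_k$ in $\overline{\cK}'_{1,1}(\cB\mu_{p^n})_k$ (Lemma \ref{dense}), noting that by Theorem \ref{gamma1} each $\scr X^{\mathrm{tw}}_1(p^m)$ is the proper closure of $\scr Y_1(p^m)$ over $\overline{\cM}_{1,1}$.

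Finally I would apply the Crossings Theorem (Theorem \ref{crossings}) to the finite flat map $\overline{\cK}'_{1,1}(\cB\mu_{p^n})_k \to \overline{\cM}_{1,1,k}$ with the closed substacks $\{\scr Z_b\}_{0 \leq b \leq n}$ playing the role of the $Z_i$. Finiteness and flatness come from Theorem \ref{propqfinite} and the flatness theorem cited just before it. The local hypotheses at supersingular points — a single $k$-rational point with complete local ring $k[\![x,y]\!]/(f)$ — are already verified for $\scr H_1(p^n)_k$ by the uncompactified corollary, and transfer verbatim since supersingular points never lie at the cusps. The main obstacle I anticipate is the routine but delicate verification that the closures of the uncompactified $Z_b$'s inside $\overline{\cK}'_{1,1}(\cB\mu_{p^n})_k$ really match the $\scr Z_b$ as defined via Definition \ref{abcyclictwisted}; this reduces to checking the cyclicity condition on the cuspidal fibers, which are always ordinary and are explicitly controlled by the analysis of standard $\mu_d$-stacky N\'eron $1$-gons already carried out in the proof of Theorem \ref{gamma1}.
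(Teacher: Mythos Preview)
Your approach is essentially the same as the paper's: reduce to the uncompactified corollary (following Proposition~\ref{y1comps}) via the lemma immediately preceding the statement, and apply the Crossings Theorem. The paper in fact gives no detailed argument beyond this, noting only one subtlety.

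That subtlety is precisely where your argument has a gap. To apply the Crossings Theorem you need the morphism $\overline{\cK}'_{1,1}(\cB\mu_N)_k \to \overline{\cM}_{1,1,k}$ to be \emph{finite}, but you cite Theorem~\ref{propqfinite}, which only yields proper and quasi-finite. For morphisms of algebraic stacks this does not suffice: one also needs representability. The paper explicitly flags this point immediately after the corollary and resolves it by invoking Lemma~\ref{finite} (which in turn rests on the representability established in Proposition~\ref{gamma1auts}). Once you replace your citation with Lemma~\ref{finite}, your argument is complete and matches the paper's.
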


Strictly speaking, to apply the crossings theorem (Theorem \ref{crossings}) to get the above corollary, we need to know that the morphism $\overline{\cK}'_{1,1}(\cB\mu_N) \rightarrow \overline{\cM}_{1,1}$ is finite. This follows from Lemma \ref{finite} below.

The picture in the case $N = p^n$ is essentially the same as the picture for $\overline{\cK}^\circ_{1,1}(\cB\mu_{p^n})_k$ (as discussed after Proposition \ref{y1comps}), except now each component is proper:

\begin{center}
\begin{tikzpicture}
\clip (-1,-3) rectangle (12,3.4);
\draw [gray!70,line width=2.5pt] (0,0) .. controls (2,2) and (4,-2) .. (6,0);
\draw [gray!70,line width=2.5pt] (6,0) .. controls (8,2) and (9,-1) .. (10,-1);
\draw [gray!70,line width=5pt] (0,1) .. controls (2,-3) and (4,4) .. (6,1);
\draw [gray!70,line width=5pt] (6,1) .. controls (8,-3) and (9,3) .. (10,3);
\draw [gray!70,line width=3.5pt] (0,0.5) .. controls (2,-0.5) and (3,0.85) .. (6,0.4);
\draw [gray!70,line width=3.5pt] (6,0.4) .. controls (8,0) and (9,0.6) .. (10,0.5);
\draw (0,0) .. controls (2,2) and (4,-2) .. (6,0);
\draw (6,0) .. controls (8,2) and (9,-1) .. (10,-1);
\draw (0,1) .. controls (2,-3) and (4,4) .. (6,1);
\draw (6,1) .. controls (8,-3) and (9,3) .. (10,3);
\draw (0,0.5) .. controls (2,-0.5) and (3,0.85) .. (6,0.4);
\draw (6,0.4) .. controls (8,0) and (9,0.6) .. (10,0.5);
\draw (0,-0.5) .. controls (2,4.4) and (4,-4.8) .. (6,-0.5);
\draw (6,-0.5) .. controls (8,4.6) and (8.6,-2.5) .. (10,-2.5);
\node at (10,-2.5) [inner sep=0pt,label=0:\textrm{$\scr Z_n$}] {};
\node at (10,-1.75) [inner sep=0pt,label=0:...] {};
\node at (10,-1) [inner sep=0pt,label=0:\textrm{$\scr Z_m$}] {};
\node at (10,0.5) [inner sep=0pt,label=0:\textrm{$\scr Z_{m-1}$}] {};
\node at (10,1.75) [inner sep=0pt,label=0:...] {};
\node at (10,3) [inner sep=0pt,label=0:\textrm{$\scr Z_0$}] {};
\node at (0,2.5) [inner sep=0pt,label=0:\textrm{$\overline{\cK}'_{1,1}(\cB\mu_{p^n})_k$}] {};
\end{tikzpicture}
\end{center}

\section{Moduli of elliptic curves in $\overline{\cK}_{1,1}(\cB \mu_N^2)$}

\subsection*{Reduction mod $p$ of $\scr H(N)$}

Next we turn our attention to $\overline{\cK}_{1,1} (\cB  \mu_N^2)$, working as before over an arbitrary base scheme $S$. Recall that by Corollary \ref{picard}, the open substack $\overline{\cK}^\circ_{1,1} (\cB  \mu_N^2)$ classifying rigidified twisted $\mu_N^2$-covers of smooth elliptic curves is naturally equivalent to the stack $\scr H(N)$ associating to a scheme $T/S$ the groupoid of pairs $(E,\phi)$ where $E/T$ is an elliptic curve and $\phi:(\mathbb{Z}/(N))^2 \rightarrow E[N]$ is a homomorphism of group schemes over $T$.

For any subgroup $K \leq (\mathbb{Z}/(N))^2$ with corresponding quotient $G_K = (\mathbb{Z}/(N))^2/K$ of $(\mathbb{Z}/(N))^2$, recall (Definition \ref{gkstructures}) that $\scr Y_K$ denotes the moduli stack associating to a scheme $T/S$ the groupoid of pairs $(E,\psi)$, where $E/T$ is an elliptic curve and $\psi: G_K \rightarrow E$ is a $G_K$-structure (in the sense of \cite[\S1.5]{KM1}). So for example, if $G_K \cong \mathbb{Z}/(d)$ for some $d|N$, $\scr Y_K$ is isomorphic to the stack $\scr Y_1(d)$, and if $G_K \cong (\mathbb{Z}/(d))^2$ then $\scr Y_K$ is isomorphic to the stack $\scr Y(d)$ classifying (not necessarily symplectic) $[\Gamma(d)]$-structures on elliptic curves. For every such $K$, we have a closed immersion 
\begin{displaymath}
\iota^K: \scr Y_K  \hookrightarrow \scr H(N), 
\end{displaymath}
given by precomposing a $G_K$-structure $\phi:G_K \rightarrow E[N]$ with the canonical projection $(\mathbb{Z}/(N))^2 \rightarrow G_K$. Together, these give a proper surjection 
\begin{displaymath}
\bigsqcup_{K \leq (\mathbb{Z}/(N))^2} \scr Y_K \rightarrow \scr H(N)
\end{displaymath}
which is an isomorphism over $S[1/N]$.

But in characteristics dividing $N$ this is not an isomorphism. First we consider the case where $N=p^n$ for some prime $p$. Any quotient $G_K = (\mathbb{Z}/(p^n))^2/K$ is isomorphic as an abelian group to $\mathbb{Z}/(p^m) \times \mathbb{Z}/(p^l)$ for some $l\leq m\leq n$. The corresponding moduli stack $\scr Y_K$ classifies $G_K$-structures on elliptic curves, and we saw in Theorem \ref{gkstructurescharp} that over a perfect field $k$ of characteristic $p$, $\scr Y_{K,k}$ is the disjoint union, with crossings at the supersingular points, of substacks $\scr Y^H_{K,k}$ indexed by the set 
\begin{displaymath}
L_K := \{H \leq G_K|\textrm{$H$ and $G_K/H$ are both cyclic}\}.
\end{displaymath}
The component $\scr Y^H_{K,k}$ classifies $G_K$-structures of component label $H$.

Now consider two subgroups $K' \leq K \leq (\mathbb{Z}/(p^n))^2$, and write $\pi: G_{K'} \rightarrow G_K$ for the canonical surjection. If $\phi:G_K \rightarrow E[p^n]$ is a $G_K$-structure on an ordinary elliptic curve $E/T/k$, then $\phi \circ \pi$ may or may not be a $G_{K'}$-structure on $E$. Indeed, we saw in Lemma \ref{gammacomps} that $\phi \circ \pi$ is a $G_{K'}$ structure if and only if $\pi^{-1}(H) \in L_{K'}$, i.e. if and only if $\pi^{-1}(H) \subseteq G_{K'}$ is cyclic.

Consider the set $\{(K,H)|K \leq (\mathbb{Z}/(p^n))^2,H \in L_K\}$. Let $\sim$ be the equivalence relation on this set generated by requiring that $(K,H) \sim (K',H')$ if $K' \leq K$ and $H,H'$ are as above, and let $\Lambda = \{(K,H)\}/\sim$.

\begin{proposition}\label{hcomps}
Let $k$ be a perfect field of characteristic $p$. $\scr H(p^n)_k$ is the disjoint union, with crossings at the supersingular points, of components $\scr H(p^n)^\lambda_k$ for $\lambda \in \Lambda$, where 
\begin{displaymath}
\scr H(p^n)^\lambda_k := \bigcup_{[(K,H)] = \lambda} \scr Y^H_{K,k},
\end{displaymath}
identifying each $\scr Y^H_{K,k}$ with a closed substack of $\scr H(p^n)^\lambda_k$ via $\iota^K$. Each $\scr Y^H_{K,k}$ is ``set-theoretically identified with $\scr H(p^n)^\lambda_k$'' in the sense that $(\scr H(p^n)^\lambda_k)_{\mathrm{red}} = \scr Y^H_{K,k,\mathrm{red}}$ as substacks of $\scr H(p^n)_{k,\mathrm{red}}$ for all $(K,H)$ with $[(K,H)] = \lambda \in \Lambda$.

If $N = p^nN'$ with $(p,N') = 1$, for any $A \leq (\mathbb{Z}/(N))^2$ of order prime to $p$, let $\scr H(N)^A_k \subset \scr H(N)_k$ be the union of the substacks $\scr Y_{K,k}$ for $A \leq K \leq (\mathbb{Z}/(N))^2$ with $(K:A)$ a power of $p$. Then similarly $\scr H(N)^A_k$ is the disjoint union, with crossings at the supersingular points, of components $\scr H(N)^{A,\lambda}_k$ for $\lambda \in \Lambda$, and $\scr H(N)_k$ is the disjoint union of the open and closed substacks $\scr H(N)^A_k$ for $A \leq (\mathbb{Z}/(N))^2$ of order prime to $p$. 
\end{proposition}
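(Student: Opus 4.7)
The plan is to mimic the proof of Theorem \ref{gkstructurescharp} (from \cite[13.7.6]{KM1}), lifting it from the level of the individual $\scr Y_{K,k}$ to the level of $\scr H(p^n)_k$, which contains all of them as closed substacks via the $\iota^K$. First I would reduce to the prime power case: the direct product decomposition $(\ZZ/(N))^2 \cong (\ZZ/(p^n))^2 \times (\ZZ/(N'))^2$ induces a decomposition of $\scr H(N)$ as the fiber product over $\cM_{1,1}$ of $\scr H(p^n)$ and $\scr H(N')$; since $N'$ is invertible on $k$, the stack $\scr H(N')_k$ is \'etale over $\cM_{1,1,k}$ and splits as the disjoint union of the $\scr Y_{A,k}$ for $A\leq (\ZZ/(N'))^2$. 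Pulling this back to $\scr H(N)_k$ yields the decomposition into the open and closed substacks $\scr H(N)^A_k$ and reduces the remaining analysis to a statement for each fiber, which is (a twist of) $\scr H(p^n)_k$.

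Next I would check that every geometric point of $\scr H(p^n)_k$ lies in the image of some $\iota^K$. Given $(E,\psi)\in \scr H(p^n)_k(T)$, I take $K=\ker\psi$ (the scheme-theoretic kernel, which on an ordinary elliptic curve is a constant subgroup scheme, and in general requires a small argument using that the Cartier divisor $\sum_{a\in(\ZZ/(p^n))^2}[\psi(a)]$ is a subgroup scheme to see that $\psi$ factors through a $G_K$-structure $\overline\psi:G_K\to E[p^n]$). Thus set-theoretically $\scr H(p^n)_k=\bigcup_K\iota^K(\scr Y_{K,k})$, and on the ordinary locus each $\scr Y_{K,k}$ itself decomposes via Theorem \ref{gkstructurescharp} as a union of closed substacks $\scr Y^H_{K,k}$ indexed by $H\in L_K$. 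Combining these two decompositions, the ordinary locus of $\scr H(p^n)_k$ is covered by the substacks $\iota^K(\scr Y^H_{K,k})$ indexed by all pairs $(K,H)$.

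The combinatorial heart of the proof is to identify exactly when two such substacks coincide, which is controlled by Lemma \ref{gammacomps}. If $K'\leq K$ and $\pi:G_{K'}\twoheadrightarrow G_K$ is the quotient, then the composite $\phi\circ\pi$ of a $G_K$-structure $\phi$ of component label $H$ is a $G_{K'}$-structure precisely when $\pi^{-1}(H)\in L_{K'}$, and in that case it has component label $\pi^{-1}(H)$. This is exactly the generating relation for $\sim$, so the substacks $\iota^K(\scr Y^H_{K,k})$ for $(K,H)$ in a given equivalence class $\lambda\in\Lambda$ all agree set-theoretically, while those in distinct classes are disjoint on the ordinary locus. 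Since each $\scr Y^H_{K,k}$ is generically reduced (being smooth on the ordinary locus by \cite[Ch.\,13]{KM1}), this yields the claimed equality $(\scr H(p^n)^\lambda_k)_{\mathrm{red}}=\scr Y^H_{K,k,\mathrm{red}}$.

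Finally I would apply the Crossings Theorem \ref{crossings} to $\scr H(p^n)_k\to \cM_{1,1,k}$. Finiteness of this morphism follows from Theorem \ref{propqfinite} (combined with Lemma \ref{finite} below) and flatness from \cite[5.1]{AOV2}; the required local complete intersection structure at each supersingular point is inherited from the known structure of each $\scr Y_{K,k}$ there (cf.\ \cite[\S13.7]{KM1}). The main obstacle I anticipate is the bookkeeping in step three: verifying that the equivalence classes in $\Lambda$ really correspond bijectively to the irreducible components of $\scr H(p^n)_k$, and that no further coincidences occur beyond those forced by Lemma \ref{gammacomps}. This amounts to a purely group-theoretic check on pairs $(K,H)$ with $K\leq(\ZZ/(p^n))^2$ and $H\in L_K$, tracing the component labels through chains of inclusions $K'\leq K$, and is the delicate combinatorial step.
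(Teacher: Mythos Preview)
Your approach is essentially the same as the paper's. The paper does not give a formal proof of this proposition; it is stated as a direct consequence of the surrounding discussion, namely the surjection $\bigsqcup_K \scr Y_K \to \scr H(p^n)$, Theorem \ref{gkstructurescharp} decomposing each $\scr Y_{K,k}$ into the $\scr Y^H_{K,k}$, and Lemma \ref{gammacomps} identifying which of these coincide under the $\iota^K$, all assembled via the Crossings Theorem \ref{crossings}. Your write-up follows precisely this outline and correctly identifies the combinatorics of $\Lambda$ as the bookkeeping step. One small remark: your citation of Lemma \ref{finite} for finiteness of $\scr H(p^n)_k \to \cM_{1,1,k}$ is misplaced, since that lemma concerns the compactified stacks $\scr X^{\mathrm{tw}}_1(N)$ and $\scr X^{\mathrm{tw}}(N)$; here finiteness is immediate from the fact that the fiber over $E$ is the finite scheme $\underline{\mathrm{Hom}}\big((\mathbb{Z}/(p^n))^2, E[p^n]\big)$.
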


As in the case of $\scr H_1(p^n)$, the reduction mod $p$ of $\scr H(p^n)$ has an appealing geometric description. To keep our pictures from getting unreasonably large, we restrict our attention to the case $n=1$. The group $K := (\mathbb{Z}/(p))^2$ has $p+3$ subgroups, namely the entire group $K$, $K_0 := 0$, and $p+1$ subgroups $K_1,...,K_{p+1}$ isomorphic to $\mathbb{Z}/(p)$. The corresponding moduli stacks are 
\begin{align*}
\scr Y_K & = \scr Y(1) \\
\scr Y_{K_0} & = \scr Y(p) \\
\scr Y_{K_i} & \cong \scr Y_1(p)\:\:\:\textrm{for $i = 1,...,p+1$,}
\end{align*}
so we see that over $\mathbb{Z}[1/p]$, $\scr H(p)$ is the disjoint union of $\scr Y(1)$, $\scr Y(p)$ and $p+1$ copies of $\scr Y_1(p)$.

By definition we have 
\begin{align*}
L_K & = \{0\} \\
L_{K_0} & = \{K_1,...,K_{p+1}\} \\
L_{K_i} & = \{G_{K_i},0\}\:\:\:\textrm{for $i = 1,...,p+1$,} 
\end{align*}
where as usual $G_{K_i} = (\mathbb{Z}/(p))^2/K_i$. The set of labels $\Lambda$ is built by putting an equivalence relation on the set consisting of the following pairs:
\begin{align*}
(K,0)\:\:\: & \\
(K_0,K_i)\:\:\: & \textrm{for $i = 1,...,p+1$} \\
(K_i,G_{K_i})\:\:\: & \textrm{for $i = 1,...,p+1$} \\
(K_i,0)\:\:\: & \textrm{for $i = 1,...,p+1$}.
\end{align*}
By definition, working over a perfect field $k$ of characteristic $p$, the pair $(K,0)$ corresponds to $\scr Y(1)_k$; the pair $(K_0,K_i)$ corresponds to the component $\scr Y(p)^{K_i}_k$ of $\scr Y(p)_k$; the pair $(K_i,G_{K_i})$ corresponds to the component $\scr Y_1(p)^{(1,0)}_k$ in $\scr Y_{K_i,k} \cong \scr Y_1(p)_k$; and the pair $(K_i,0)$ corresponds to the component $\scr Y_1(p)^{(0,1)}_k$ in $\scr Y_{K_i,k} \cong \scr Y_1(p)_k$. Unwinding the definition, we see that the equivalence relation defining $\Lambda$ just says that $(K,0) \sim (K_i,G_{K_i})$ for $i = 1,...,p+1$, and that $(K_0,K_i) \sim (K_i,0)$ for $i = 1,...,p+1$. Thus the set of labels is $\Lambda = \{\lambda_0,\lambda_1, ..., \lambda_{p+1}\}$, where 
\begin{align*}
\lambda_0 & = [(K,0)] = [(K_i,G_{K_i})]\:\:\:\textrm{for $i = 1,...,p+1$} \\
\lambda_i & = [(K_0,K_i)] = [(K_i,0)]\:\:\:\:\textrm{for $i = 1,...,p+1$.}
\end{align*}

Visualize $\scr H(p)_{k,\mathrm{red}}$ as follows:

\begin{center}
\begin{tikzpicture}
\clip (-0.5,-2.8) rectangle (12,3.4);
\draw (0,0) .. controls (2,2) and (4,-2) .. (6,0);
\draw (6,0) .. controls (8,2.4) and (9,-1) .. (10,-1);
\draw (0,1) .. controls (2,-3) and (4,4) .. (6,0.8);
\draw (6,0.8) .. controls (8.3,-2.8) and (9,3.4) .. (10,3);
\draw (0,0.5) .. controls (2,-0.5) and (3,0.85) .. (6,0.4);
\draw (6,0.4) .. controls (8,0) and (9,0.6) .. (10,1.5);
\draw (0,-0.5) .. controls (2,4.4) and (4,-4.8) .. (6,-0.5);
\draw (6,-0.5) .. controls (8,5.3) and (8.6,-2.5) .. (10,-2.5);
\node at (10,-2.5) [inner sep=0pt,label=0:\textrm{$\scr H(p)^{\lambda_0}_{k,\mathrm{red}}$}] {};
\node at (10,-1) [inner sep=0pt,label=0:\textrm{$\scr H(p)^{\lambda_1}_{k,\mathrm{red}}$}] {};
\node at (10,0.3) [inner sep=0pt,label=0:...] {};
\node at (10,1.55) [inner sep=0pt,label=0:\textrm{$\scr H(p)^{\lambda_p}_{k,\mathrm{red}}$}] {};
\node at (10,3) [inner sep=0pt,label=0:\textrm{$\scr H(p)^{\lambda_{p+1}}_{k,\mathrm{red}}$}] {};
\node at (0,2.5) [inner sep=0pt,label=0:\textrm{$\scr H(p)_{k,\mathrm{red}}$}] {};
\end{tikzpicture}
\end{center}
The component $\scr H(p)^{\lambda_0}_k$ is ``set-theoretically identified'' with the component $\scr Y(1)_k$ and with the component $\scr Y_1(p)^{(1,0)}_k$ in each copy of $\scr Y_1(p)_k$; each of these contributes additional nilpotent structure to the component $\scr H(p)^{\lambda_0}_k$ (each $\scr Y_1(p)^{(1,0)}_k$ has length $p-1$ over $\scr H(p)^{\lambda_0}_{k,\mathrm{red}} = \scr Y(1)_k$, so $\scr H(p)^{\lambda_0}_k$ has length $(p+1)(p-1)+1 = p^2$ over $\scr H(p)^{\lambda_0}_{k,\mathrm{red}} = \scr Y(1)_k$).

\begin{center}
\begin{tikzpicture}
\clip (-0.5,-2.8) rectangle (12,2);
\draw [gray!70,line width=7pt] (0,-0.5) .. controls (2,4.4) and (4,-4.8) .. (6,-0.5);
\draw [gray!70,line width=7pt] (6,-0.5) .. controls (8,5.3) and (8.6,-2.5) .. (10,-2.5);
\draw (0,-0.5) .. controls (2,4.4) and (4,-4.8) .. (6,-0.5);
\draw (6,-0.5) .. controls (8,5.3) and (8.6,-2.5) .. (10,-2.5);
\node at (10,-2.5) [inner sep=0pt,label=0:\textrm{$\scr H(p)^{\lambda_0}_k$}] {};
\end{tikzpicture}
\end{center}
For $i = 1,...,p+1$, the component $\scr H(p)^{\lambda_i}_k$ is ``set-theoretically identified'' with the component $\scr Y(p)^{K_i}_k$ of $\scr Y(p)_k$, contributing nilpotent structure to $\scr H(p)^{\lambda_i}_k$ (each $\scr Y(p)^{K_i}_k$ has length $p-1$ over $\scr H(p)^{\lambda_i}_{k,\mathrm{red}}$). The component $\scr H(p)^{\lambda_i}_k$ is also ``set-theoretically identified'' with the component $\scr Y_1(p)^{(0,1)}_k$ of $\scr Y_{K_i,k} \cong \scr Y_1(p)_k$; each of these is reduced, adding $1$ to the length of the component $\scr H(p)^{\lambda_i}_k$ over $\scr H(p)^{\lambda_i}_{k,\mathrm{red}}$. Thus $\scr H(p)^{\lambda_i}_k$ has length $p$ over $\scr H(p)^{\lambda_i}_{k,\mathrm{red}}$, which is isomorphic to $\scr Y_1(p)^{(0,1)}_k$ and hence has degree $p^2-p$ over $\scr Y(1)_k$ (cf. \cite[13.5.6]{KM1}). The result is that the component $\scr H(p)^{\lambda_i}_k$ has length $p$ over the underlying reduced stack $\scr Y_1(p)^{(0,1)}_k$, which has degree $p^2-p$ over $\scr Y(1)_k$:

\begin{center}
\begin{tikzpicture}
\clip (-0.5,-2.8) rectangle (12,3.4);
\draw [gray!70,line width=5pt] (0,0) .. controls (2,2) and (4,-2) .. (6,0);
\draw [gray!70,line width=5pt] (6,0) .. controls (8,2.4) and (9,-1) .. (10,-1);
\draw [gray!70,line width=5pt] (0,1) .. controls (2,-3) and (4,4) .. (6,0.8);
\draw [gray!70,line width=5pt] (6,0.8) .. controls (8.3,-2.8) and (9,3.4) .. (10,3);
\draw [gray!70,line width=5pt] (0,0.5) .. controls (2,-0.5) and (3,0.85) .. (6,0.4);
\draw [gray!70,line width=5pt] (6,0.4) .. controls (8,0) and (9,0.6) .. (10,1.5);
\draw (0,0) .. controls (2,2) and (4,-2) .. (6,0);
\draw (6,0) .. controls (8,2.4) and (9,-1) .. (10,-1);
\draw (0,1) .. controls (2,-3) and (4,4) .. (6,0.8);
\draw (6,0.8) .. controls (8.3,-2.8) and (9,3.4) .. (10,3);
\draw (0,0.5) .. controls (2,-0.5) and (3,0.85) .. (6,0.4);
\draw (6,0.4) .. controls (8,0) and (9,0.6) .. (10,1.5);
\node at (10,-1) [inner sep=0pt,label=0:\textrm{$\scr H(p)^{\lambda_1}_k$}] {};
\node at (10,0.3) [inner sep=0pt,label=0:...] {};
\node at (10,1.55) [inner sep=0pt,label=0:\textrm{$\scr H(p)^{\lambda_p}_k$}] {};
\node at (10,3) [inner sep=0pt,label=0:\textrm{$\scr H(p)^{\lambda_{p+1}}_k$}] {};
\end{tikzpicture}
\end{center}
This gives us the following picture of $\scr H(p)_k$:

\begin{center}
\begin{tikzpicture}
\clip (-0.5,-2.8) rectangle (12,3.4);
\draw [gray!70,line width=5pt] (0,0) .. controls (2,2) and (4,-2) .. (6,0);
\draw [gray!70,line width=5pt] (6,0) .. controls (8,2.4) and (9,-1) .. (10,-1);
\draw [gray!70,line width=5pt] (0,1) .. controls (2,-3) and (4,4) .. (6,0.8);
\draw [gray!70,line width=5pt] (6,0.8) .. controls (8.3,-2.8) and (9,3.4) .. (10,3);
\draw [gray!70,line width=5pt] (0,0.5) .. controls (2,-0.5) and (3,0.85) .. (6,0.4);
\draw [gray!70,line width=5pt] (6,0.4) .. controls (8,0) and (9,0.6) .. (10,1.5);
\draw [gray!70,line width=7pt] (0,-0.5) .. controls (2,4.4) and (4,-4.8) .. (6,-0.5);
\draw [gray!70,line width=7pt] (6,-0.5) .. controls (8,5.3) and (8.6,-2.5) .. (10,-2.5);
\draw (0,0) .. controls (2,2) and (4,-2) .. (6,0);
\draw (6,0) .. controls (8,2.4) and (9,-1) .. (10,-1);
\draw (0,1) .. controls (2,-3) and (4,4) .. (6,0.8);
\draw (6,0.8) .. controls (8.3,-2.8) and (9,3.4) .. (10,3);
\draw (0,0.5) .. controls (2,-0.5) and (3,0.85) .. (6,0.4);
\draw (6,0.4) .. controls (8,0) and (9,0.6) .. (10,1.5);
\draw (0,-0.5) .. controls (2,4.4) and (4,-4.8) .. (6,-0.5);
\draw (6,-0.5) .. controls (8,5.3) and (8.6,-2.5) .. (10,-2.5);
\node at (10,-2.5) [inner sep=0pt,label=0:\textrm{$\scr H(p)^{\lambda_0}_k$}] {};
\node at (10,-1) [inner sep=0pt,label=0:\textrm{$\scr H(p)^{\lambda_1}_k$}] {};
\node at (10,0.3) [inner sep=0pt,label=0:...] {};
\node at (10,1.55) [inner sep=0pt,label=0:\textrm{$\scr H(p)^{\lambda_p}_k$}] {};
\node at (10,3) [inner sep=0pt,label=0:\textrm{$\scr H(p)^{\lambda_{p+1}}_k$}] {};
\node at (0,2.5) [inner sep=0pt,label=0:\textrm{$\scr H(p)_k$}] {};
\end{tikzpicture}
\end{center}
Note that adding up the lengths calculated in the course of the above construction, we recover the fact that the stack $\scr H(p)$ has length $p^4$ over $\scr Y(1) = \cM_{1,1}$.

\subsection*{Closure of $\scr Y(N)$ in $\overline{\cK}_{1,1}(\cB \mu_N^2)$}

\begin{definition}\label{gammaNtwistedcurve}
Let $\cC/S$ be a $1$-marked genus $1$ twisted stable curve over a scheme $S$, with no stacky structure at its marking. A \textit{$[\Gamma(N)]$-structure} on $\cC$ is a group scheme homomorphism $\phi: (\mathbb{Z}/(N))^2 \rightarrow \mathrm{Pic}^0_{\cC/S}$ such that:
\begin{itemize}
  \item the relative effective Cartier divisor 
  \begin{displaymath}
    D := \sum_{a \in (\mathbb{Z}/(N))^2} [\phi(a)]
  \end{displaymath}
  in $\mathrm{Pic}^0_{\cC/S}$ is an $N$-torsion subgroup scheme, hence $D = \mathrm{Pic}^0_{\cC/S}[N]$; and 
  \item for every geometric point $\overline{p} \rightarrow S$, $D_{\overline{p}}$ meets every irreducible component of $(\mathrm{Pic}^0_{\cC/S})_{\overline{p}} = \mathrm{Pic}^0_{\cC_{\overline{p}}/k(\overline{p})}$.
\end{itemize}
We write $\scr X^{\mathrm{tw}}(N)$ for the substack of $\overline{\cK}_{1,1}(\cB\mu_N^2)$ associating to $T/S$ the groupoid of pairs $(\cC,\phi)$, where $\cC/S$ is a $1$-marked genus $1$ twisted stable curve with non-stacky marking, and $\phi$ is a $[\Gamma(N)]$-structure on $\cC$.
\end{definition}

Note that if $\cC/S$ is a twisted curve admitting a $[\Gamma(N)]$-structure and $\overline{p} \rightarrow S$ is a geometric point such that $C_{\overline{p}}$ is singular, then necessarily $\mathrm{Pic}^0_{\cC_{\overline{p}}/k(\overline{p})} \cong \mathbb{G}_m \times \mathbb{Z}/(N)$, so by Lemma \ref{singulartwistedcurves} $\cC_{\overline{p}}$ is a standard $\mu_N$-stacky N\'eron $1$-gon over $k(\overline{p})$, as in \ref{stacky1gon}.

Applying the methods of our study of $\scr X^{\mathrm{tw}}_1(N)$ to the stack $\scr X^{\mathrm{tw}}(N)$, we have:
\begin{theorem}\label{gamma}
Let $S$ be a scheme and $\scr X^{\mathrm{tw}}(N)$ the stack over $S$ classifying $[\Gamma(N)]$-structures on $1$-marked genus $1$ twisted stable curves with non-stacky marking. Then $\scr X^{\mathrm{tw}}(N)$ is a closed substack of $\overline{\cK}_{1,1}(\cB\mu_N^2)$, which contains $\scr Y(N)$ as an open dense substack.
\end{theorem}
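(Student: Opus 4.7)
The plan is to imitate the proof of Theorem \ref{gamma1}, with a $[\Gamma(N)]$-structure playing the role of two coupled $[\Gamma_1(N)]$-structures. Density of $\scr Y(N)$ in $\scr X^{\mathrm{tw}}(N)$ is immediate from Lemma \ref{dense} (whose proof is written to cover the $\mu_N^2$ case as well), so I reduce to verifying the valuative criterion of properness for the inclusion $\scr X^{\mathrm{tw}}(N) \hookrightarrow \overline{\cK}_{1,1}(\cB\mu_N^2)$. Given $(\cC_\eta,\phi_\eta) \in \scr X^{\mathrm{tw}}(N)(\eta)$ over the generic point of a DVR $R$, properness of $\overline{\cK}_{1,1}(\cB\mu_N^2)$ extends this, after base change on $R$, to a unique pair $(\cC,\phi) \in \overline{\cK}'_{1,1}(\cB\mu_N^2)(R)$, and I must verify that the extended $\phi$ is a $[\Gamma(N)]$-structure on $\cC$.

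I split into the same cases as in the proof of Theorem \ref{gamma1}, further separating $(N,p)=1$ from $N=p^n$. When the generic coarse space is singular, Lemma \ref{singulartwistedcurves} together with the requirement that $\mathrm{Pic}^0_{\cC_\eta/K}$ admit a $[\Gamma(N)]$-structure forces $\cC_\eta$ to be a standard $\mu_N$-stacky N\'eron $1$-gon, and the extension is obtained by scheme-theoretic closure as in Lemma \ref{genericnode}. When the generic fiber is smooth with bad reduction, I invoke properness of $\scr X(N)$ over $\overline{\cM}_{1,1}$ from \cite[Thm.~4.1.1]{C} to extend $(E_\eta,\phi_\eta)$ to a generalized elliptic curve $\scr E/R$ equipped with a $[\Gamma(N)]$-structure $(P,Q)$ extending $\phi_\eta$; after further base change $\scr E_s$ is a N\'eron $N$-gon with $\scr E^{\mathrm{sm}}[N] \cong \mu_N \times \mathbb{Z}/(N)$, and the candidate twisted curve is $\cC := [\scr E/\scr E^{\mathrm{sm}}[N]]$, whose special fiber is a standard $\mu_N$-stacky N\'eron $1$-gon.

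To construct the $\mu_N^2$-torsor over $\cC$, I decompose $\phi_\eta$ into its two coordinate components $\phi^{(i)}_\eta : \mathbb{Z}/(N) \to \mathrm{Pic}^0_{\cC_\eta/K}$ and apply the torsor-extension techniques of Lemmas \ref{purechar} and \ref{mixedchar} to each. The extension of the $e_N$-pairing to $\scr E^{\mathrm{sm}}[N]$ over all of $R$ (from \cite[Thm.~4.1.1]{C}) is the essential ingredient allowing the quotient construction of Lemma \ref{quotient} to produce each $\mu_N$-torsor $P^{(i)} \to \cC$. The fiber product $P^{(1)} \times_\cC P^{(2)}$ is a $\mu_N^2$-torsor which, via Corollary \ref{picard}, corresponds to the given extension $\phi$; because representability of a $\mu_N^2$-torsor is strictly weaker than representability of each $\mu_N$-factor, it follows automatically from representability of either $P^{(i)}$, which is supplied by the Lemmas. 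The $[\Gamma(N)]$-condition on $\phi$ is fiberwise: nothing to verify on $\eta$, while on $s$ the decomposition $\mathrm{Pic}^0_{\cC_s/k} \cong \mathbb{G}_m \times \mathbb{Z}/(N)$ together with the explicit component-label computations from Lemmas \ref{purechar}(b) and \ref{mixedchar}(b) shows that the two generators of $\phi$ together project onto a generating set of the full $N$-torsion and so meet every irreducible component.

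The main obstacle will be the mixed characteristic case with $N=p^n$: there, individual coordinates of $\phi_\eta$ can decouple into \'etale and identity-component parts (as in the splitting $Q = Q^1+Q^2$ from Lemma \ref{mixedchar}), and one must track how these decompositions interact across the two coordinates to ensure the combined image fills out $\mathrm{Pic}^0_{\cC_s/k}[N]$. Once this bookkeeping is carried out, the rest is a routine adaptation of the arguments for $\scr X^{\mathrm{tw}}_1(N)$.
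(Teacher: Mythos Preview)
Your approach is correct and close in spirit to the paper's, but the paper takes a more direct route that avoids the coordinate decomposition entirely. Rather than splitting $\phi_\eta$ into its two $\mathbb{Z}/(N)$-components, constructing $\mu_N$-torsors $P^{(1)},P^{(2)}$ separately, and forming the fiber product $P^{(1)}\times_\cC P^{(2)}$, the paper identifies the full $\mu_N^2$-torsor at once. When $N$ is invertible on $K$, an immediate consequence of Lemma~\ref{quotient} is that the $\mu_N^2$-torsor attached to a $[\Gamma(N)]$-structure $(Q_1,Q_2)$ on $E_\eta$ is $E_\eta$ itself, with structure map the isogeny $[N]:E_\eta\to E_\eta$; one then reads off directly that at the cusp the torsor is a N\'eron $N$-gon $P$ over $\cC=[\scr E/\scr E^{\mathrm{sm}}[N]]$, with the $\mu_N^2$-action coming from an isomorphism $P^{\mathrm{sm}}[N]\cong\mu_N^2$. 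For $N=p^n$ in characteristic $p$, the special-fiber torsor is described in one stroke as $C'\times\mu_{p^n}$, where $C'$ is an honest N\'eron $1$-gon sitting as a $\mu_{p^n}$-torsor over $\cC=[C'/\mu_{p^n}]$ via $x\mapsto x^{p^n}$. In both cases the $[\Gamma(N)]$-condition on the special fiber is then immediate.

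This packaging sidesteps exactly the bookkeeping you flag in your final paragraph: because the torsor is produced whole, there is no need to reconcile two possibly different twisted curves $\cC^{(1)},\cC^{(2)}$ coming from Lemmas~\ref{purechar} and~\ref{mixedchar} applied coordinatewise, nor to argue that at least one $P^{(i)}$ is representable over the common $\cC$. Your approach does go through (since $(P,Q)$ together generate the \'etale quotient of $\scr E^{\mathrm{sm}}[N]$, at least one coordinate has full \'etale order, so its associated twisted curve is already $\cC$ and its torsor is representable there), but the paper's all-at-once identification is cleaner and makes the mixed-characteristic case no harder than the others.
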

In particular $\scr X^{\mathrm{tw}}(N)$ is flat over $S$ with local complete intersection fibers, and is proper and quasi-finite over $\overline{\cM}_{1,1}$.

\begin{remark}
In \cite{P} a direct proof is given that over $\mathbb{Z}[1/N]$, $\scr X^{\mathrm{tw}}(N)$ agrees with the stack $\scr X(N)$ classifying $[\Gamma(N)]$-structures on generalized elliptic curves; in this case a $\mu_N^2$-torsor over a $1$-marked genus $1$ twisted stable curve with non-stacky marking is in fact a generalized elliptic curve, and it is this observation that gives the desired equivalence. This argument does not generalize to characteristics dividing $N$, because, for example, if $N = p^n$ then the N\'eron $N$-gon in characteristic $p$ (which is a generalized elliptic curve admitting various $[\Gamma(N)]$-structures) cannot be realized as a $\mu_N^2$-torsor over a $1$-marked genus $1$ twisted stable curve.
\end{remark}

\begin{proof}[Proof of \ref{gamma}]
This is proved in exactly the same manner as Theorem \ref{gamma1}. An immediate consequence of Lemma \ref{quotient} is that if $E/K$ is an elliptic curve over a field $K$ in which $N$ is invertible, and $(Q_1,Q_2)$ is a $[\Gamma(N)]$-structure on $E$, then the $\mu_N^2$-torsor 
\begin{displaymath}
P = \underline{\mathrm{Spec}}_E \big( \bigoplus_{a,b \in \mathbb{Z}/(N)} \cL((a\cdot Q_1 + b\cdot Q_2)-(0_E)) \big) 
\end{displaymath}
over $E$ as in \ref{picard} may be identified with $E$ itself, with the quotient map $P \rightarrow E$ corresponding to the isogeny $[N]: E \rightarrow E$. We immediately deduce via the methods of Theorem \ref{gamma1} that over an algebraically closed field $k$ in which $N$ is invertible, the $\mu_N^2$-torsor obtained when we pass to the cusp of $\overline{\cM}_{1,1}$ is a N\'eron $N$-gon $P$ over a standard $\mu_N$-stacky N\'eron $1$-gon $\cC$, with the $\mu_N^2$-action on $P$ induced by some choice of isomorphism $P^{\mathrm{sm}}[N] \cong \mu_N^2$:

\begin{center}
\begin{tikzpicture}
\clip (-2,2.5) rectangle (9,6.5);
\draw (0.2,3.8) -- (-0.907,4.907);
\draw (-0.707,4.507) -- (-0.707,5.907);
\draw (-0.907,5.507) -- (0.2,6.614);
\draw (-0.2,6.414) -- (1.2,6.414);
\begin{scope}[dashed]
\draw (0.8,6.614) -- (1.707,5.707);
\draw (1,4) -- (-0.2,4);
\draw [decorate,decoration=snake] (1.707,5.707) -- (1,4);
\end{scope}
\draw [name path=curve] (8,6) .. controls (4.5,0) and (4.5,10) .. (8,4);
\node at (7.35,5) [circle,draw,fill,inner sep=2pt,label=0:$\mu_{N}$] {};
\node at (6.5,3.7) [inner sep=0pt,label=270:\textrm{$\cC = [P/P^{\mathrm{sm}}[N]]$}] {};
\node at (0.5,3.7) [inner sep=0pt,label=270:\textrm{$P$}] {};
\node at (0.5,3.2) [inner sep=0pt,label=270:($N$-gon)] {};
\node at (3.4,5) [inner sep=0pt,label=90:\textrm{$\mu_N^2 \cong P^{\mathrm{sm}}[N]$}] {};
\draw [->] (2.2,5) -- (4.5,5);
\end{tikzpicture}
\end{center}
And in the case of $N = p^n$, over an algebraically closed $k$ field of characteristic $p$, the $\mu_{p^n}^2$-torsor $P$ obtained in passing to the cusp of $\overline{\cM}_{1,1}$ may be realized as a trivial $\mu_{p^n}$-torsor over a standard N\'eron $1$-gon $C'$, which in turn is a $\mu_{p^n}$-torsor over a standard $\mu_{p^n}$-stacky N\'eron $1$-gon $\cC = [C'/\mu_{p^n}]$ via the choice of an isomorphism $(C')^{\mathrm{sm}}[p^n] \cong \mu_{p^n}$:

\begin{center}
\begin{tikzpicture}
\clip (-1.5,3) rectangle (13,6.5);
\draw [gray!70,line width=5pt] (2,6) .. controls (-1.5,0) and (-1.5,10) .. (2,4);
\draw (2,6) .. controls (-1.5,0) and (-1.5,10) .. (2,4);
\draw (7,6) .. controls (3.5,0) and (3.5,10) .. (7,4);
\draw (12,6) .. controls (8.5,0) and (8.5,10) .. (12,4);
\node at (11.35,5) [circle,draw,fill,inner sep=2pt,label=0:$\mu_{p^n}$] {};
\node at (5.5,3.7) [inner sep=0pt,label=270:\textrm{$C'$}] {};
\node at (0.5,3.7) [inner sep=0pt,label=270:\textrm{$P = C' \times \mu_{p^n}$}] {};
\node at (3,5) [inner sep=0pt,label=90:$\mu_{p^n}$] {};
\node at (8.4,5) [inner sep=0pt,label=90:$\mu_{p^n}$] {};
\node at (8.3,5) [inner sep=0pt,label=270:\textrm{``$x\mapsto x^{p^n}$''}] {};
\node at (10.5,3.7) [inner sep=0pt,label=270:\textrm{$\cC$}] {};
\draw [->] (2.2,5) -- (3.7,5);
\draw [->] (7.5,5) -- (9,5);
\end{tikzpicture}
\end{center}
In both of the above cases, it is immediately verified that for each of these $\mu_N^2$-torsors, the corresponding group scheme homomorphism $(\mathbb{Z}/(N))^2 \rightarrow \mathrm{Pic}^0_{\cC/k} \cong \mathbb{G}_m \times \mathbb{Z}/(N)$ is a $[\Gamma(N)]$-structure on the standard $\mu_N$-stacky N\'eron $1$-gon $\cC$ in the sense of Definition \ref{gammaNtwistedcurve}, giving the valuative criterion of properness for $\scr X^{\mathrm{tw}}(N)$, hence Theorem \ref{gamma}.
\end{proof}

\subsection*{Reduction mod $p$ of $\overline{\cK}'_{1,1}(\cB \mu_N^2)$}

Recall that $\overline{\cK}'_{1,1}(\cB \mu_N^2) \subset \overline{\cK}_{1,1}(\cB \mu_N^2)$ is the closed substack classifying rigidified twisted stable $\mu_N^2$-covers of twisted curves with non-stacky marking; so $\overline{\cK}'_{1,1}(\cB \mu_N^2)$ is the closure of 
\begin{displaymath}
\overline{\cK}^\circ_{1,1} (\cB \mu_N^2) \simeq \scr H(N) 
\end{displaymath}
in $\overline{\cK}_{1,1}(\cB \mu_N^2)$.

\begin{definition}
Let $\cC/S$ be a $1$-marked genus $1$ twisted stable curve with non-stacky marking, and let $G$ be a 2-generated finite abelian group, say $G \cong \mathbb{Z}/(n_1) \times \mathbb{Z}/(n_2)$, $n_1 \geq n_2$. A \textit{$G$-structure} on $\cC$ is a homomorphism $\phi: G \rightarrow \mathrm{Pic}^0_{\cC/S}$ of group schemes over $S$ such that:
\begin{itemize}
  \item the relative effective Cartier divisor 
  \begin{displaymath}
  D := \sum_{a \in G} [\phi(a)]
  \end{displaymath}
  in $\mathrm{Pic}^0_{\cC/S}$ is an $n_1$-torsion subgroup scheme; and 
  \item for every geometric point $\overline{p} \rightarrow S$, $D_{\overline{p}}$ meets every irreducible component of $(\mathrm{Pic}^0_{\cC/S})_{\overline{p}} = \mathrm{Pic}^0_{\cC_{\overline{p}}/k(\overline{p})}$.
\end{itemize}

For any subgroup $K \leq (\mathbb{Z}/(N))^2$ with corresponding quotient $G_K = (\mathbb{Z}/(N))^2/K$ of $(\mathbb{Z}/(N))^2$, we write $\scr X^{\mathrm{tw}}_K$ for the moduli stack over $S$ associating to a scheme $T/S$ the groupoid of pairs $(\cC,\psi)$, where $\cC/T$ is a $1$-marked genus $1$ twisted stable curve with non-stacky marking, and $\psi: G_K \rightarrow \mathrm{Pic}^0_{\cC/T}$ is a $G_K$-structure on $\mathrm{Pic}^0_{\cC/T}$. 

For a twisted curve $\cC/S/\mathbb{F}_p$, if $N = p^n$ and $G_K \cong \mathbb{Z}/(p^m) \times \mathbb{Z}/(p^l)$ with $m\geq l \geq 1$, we set 
\begin{displaymath}
L_K := \{H \leq G_K|\textrm{$H$ and $G_K/H$ are both cyclic}\},
\end{displaymath}
and for any $H \in L_K$ we say a $G_K$-structure $\phi: G_K \rightarrow \mathrm{Pic}^0_{\cC/S}$ has \textit{component label $H$} if $H$ maps to the kernel of the $n$-fold relative Frobenius $F^n$ on the group scheme $\mathrm{Pic}^0_{\cC/S}$ over $S$, and the resulting group scheme homomorphism $G_K/H \rightarrow \mathrm{Pic}^0_{\cC/S}[p^n]/\mathrm{ker}(F^n)$ is a $G_K/H$-structure in the sense of \cite[\S1.5]{KM1}.

If $G_K \cong \mathbb{Z}/(p^m)$ (i.e. $l=0$), then $\scr X^{\mathrm{tw}}_K \cong \scr X^{\mathrm{tw}}_1(p^m)$, and for $H \cong \mathbb{Z}/(p^a) \in L_K$ we define $\scr X^{\mathrm{tw},H}_K \subset \scr X^{\mathrm{tw}}_K$ to be the substack $\scr X^{\mathrm{tw}}_1(p^m)^{(a,m-a)} \subset \scr X^{\mathrm{tw}}_1(p^m)$ as in Definition \ref{abcyclictwisted}. We still say that $\scr X^{\mathrm{tw},H}_K$ classifies $G_K$-structures of \textit{component label $H$}.
\end{definition}

So for example, if $G_K \cong \mathbb{Z}/(d)$ for some $d|N$, $\scr X^{\mathrm{tw}}_K$ is isomorphic to the stack $\scr X^{\mathrm{tw}}_1(d)$, and if $G_K \cong (\mathbb{Z}/(d))^2$ then $\scr X^{\mathrm{tw}}_K$ is isomorphic to the stack $\scr X^{\mathrm{tw}}(d)$. For every such $K$, we have a closed immersion 
\begin{displaymath}
\iota^K: \scr X^{\mathrm{tw}}_K  \hookrightarrow \overline{\cK}'_{1,1}( \cB \mu_N^2), 
\end{displaymath}
given by precomposing a $G_K$-structure $\phi:G_K \rightarrow \mathrm{Pic}^0_{\cC/T}$ with the canonical projection $(\mathbb{Z}/(N))^2 \rightarrow G_K$. Together, these give a proper surjection 
\begin{displaymath}
\bigsqcup_{K \leq (\mathbb{Z}/(N))^2} \scr X^{\mathrm{tw}}_K \rightarrow \overline{\cK}'_{1,1}(\cB\mu_N^2)
\end{displaymath}
which is an isomorphism over $S[1/N]$.

Let $k$ be a perfect field of characteristic $p$. The same argument proving Theorem \ref{gammacomps} immediately gives us:

\begin{corollary}
If $K \leq (\mathbb{Z}/(p^n))^2$ such that $G_K := (\mathbb{Z}/(p^n))^2/K \cong \mathbb{Z}/(p^m) \times \mathbb{Z}/(p^l)$ with $l\leq m \leq n$, then $\scr X^{\mathrm{tw}}_{K,k}$ is the disjoint union, with crossings at the supersingular points, of closed substacks $\scr X^{\mathrm{tw},H}_{K,k}$ for $H \in L_K$. $\scr X^{\mathrm{tw},H}_{K,k}$ classifies $G_K$-structures with component label $H$. 
\end{corollary}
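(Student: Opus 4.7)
The strategy is to transport the component decomposition of Theorem \ref{gkstructurescharp} from $\scr Y_{K,k}$ to its compactification $\scr X^{\mathrm{tw}}_{K,k}$ via the Crossings Theorem \ref{crossings}. First I would reduce to the case $N = p^n$: in general, writing $K_p$ and $K^p$ for the $p$-primary and prime-to-$p$ parts of $K$, there is a fiber product decomposition $\scr X^{\mathrm{tw}}_K \cong \scr X^{\mathrm{tw}}_{K_p} \times_{\overline{\cM}_{1,1}} \scr X^{\mathrm{tw}}_{K^p}$ whose prime-to-$p$ factor is \'etale in characteristic $p$ and plays no role in the decomposition.

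Next I would check that the definition of component label extends across the cusps. For $\cC/k$ a standard $\mu_d$-stacky N\'eron $1$-gon with $d \mid p^n$, we have $\mathrm{Pic}^0_{\cC/k} \cong \mathbb{G}_m \times \mathbb{Z}/(d)$, and the relative Frobenius on this group scheme kills the connected factor and acts as the identity on the \'etale factor. So for any $G_K$-structure $\phi: G_K \rightarrow \mathrm{Pic}^0_{\cC/k}$ the preimage $H := \phi^{-1}(\ker F^n) \in L_K$ is well-defined, and it agrees with the smooth-locus notion of component label from Definition \ref{gkstructures}. This constructs closed substacks $\scr X^{\mathrm{tw},H}_{K,k} \subseteq \scr X^{\mathrm{tw}}_{K,k}$ whose reduced structures are the closures of $\scr Y^H_{K,k,\mathrm{red}}$.

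The main obstacle is verifying the hypotheses of the Crossings Theorem at the supersingular points. Properness and quasi-finiteness of $\scr X^{\mathrm{tw}}_K \to \overline{\cM}_{1,1}$ follow from the same valuative-criterion argument as in Theorems \ref{gamma1} and \ref{gamma} (applied to the $G_K$-structure condition in place of $[\Gamma_1(N)]$- or $[\Gamma(N)]$-structures), so in particular $\scr X^{\mathrm{tw}}_K \to \overline{\cM}_{1,1}$ is finite since $\overline{\cM}_{1,1}$ has separated diagonal. Flatness and the local complete intersection fiber structure are inherited from those of $\overline{\cK}_{1,1}(\cB\mu_{p^n}^2) \to \overline{\cM}_{1,1}$ via \cite[5.1]{AOV2}. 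Thus at a supersingular geometric point of $\overline{\cM}_{1,1,k}$, the fiber of $\scr X^{\mathrm{tw}}_{K,k}$ is a single $k$-rational point whose completed local ring has the form $k[\![x,y]\!]/f$ required by Theorem \ref{crossings}. Each $\scr X^{\mathrm{tw},H}_{K,k}$ is similarly finite and flat over $\overline{\cM}_{1,1,k}$ with a unique geometric point above each supersingular point, and on the smooth locus its reduction coincides with $\scr Y^H_{K,k,\mathrm{red}}$. The Crossings Theorem then delivers the claimed disjoint union with crossings at the supersingular points.
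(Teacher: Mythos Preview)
Your approach is correct and matches the paper's. The paper itself gives no independent proof here: it simply says ``the same argument proving Theorem \ref{gkstructurescharp} immediately gives us'' this corollary (the printed reference to \ref{gammacomps} appears to be a typo). You have correctly unpacked what that argument is---namely, apply the Crossings Theorem \ref{crossings} after checking finiteness and flatness of $\scr X^{\mathrm{tw}}_{K,k} \to \overline{\cM}_{1,1,k}$ and of each $\scr X^{\mathrm{tw},H}_{K,k}$, exactly as was done in the $[\Gamma_1]$ case (Corollary \ref{twistedgamma1comps}).

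One point to tighten: your justification of flatness is imprecise. The citation \cite[5.1]{AOV2} gives flatness of $\cK_{1,1}(\cB\mu_{p^n}^2)$ over the base scheme $S$, not over $\overline{\cM}_{1,1}$, and in any case flatness is not inherited by arbitrary closed substacks. The clean argument over the field $k$ is: $\scr X^{\mathrm{tw}}_{K,k}$ is finite over the smooth curve $\overline{\cM}_{1,1,k}$ (as you note), and it is pure one-dimensional and Cohen-Macaulay (being a local complete intersection, since it arises as the closure of $\scr Y_{K,k}$ inside the LCI stack $\overline{\cK}'_{1,1}(\cB\mu_{p^n}^2)_k$---or, more directly, via the identification with the classical $\scr X_{K,k}$); miracle flatness then gives what you need. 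Similarly, your remark that Frobenius ``kills the connected factor'' of $\mathbb{G}_m \times \mathbb{Z}/(d)$ is not literally true---$F^n$ has kernel $\mu_{p^n}$ on $\mathbb{G}_m$---but since a $G_K$-structure lands in the $p^m$-torsion $\mu_{p^m} \times \mathbb{Z}/(d) \subset \ker(F^m) \times \mathbb{Z}/(d)$, your conclusion about component labels at the cusps stands.
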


As before, we let $\Lambda$ denote the set $\{(K,H)|K\leq (\mathbb{Z}/(p^n))^2,H \in L_K\}$, modulo the equivalence relation generated by declaring $(K,H) \sim (K',\pi^{-1}(H))$ whenever $K' \leq K$ with corresponding quotient map $\pi:G_{K'} \rightarrow G_K$ such that $\pi^{-1}(H) \in L_{K'}$. We conclude:
\begin{corollary}
Let $k$ be a perfect field of characteristic $p$. $\overline{\cK}'_{1,1}(\cB \mu_{p^n}^2)_{k}$ is the disjoint union, with crossings at the supersingular points, of components $\overline{\cK}'_{1,1}(\cB\mu_{p^n}^2)^\lambda_k$ for $\lambda \in \Lambda$, where 
\begin{displaymath}
\overline{\cK}'_{1,1}(\cB\mu_{p^n}^2)^\lambda_k := \bigcup_{[(K,H)] = \lambda} \scr X^{\mathrm{tw},H}_{K,k},
\end{displaymath}
identifying each $\scr X^{\mathrm{tw},H}_{K,k}$ with a closed substack of $\overline{\cK}'_{1,1}(\cB\mu_{p^n}^2)^\lambda_k$ via $\iota^K$. Each $\scr X^{\mathrm{tw},H}_{K,k}$ is ``set-theoretically identified with $\overline{\cK}'_{1,1}(\cB\mu_{p^n}^2)^\lambda_k$'' in the sense that 
\begin{displaymath}
\overline{\cK}'_{1,1}(\cB\mu_{p^n}^2)^\lambda_{k,\mathrm{red}} = \scr X^{\mathrm{tw},H}_{K,k,\mathrm{red}} 
\end{displaymath}
as substacks of $\overline{\cK}'_{1,1}(\cB\mu_{p^n})_{k,\mathrm{red}}$ for all $(K,H)$ with $[(K,H)] = \lambda \in \Lambda$.

If $N = p^nN'$ with $(p,N') = 1$, for any $A \leq (\mathbb{Z}/(N))^2$ of order prime to $p$, let 
\begin{displaymath}
\overline{\cK}'_{1,1}(\cB\mu_N^2)^A_k \subset \overline{\cK}'_{1,1}(\cB\mu_N^2)_k
\end{displaymath}
be the union of the substacks $\scr X^{\mathrm{tw}}_{K,k}$ for $A \leq K \leq (\mathbb{Z}/(N))^2$ with $(K:A)$ a power of $p$. Then similarly $\overline{\cK}'_{1,1}(\cB\mu_N^2)^A_k$ is the disjoint union, with crossings at the supersingular points, of components $\overline{\cK}'_{1,1}(\cB\mu_N^2)^{A,\lambda}_k$ for $\lambda \in \Lambda$, and $\overline{\cK}'_{1,1}(\cB\mu_N^2)_k$ is the disjoint union of the open and closed substacks $\overline{\cK}'_{1,1}(\cB\mu_N^2)^A_k$ for $A \leq (\mathbb{Z}/(N))^2$ of order prime to $p$.
\end{corollary}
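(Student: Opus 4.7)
The plan is to mirror the proof of Proposition \ref{hcomps}, with each ingredient lifted from the smooth open substack to the compactification; the argument used for the analogous Corollary \ref{twistedgamma1comps} in the $\mu_N$ case adapts almost word-for-word. First I would verify that $\overline{\cK}'_{1,1}(\cB\mu_{p^n}^2) \to \overline{\cM}_{1,1}$ is finite, so that the Crossings Theorem (Theorem \ref{crossings}) legitimately applies; this follows from the properness and quasi-finiteness in Theorem \ref{gamma} combined with representability of the structure morphism over the Deligne-Mumford stack $\overline{\cM}_{1,1}$. The closed immersions $\iota^K$ then jointly surject onto $\overline{\cK}'_{1,1}(\cB\mu_{p^n}^2)_k$, since this already holds on the generic fiber and the target is proper over $\overline{\cM}_{1,1}$.

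For each fixed $K$, I would apply the preceding corollary to decompose $\scr X^{\mathrm{tw}}_{K,k}$ as a disjoint union with crossings at the supersingular points of closed substacks $\scr X^{\mathrm{tw},H}_{K,k}$ indexed by $H \in L_K$. The crux is to identify which of the resulting $\iota^K(\scr X^{\mathrm{tw},H}_{K,k}) \subset \overline{\cK}'_{1,1}(\cB\mu_{p^n}^2)_k$ coincide as reduced substacks. For this I would establish the compactified analogue of Lemma \ref{gammacomps}: for $K' \leq K$ with projection $\pi: G_{K'} \twoheadrightarrow G_K$, and a $G_K$-structure $\phi$ of component label $H$ on a $1$-marked genus $1$ twisted stable curve $\cC/S/\mathbb{F}_p$, the composite $\phi \circ \pi$ is a $G_{K'}$-structure on $\cC$ (necessarily of component label $\pi^{-1}(H)$) if and only if $\pi^{-1}(H) \in L_{K'}$. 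Since in characteristic $p$ we have $\mathrm{Pic}^0_{\cC/S}[p^n] \cong \mu_{p^n} \times (\text{\'etale part})$ in both the smooth and stacky-$1$-gon cases, by Lemma \ref{singulartwistedcurves} and the ensuing discussion of $\mathrm{Pic}^0$, the argument of Lemma \ref{gammacomps} carries over without substantial modification.

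This identification yields exactly the equivalence relation defining $\Lambda$, and it is exhaustive: the generic point of each irreducible component of $\overline{\cK}'_{1,1}(\cB\mu_{p^n}^2)_k$ lies over the ordinary locus, where a rigidified $\mu_{p^n}^2$-torsor is determined by its underlying group scheme homomorphism into the Picard scheme, so inequivalent pairs $(K,H)$ give distinct reduced substacks. Applying the Crossings Theorem with $\cX = \overline{\cK}'_{1,1}(\cB\mu_{p^n}^2)_k$, $\cY = \overline{\cM}_{1,1,k}$, and $\{\cZ_i\}$ the images $\iota^K(\scr X^{\mathrm{tw},H}_{K,k})$ for representatives of $\Lambda$, then produces the first half of the statement. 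For general $N = p^n N'$ with $(N',p)=1$, the product decomposition
\[
\overline{\cK}'_{1,1}(\cB\mu_N^2) \cong \overline{\cK}'_{1,1}(\cB\mu_{p^n}^2) \times_{\overline{\cM}_{1,1}} \overline{\cK}'_{1,1}(\cB\mu_{N'}^2),
\]
combined with the fact that $\overline{\cK}'_{1,1}(\cB\mu_{N'}^2) \to \overline{\cM}_{1,1}$ is \'etale over $\mathbb{Z}[1/p]$, lets the prime-to-$p$ subgroup $A \leq (\mathbb{Z}/(N))^2$ decompose $\overline{\cK}'_{1,1}(\cB\mu_N^2)_k$ into open-and-closed substacks indexed by $A$, on each of which the $p$-part analysis above applies.

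The main obstacle will be proving the ``only if'' direction of the compactified Lemma \ref{gammacomps} together with the exhaustiveness of the equivalence relation, i.e., ruling out any additional coincidences of reduced substacks arising from the stacky structure at the node in characteristic $p$. The key tool is the explicit form $\mathrm{Pic}^0_{\cC/k} \cong \mathbb{G}_m \times \mathbb{Z}/(d)$ for a standard $\mu_d$-stacky N\'eron $1$-gon, which ensures that component labels can be verified fiberwise using the same Frobenius/cyclic criteria as in the smooth case, so no new identifications appear beyond those forced by the projections $\pi: G_{K'} \twoheadrightarrow G_K$.
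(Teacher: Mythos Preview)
Your proposal is correct and follows essentially the same approach as the paper, which states this corollary without explicit proof as the immediate compactified analogue of Proposition \ref{hcomps} (just as Corollary \ref{twistedgamma1comps} is the compactified analogue of Proposition \ref{y1comps}). One small correction: the properness and quasi-finiteness of $\overline{\cK}'_{1,1}(\cB\mu_{p^n}^2) \to \overline{\cM}_{1,1}$ come from Theorem \ref{propqfinite}, not Theorem \ref{gamma}; the finiteness then follows, as in the paper's remark after Corollary \ref{twistedgamma1comps}, from the representability argument of Proposition \ref{gamma1auts} (which applies because objects of $\overline{\cK}'_{1,1}(\cB\mu_{p^n}^2)$ automatically have $\phi$ meeting every component of $\mathrm{Pic}^0$, by representability of the map to $\cB\mu_{p^n}^2$).
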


The picture in the case $N = p$ is essentially the same as the picture for $\overline{\cK}^\circ_{1,1}(\cB\mu_p^2)_k \simeq \scr H(p)_k$ (as discussed after Proposition \ref{hcomps}), except now each component is proper:

\begin{center}
\begin{tikzpicture}
\clip (-0.5,-2.8) rectangle (12.4,3.4);
\draw [gray!70,line width=5pt] (0,0) .. controls (2,2) and (4,-2) .. (6,0);
\draw [gray!70,line width=5pt] (6,0) .. controls (8,2.4) and (9,-1) .. (10,-1);
\draw [gray!70,line width=5pt] (0,1) .. controls (2,-3) and (4,4) .. (6,0.8);
\draw [gray!70,line width=5pt] (6,0.8) .. controls (8.3,-2.8) and (9,3.4) .. (10,3);
\draw [gray!70,line width=5pt] (0,0.5) .. controls (2,-0.5) and (3,0.85) .. (6,0.4);
\draw [gray!70,line width=5pt] (6,0.4) .. controls (8,0) and (9,0.6) .. (10,1.5);
\draw [gray!70,line width=7pt] (0,-0.5) .. controls (2,4.4) and (4,-4.8) .. (6,-0.5);
\draw [gray!70,line width=7pt] (6,-0.5) .. controls (8,5.3) and (8.6,-2.5) .. (10,-2.5);
\draw (0,0) .. controls (2,2) and (4,-2) .. (6,0);
\draw (6,0) .. controls (8,2.4) and (9,-1) .. (10,-1);
\draw (0,1) .. controls (2,-3) and (4,4) .. (6,0.8);
\draw (6,0.8) .. controls (8.3,-2.8) and (9,3.4) .. (10,3);
\draw (0,0.5) .. controls (2,-0.5) and (3,0.85) .. (6,0.4);
\draw (6,0.4) .. controls (8,0) and (9,0.6) .. (10,1.5);
\draw (0,-0.5) .. controls (2,4.4) and (4,-4.8) .. (6,-0.5);
\draw (6,-0.5) .. controls (8,5.3) and (8.6,-2.5) .. (10,-2.5);
\node at (10,-2.5) [inner sep=0pt,label=0:\textrm{$\overline{\cK}'_{1,1}(\cB\mu_p^2)^{\lambda_0}_k$}] {};
\node at (10,-1) [inner sep=0pt,label=0:\textrm{$\overline{\cK}'_{1,1}(\cB\mu_p^2)^{\lambda_1}_k$}] {};
\node at (10,0.3) [inner sep=0pt,label=0:...] {};
\node at (10,1.55) [inner sep=0pt,label=0:\textrm{$\overline{\cK}'_{1,1}(\cB\mu_p^2)^{\lambda_p}_k$}] {};
\node at (10,3) [inner sep=0pt,label=0:\textrm{$\overline{\cK}'_{1,1}(\cB\mu_p^2)^{\lambda_{p+1}}_k$}] {};
\node at (0,2.5) [inner sep=0pt,label=0:\textrm{$\overline{\cK}'_{1,1}(\cB\mu_p^2)_k$}] {};
\end{tikzpicture}
\end{center}

\section{Comparison with the classical moduli stacks}\label{compare}
As promised, we verify that the moduli stacks $\scr X_1^{\mathrm{tw}}(N)$ and $\scr X^{\mathrm{tw}}(N)$ are isomorphic to the corresponding classical moduli stacks, justifying the claim in \cite{AOV2} that we have recovered the Katz-Mazur regular models:
\begin{theorem}[Restatement of \ref{firstclassical}]\label{classical}
Over the base $S = \mathrm{Spec}(\mathbb{Z})$, there is a canonical isomorphism of algebraic stacks $\scr X_1^{\mathrm{tw}}(N) \cong \scr X_1(N)$ extending the identity map on $\scr Y_1(N)$, and a canonical isomorphism of algebraic stacks $\scr X^{\mathrm{tw}}(N) \cong \scr X(N)$ extending the identity map on $\scr Y(N)$.
\end{theorem}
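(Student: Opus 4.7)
The plan is to follow the three-step strategy indicated in the introduction: (i) finiteness of $\scr X_1^{\mathrm{tw}}(N), \scr X^{\mathrm{tw}}(N) \to \overline{\cM}_{1,1}$; (ii) Cohen--Macaulayness over $\mathrm{Spec}(\mathbb{Z})$; (iii) the normalization characterization from \cite[\S4.1]{C}. I describe the argument for $\scr X_1^{\mathrm{tw}}(N)$; the case of $\scr X^{\mathrm{tw}}(N)$ is formally identical.

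For (i), Theorems \ref{gamma1} and \ref{propqfinite} give properness and quasi-finiteness, so only representability remains: an automorphism of $(\cC,\phi)$ inducing the identity on the coarse space must, after rigidification by $\mu_N$, commute with $\phi$, and since $\phi$ hits every irreducible component of $\mathrm{Pic}^0_{\cC/S}$ it is forced to be the identity. A proper, quasi-finite, representable morphism is finite. For (ii), Theorem \ref{gamma1} already gives flatness over $\mathrm{Spec}(\mathbb{Z})$ with local complete intersection (hence Cohen--Macaulay) fibers. Since $\overline{\cM}_{1,1}$ is regular of the same dimension, miracle flatness (Auslander--Buchsbaum stalkwise) upgrades the finite morphism $\scr X_1^{\mathrm{tw}}(N) \to \overline{\cM}_{1,1}$ to a finite flat one.

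For (iii), \cite[4.1.5]{C} characterizes $\scr X_1(N)$ as the normalization of $\overline{\cM}_{1,1}$ in $\scr X_1(N)|_{\mathbb{Z}[1/N]}$; the results of \cite{ACV, P} identify $\scr X_1^{\mathrm{tw}}(N)|_{\mathbb{Z}[1/N]} \cong \scr X_1(N)|_{\mathbb{Z}[1/N]}$. It therefore suffices to show that $\scr X_1^{\mathrm{tw}}(N)$ itself is normal. The Cohen--Macaulay property gives Serre's $S_2$, reducing normality to $R_1$. I expect this to be the main technical obstacle: at codimension-$1$ points of $\overline{\cM}_{1,1}$ above primes $p \mid N$, one must use the component analysis of Corollary \ref{twistedgamma1comps} to verify that each component of $\scr X_1^{\mathrm{tw}}(N)_{\mathbb{F}_p}$ is generically reduced, so that its generic local rings are discrete valuation rings. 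This should follow from the Cartier-divisor condition defining $[\Gamma_1(N)]$-structures together with an explicit analysis of the ordinary locus, paralleling Conrad's analysis on the generalized-elliptic-curve side. Once $R_1$ is verified, normality yields the desired isomorphism $\scr X_1^{\mathrm{tw}}(N) \cong \scr X_1(N)$; the argument for $\scr X^{\mathrm{tw}}(N) \cong \scr X(N)$ proceeds identically.
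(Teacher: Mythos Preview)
Your overall three-step strategy matches the paper exactly: representability (hence finiteness), Cohen--Macaulayness, then the normalization characterization from \cite[4.1.5]{C}. Steps (i) and (ii) are essentially the paper's Proposition \ref{gamma1auts}, Corollary \ref{finite}, and Proposition \ref{CM}.

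The divergence is in how you propose to verify $R_1$. You suggest checking, component by component, that the characteristic-$p$ fibers $\scr X_1^{\mathrm{tw}}(N)_{\mathbb{F}_p}$ are generically reduced. This will not work as stated: for $p\mid N$ the fibers of $\scr X_1(N)$ (equivalently $\scr X_1^{\mathrm{tw}}(N)$) over $\mathbb{F}_p$ are \emph{not} generically reduced. The components $\scr X_1(p^n)^{(a,b)}_k$ with $a>0$ carry genuine nilpotent structure (see the pictures following Theorem \ref{y1-char-p} and the length computations in \cite[\S13.5]{KM1}). Generic reducedness of the fiber is sufficient but not necessary for the total-space local ring to be a DVR; here the uniformizer is not $p$ but some $\pi$ with $p=u\pi^e$, $e>1$. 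So your proposed criterion would fail, and to push through a direct argument you would essentially have to reprove the Katz--Mazur regularity theorem at those points.

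The paper sidesteps this entirely with a short trick (Lemma \ref{cartier} and the following corollary): the cuspidal locus $\scr X_1^{\mathrm{tw}}(N)^\infty$, being the pullback of $\infty\subset\overline{\cM}_{1,1}$ along a flat map, is a relative effective Cartier divisor over $\mathrm{Spec}(\mathbb{Z})$. Such a divisor is $\mathbb{Z}$-flat of relative dimension $0$, hence contains no codimension-$1$ point of positive residue characteristic. So every codimension-$1$ point either has characteristic $0$ (where smoothness is known from \cite{ACV}) or lies in the complement $\scr Y_1(N)$, which is already regular by \cite[5.1.1]{KM1}. This gives $R_1$ without any component analysis in characteristic $p$.
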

We prove this after some preliminary results; the main point is to demonstrate that $\scr X^{\mathrm{tw}}_1(N)$ and $\scr X^{\mathrm{tw}}(N)$ are normal.

\begin{proposition}\label{gamma1auts}
The morphism $\pi:\scr X^{\mathrm{tw}}_1(N) \rightarrow \overline{\cM}_{1,1}$ sending $(\cC,\phi)$ to the coarse space $C$ of $\cC$ is a representable morphism of stacks. In particular, $\scr X^{\mathrm{tw}}_1(N)$ is Deligne-Mumford.

Similarly the natural morphism $\scr X^{\mathrm{tw}}(N) \rightarrow \overline{\cM}_{1,1}$ is representable, hence $\scr X^{\mathrm{tw}}(N)$ is Deligne-Mumford.
\end{proposition}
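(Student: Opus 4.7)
To show that $\pi$ is representable, it suffices to verify that for every scheme $T$ and every object $(\cC,\phi) \in \scr X^{\mathrm{tw}}_1(N)(T)$ with coarse space $(C,p)$, the natural homomorphism of automorphism group schemes
\[
\mathrm{Aut}_T\bigl((\cC,\phi)\bigr) \longrightarrow \mathrm{Aut}_T\bigl((C,p)\bigr)
\]
is a monomorphism. Since $\scr X^{\mathrm{tw}}_1(N) \hookrightarrow \overline{\cK}_{1,1}(\cB\mu_N)$ is a substack (and the marking of any object is non-stacky by definition of $\scr X^{\mathrm{tw}}_1(N)$), it is equivalent to prove the analogous statement for $\overline{\cK}_{1,1}(\cB\mu_N)$ restricted to the locus of objects with non-stacky marking.

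Working in the un-rigidified stack $\cK_{1,1}(\cB\mu_N)$, an automorphism of the object $(\cC,P)$ corresponding to $(\cC,\phi)$ is a pair $(g,\alpha)$, with $g$ an automorphism of the twisted curve $\cC/T$ fixing its marking and $\alpha \colon g^{\ast}P \to P$ a $\mu_N$-equivariant isomorphism of twisted $\mu_N$-covers. Suppose such a pair induces the identity on $(C,p)$; then $g$ is an automorphism of $\cC$ over its coarse space. The key step is to show that for a balanced twisted curve with non-stacky marking, any such $g$ is $2$-isomorphic to $\mathrm{id}_\cC$. This is a strictly local question at the stacky locus of $\cC$, which under our hypotheses consists only of stacky nodes. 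At a node with \'etale local model $[D^{\mathrm{sh}}/\mu_r]$, where $\mu_r$ acts by $x \mapsto \zeta x$ and $y \mapsto \zeta^{-1} y$, one checks directly that the $\mu_r$-equivariance constraint combined with the requirement of covering the identity on the invariant ring $D^{\mathrm{sh},\mu_r}$ forces any endomorphism of $[D^{\mathrm{sh}}/\mu_r]$ to arise from a lift $(x,y) \mapsto (a x, a^{-1} y)$ with $a \in \mu_r$ (the balanced structure rules out any contribution from $\mathrm{Aut}(\mu_r) \cong (\mathbb{Z}/r)^\times$), and each such lift is $2$-isomorphic to the identity of the stack via the natural transformation given by the $\mu_r$-action by $a$ itself. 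Gluing gives $g$ equivalent to $\mathrm{id}_\cC$.

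With $g$ (effectively) the identity, the data of $\alpha$ is an automorphism of $P$ as a $\mu_N$-torsor over $\cC$, hence lies in the central copy of $\mu_N$ acting on $P$ by translation. But this central $\mu_N$ is precisely the subgroup of the inertia of $\cK_{1,1}(\cB\mu_N)$ that is rigidified away in forming $\overline{\cK}_{1,1}(\cB\mu_N)$, so $(g,\alpha)$ becomes trivial in $\overline{\cK}_{1,1}(\cB\mu_N)$, and a fortiori in the closed substack $\scr X^{\mathrm{tw}}_1(N)$. This establishes the required monomorphism of automorphism group schemes, hence the representability of $\pi$. Since $\overline{\cM}_{1,1}$ is Deligne--Mumford, it follows immediately that $\scr X^{\mathrm{tw}}_1(N)$ is Deligne--Mumford. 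The identical argument, with $\mu_N$ replaced by $\mu_N^2$ throughout, handles $\scr X^{\mathrm{tw}}(N) \hookrightarrow \overline{\cK}_{1,1}(\cB\mu_N^2)$.

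The main obstacle is the local analysis at a balanced $\mu_r$-stacky node: one must carefully unwind the definition of a $1$-morphism between quotient stacks to verify both that the balanced condition rules out any outer-automorphism contribution from $(\mathbb{Z}/r)^\times$, and that each surviving $\mu_r$-translation is $2$-isomorphic to the identity. Once this local computation is in hand, the conclusion is formal: everything follows from the matching of the residual central $\mu_N$ (resp.\ $\mu_N^2$) with the subgroup that has already been rigidified away in the definition of $\overline{\cK}$.
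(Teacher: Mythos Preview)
Your local computation at a stacky node is incorrect, and this is fatal to the argument. At a balanced node with local model $[D/\mu_r]$, $D = \mathrm{Spec}(k[x,y]/(xy))$, an automorphism covering the identity of the coarse space must send $x \mapsto a x$ and $y \mapsto b y$ with $a, b \in \mu_r$; but there is \emph{no} constraint forcing $b = a^{-1}$, because the relation $xy = 0$ imposes nothing on $ab$. (The constraint $ab = 1$ would hold on the smoothing $xy = t$ with $t$ a non-zero-divisor, but it disappears at an actual node.) Modulo $2$-isomorphism one has $(a,b) \sim (\zeta a, \zeta^{-1} b)$ for $\zeta \in \mu_r$, so the group of local automorphisms over the coarse space is $(\mu_r \times \mu_r)/\mu_r \cong \mu_r$, indexed by $ab$. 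These are the well-known ``ghost automorphisms'' of a twisted curve; for the standard $\mu_d$-stacky $1$-gon they give precisely the extra factor $\mu_d$ in $\mathrm{Aut}(\cC) \cong \mathrm{Aut}(C) \times \mu_d$ computed in the paper, and for $ab \neq 1$ they are \emph{not} $2$-isomorphic to the identity.

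Consequently your argument never uses the hypothesis that $\phi$ is a $[\Gamma_1(N)]$-structure, and without that hypothesis the conclusion is false: a ghost automorphism of $\cC$ together with the identity on the torsor gives a nontrivial automorphism in $\overline{\cK}_{1,1}(\cB\mu_N)$ lying over $\mathrm{id}_C$. The paper's proof fills exactly this gap by computing that the ghost automorphism $\zeta \in \mu_d$ acts on $\mathrm{Pic}^0_{\cC/k} \cong \mathbb{G}_m \times \mathbb{Z}/(d)$ by $(\eta,a) \mapsto (\zeta^a \eta, a)$; since a $[\Gamma_1(N)]$-structure must have $\phi(1)$ projecting to a generator of $\mathbb{Z}/(d)$ (the ``meets every component'' condition), no nontrivial $\zeta$ can fix $\phi$. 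This is the step your argument is missing.
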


\begin{proof}
We have already seen that $\scr X^{\mathrm{tw}}_1(N)$ is an algebraic stack, so it suffices to show that for any object $(\cC,\phi) \in \scr X^{\mathrm{tw}}_1(N)(k)$ with $k$ an algebraically closed field, the natural map $\mathrm{Aut}(\cC,\phi) \rightarrow \mathrm{Aut}(C)$ is a monomorphism of group schemes. Here $C/k$ is the coarse space of $\cC$, and automorphisms are required to preserve the marking. It is obvious that $\mathrm{Aut}(\cC,\phi) \rightarrow \mathrm{Aut}(C)$ is a monomorphism if $\cC = C$ is a smooth elliptic curve over $k$, so by Lemma \ref{singulartwistedcurves} we reduce to the case where $\cC/k$ is a standard $\mu_d$-stacky N\'eron $1$-gon for some $d|N$.

\begin{center}
\begin{tikzpicture}
\clip (-1,-1) rectangle (5,1.2);
\draw [name path=curve] (3,1) .. controls (-0.5,-5) and (-0.5,5) .. (3,-1);
\node at (2.35,0) [circle,draw,fill,inner sep=2pt,label=0:$\mu_{d}$] {};
\end{tikzpicture}
\end{center}

In this case an automorphism of $\cC$ is an automorphism of the coarse space $C$, together with an automorphism of the $\mu_d$-gerbe in $\cC$ lying over the node of $C$. Thus 
\begin{displaymath}
\mathrm{Aut}(\cC) \cong \mathrm{Aut}(C) \times \mathrm{Aut}(\cB \mu_{d,k}).
\end{displaymath}
The only nontrivial automorphism of $C$ preserving the marked point $1 \in C$ is the automorphism $\iota: C \rightarrow C$ induced by the inversion automorphism of $\mathbb{G}_m$. We have $\mathrm{Aut}(\cB \mu_d) \cong \mu_d$, and the automorphism of $\mathrm{Pic}^0_{\cC/k} \cong \mathbb{G}_m \times \mathbb{Z}/(d)$ induced by $(0, \zeta) \in \mathrm{Aut}(\cC) \cong \langle \iota \rangle \times \mu_d$ sends $(\eta, a)$ to $(\zeta^a \eta, a)$. Since $\phi: \mathbb{Z}/(N) \rightarrow \mathrm{Pic}^0_{\cC/k}$ meets every component, the only automorphisms of $\cC$ than can possibly preserve $\phi$ are the automorphisms $\langle \iota \rangle \times \{0\} \subset \mathrm{Aut}(\cC)$ (cf. \cite[proof of 3.1.8]{C}). Thus $\mathrm{Aut}(\cC,\phi) \subset \langle \iota \rangle \times \{0\} \cong \mathrm{Aut}(C)$.

The same argument applies to $\scr X^{\mathrm{tw}}(N)$.
\end{proof}

\begin{corollary}\label{finite}
$\scr X^{\mathrm{tw}}_1(N)$ and $\scr X^{\mathrm{tw}}(N)$ are finite over $\overline{\cM}_{1,1}$.
\end{corollary}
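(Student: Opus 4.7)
The plan is to assemble the three properties \emph{representable}, \emph{proper}, and \emph{quasi-finite} for the morphisms $\scr X^{\mathrm{tw}}_1(N)\to\overline{\cM}_{1,1}$ and $\scr X^{\mathrm{tw}}(N)\to\overline{\cM}_{1,1}$, and then invoke the standard fact that a representable, proper, quasi-finite morphism of algebraic stacks is finite (this reduces, after an \'etale cover of the target, to the classical statement that a proper quasi-finite morphism of algebraic spaces is finite, cf.\ EGA IV, \S 8, or \cite[II.6.15]{K}).

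First I would record that the morphism $\overline{\cK}_{1,1}(\cB\mu_N)\to\overline{\cM}_{1,1}$ is proper and quasi-finite. Since $\cB\mu_N$ is tame and has coarse moduli space $S$, Theorem~\ref{propqfinite} gives that $\cK_{1,1}(\cB\mu_N)\to\overline{\cM}_{1,1}(S)=\overline{\cM}_{1,1}$ is proper and quasi-finite. The rigidification $\overline{\cK}_{1,1}(\cB\mu_N)\to\overline{\cM}_{1,1}$ inherits these properties (rigidifying along a central subgroup kills automorphisms without affecting properness or quasi-finiteness of the map to $\overline{\cM}_{1,1}$, since the forgetful map to $\overline{\cM}_{1,1}$ already factors through the rigidification). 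The same holds verbatim for $\overline{\cK}_{1,1}(\cB\mu_N^2)\to\overline{\cM}_{1,1}$.

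Next I would use that $\scr X^{\mathrm{tw}}_1(N)\hookrightarrow\overline{\cK}_{1,1}(\cB\mu_N)$ is a closed immersion (Theorem~\ref{gamma1}), so the composite $\scr X^{\mathrm{tw}}_1(N)\to\overline{\cM}_{1,1}$ is again proper and quasi-finite; likewise $\scr X^{\mathrm{tw}}(N)\to\overline{\cM}_{1,1}$ is proper and quasi-finite by Theorem~\ref{gamma}. Finally, Proposition~\ref{gamma1auts} supplies the crucial representability of both morphisms.

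A proper, quasi-finite, representable morphism of algebraic stacks is finite: representability lets one check the assertion after \'etale base change to a scheme covering $\overline{\cM}_{1,1}$, reducing to the statement for algebraic spaces, and a proper quasi-finite morphism of algebraic spaces is finite by Zariski's Main Theorem. I do not anticipate a real obstacle here; the only thing to be mildly careful about is the rigidification step, where one should note that the structure map to $\overline{\cM}_{1,1}$ descends from the unrigidified stack because $\mu_N$ acts trivially on the underlying twisted curve, so properness and quasi-finiteness pass to $\overline{\cK}_{1,1}(\cB\mu_N)$ without issue.
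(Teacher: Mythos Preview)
Your argument is correct and essentially identical to the paper's proof: the paper simply cites Theorems~\ref{gamma1} and~\ref{gamma} for properness and quasi-finiteness over $\overline{\cM}_{1,1}$ (these follow, as you spell out, from being closed substacks of the proper quasi-finite $\overline{\cK}_{1,1}(\cB\mu_N)$ and $\overline{\cK}_{1,1}(\cB\mu_N^2)$), and then invokes representability from Proposition~\ref{gamma1auts} to conclude finiteness. Your version just unpacks the chain of implications a bit more explicitly.
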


\begin{proof}
By Theorems \ref{gamma1} and \ref{gamma}, the natural maps $\scr X^{\mathrm{tw}}_1(N) \rightarrow \overline{\cM}_{1,1}$ and $\scr X^{\mathrm{tw}}(N) \rightarrow \overline{\cM}_{1,1}$ are proper and quasi-finite; since they are also representable, they are finite.
\end{proof}

Let $\infty \hookrightarrow \overline{\cM}_{1,1}$ denote the closed substack classifying $1$-marked genus $1$ stable curves whose geometric fibers are singular. Let $\scr X^{\mathrm{tw}}_1(N)^\infty = \scr X^{\mathrm{tw}}_1(N) \times_{\overline{\cM}_{1,1}} \infty$ and $\scr X^{\mathrm{tw}}(N)^\infty = \scr X^{\mathrm{tw}}(N) \times_{\overline{\cM}_{1,1}} \infty$. Exactly analogously to \cite[2.1.12]{C}, formation of these closed substacks is compatible with arbitrary base change.

\begin{proposition}\label{CM}
The proper flat morphisms $\scr X^{\mathrm{tw}}_1(N) \rightarrow \mathrm{Spec} (\mathbb{Z})$ and $\scr X^{\mathrm{tw}}(N) \rightarrow \mathrm{Spec}(\mathbb{Z})$ are Cohen-Macaulay (of pure relative dimension $1$).
\end{proposition}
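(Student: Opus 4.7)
The plan is to reduce the assertion to the standard fact that a flat morphism of locally Noetherian algebraic stacks whose geometric fibers are Cohen-Macaulay of pure dimension $1$ is itself Cohen-Macaulay of pure relative dimension $1$; this is local in the smooth topology, so extends from schemes (cf.\ EGA~IV.6.1.5 and~6.8.1) to algebraic stacks via a smooth presentation. Flatness over $\mathrm{Spec}(\mathbb{Z})$ together with local complete intersection (hence Cohen-Macaulay) geometric fibers for both $\scr X^{\mathrm{tw}}_1(N)$ and $\scr X^{\mathrm{tw}}(N)$ is already supplied by Theorems~\ref{gamma1} and~\ref{gamma}, so all that remains is to verify pure dimension $1$ of each geometric fiber.

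I would handle this by splitting into cases on the characteristic of the base field $k$. If $\mathrm{char}(k) \nmid N$, then $\mu_N$ is \'etale over $k$ and, after adjoining the necessary $N$th roots of unity, noncanonically isomorphic to the constant group $\mathbb{Z}/(N)$; the corollary preceding Theorem~\ref{gamma1}, together with the results of \cite{ACV} and \cite{P}, identifies $\scr X^{\mathrm{tw}}_1(N)_k$ and $\scr X^{\mathrm{tw}}(N)_k$ with the classical Katz-Mazur moduli stacks $\scr X_1(N)_k$ and $\scr X(N)_k$, which are of pure relative dimension $1$ over $k$ by the theorem of Conrad recalled in \S\ref{section2}. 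If $\mathrm{char}(k) = p$ divides $N$, write $N = p^n N'$ with $(p,N') = 1$; Corollary~\ref{twistedgamma1comps} and its $[\Gamma(N)]$-analogue exhibit $\scr X^{\mathrm{tw}}_1(N)_k$ and $\scr X^{\mathrm{tw}}(N)_k$ as disjoint unions with crossings at the supersingular points of closed substacks whose underlying reductions coincide with the classical strata $\scr X_1(p^m r)^{(m-b,b)}_{k,\mathrm{red}}$ (respectively the component-label strata for $\scr X(p^n)_k$, extended by $N'$ via fiber product), and these are smooth of pure dimension $1$ by \cite[\S13.5, \S13.7]{KM1} (cf.~Theorem~\ref{y1-char-p}). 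Consequently every irreducible component of each geometric fiber has dimension exactly $1$.

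The main obstacle is the characteristic $p \mid N$ case; in all other characteristics the claim follows at once from the classical theory of $[\Gamma_1(N)]$- and $[\Gamma(N)]$-structures on generalized elliptic curves. The critical input in bad characteristic is the Katz-Mazur crossings theorem, already invoked to produce the stratifications in Corollary~\ref{twistedgamma1comps} and its $[\Gamma(N)]$-analogue: these present the mod-$p$ fibers as unions of smooth $1$-dimensional classical strata glued at finitely many supersingular points, so that the required dimension count is immediate.
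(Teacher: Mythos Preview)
Your argument is correct, but the paper's proof is considerably shorter and avoids the case analysis entirely. The paper simply factors the structural map as $\scr X \to \overline{\cM}_{1,1} \to \mathrm{Spec}(\mathbb{Z})$: the first arrow is finite (Corollary~\ref{finite}) and flat (Theorems~\ref{gamma1} and~\ref{gamma}), the second is Cohen--Macaulay of pure relative dimension $1$ (cf.\ \cite[3.3.1]{C}), and a general commutative algebra fact (Bourbaki, Alg.\ Comm.\ X, \S2.7.9, Cor.~3) says a finite flat map followed by a Cohen--Macaulay map is Cohen--Macaulay. No separate dimension check is needed.

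Your approach instead verifies the Cohen--Macaulay property directly on the fibers (flat with LCI fibers, hence CM fibers) and then argues pure dimension~$1$ via the classical theory in good characteristic and the crossings-theorem stratifications (Corollary~\ref{twistedgamma1comps}) in bad characteristic. This works, but note that invoking Corollary~\ref{twistedgamma1comps} already implicitly uses finiteness over $\overline{\cM}_{1,1}$ (as the paper remarks immediately after that corollary), so you are not really avoiding Corollary~\ref{finite}. Moreover, the stratification machinery is overkill for the dimension claim: once you have that $\scr X \to \overline{\cM}_{1,1}$ is finite and flat, pure fiber dimension~$1$ over $\mathrm{Spec}(\mathbb{Z})$ is immediate from the corresponding fact for $\overline{\cM}_{1,1}$. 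So your route is valid but longer; the paper's factorization argument is the cleaner one-line proof.
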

\begin{proof}
Let $\scr X$ denote $\scr X^{\mathrm{tw}}_1(N)$ or $\scr X^{\mathrm{tw}}(N)$. The canonical morphism $\scr X \rightarrow \overline{\cM}_{1,1}$ is finite (by Corollary \ref{finite}) and flat (by Theorems \ref{gamma1} and \ref{gamma}), and the structural morphism $\overline{\cM}_{1,1} \rightarrow \mathrm{Spec}(\mathbb{Z})$ is Cohen-Macaulay (cf. \cite[3.3.1]{C}), so by \cite[2.7.9, Cor. 3]{B}, the composite $\scr X \rightarrow \mathrm{Spec}(\mathbb{Z})$ is Cohen-Macaulay.
\end{proof}

\begin{lemma}\label{cartier}
$\scr X^{\mathrm{tw}}_1(N)^\infty$ and $\scr X^{\mathrm{tw}}(N)^\infty$ are relative effective Cartier divisors over $\mathrm{Spec}(\mathbb{Z})$ in $\scr X^{\mathrm{tw}}_1(N)$ and $\scr X^{\mathrm{tw}}(N)$ respectively.
\end{lemma}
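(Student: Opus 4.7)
The plan is to verify both conditions defining a relative effective Cartier divisor: that $\scr X^{\mathrm{tw}}_1(N)^\infty \hookrightarrow \scr X^{\mathrm{tw}}_1(N)$ (resp.\ $\scr X^{\mathrm{tw}}(N)^\infty \hookrightarrow \scr X^{\mathrm{tw}}(N)$) is an effective Cartier divisor, and that $\scr X^{\mathrm{tw}}_1(N)^\infty$ (resp.\ $\scr X^{\mathrm{tw}}(N)^\infty$) is flat over $\mathrm{Spec}(\ZZ)$. Write $\scr X$ for either of the two stacks.

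First I would handle flatness over $\mathrm{Spec}(\ZZ)$. The cusp $\infty \hookrightarrow \overline{\cM}_{1,1}$ is a relative effective Cartier divisor over $\mathrm{Spec}(\ZZ)$, and in particular $\infty$ is flat over $\mathrm{Spec}(\ZZ)$ (it is smooth, being a $\ZZ/(2)$-gerbe over $\mathrm{Spec}(\ZZ)$ classifying N\'eron $1$-gons). By Theorems \ref{gamma1} and \ref{gamma}, the morphism $\scr X \rightarrow \overline{\cM}_{1,1}$ is finite and flat, so its base change $\scr X^\infty \rightarrow \infty$ is finite and flat. Composing with the flat morphism $\infty \rightarrow \mathrm{Spec}(\ZZ)$ gives flatness of $\scr X^\infty$ over $\mathrm{Spec}(\ZZ)$.

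Next I would show that $\scr X^\infty$ is an effective Cartier divisor in $\scr X$. Working \'etale-locally on a smooth cover of $\overline{\cM}_{1,1}$, the Cartier divisor $\infty$ is cut out by a single non-zero-divisor (for instance, the $q$-parameter coming from the Tate curve). Pulling back along the finite flat map $\scr X \rightarrow \overline{\cM}_{1,1}$, we see that $\scr X^\infty$ is locally cut out in $\scr X$ by a single element $q'$. By Proposition \ref{CM}, $\scr X$ is Cohen--Macaulay of pure relative dimension $1$ over $\mathrm{Spec}(\ZZ)$; combined with the fact that $\scr Y_1(N)$ (resp.\ $\scr Y(N)$) is open and dense in $\scr X$ (Theorems \ref{gamma1} and \ref{gamma}) and disjoint from $\scr X^\infty$, it follows that $\scr X^\infty$ has codimension $\ge 1$ in $\scr X$ at every point. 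On a Cohen--Macaulay local ring, any element whose vanishing locus has codimension $1$ is a non-zero-divisor (it forms part of a system of parameters, and a system of parameters on a CM ring is a regular sequence). Hence $q'$ is a non-zero-divisor, so $\scr X^\infty$ is an effective Cartier divisor in $\scr X$.

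There is no serious obstacle; the argument is a standard application of the Cohen--Macaulay criterion for a locally principal closed subscheme to be Cartier, combined with the density of the open locus of smooth curves (Theorems \ref{gamma1} and \ref{gamma}) and the fact that $\scr X \rightarrow \overline{\cM}_{1,1}$ is finite flat. The only small subtlety is that $\scr X$ is a Deligne--Mumford stack (Proposition \ref{gamma1auts}) rather than a scheme, but both conditions may be checked \'etale-locally on a smooth presentation, reducing everything to the case of schemes.
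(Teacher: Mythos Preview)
Your proof is correct. The flatness half is identical to the paper's. For the Cartier divisor half, however, the paper takes a shorter route: since $\scr X \to \overline{\cM}_{1,1}$ is flat and $\infty \subset \overline{\cM}_{1,1}$ is already an effective Cartier divisor, the pullback $\scr X^\infty$ is automatically an effective Cartier divisor in $\scr X$ (flat pullback takes non-zero-divisors to non-zero-divisors; the paper cites \cite[\S1.7]{F}). Your argument instead invokes Proposition~\ref{CM} together with the density of $\scr Y_1(N)$ (resp.\ $\scr Y(N)$) to force the local equation to be a non-zero-divisor via the Cohen--Macaulay criterion. Both are valid, but the paper's argument is more economical and does not require Proposition~\ref{CM} as input; your approach, on the other hand, makes explicit why the closed substack has the expected codimension and would still go through in situations where one knows Cohen--Macaulayness of the total space but not flatness of the relevant map.
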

\begin{proof}
Here we are using the notion of a Cartier divisor on a Deligne-Mumford stack, cf. \cite[Ch. XIII]{ACG}. For $\scr X = \scr X^{\mathrm{tw}}_1(N)$ or $\scr X = \scr X^{\mathrm{tw}}(N)$, the closed substack $\scr X^\infty$ is the pullback $\scr X \times_{\overline{\cM}_{1,1}} \infty$. We know $\infty \subset \overline{\cM}_{1,1}$ is a relative effective Cartier divisor over $\mathrm{Spec}(\mathbb{Z})$ (meaning an effective Cartier divisor which is flat over $\mathrm{Spec}(\mathbb{Z})$), and by Theorems \ref{gamma1} and \ref{gamma} the morphism $\scr X \rightarrow \overline{\cM}_{1,1}$ is flat. Cartier divisors are preserved by flat morphisms (cf. \cite[\S1.7]{F}), so $\scr X^\infty$ is an effective Cartier divisor in $\scr X$. Since $\scr X \rightarrow \overline{\cM}_{1,1}$ is flat, so is $\scr X^\infty \rightarrow \infty$, so $\scr X^\infty$ is flat over $\mathrm{Spec}(\mathbb{Z})$, i.e. $\scr X^\infty$ is a relative effective Cartier divisor in $\scr X$ over $\mathrm{Spec}(\mathbb{Z})$.
\end{proof}

\begin{corollary}
$\scr X_1^{\mathrm{tw}}(N)$ and $\scr X^{\mathrm{tw}}(N)$ are normal.
\end{corollary}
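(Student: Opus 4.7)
The plan is to apply Serre's criterion: a Noetherian Cohen-Macaulay scheme (or Deligne-Mumford stack) is normal if and only if it satisfies the $R_1$ condition, i.e.\ is regular at every codimension-$1$ point. By Proposition~\ref{CM}, both $\scr X^{\mathrm{tw}}_1(N)$ and $\scr X^{\mathrm{tw}}(N)$ are Cohen-Macaulay of pure relative dimension $1$ over $\mathrm{Spec}(\mathbb{Z})$, and are therefore automatically $S_2$; what remains is $R_1$.

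First I would dispose of the interior. The open dense substack $\scr Y_1(N) \subset \scr X^{\mathrm{tw}}_1(N)$ (resp.\ $\scr Y(N) \subset \scr X^{\mathrm{tw}}(N)$) is regular by the Katz-Mazur theory \cite[\S5]{KM1}: it is smooth of relative dimension $1$ over $\mathbb{Z}[1/N]$, and its reduction modulo any $p \mid N$ is regular via the Drinfeld level structure analysis recalled in Section~\ref{section2}. Consequently the non-regular locus lies inside the cuspidal Cartier divisor $\scr X^{\mathrm{tw}}_1(N)^\infty$ (resp.\ $\scr X^{\mathrm{tw}}(N)^\infty$), which by Lemma~\ref{cartier} has pure codimension $1$, so $R_1$ reduces to checking regularity at the generic points $\eta$ of its components.

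At such a point, set $A := \cO_{\scr X^{\mathrm{tw}}_1(N),\eta}$: this is a $1$-dimensional Cohen-Macaulay local ring, and the Cartier-divisor structure of the cuspidal locus provides a non-zerodivisor $f \in \frm_A$ with $A/(f) = \cO_{\scr X^{\mathrm{tw}}_1(N)^\infty,\eta}$ Artinian. To conclude $A$ is a discrete valuation ring it suffices to check that $A/(f)$ is a field, i.e.\ that the cusp divisor is generically reduced. This is an explicit local computation at the cusps: by Lemma~\ref{singulartwistedcurves} the universal twisted curve at $\eta$ is a standard $\mu_d$-stacky N\'eron $1$-gon for some $d \mid N$, and the $[\Gamma_1(N)]$-structure corresponds to a homomorphism $\mathbb{Z}/(N) \to \mathrm{Pic}^0 \cong \mathbb{G}_m \times \mathbb{Z}/(d)$ meeting every component. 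Using the Tate-curve coordinates on $\overline{\cM}_{1,1}$ near $\infty$, exactly as in \cite[\S4.1]{C}, one identifies $A$ with a discrete valuation ring obtained by adjoining suitable roots of the Tate parameter. The same argument applies verbatim to $\scr X^{\mathrm{tw}}(N)$ with $(\mathbb{Z}/(N))^2$ in place of $\mathbb{Z}/(N)$.

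The main obstacle is carrying out this local computation in characteristics $p \mid N$, where the $\mu_{p^k}$-stacky gerbe at the cusp is not \'etale: one must appeal to the Drinfeld level structure formalism, rather than na\"ive \'etale descent, in order to identify the cuspidal local ring and thereby verify generic reducedness of $\scr X^{\mathrm{tw}}_1(N)^\infty$. Once this is done, $R_1$ holds at every height-$1$ point and Serre's criterion yields normality.
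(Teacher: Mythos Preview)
Your overall architecture matches the paper's: Serre's criterion, with $S_2$ coming from Proposition~\ref{CM} and the interior $R_1$ coming from regularity of $\scr Y_1(N)$ and $\scr Y(N)$ via \cite[5.1.1]{KM1}. The divergence is in how you handle the cuspidal divisor, and here you have made the problem harder than it is.

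You correctly observe that $R_1$ only needs to be checked at the generic points of the components of $\scr X^{\mathrm{tw}}_1(N)^\infty$. But then you worry about ``carrying out this local computation in characteristics $p\mid N$''. There is no such computation to carry out: by Lemma~\ref{cartier}, $\scr X^{\mathrm{tw}}_1(N)^\infty$ is a \emph{relative} effective Cartier divisor over $\mathrm{Spec}(\mathbb{Z})$, hence flat over $\mathbb{Z}$ of relative dimension~$0$. Every irreducible component therefore dominates $\mathrm{Spec}(\mathbb{Z})$, so its generic point has residue characteristic~$0$. Equivalently, any point of $\scr X^{\mathrm{tw}}_1(N)^\infty$ with positive residue characteristic has codimension at least~$2$ in $\scr X^{\mathrm{tw}}_1(N)$ and is irrelevant for $R_1$. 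This is exactly the paper's argument (following \cite[4.1.4]{C}): a relative effective Cartier divisor over $\mathbb{Z}$ cannot contain any codimension-$1$ point of positive residue characteristic.

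Once you see this, the remaining codimension-$1$ cuspidal points are all in characteristic~$0$, where $\scr X^{\mathrm{tw}}_1(N)\otimes\mathbb{Z}[1/N]$ is smooth over $\mathbb{Z}[1/N]$ by \cite[3.0.2]{ACV}, hence regular. No Tate-curve computation, and in particular no Drinfeld-style analysis at cusps in bad characteristic, is required. Your proposed explicit computation would also succeed (the cusp divisor is indeed generically reduced), but it is unnecessary work, and the ``main obstacle'' you identify is a phantom.
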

\begin{proof}
This is proven in an identical manner to \cite[4.1.4]{C}. The stacks $\scr X_1^{\mathrm{tw}}(N)$ and $\scr X^{\mathrm{tw}}(N)$ are Deligne-Mumford, and from \cite[3.0.2]{ACV} we know $\scr X_1^{\mathrm{tw}}(N) \otimes_\mathbb{Z} \mathbb{Z}[1/N]$ and $\scr X^{\mathrm{tw}}(N) \otimes_\mathbb{Z} \mathbb{Z}[1/N]$ are smooth over $\mathrm{Spec}(\mathbb{Z}[1/N])$. In particular, they are regular at any characteristic $0$ points. Furthermore, by Proposition \ref{CM} they are Cohen-Macaulay over $\mathrm{Spec}(\mathbb{Z})$ of pure relative dimension $1$. As in \cite[4.1.4]{C}, we can conclude from Serre's criterion for normality that it suffices to prove that these stacks are regular away from some relative effective Cartier divisor, since such a divisor cannot contain any codimension $1$ points of positive residue characteristic. Use the divisors $\scr X_1^{\mathrm{tw}}(N)^{\infty}$ and $\scr X^{\mathrm{tw}}(N)^{\infty}$; their complements are $\scr Y_1(N)$ and $\scr Y(N)$, which are regular by \cite[5.1.1]{KM1}.
\end{proof}

\begin{proof}[Proof of \ref{classical}]
$\scr X_1^{\mathrm{tw}}(N)$ and $\scr X^{\mathrm{tw}}(N)$ are finite, flat and normal over $\overline{\cM}_{1,1}$, so they are naturally identified with the normalizations (in the sense of \cite[IV.3.3]{DR}) of $\overline{\cM}_{1,1}$ in $\scr X_1^{\mathrm{tw}}(N)|_{\cM_{1,1}} = \scr Y_1(N)$ and $\scr X^{\mathrm{tw}}(N)|_{\cM_{1,1}} = \scr Y(N)$ respectively; cf. \cite[4.1.5]{C}.
\end{proof}

We now give a moduli interpretation of the equivalence $\scr X_1(N) \simeq \scr X^{\mathrm{tw}}_1(N)$. Let $S$ be a scheme, $E/S$ a generalized elliptic curve, and $P \in E^{\mathrm{sm}}(S)[N]$ a $[\Gamma_1(N)]$-structure on $E$. From this data we want to construct a pair $(\cC_P,\phi_P)$, where $\cC_P/S$ is a $1$-marked genus $1$ twisted stable curve with non-stacky marking, and $\phi_P: \mathbb{Z}/(N) \rightarrow \mathrm{Pic}^0_{\cC_P/S}$ is a $[\Gamma_1(N)]$-structure on $\cC_P$.

If $E/S$ is a smooth elliptic curve, there is nothing to show: we simply take $(\cC_P, \phi_P) = (E,\phi_P)$ where $\phi_P: \mathbb{Z}/(N) \rightarrow \mathrm{Pic}^0_{E/S} \cong E$ sends $1 \mapsto P$. Therefore to construct $(\cC_P, \phi_P)$ in general, we may restrict to the open subscheme of $S$ where $E/S$ has no supersingular geometric fibers; once we have constructed $(\cC_P, \phi_P)$ in this case, we only need to check that it agrees with our previous construction for ordinary elliptic curves.

For the rest of the construction, assume that $E/S$ has no supersingular geometric fibers.

Note that by \cite[4.2.3]{C}, fppf-locally on $S$ there exists a generalized elliptic curve $E'/S$, whose singular geometric fibers are $N$-gons, together with an open $S$-immersion $\iota: E^{\mathrm{sm}} \hookrightarrow E'^{\mathrm{sm}}$ of group schemes over $S$. In particular, by \cite[II.1.20]{DR} the group scheme $E'^{\mathrm{sm}}[N]/S$ is finite and flat of constant rank $N^2$.

Since $E'/S$ has no supersingular geometric fibers and all its singular geometric fibers are $N$-gons, it follows that fppf-locally on $S$, there exists a $[\Gamma(N)]$-structure $(Q,R)$ on $E'$ such that: 
\begin{itemize}
  \item the relative effective Cartier divisor 
  \begin{displaymath}
  D := \sum_{a \in \mathbb{Z}/(N)} [a\cdot Q]
  \end{displaymath}
in $E'^{\mathrm{sm}}$ is \'etale over $S$, and 
   \item $R$ meets the identity component of every geometric fiber of $E'/S$.
\end{itemize}

The choice of $Q$ and the pairing 
\begin{displaymath}
e_N: E'^{\mathrm{sm}}[N] \times E'^{\mathrm{sm}}[N] \rightarrow \mu_N
\end{displaymath}
induce a canonical isomorphism $E'^{\mathrm{sm}}[N]/D \cong E'^{\mathrm{sm}}[N]/D \times \{Q\} \cong \mu_N$. Identifying $E'^{\mathrm{sm}}[N]/D$ with its image in the $N$-torsion of the generalized elliptic curve $C := E'/D$ (a generalized elliptic curve whose singular fibers are $1$-gons), the group law of $C$ and the above isomorphism give us an action of $\mu_N$ on $C$, making $C$ a $\mu_N$-torsor over the twisted curve $\cC := [C/\mu_N] = [E'/E'^{\mathrm{sm}}[N]] = [E/E^{\mathrm{sm}}[N]]$. Write 
\begin{displaymath}
C = \underline{\mathrm{Spec}}_{\cC} \big( \bigoplus_{a \in \mathbb{Z}/(N)} \cG_a \big) \stackrel{\pi}{\longrightarrow} \cC,
\end{displaymath}
where each $\cG_a$ is an invertible $\cO_{\cC}$-module, with the grading and algebra structure corresponding to the structure of $C$ as a $\mu_N$-torsor over $\cC$.

The image $\overline{R}$ of $R$ in $C$ is a $[\Gamma_1(N)]$-structure on $C$, so we get a $\mu_N$-torsor 
\begin{displaymath}
T := \underline{\mathrm{Spec}}_C \big( \bigoplus_{b \in \mathbb{Z}/(N)} \cL((b\cdot \overline{R}) - (0_C)) \big) 
\end{displaymath}
over $C$; the $\mu_N$-action on $T$ corresponds to the $\mathbb{Z}/(N)$-grading and the algebra structure on 
\begin{displaymath}
\bigoplus_{b \in \mathbb{Z}/(N)} \cL((b\cdot \overline{R})-(0_C)) 
\end{displaymath}
comes from the group law on $C^{\mathrm{sm}}$ and the canonical isomorphism $C^{\mathrm{sm}} \cong \mathrm{Pic}^0_{C/S}$.

Since $C$ is a $\mu_N$-torsor over $\cC$, if $\cL \in \mathrm{Pic}(C)$ we have a canonical decomposition 
\begin{displaymath}
\pi_* \cL = \bigoplus_{a \in \mathbb{Z}/(N) } \cL_a, 
\end{displaymath}
where each $\cL_a$ is an invertible sheaf on $\cC$ and $\zeta \in \mu_N$ acts on $\cL_a$ via multiplication by $\zeta^a$. In particular this applies to the invertible sheaf $\cL = \cL((b\cdot \overline{R}) - (0_C))$, giving us a canonical decomposition 
\begin{displaymath}
\pi_* \cL((b\cdot \overline{R}) - (0_C)) = \bigoplus_{a \in \mathbb{Z}/(N)} \cL_{(a,b)}.
\end{displaymath}

We have $\cL_{0,0} = \cG_0 = \cO_{\cC}$, and the isomorphisms 
\begin{displaymath}
\cL((b_0 \cdot \overline{R}) - (0_C)) \otimes_{\cO_C} \cL((b_1 \cdot \overline{R}) - (0_C)) \cong \cL(((b_0 + b_1) \cdot \overline{R}) - (0_C))
\end{displaymath}
(coming from the algebra structure of $\oplus_b \cL((b\cdot \overline{R}) - (0_C))$) induce isomorphisms 
\begin{displaymath}
\cL_{(a_0,b_0)} \otimes_{\cO_{\cC}} \cL_{(a_1,b_1)} \cong \cL_{(a_0+a_1,b_0+b_1)}
\end{displaymath}
for all $(a_0,b_0),(a_1,b_1) \in (\mathbb{Z}/(N))^2$, giving us a canonical algebra structure on the direct sum 
\begin{displaymath}
\bigoplus_{(a,b) \in (\mathbb{Z}/(N))^2} \cL_{(a,b)}.
\end{displaymath}

Identifying our original $[\Gamma_1(N)]$-structure $P$ with its image in $E'^{\mathrm{sm}}(S)[N]$, there exists some $(a_0,b_0) \in (\mathbb{Z}/(N))^2$ with $P = a_0\cdot Q + b_0\cdot R \in E'^{\mathrm{sm}}[N]$. This determines a $\mu_N$-torsor 
\begin{displaymath}
\cT := \underline{\mathrm{Spec}}_{\cC} \big( \bigoplus_{c \in \mathbb{Z}/(N)} \cL_{(ca_0,cb_0)} \big) 
\end{displaymath}
over $\cC$, corresponding to a morphism $\cC \rightarrow \cB \mu_N$. Here $\oplus \cL_{(ca_0,cb_0)}$ is viewed as a sub-$\cO_{\cC}$-algebra of the algebra $\oplus \cL_{(a,b)}$.
\begin{definition}
We define $\cC_P \rightarrow \cB\mu_N$ to be the relative coarse moduli space of the above morphism $\cC \rightarrow \cB \mu_N$, and we write $\phi_P: \mathbb{Z}/(N) \rightarrow \mathrm{Pic}^0_{\cC_P/S}$ for the corresponding group scheme homomorphism.
\end{definition}

It is immediate that $\phi_P$ is a $[\Gamma_1(N)]$-structure on the twisted curve $\cC_P$. 

\begin{lemma}
$(\cC_P,\phi_P)$ is independent of the choice of $(a_0,b_0)$ with $P = a_0\cdot Q + b_0 \cdot R$, and of the choice of generalized elliptic curve $E'$ and $[\Gamma(N)]$-structure $(Q,R)$ on $E'$ such that $D = \sum [a\cdot Q]$ is \'etale over $S$ and $R$ meets the fiberwise identity components of $E'/S$.
\end{lemma}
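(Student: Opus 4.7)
The plan is to identify the constructed pair $(\cC_P, \phi_P)$ with the canonical object $\Psi^{-1}(E, P) \in \scr X^{\mathrm{tw}}_1(N)(S)$, where $\Psi: \scr X^{\mathrm{tw}}_1(N) \xrightarrow{\sim} \scr X_1(N)$ is the isomorphism of Theorem \ref{classical}; since $\Psi^{-1}(E, P)$ depends only on $(E,P)$, this will yield independence from all the intermediate choices. First I would settle the smooth case: when $E/S$ is a smooth elliptic curve, one may take $E' = E$, in which case $D \subset E[N]$ is \'etale of order $N$, the stack $\cC = [E/E[N]]$ is canonically isomorphic to $E$ itself via the $N$-fold multiplication map, and a direct computation --- using the $e_N$-pairing to compatibly identify the various $\cL_{(a,b)}$ --- shows that $\cT$ becomes canonically isomorphic to the Picard-theoretic torsor $\underline{\mathrm{Spec}}_E\bigl(\bigoplus_c \cL((c\cdot P)-(0_E))\bigr)$. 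Thus $(\cC_P, \phi_P) = (E, \phi^{\mathrm{can}}_P)$ with $\phi^{\mathrm{can}}_P(1) = \cL((P) - (0_E))$, which is independent of choices and equals $\Psi^{-1}(E, P)$ since $\Psi$ extends the identity on $\scr Y_1(N)$.

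For general $(E,P)$ without supersingular fibers I would argue universally. Denote by $\scr X_1(N)^{\dagger} \subset \scr X_1(N)$ the open substack of pairs whose geometric fibers are not supersingular (the locus on which the construction is defined). After auxiliary data are chosen on an fppf cover $p: U \to \scr X_1(N)^{\dagger}$, the construction gives a morphism $F: U \to \scr X^{\mathrm{tw}}_1(N)$; compare with $G := \Psi^{-1} \circ p$. Both $F$ and $G$ induce the same morphism $U \to \overline{\cM}_{1,1}$, since each records the canonical contraction of $E$ to its $1$-gon coarse space. The map $\scr X^{\mathrm{tw}}_1(N) \to \overline{\cM}_{1,1}$ is representable (Proposition \ref{gamma1auts}) and finite (Corollary \ref{finite}), hence separated, so the equalizer $\mathrm{Eq}(F, G) \hookrightarrow U$ is a closed substack. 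By the smooth case it contains $p^{-1}(\scr Y_1(N))$, which is schematically dense in $U$ since $\scr X_1(N)^{\dagger}$ is $\mathbb{Z}$-flat with $\scr Y_1(N)$ fiberwise dense (by Lemma \ref{dense}) and $p$ is flat. Hence $\mathrm{Eq}(F,G) = U$, i.e.\ $F = G$, and so $(\cC_P, \phi_P) = \Psi^{-1}(E, P)$ as claimed.

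The main obstacle is the smooth-case calculation: one must verify that the compatibilities forced by the $e_N$-pairing cause the $\cL_{(a,b)}$ to reassemble into precisely the classical Picard-theoretic torsor corresponding to $P$, so that the construction is canonically determined by $(E,P)$ despite the non-uniqueness of $(Q,R)$ and $(a_0,b_0)$. Granting this explicit identification on the smooth locus, the reduction of the general case via separatedness and schematic density is a formal consequence.
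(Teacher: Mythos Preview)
Your approach is correct and genuinely different from the paper's. Both arguments share the explicit smooth-case computation (the paper carries it out via Lemma \ref{quotient}, showing that $\cL_{(a,b)} \cong \cL((a\cdot Q + b\cdot R) - (0_E))$, whence $\cL_{(ca_0,cb_0)} \cong \cL((c\cdot P)-(0_E))$ and the construction reduces to the identity on $E[N]$). Where you diverge is in the passage to singular fibers: the paper works pointwise over an algebraically closed field, checks independence from $(a_0,b_0)$ by an explicit coordinate calculation on the $N$-gon, and handles independence from $(E',(Q,R))$ by deforming along a Tate curve over $k[\![q^{1/N}]\!]$ and invoking properness of $\underline{\mathrm{End}}(\mathbb{Z}/(N)\times\mu_N)$ to propagate the smooth-fiber identification to the special fiber. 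Your route instead leverages the already-proved abstract isomorphism $\Psi$ of Theorem \ref{classical} (whose proof via normalization does not depend on this lemma, so there is no circularity) together with separatedness of $\scr X^{\mathrm{tw}}_1(N)\to\overline{\cM}_{1,1}$ and schematic density of the smooth locus. This is cleaner and more conceptual; the paper's argument is more self-contained and makes the eventual moduli-theoretic isomorphism of Corollary \ref{modular1} fully explicit rather than deducing it from an abstract normalization statement.

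One point in your outline deserves more care: the assertion that $F$ and $G$ induce \emph{the same} morphism $U\to\overline{\cM}_{1,1}$. What you actually have a priori is a $2$-isomorphism over the smooth locus $V$; to extend it to all of $U$ you should invoke that the diagonal of the separated Deligne--Mumford stack $\overline{\cM}_{1,1}$ is finite, so $\mathrm{Isom}(\pi F,\pi G)\to U$ is finite, and then use normality of $U$ (which one may arrange, since $\scr X_1(N)$ is regular) to extend the section from $V$. Once that $2$-isomorphism is in hand, your equalizer argument goes through as stated, since $\scr X^{\mathrm{tw}}_1(N)\to\overline{\cM}_{1,1}$ is representable and finite.
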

\begin{proof} 
First of all, if $E/S$ is an ordinary elliptic curve, then $E' = E$. Our construction defines a map $E[N] \rightarrow \mathrm{Pic}^0_{E/S}[N] \cong E[N]$, which in fact is simply the identity map (which in particular is independent of the choice of $(a_0,b_0)$ and the $[\Gamma(N)]$-structure $(Q,R)$). To see this, recall that by Lemma \ref{quotient} we may identify the generalized elliptic curve $C = E/\langle Q \rangle$ (viewed as a $\mu_N$-torsor over $E \cong [E/E[N]]$ as discussed above) with the $\mu_N$-torsor 
\begin{displaymath}
\underline{\mathrm{Spec}}_E \big( \bigoplus_{a \in \mathbb{Z}/(N)} \cL((a\cdot Q) - (0_E)) \big) 
\end{displaymath}
over $E$. So in the notation of the above construction, $\cC = E$ and $\cG_a = \cL_{(a,0)} = \cL((a\cdot Q) - (0_E))$. We have 
\begin{align*}
\pi_*\cL((\overline{R}) - (0_C)) & \cong \bigoplus_{a \in \mathbb{Z}/(N)} \big( \cG_a \otimes \cL((R) - (0_E)) \big) \\
&\cong \bigoplus_{a \in \mathbb{Z}/(N)} \cL((a\cdot Q + R) - (0_E)).
\end{align*}
So the map defined in the above construction sends $Q$ to $\cL_{(1,0)} = \cL((Q) - (0_E))$ and $R$ to $\cL_{(0,1)} = \cL((R) - (0_E))$. Composing with the usual isomorphism $\mathrm{Pic}^0_{E/S} \cong E$ yields the identity map on $E[N]$.

To complete the proof in the case where $E/S$ is not necessarily smooth, it suffices to consider the case where $S$ is the spectrum of an algebraically closed field $k$ and $E/k$ is a N\'eron $d$-gon (for some $d|N$). $E'/k$ is then a N\'eron $N$-gon, and our Drinfeld basis $(Q,R)$ was chosen so that $\langle Q \rangle$ meets every irreducible component of $E'$ and $\langle R \rangle$ lies on the identity component. We may therefore choose an isomorphism $E'^{\mathrm{sm}} \cong \mathbb{Z}/(N) \times \mu_N$ such that $Q = (1,1)$ and $R = (0,\zeta)$ for some $\zeta \in \mu_N^\times (k)$. 
\begin{center}
\begin{tikzpicture}
\clip (-1.8,-0.2) rectangle (2.8,2.7);
\draw (0.2,-0.2) -- (-0.907,0.907);
\draw (-0.707,0.507) -- (-0.707,1.907);
\draw (-0.907,1.507) -- (0.2,2.614);
\draw (-0.2,2.414) -- (1.2,2.414);
\begin{scope}[dashed]
\draw (0.8,2.614) -- (1.707,1.707);
\draw (1,0) -- (-0.2,0);
\draw [decorate,decoration=snake] (1.707,1.707) -- (1,0);
\end{scope}
\node at (-0.5,0.5) [circle,draw,fill,inner sep=1.5pt] {};
\node at (-0.67,0.33) [label=80:$0_{E'}$] {};
\node at (-0.2,0.2) [circle,draw,fill,inner sep=1.5pt] {};
\node at (-0.3,0.25) [label=0:$R$] {};
\node at (-0.707,1.5) [circle,draw,fill,inner sep=1.5pt] {};
\node at (-0.8,1.5) [label=0:$Q$] {};
\node at (-2,1.2) [label=0:$E'$] {};
\end{tikzpicture}
\end{center}
Then if $P = a_0 \cdot Q + b_0 \cdot R = a_1 \cdot Q + b_1 \cdot R$, it follows that $a_0 = a_1$ and $\zeta^{b_0} = \zeta^{b_1}$, the latter of which implies that $b_0 \cdot \overline{R} = b_1 \cdot \overline{R}$. Thus $\cL((b_0 \cdot \overline{R}) - (0_C)) = \cL((b_1 \cdot \overline{R}) - (0_C))$, so $\cL_{(a_0,b_0)} = \cL_{(a_1,b_1)}$, hence $(\cC_P,\phi_P)$ is independent of the choice of $(a_0,b_0)$.

Now we must check that $(\cC_P,\phi_P)$ is independent of the choice of $(E',(Q,R))$. $E'/k$ is a N\'eron $N$-gon, so the choice of $E'$ is unique up to composition with an automorphism of $E'$ fixing $E^{\mathrm{sm}} \subset E'^{\mathrm{sm}}$. $E'$ is the special fiber of an $N$-gon Tate curve $\cE' / k[\![q^{1/N}]\!]$. Let $\cC = [\cE'/\cE'^{\mathrm{sm}}[N]]$, so $\cC_k = [E'/E'^{\mathrm{sm}}[N]] = [E/E^{\mathrm{sm}}[N]]$. 
\begin{center}
\begin{tikzpicture}
\clip (-4,-0.8) rectangle (6,2.8);
\draw (1.2,2.7) .. controls (1,2.4) and (0.5,1) .. (-0.5,2);
\draw (-0.5,2) .. controls (-1,2.5) and (-1.5,2.1) .. (-1.5,1);
\draw (1.2,-0.7) .. controls (1,-0.4) and (0.5,1) .. (-0.5,0);
\draw (-0.5,0) .. controls (-1,-0.5) and (-1.5,-0.1) .. (-1.5,1);
\node at (-1.5,1) [inner sep=0pt,label=180: $\cC \otimes k((q^{1/N}))$] {};
\draw [name path=curve] (5,2) .. controls (1.5,-4) and (1.5,6) .. (5,0);
\node at (2.4,1) [inner sep=0pt,label=180:$\cC_k$] {};
\node at (4.35,1) [circle,draw,fill,inner sep=2pt,label=0:$\mu_N$] {};
\end{tikzpicture}
\end{center}
We may choose an isomorphism $\cE'^{\mathrm{sm}}[N] \cong \mathbb{Z}/(N) \times \mu_N$ of finite flat group schemes over $k[\![q^{1/N}]\!]$. $(Q,R)$ extends to a $[\Gamma(N)]$-structure $(\cQ,\cR)$ on $\cE'$ with $\cD = \sum (a\cdot \cQ)$ \'etale over $k[\![q^{1/N}]\!]$ (and of course $\cR$ meets the identity component of every geometric fiber of $\cE'/ k[\![q^{1/N}]\!]$). Given such a $[\Gamma(N)]$-structure on $\cE'$, our construction defines a group scheme homomorphism 
\begin{displaymath}
\cE'^{\mathrm{sm}}[N] \rightarrow \mathrm{Pic}^0_{\cC/k[\![q^{1/N}]\!]}[N]. 
\end{displaymath}
Both of these are finite flat group schemes over $k[\![q^{1/N}]\!]$ which are isomorphic to $\mathbb{Z}/(N) \times \mu_N$, and $\underline{\mathrm{End}}(\mathbb{Z}/(N) \times \mu_N)$ is finite (hence proper) over $k[\![q^{1/N}]\!]$. Since 
\begin{displaymath}
\cE'^{\mathrm{sm}}[N] \otimes k((q^{1/N})) \rightarrow \mathrm{Pic}^0_{\cC/k[\![q^{1/N}]\!]}[N] \otimes k((q^{1/N})) 
\end{displaymath}
is independent of the choice of $[\Gamma(N)]$-structure over $k((q^{1/N}))$ (as $\cE' \otimes k((q^{1/N}))$ is an elliptic curve), we conclude that $\cE'^{\mathrm{sm}}[N] \rightarrow \mathrm{Pic}^0_{\cC/k[\![q^{1/N}]\!]}[N]$ is independent of the choice of $(\cQ,\cR)$. Thus in particular $E'^{\mathrm{sm}}[N] \rightarrow \mathrm{Pic}^0_{\cC_k/k}[N]$ is independent of the choice of $(Q,R)$ and the resulting homomorphism $E^{\mathrm{sm}}[N] \rightarrow \mathrm{Pic}^0_{\cC_k/k}[N]$ is independent of the choice of $E'$.
\end{proof}

Thus by descent, $(\cC_P, \phi_P) \in \scr X^{\mathrm{tw}}_1(N)(S)$ is well-defined globally over our initial base scheme $S$ (even allowing supersingular fibers) and depends only on the pair $(E,P) \in \scr X_1(N)(S)$. We define our map $\scr X_1(N) \rightarrow \scr X^{\mathrm{tw}}_1(N)$ by sending $(E,P)$ to $(\cC_P, \phi_P)$. 
\begin{corollary}\label{modular1}
Over any base scheme $S$, the morphism $\scr X_1(N) \rightarrow \scr X^{\mathrm{tw}}_1(N)$ sending $(E,P)$ to $(\cC_P, \phi_P)$ is an isomorphism of algebraic stacks.
\end{corollary}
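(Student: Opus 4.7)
The construction preceding the statement produces, for each $S$-scheme $T$ and each $(E,P) \in \scr X_1(N)(T)$, a pair $(\cC_P,\phi_P) \in \scr X^{\mathrm{tw}}_1(N)(T)$, with independence of all fppf-local choices verified by the preceding lemma. A step-by-step check shows the construction commutes with base change on $T$: formation of the auxiliary $E'$ and the $[\Gamma(N)]$-basis $(Q,R)$ is fppf-local, while formation of $C = E'/D$, of the quotient stack $\cC = [C/\mu_N]$, of the $\mu_N$-torsor $\cT$, and of the relative coarse moduli space all commute with arbitrary base change. Fppf descent therefore assembles these data into a morphism $f\colon \scr X_1(N) \to \scr X^{\mathrm{tw}}_1(N)$ of stacks over $S$; since everything is compatible with base change on $S$, it suffices to prove $f$ is an isomorphism over $S = \mathrm{Spec}(\mathbb{Z})$.

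The key compatibility is that $f$ restricts to the natural open immersion on $\scr Y_1(N) \subset \scr X_1(N)$: this is hard-coded into the construction, which on a smooth elliptic curve $E/T$ sets $(\cC_P,\phi_P) = (E, P)$ via the canonical isomorphism $E \cong \mathrm{Pic}^0_{E/T}$, matching the inclusion $\scr Y_1(N) \hookrightarrow \scr X^{\mathrm{tw}}_1(N)$. By Theorem \ref{classical}, there is a canonical isomorphism $g\colon \scr X_1(N) \xrightarrow{\sim} \scr X^{\mathrm{tw}}_1(N)$ also extending the identity on $\scr Y_1(N)$; so on $\scr Y_1(N)$ we have canonical $2$-isomorphisms $f \cong \mathrm{id} \cong g$, yielding a canonical $2$-isomorphism $\alpha\colon f|_{\scr Y_1(N)} \cong g|_{\scr Y_1(N)}$.

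To conclude $f \cong g$, I would extend $\alpha$ via the following standard argument. The $2$-isomorphism stack $\mathrm{Isom}(f,g) \to \scr X_1(N)$ is the pullback of the diagonal of $\scr X^{\mathrm{tw}}_1(N)$ along $(f,g)$; since $\scr X^{\mathrm{tw}}_1(N)$ is a separated Deligne-Mumford stack (being proper over the separated DM stack $\overline{\cM}_{1,1}$), this diagonal is proper, unramified and representable, so $\mathrm{Isom}(f,g) \to \scr X_1(N)$ is a finite representable morphism of algebraic stacks. The $2$-isomorphism $\alpha$ gives a section of $\mathrm{Isom}(f,g)$ over the dense open $\scr Y_1(N)$; taking the schematic closure of this section yields a reduced closed substack $\cZ \hookrightarrow \mathrm{Isom}(f,g)$ which is finite and birational over the normal stack $\scr X_1(N)$ (regular by \cite[3.1.7]{C}). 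By Zariski's Main Theorem, $\cZ \to \scr X_1(N)$ is an isomorphism, producing a section $\scr X_1(N) \to \mathrm{Isom}(f,g)$ extending $\alpha$ --- equivalently, a canonical $2$-isomorphism $f \cong g$. Since $g$ is an isomorphism, so is $f$.

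The main obstacle in this plan is the base-change and fppf-descent verification that globalizes the fppf-local construction into a well-defined morphism $f$; once $f$ exists as a morphism of stacks, the identification $f \cong g$ is a formal consequence of separatedness of the target, density of $\scr Y_1(N)$, and normality of the source, via the $\mathrm{Isom}$-extension argument sketched above.
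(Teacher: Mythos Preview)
Your approach is correct and essentially matches the paper's. The paper states the corollary without proof, treating it as immediate from Theorem~\ref{classical}: since both $\scr X_1(N)$ and $\scr X^{\mathrm{tw}}_1(N)$ are identified with the normalization of $\overline{\cM}_{1,1}$ in $\scr Y_1(N)$, any morphism between them over $\overline{\cM}_{1,1}$ restricting to the identity on $\scr Y_1(N)$ must be the canonical isomorphism, by the universal property of normalization. Your $\mathrm{Isom}(f,g)$ argument is a valid and explicit way to verify this uniqueness step; it replaces the appeal to the normalization universal property with a direct Zariski Main Theorem argument, but the underlying idea (uniqueness of extension from a dense open to a normal target with separated codomain) is the same.
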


Similarly, given a generalized elliptic curve $E/S$ equipped with a $[\Gamma(N)]$-structure $(P_1,P_2)$, the above procedure produces a $[\Gamma(N)]$-structure $\phi_{(P_1,P_2)}$ on the twisted curve $\cC_E := [E/E^{\mathrm{sm}}[N]]$.

\begin{corollary}\label{modular2}
Over any base scheme $S$, the morphism $\scr X(N) \rightarrow \scr X^{\mathrm{tw}}(N)$ sending $(E,(P_1,P_2))$ to $(\cC_E, \phi_{(P_1,P_2)})$ is an isomorphism of algebraic stacks.
\end{corollary}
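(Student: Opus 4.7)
The plan is to run in parallel the construction that precedes Corollary \ref{modular1}, now for ordered pairs of sections. Given $(E,(P_1,P_2)) \in \scr X(N)(S)$, the case where $E/S$ is a smooth elliptic curve is handled tautologically since then $\cC_E = E$ and $\mathrm{Pic}^0_{E/S} \cong E$, so I may reduce to the case where $E/S$ has no supersingular geometric fibers. Choose fppf-locally on $S$ an auxiliary generalized elliptic curve $E'/S$ with $N$-gon singular geometric fibers containing $E^{\mathrm{sm}}$ as an open subgroup (via \cite[4.2.3]{C}), together with an auxiliary $[\Gamma(N)]$-structure $(Q,R)$ on $E'$ such that $\sum_a [a\cdot Q]$ is \'etale over $S$ and $R$ is fiberwise on the identity component. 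This produces the twisted curve $\cC_E = [E/E^{\mathrm{sm}}[N]] = [E'/E'^{\mathrm{sm}}[N]]$ together with the invertible sheaves $\cL_{(a,b)}$ on $\cC_E$ indexed by $(a,b) \in (\mathbb{Z}/(N))^2$, exactly as in the construction preceding Corollary \ref{modular1}. Writing $P_i = a_i\cdot Q + b_i\cdot R$ in $E'^{\mathrm{sm}}(S)[N]$ for $i = 1,2$, I would define
\begin{displaymath}
\phi_{(P_1,P_2)}\colon (\mathbb{Z}/(N))^2 \to \mathrm{Pic}^0_{\cC_E/S}, \quad (c_1,c_2) \mapsto \cL_{(c_1a_1+c_2a_2,\, c_1b_1+c_2b_2)},
\end{displaymath}
corresponding to the $\mu_N^2$-torsor $\underline{\mathrm{Spec}}_{\cC_E}\bigl(\bigoplus_{(c_1,c_2)} \cL_{(c_1a_1+c_2a_2,\, c_1b_1+c_2b_2)}\bigr)$, realized as a sub-$\cO_{\cC_E}$-algebra of $\bigoplus_{(a,b)} \cL_{(a,b)}$.

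Independence of the choices $(a_i,b_i)$ and of the auxiliary data $(E',(Q,R))$ would follow by the same Tate-curve descent argument used for Corollary \ref{modular1}: degenerating along an $N$-gon Tate curve $\cE'/k[\![q^{1/N}]\!]$, the construction over $k((q^{1/N}))$ reduces to the canonical identification $\cE'^{\mathrm{sm}}[N] \cong \mathrm{Pic}^0_{\cE'/k((q^{1/N}))}[N]$, and separatedness of $\underline{\mathrm{Hom}}((\mathbb{Z}/(N))^2,\, \mathrm{Pic}^0_{\cC/k[\![q^{1/N}]\!]}[N])$ over $k[\![q^{1/N}]\!]$ propagates this agreement to the closed fiber. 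Fppf descent then globalizes the construction to a morphism $\scr X(N) \to \overline{\cK}_{1,1}(\cB\mu_N^2)$, and the $[\Gamma(N)]$-structure hypothesis on $(P_1,P_2)$ guarantees that the image $\phi_{(P_1,P_2)}$ meets every irreducible component of $\mathrm{Pic}^0_{\cC_E/S}$, placing the image inside the closed substack $\scr X^{\mathrm{tw}}(N)$.

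To conclude the morphism is an isomorphism, I would observe that it restricts to the identity on the open dense substack $\scr Y(N) \subseteq \scr X(N)$ (where $\cC_E = E$ and $\phi_{(P_1,P_2)}$ sends the standard generators of $(\mathbb{Z}/(N))^2$ to $P_1,P_2$ under $\mathrm{Pic}^0_{E/S} \cong E$), and then invoke Theorem \ref{classical}: both $\scr X(N)$ and $\scr X^{\mathrm{tw}}(N)$ are canonically identified with the normalization of $\overline{\cM}_{1,1}$ in $\scr Y(N)|_{\mathbb{Z}[1/N]}$, so the universal property of normalization produces a unique morphism between them extending the identity on $\scr Y(N)$, and our morphism must therefore coincide with the canonical isomorphism of Theorem \ref{classical}. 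The main obstacle I expect is checking the unconditional representability of the resulting $\mu_N^2$-torsor over $\cC_E$: unlike in the $[\Gamma_1(N)]$ case, where one passes to a relative coarse moduli space to kill residual stackiness, here the fact that $(P_1,P_2)$ generates all of $E^{\mathrm{sm}}[N]$ should force the torsor to already be representable, but this needs to be verified directly at each stacky node using Lemma \ref{singulartwistedcurves} and the computation of $H^1(\cB\mu_N,\mu_N^2)$.
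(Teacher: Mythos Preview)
Your proposal is correct and follows exactly the approach the paper intends: the paper gives no separate argument for Corollary~\ref{modular2} beyond the sentence ``Similarly, \ldots\ the above procedure produces a $[\Gamma(N)]$-structure $\phi_{(P_1,P_2)}$ on the twisted curve $\cC_E := [E/E^{\mathrm{sm}}[N]]$,'' and then invokes Theorem~\ref{classical}. Your write-up spells out precisely this parallel construction, the same Tate-curve independence argument, and the same normalization step; you also correctly note that in the $[\Gamma(N)]$ case no passage to a relative coarse moduli space is needed, since the condition that $(P_1,P_2)$ meets every component of $E^{\mathrm{sm}}$ forces the singular fibers of $E$ to already be $N$-gons, so $\cC_E = [E/E^{\mathrm{sm}}[N]]$ and the resulting $\mu_N^2$-torsor is representable.
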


\section{Other compactified moduli stacks of elliptic curves}
It is worth noting that the techniques in the proof of \ref{gamma1} are easily adapted to prove properness of the natural analogues in our current setting of well-known modular compactifications of other various moduli stacks of elliptic curves with extra structure, even when these moduli stacks do not naturally lie in a moduli stack of twisted stable maps:

\begin{definition}
Let $\cC/S$ be a $1$-marked genus $1$ twisted stable curve with non-stacky marking.

(i) A \textit{$[\Gamma_0(N)]$-structure} on $\cC$ is a finite locally free $S$-subgroup scheme $G$ of $\mathrm{Pic}^0_{\cC/S}$ of rank $N$ over $S$ which is cyclic (fppf-locally admits a $\mathbb{Z}/(N)$-generator), such that for every geometric point $\overline{p} \rightarrow S$, $G_{\overline{p}}$ meets every irreducible component of $\mathrm{Pic}^0_{\cC_{\overline{p}}/k(\overline{p})}$. We write $\scr X^{\mathrm{tw}}_0(N)$ for the stack over $S$ associating to $T/S$ the groupoid of pairs $(\cC,G)$, where $\cC/T$ is a $1$-marked genus $1$ twisted stable curve with non-stacky marking, and $G$ is a $[\Gamma_0(N)]$-structure on $\cC$.

(ii) A \textit{balanced $[\Gamma_1(N)]$-structure} (cf. \cite[\S3.3]{KM1}) on $\cC$ is an fppf short exact sequence of commutative group schemes over $S$ 
\begin{displaymath}
\:\:\:\:\:\:\:0 \rightarrow K \rightarrow \mathrm{Pic}^0_{\cC/S}[N] \rightarrow K' \rightarrow 0,\:\:\:\:\:\:\: (\dagger)
\end{displaymath}
where $K$ and $K'$ are locally free of rank $N$ over $S$, together with sections $P \in K(S)$ and $P' \in K'(S)$ which are $\mathbb{Z}/(N)$-generators of $K$ and $K'$ in the sense of \cite[\S1.4]{KM1}. We write $\scr X^{\mathrm{bal,tw}}_1(N)$ for the stack over $S$ associating to $T/S$ the groupoid of pairs $(\cC,\dagger)$, where $\cC/T$ is a $1$-marked genus $1$ twisted stable curve with non-stacky marking, and $\dagger$ is a balanced $[\Gamma_1(N)]$-structure on $\cC$.

(iii) An $[N$-Isog$]$\textit{-structure} (cf. \cite[\S6.5]{KM1}) on $\cC$ is a finite locally free commutative $S$-subgroup scheme $G \subset \mathrm{Pic}^0_{\cC/S}[N]$ of rank $N$ over $S$, such that for every geometric point $\overline{p} \rightarrow S$, $G_{\overline{s}}$ meets every irreducible component of $\mathrm{Pic}^0_{\cC_{\overline{p}}/k(\overline{p})}$. We write $\scr X^{\mathrm{tw}}(N\textrm{-Isog})$ for the stack over $S$ associating to $T/S$ the groupoid of pairs $(\cC,G)$, where $\cC/T$ is a $1$-marked genus $1$ twisted stable curve with non-stacky marking, and $G$ is an $[N$-Isog$]$-structure on $\cC$.

(iv) If $N$ and $n$ are positive integers such that $\mathrm{ord}_p(n) \leq \mathrm{ord}_p(N)$ for all primes $p$ dividing both $N$ and $n$, a \textit{$[\Gamma_1(N;n)]$-structure} (cf. \cite[2.4.3]{C}) on $\cC$ is a pair $(\phi,G)$, where: 
\begin{itemize}
  \item $\phi: \mathbb{Z}/(N) \rightarrow \mathrm{Pic}^0_{\cC/S}$ is a $\mathbb{Z}/(N)$-structure in the sense of \cite[\S1.5]{KM1}; 
  \item $G \subset \mathrm{Pic}^0_{\cC/S}$ is a finite locally free $S$-subgroup scheme which is cyclic of order $n$; 
  \item the degree $Nn$ relative effective Cartier divisor 
  \begin{displaymath}
  \sum_{a = 0}^{N-1} (\phi(a) + G)
  \end{displaymath}
  in $\mathrm{Pic}^0_{\cC/S}$ meets every irreducible component of each geometric fiber of $\mathrm{Pic}^0_{\cC/S}$ over $S$; 
  \item for all primes $p$ dividing both $N$ and $n$, for $e = \mathrm{ord}_p(n)$ we have an equality of closed subschemes of $\mathrm{Pic}^0_{\cC/S}$ 
  \begin{displaymath}
  \sum_{a = 0}^{p^e-1} ((N/p^e) \cdot \phi(a) + G[p^e]) = \mathrm{Pic}^0_{\cC/S}[p^e].
  \end{displaymath}
\end{itemize}
We write $\scr X^{\mathrm{tw}}_1(N;n)$ for the stack over $S$ associating to $T/S$ the groupoid of tuples $(\cC,(\phi,G))$, where $\cC/T$ is a $1$-marked genus $1$ twisted stable curve with non-stacky marking, and $(\phi,G)$ is a $[\Gamma_1(N;n)]$-structure on $\cC$.
\end{definition}

\begin{corollary}
The stacks $\scr X^{\mathrm{tw}}_0(N)$, $\scr X^{\mathrm{bal,tw}}_1(N)$, $\scr X^{\mathrm{tw}}(N\textrm{-}\mathrm{Isog})$, and $\scr X^{\mathrm{tw}}_1(N;n)$ are algebraic stacks which are flat and locally finitely presented over $S$, with local complete intersection fibers. They are proper and quasi-finite over $\overline{\cM}_{1,1}$, and each is isomorphic to the corresponding moduli stack for generalized elliptic curves.
\end{corollary}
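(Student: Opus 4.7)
The plan is to reduce each of these four cases to the results already established for $\scr X^{\mathrm{tw}}_1(N)$ (Theorem \ref{gamma1}) and $\scr X^{\mathrm{tw}}(N)$ (Theorem \ref{gamma}) by constructing natural finite morphisms between the new stacks and these known ones. For $\scr X^{\mathrm{tw}}_0(N)$, use the finite \'etale surjection $\scr X^{\mathrm{tw}}_1(N) \rightarrow \scr X^{\mathrm{tw}}_0(N)$ of degree $\varphi(N)$ sending $(\cC,\phi)$ to the cyclic subgroup scheme $\sum_a [\phi(a)] \subset \mathrm{Pic}^0_{\cC/S}$, exhibiting $\scr X^{\mathrm{tw}}_0(N)$ as the quotient of $\scr X^{\mathrm{tw}}_1(N)$ by the natural $(\mathbb{Z}/(N))^\times$-action. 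For $\scr X^{\mathrm{bal,tw}}_1(N)$, use the finite surjection $\scr X^{\mathrm{tw}}(N) \rightarrow \scr X^{\mathrm{bal,tw}}_1(N)$ sending $(\cC,(P_1,P_2))$ to the balanced structure with $K = \sum_a [a \cdot P_1]$, $P = P_1$, and $P'$ the image of $P_2$ in $\mathrm{Pic}^0_{\cC/S}[N]/K$. For $\scr X^{\mathrm{tw}}_1(N;n)$, embed it naturally as a closed substack of the fiber product $\scr X^{\mathrm{tw}}_1(N) \times_{\overline{\cM}_{1,1}} \scr X^{\mathrm{tw}}_0(n)$, cut out by the compatibility conditions at primes dividing both $N$ and $n$. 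Finally, $\scr X^{\mathrm{tw}}(N\textrm{-Isog})$ admits a stratification by the isomorphism type of the subgroup scheme $G$, where each stratum is a closed substack built from $\scr X^{\mathrm{tw}}_0(d)$-type data or an appropriate fiber product.

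With these reductions in hand, algebraicity, flatness, local complete intersection fibers, properness, and quasi-finiteness over $\overline{\cM}_{1,1}$ all transfer from Theorems \ref{gamma1} and \ref{gamma} via standard stability of these properties under finite morphisms, fiber products, and passage to closed substacks. The comparison with the classical moduli stacks of generalized elliptic curves then follows by the same strategy as Theorem \ref{classical}: one checks representability of each stack over $\overline{\cM}_{1,1}$ via an automorphism computation analogous to Proposition \ref{gamma1auts} (the only nontrivial automorphism of a standard $\mu_d$-stacky N\'eron $1$-gon preserving the marking is the inversion $\iota$, while the gerbe automorphisms $\mu_d$ act nontrivially on $\mathrm{Pic}^0$ and hence cannot fix any of these extra structures that meet every component), deduces the Cohen-Macaulay property over $\mathrm{Spec}(\mathbb{Z})$ via \cite[2.7.9, Cor. 3]{B}, observes that the cuspidal locus pulls back to a relative effective Cartier divisor over $\mathrm{Spec}(\mathbb{Z})$ by flatness (as in Lemma \ref{cartier}), and invokes smoothness over $\mathbb{Z}[1/N]$ away from the cusps from \cite[Ch. 5]{KM1} to conclude normality via Serre's criterion. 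Each stack is then identified with the normalization of $\overline{\cM}_{1,1}$ in the corresponding smooth moduli stack over $\mathbb{Z}[1/N]$, which coincides with the classical Katz--Mazur/Conrad stack by \cite[\S4.1]{C}.

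The main obstacle is the case of $\scr X^{\mathrm{tw}}(N\textrm{-Isog})$, where the data of an $[N\textrm{-Isog}]$-structure is a finite locally free subgroup scheme rather than a homomorphism out of a fixed finite group scheme, so this stack is not obviously embedded in any $\overline{\cK}_{1,1}(\cB\cG)$. The stratification by isomorphism type of $G$ behaves well over $\mathbb{Z}[1/N]$, but in characteristics dividing $N$ the structure of $G$ can degenerate in families (for instance a geometrically \'etale $\mathbb{Z}/(p) \times \mathbb{Z}/(p)$ can specialize to $\mu_p \times \mathbb{Z}/(p)$), so the reduction to the already-established stacks must be performed with care. I expect this can be handled by first establishing algebraicity directly as a substack of an appropriate Hilbert scheme of $\mathrm{Pic}^0_{\cC/S}[N] \rightarrow \overline{\cM}_{1,1}$, and then running the valuative-criterion argument of Lemmas \ref{completelysmooth}--\ref{mixedchar} stratum by stratum, since the extension techniques there (completing first to a generalized elliptic curve with the analogous classical structure, then taking the contraction and the associated stacky quotient) work uniformly in the subgroup-scheme data without needing cyclicity.
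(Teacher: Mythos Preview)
The paper does not supply a proof here; it simply asserts that the techniques of Theorem \ref{gamma1} adapt.  The intended argument is therefore to rerun the valuative-criterion computation of Lemmas \ref{completelysmooth}--\ref{mixedchar} directly for each of the four level structures, and then to rerun the normalization argument of Theorem \ref{classical}.  Your second paragraph (representability over $\overline{\cM}_{1,1}$, Cohen--Macaulayness, cuspidal Cartier divisor, Serre's criterion, identification with the normalization) follows that template faithfully.

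Your first paragraph proposes instead to transfer the conclusions of Theorems \ref{gamma1} and \ref{gamma} along auxiliary finite maps.  This is a reasonable alternative strategy, but there is a genuine error in your treatment of $\scr X^{\mathrm{tw}}_0(N)$: the forgetful map $\scr X^{\mathrm{tw}}_1(N) \to \scr X^{\mathrm{tw}}_0(N)$ is \emph{not} finite \'etale when a prime $p$ divides $N$, and $\scr X^{\mathrm{tw}}_0(N)$ is \emph{not} the stack quotient $[\scr X^{\mathrm{tw}}_1(N)/(\mathbb{Z}/(N))^\times]$.  The fiber of this map over $(\cC,G)$ is the scheme of $\mathbb{Z}/(N)$-generators of $G$, which is finite flat of rank $\varphi(N)$ but typically non-reduced in characteristic $p\mid N$ (e.g.\ $\mu_p^\times \cong \mathrm{Spec}\,\mathbb{F}_p[x]/(x-1)^{p-1}$).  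Correspondingly, at a supersingular point in characteristic $p$ the $(\mathbb{Z}/(N))^\times$-action on $\scr X^{\mathrm{tw}}_1(N)$ has nontrivial stabilizers, so the stack quotient acquires spurious inertia that $\scr X^{\mathrm{tw}}_0(N)$ does not have.  The map is still finite \emph{flat} surjective, and that suffices (by fpqc descent) to push flatness of $\scr X^{\mathrm{tw}}_1(N)\to\overline{\cM}_{1,1}$ down to $\scr X^{\mathrm{tw}}_0(N)\to\overline{\cM}_{1,1}$; but you should drop the \'etale and quotient claims, and you should not expect the local-complete-intersection property of the fibers to descend this way---it is cleanest to deduce that, as the paper implicitly does, from the eventual isomorphism with the classical (regular) stack.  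With those corrections your reduction approach works for $\scr X^{\mathrm{tw}}_0(N)$, $\scr X^{\mathrm{bal,tw}}_1(N)$, and $\scr X^{\mathrm{tw}}_1(N;n)$, and for $\scr X^{\mathrm{tw}}(N\textrm{-Isog})$ you have already acknowledged that the valuative criterion must be rerun directly, which is precisely what the paper intends.
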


As shown explicitly for the stacks $\scr X^{\mathrm{tw}}_1(N)$ and $\scr X^{\mathrm{tw}}(N)$ earlier in this paper, one may study the reductions modulo $p$ of these moduli stacks, and one finds that over a perfect field of characteristic $p$, each stack is a disjoint union with crossings at the supersingular points of various closed substacks, which come naturally equipped with moduli interpretations extending the definitions given in \cite{KM1} (except for $\scr X^{\mathrm{tw}}_1(N;n)$, which is not studied in \cite{KM1}).

\begin{question}
If $\cC/S$ is a $1$-marked genus $1$ twisted stable curve with non-stacky marking, the group scheme $\mathrm{Pic}^0_{\cC/S}$ behaves just like the smooth part of a generalized elliptic curve. Over the Zariski open set of $S$ where $\cC \rightarrow S$ is smooth, it agrees with $C$ (which is in this case a smooth elliptic curve); if $\overline{p} \rightarrow S$ is a geometric point such that $C_{\overline{p}}$ is singular, then $\mathrm{Pic}^0_{\cC_{\overline{p}}/k(\overline{p})} \cong \mathbb{G}_m \times \mathbb{Z}/(N)$ for some $N$, and this is just the smooth part of a N\'eron $N$-gon over $k(\overline{p})$. Is there a natural way to exhibit $\mathrm{Pic}^0_{\cC/S}$ as the smooth part of a generalized elliptic curve, giving an equivalence between the stack of generalized ellipic curves over $S$ and the stack of $1$-marked genus $1$ twisted stable curves over $S$ with non-stacky marking? More precisely, is there a natural way of defining compactified Jacobians of twisted curves, such that the degree zero compactified Jacobian of a standard $\mu_d$-stacky N\'eron $1$-gon is a N\'eron $d$-gon?
\end{question}

\begin{appendix}
\section{A note on moduli of curves of higher genus}

As in the case of elliptic curves, stacks of twisted stable maps allow for natural compactifications of stacks of genus $g$ curves equipped with certain extra structure. For example, 
\begin{displaymath}
\overline{\cK}^\circ_{g,0}(\cB\mu_N) := \overline{\cK}_{g,0} (\cB\mu_N) \times_{\overline{\cM}_g} \cM_g
\end{displaymath}
classifies pairs $(C/S,\phi)$ where $C/S$ is a smooth genus $g$ curve and $\phi: \mathbb{Z}/(N) \rightarrow \mathrm{Pic}^0_{C/S}$, which we view as an $N$-torsion point in $\mathrm{Pic}^0_{C/S} = \mathrm{Jac}(C/S)$. This stack is naturally contained in the proper algebraic stack $\overline{\cK}_{g,0}(\cB\mu_N)$ as an open dense substack. However, the situation becomes considerably more complicated when we attempt to use this to obtain proper moduli stacks of curves with level structure, e.g. replacing ``$N$-torsion points'' with ``points of exact order $N$.''

Over $\mathbb{Z}[1/N]$, we have a stack $\overline{\cM}^{(N)}_g$ of twisted curves with level $N$ structure, studied in \cite[\S6]{ACV}; this is a smooth proper modular compactification of the stack classifying (not necessarily symplectic) Jacobi level $N$ structures on smooth genus $g$ curves, as in \cite[5.4]{DM}. One may be tempted to proceed as follows:
\begin{definition}
Let $\cC/S$ be an unmarked genus $g$ ($g>1$) twisted stable curve over a scheme $S$. A \textit{full level $N$ structure} on $\cC$ is a group scheme homomorphism $\phi: (\mathbb{Z}/(N))^{2g} \rightarrow \mathrm{Pic}^0_{\cC/S}$ such that $\{\phi(a)|a\in (\mathbb{Z}/(N))^2\}$ is a full set of sections for the finite flat group scheme $\mathrm{Pic}^0_{\cC/S}[N]$ over $S$ in the sense of \cite[\S1.8]{KM1}.

We write $\overline{\cM}^{(N),\mathrm{tw}}_g$ for the substack of $\overline{\cK}_{g,0}(\cB\mu_N^{2g})$ associating to $T/S$ the groupoid of pairs $(\cC,\phi)$, where $\cC/T$ is an unmarked genus $g$ twisted stable curve and $\phi$ is a full level $N$ structure on $\cC$.
\end{definition}

Unfortunately, this is not the ``right'' definition. By this we mean that we would like the stack $\overline{\cM}^{(N),\mathrm{tw}}_g$ to be a closed substack of $\overline{\cK}_{g,0}(\cB \mu_N^{2g})$, flat over $S$; but it follows immediately from the example \cite[appendix]{CN} that flatness of $\overline{\cM}^{(N),\mathrm{tw}}_g$ fails in mixed characteristic, even over the ordinary locus of $\cM_g$. Of course, if $N$ is invertible on $S$ then this definition is the correct one. More precisely, the choice of an isomorphism $(\mathbb{Z}/(N))^{2g} \cong \mu_N^{2g}$ identifies $\overline{\cM}^{(N),\mathrm{tw}}_g$ with the stack $\overline{\cM}^{(N)}_g$ of \cite[\S6]{ACV}, which is shown in loc. cit. to be smooth over $\mathbb{Z}[1/N]$ and proper over $\overline{\cM}_g$.  One would hope to be able to change the above definition to get a closed substack $\overline{\cM}^{(N),\mathrm{tw}}_g$ of $\overline{\cK}_{g,0}(\cB \mu_N^{2g})$, flat over $S$, agreeing with $\overline{\cM}^{(N)}_g$ over $S[1/N]$ and with a natural moduli interpretation in terms of the maps from $(\mathbb{Z}/(N))^{2g}$ to the group schemes $\mathrm{Pic}^0_{\cC/S}$. 

More generally we have good properties for the moduli stack $\overline{\cK}_{g,0}(\cB G)$ whenever $G$ is a finite diagonalizable or locally diagonalizable group scheme over $S$, so the Cartier dual $G^*$ is commutative and constant or locally constant. Recall that for a finite group $G$ there is a stack $_G\cM_g$ over $\mathbb{Z}[1/|G|]$ of Teichm\"uller structures of level $G$ on smooth curves, (cf. \cite[5.6]{DM} and \cite{PJ}). Now if $G$ is a diagonalizable group scheme with $|G|$ invertible on the base scheme $S$, then after adjoining appropriate roots of unity we may fix an isomorphism $G \cong G^*$. In \cite[5.2.3]{ACV} it is observed that this determines an isomorphism between $_{G^*}\cM_g$ and a substack $\scr B^{\mathrm{tei}}_g(G)^\circ$ of $\overline{\cK}^\circ_{g,0}(\cB G)$ whose closure $\scr B^{\mathrm{tei}}_g(G)$ in $\overline{\cK}_{g,0}(\cB G)$ is a moduli stack whose geometric objects correspond precisely to $G$-torsors $P \rightarrow \cC$ which are connected (where $\cC$ is a genus $g$ twisted stable curve); these are called \textit{twisted Teichm\"uller $G$-structures}. One would hope that $\scr B^{\mathrm{tei}}_g(G)$ can be defined in arbitrary characteristic, with a natural moduli interpretation, but it is not clear to the author how to proceed with this for genus $g>1$; as discussed above, it does not suffice to simply consider the substack of $\overline{\cK}_{g,0}(\cB G)$ whose geometric objects correspond to $G$-torsors which are connected, since $\mu_{p^n}$ is connected in characteristic $p$, and the definition in terms of ``full sets of sections'' does not give a stack flat over the base scheme in mixed characteristic.

\end{appendix}

\end{document}